\theoremstyle{definition}
\newtheorem{para}{§\hspace{-.15em}}[section]
\theoremstyle{plain}
\newtheorem{thm}[para]{Theorem}
\newtheorem{lem}[para]{Lemma}
\newtheorem{prop}[para]{Proposition}
\newtheorem{cor}[para]{Corollary}
\theoremstyle{definition}
\newtheorem{defn}[para]{Definition}
\newtheorem{examplex}[para]{Example}
\newtheorem{rem}[para]{Remark}
\newcommand{\CF}{\EuScript{CF}}
\newcommand{\CD}{\EuScript{CD}}
\newcommand{\I}{\mathsf I}
\newcommand{\Fin}{\mathsf{Fin}}
\newcommand{\nerve}[1]{\vert \kern-.1em\vert#1\vert \kern-.1em\vert}
\newcommand{\nerveu}[1]{\lceil \kern-.27em\lceil \kern-.12em #1\kern-.12em\rceil\kern-.27em\rceil}
\newcommand{\nerved}[1]{\lfloor \kern-.27em\lfloor \kern-.12em #1\kern-.12em\rfloor\kern-.27em\rfloor}
\newcommand{\nerveud}[1]{\mathrlap{\lfloor}\lceil \kern-.27em\mathrlap{\lfloor}\lceil \kern-.12em #1\kern-.12em\mathrlap{\rfloor}\rceil\kern-.27em\mathrlap{\rfloor}\rceil}
\newcommand{\field}[1]{\ensuremath{\mathbf{#1}}}
\newcommand{\Q}{\ensuremath{\field{Q}}}        
\newcommand{\Z}{\ensuremath{\field{Z}}} 
\newcommand{\R}{\ensuremath{\field{R}}} 
\DeclareMathOperator*{\hocolim}{hocolim}
\title{Cohomology of generalized configuration spaces}
\author{Dan Petersen}
\thanks{The author gratefully acknowledges support by ERC-2017-STG 759082.}
\email{dan.petersen@math.su.se}
\subjclass{55R80; 32S60; 55N30; 06A07; 55P48}
\keywords{configuration spaces; arrangements; cohomology with compact support }
\address{Matematiska Institutionen\\ Stockholms Universitet\\ 106 91 Stockholm\\ Sweden}
\begin{document} 
	
	\begin{abstract}Let $X$ be a topological space. We consider certain generalized configuration spaces of points on $X$, obtained from the cartesian product $X^n$ by removing some intersections of diagonals. We give a systematic framework for  studying the cohomology of such spaces using what we call ``tcdga models'' for the cochains on $X$. We prove the following theorem: suppose that $X$ is a ``nice'' topological space, $R$ is any commutative ring, $H^\bullet_c(X,R)\to H^\bullet(X,R)$ is the zero map, and that $H^\bullet_c(X,R)$ is a projective $R$-module. Then the compact support cohomology of any generalized configuration space of points on $X$ depends only on the graded $R$-module $H^\bullet_c(X,R)$. This generalizes a theorem of Arabia. \end{abstract}
 \maketitle

\section{Introduction}

\begin{para}\label{firstsection}
	If $X$ is a Hausdorff space, let $F(X,n)$ denote the \emph{configuration space} of $n$ distinct ordered points on $X$. A basic question in the study of configuration spaces is the following: 
	\[\text{\emph{How do the (co)homology groups of $F(X,n)$ depend on the (co)homology groups of $X$?}}\]
	This question has a slightly nicer answer if one considers \emph{cohomology with compact support} instead of the usual cohomology. This can be seen already for $n=2$, in which case there is a Gysin long exact sequence
	$$ \ldots \to H^k_c(F(X,2)) \to H^k_c(X^2) \to H^k_c(X) \to H^{k+1}_c(F(X,2)) \to \ldots $$
	associated to the inclusion of $F(X,2)$ as an open subspace of $X^2$. If we work with field coefficients, then $H^k_c(X^2) \cong \bigoplus_{p+q=k}H^p_c(X) \otimes H^q_c(X)$ and the map $H^k_c(X^2) \to H^k_c(X)$ is given by multiplication in the cohomology ring $H^\bullet_c(X)$; consequently, the compactly supported cohomology groups of $F(X,2)$ are completely determined by the compactly supported cohomology of $X$, \emph{with its ring structure}. No such simple statement is true for the usual cohomology or homology.\end{para}
	\begin{para}
	For $n > 2$ it is no longer true that $H^\bullet_c(F(X,n))$ depends only on the ring structure on $H^\bullet_c(X)$. However, as a consequence of the results proved in this paper, we will see that if $X$ is any locally compact Hausdorff space, and $R$ is any ring,\footnote{From now until forever, all rings are assumed to be commutative.} then the graded $\mathbb S_n$-module $H^\bullet_c(F(X,n),R)$ depends only on the quasi-isomorphism class of the \emph{$\mathbb E_\infty$-algebra} given by the \emph{compactly supported cochains} $C_c^\bullet(X,R)$.\footnote{If $R=\Z$ and the one-point compactification $X^\ast$ of $X$ is finite type nilpotent, this is an easy consequence of Mandell's theorem \cite{mandell}, at least modulo point-set issues. Indeed, we may identify $C_c^\bullet(X,R)$ with the reduced cochains of $X^\ast$, so that the $\mathbb E_\infty$-structure on the compactly supported cochains determines the homotopy type of $X^\ast$. Then $F(X,n)^\ast$ is given by collapsing the big diagonal inside the $n$-fold smash product of $X^\ast$ with itself, which is clearly homotopy invariant.} In fact, the full $\mathbb E_\infty$-structure is not needed: there is a forgetful functor from $\mathbb E_\infty$-algebras to \emph{twisted commutative dg algebras}, and one only needs to know $C_c^\bullet(X,R)$ as a twisted commutative dg algebra. If we are not interested in the $\mathbb S_n$-module structure on $H^\bullet_c(F(X,n),R)$ one needs even less information: we may consider $C_c^\bullet(X,R)$ as what's known as a \emph{commutative shuffle dg algebra}.	 With field coefficients one can show using homological perturbation theory that $H^\bullet_c(F(X,n))$ depends only on the ring $H^\bullet_c(X)$ \emph{together with its higher Massey products} --- in fact, one only needs to know the $m$-fold Massey products for $m \leq n$, which recovers what we said above for $n=2$. When $X$ is a compact oriented manifold, this is a theorem of Baranovsky--Sazdanovic \cite{baranovskysazdanovic}.\end{para}

\begin{para}
	The prototype of our first main theorem is a result of Bendersky and Gitler \cite{benderskygitler}. They define for any integer $n \geq 1$ an explicit and combinatorially defined functor $\Lambda(n,-)$ from commutative dg algebras (cdga's) to $\mathbb S_n$-equivariant cochain complexes. If $A \to A'$ is a cdga quasi-isomorphism, then $\Lambda(n,A) \to \Lambda(n,A')$ is a quasi-isomorphism. If $A$ is a cdga model for the cochains $C^\bullet(X,\Q)$ of a space $X$, then this cochain complex computes the following relative cohomology group:
	$$ H(\Lambda(n,A)) \cong H^\bullet(X^n,D(X,n); \Q),$$
	where $D(X,n) = X^n \setminus F(X,n)$ is the complement of the configuration space (i.e.\ the ``big diagonal'').
	
	Our first main theorem is the construction of two functors $\CF(U,A)$ and $\CD(U,A)$, where $U$ is an upwards closed subset of the $n$th partition lattice $\Pi_n$ and $A$ is a \emph{twisted} commutative dg algebra. When $U = \Pi_n\setminus \{\hat 0\}$ and $A$ is a cdga, we have $\CF(U,A) \simeq \Lambda(n,A)$. The functors $\CF$ and $\CD$ improve upon the construction of Bendersky and Gitler in three directions:
	\begin{enumerate}[(A)]
		\item The construction of Bendersky--Gitler works only for cohomology with coefficients in $\Q$. By replacing commutative dg algebras with twisted commutative dg algebras, we can treat in a uniform manner coefficients in an arbitrary ring (or in fact any sheaf or complex of sheaves).
		\item The construction here works for more general ``configuration-like'' spaces: to an upwards closed subset $U \subset \Pi_n$ we can associate an open subset of $X^n$ obtained by removing a family of intersections of diagonals, and any such open subset arises from some upwards closed $U$. Our functors $\CF(U,-)$ and $\CD(U,-)$ compute the cohomology of the resulting generalized configuration space, and the cohomology of the ``discriminant'', respectively.
		\item We give a construction that works equally well for computing the \emph{compactly supported cohomology} of the configuration space of points on $X$; that this should be possible is not at all obvious from how the functor $\Lambda(n,-)$ is constructed by Bendersky--Gitler. In particular, if $X$ is an oriented manifold, we obtain a complex which computes the homology of $F(X,n)$ (since Poincar\'e duality identifies homology and compact support cohomology).
		
	\end{enumerate}
	Let us, in turn, comment on each of these points in some more detail. 
\end{para}

\subsection*{A. Arbitrary coefficients.}

\begin{para} In Bendersky and Gitler's definition of $\Lambda(n,A)$, commutativity of $A$ is used crucially in order to verify the equation $d^2 = 0$. This means that their results can only be used over a field of characteristic zero; otherwise, there will practically never exist a strictly commutative dg algebra model for the cochains on a space. For this reason, Baranovsky and Sazdanovic \cite{baranovskysazdanovic} remarked that it would be interesting to understand if the construction of $\Lambda(n,A)$ could be modified somehow to allow $A$ to be an $\mathbb E_\infty$-algebra. But the higher coherence conditions in an $\mathbb E_\infty$-algebra are unwieldy, no matter what model of $\mathbb E_\infty$-operad one chooses, and it is far from immediate how the functor $\Lambda$ should be modified.  	\end{para}

\begin{para}\label{I-alg}It was proven by Sagave and Schlichtkrull \cite{sagaveschlichtkrull} that $\mathbb E_\infty$-algebras, which are only commutative up to coherent homotopy, can be faithfully modeled by \emph{strictly commutative objects in a larger category}, in which the higher coherences are in a sense built into the objects of the category themselves. Specifically, they introduced the notion of an $\mathsf I$-algebra; an $\mathsf I$-algebra is a commutative monoid in the category of functors from finite sets and injections to cochain complexes. Many readers of this paper are perhaps familiar with the literature on representation stability --- in the usual lingo of representation stability, an $\mathsf I$-algebra is just a gadget which is simultaneously an $\mathsf {FI}$-module and a twisted commutative algebra. For a suitable model structure on $\mathsf I$-algebras, this category is Quillen equivalent to the category of $\mathbb E_\infty$-algebras. 
\end{para}
\begin{para}
	It turns out that Bendersky--Gitler's functor $\Lambda$, and our generalized versions $\CF$ and $\CD$, can be readily modified to make sense also for commutative dg $\mathsf I$-algebras. In fact, one does not even need the injections, so the full $\mathbb E_\infty$-structure is not needed --- the functor is well defined already on the category of twisted commutative dg algebras (tcdga's). We shall think of the category of tcdga's as a useful enlargement of the category of cdga's; small enough that its objects are specified by a manageable amount of data, and large enough to contain all examples of interest.
\end{para}

\begin{para}
	The passage from cdga's to tcdga's not only allows us to pass from $\Q$-coefficients to $\Z$-coefficients; it is also needed if we wish to deal with cohomology with twisted coefficients. If $\EuScript F$ is a complex of sheaves on $X$, then we define tcdga's $R\Gamma^\otimes(X,\EuScript F)$ and $R\Gamma^\otimes_c(X,\EuScript F)$ whose cohomologies are given by
	$$ \bigoplus_{n \geq 1} H^\bullet(X,\EuScript F^{\otimes n}) \qquad \text{and} \qquad \bigoplus_{n \geq 1} H^\bullet_c(X,\EuScript F^{\otimes n})$$
	with multiplication given by $H^k(X,\EuScript F^{\otimes n}) \otimes H^k(X,\EuScript F^{\otimes m}) \to H^{k+l}(X,\EuScript F^{\otimes (n+m)})$. Evaluating our functor $\CF$ on these tcdga's gives a cochain complex calculating the cohomologies of
	$$ H^\bullet (X^n, D(X,U); \EuScript F^{\boxtimes n}) \qquad \text{and} \qquad H^\bullet_c(F(X,U),\EuScript F^{\boxtimes n}),$$
	respectively. The additional generality of being allowed to use arbitrary coefficients is in fact useful. For example, we can recover a construction of Knudsen \cite{knudsenconfiguration} computing the rational homology of \emph{unordered} configuration spaces of points on a manifold $X$ by taking for $\EuScript F$ the orientation local system on $X$ and taking $\mathbb S_n$-invariants. More generally, if we let $\EuScript F = \mathbb D \Z_X$ be the dualizing complex on the space $X$, then $H^\bullet_c(F(X,U),\EuScript F^{\boxtimes n})$ is the integer homology of $F(X,U)$ and we get a spectral sequence calculating the homology of the configuration space of points on an arbitrary space. In general it is a hard problem to compute $\mathbb D\Z_X$ for non-manifold $X$, but even partial information can be used to obtain qualitative results on the cohomology of configuration spaces. For example, Tosteson \cite{tostesonconfiguration} shows, using a spectral sequence equivalent to the one here, that if $\EuScript H^i(\mathbb D\Z_X)=0$  for $i < 2$ ($X$ is ``locally $\geq 2$-dimensional'') then the configuration spaces of points on $X$ satisfy representation stability.
\end{para}

\begin{para}
	Our constructions of the tcdga's $R\Gamma^\otimes(X,\EuScript F)$ and $R\Gamma^\otimes_c(X,\EuScript F)$ is quite simple, but we hope that it can be of independent interest. The case when $\EuScript F$ is a constant sheaf is particularly interesting --- in this case one can give a natural $\mathsf {FI}$-module structure on these tcdga's as well, so that we obtain an $\mathsf I$-algebra in the sense of \S \ref{I-alg}. Under the Quillen equivalence between $\mathsf I$-algebras and $\mathbb E_\infty$-algebras, these $\mathsf I$-algebras are equivalent to the cochains (resp.\ compactly supported cochains) on $X$, with their $\mathbb E_\infty$-algebra structure (so we recover in a slightly unusual way the $\mathbb E_\infty$-structure on cochains). A different explicit $\mathsf I$-algebra structure for the cochains on a space was very recently constructed by Richter and Sagave \cite{richtersagave}; our construction gives a sheaf-theoretic alternative to theirs.
\end{para}

\subsection*{B. Compact support cohomology}

\begin{para} Let us momentarily consider the simplest case of rational coefficients and the classical configuration spaces $F(X,n)$, where our construction is equivalent to the functor $\Lambda(n,-)$ of Bendersky and Gitler. As we said above, if $A$ is a cdga model for the cochains $C^\bullet(X,\Q)$, then $\Lambda(n,A)$ computes the relative cohomology $H^\bullet(X^n,D(X,n); \Q)$. A consequence of what we prove here is that if $A$ is instead a cdga model for the \emph{compactly supported cochains} $C^\bullet_c(X,\Q)$, then we have an isomorphism
	$$ H(\Lambda(n,A)) \cong H^\bullet_c(F(X,n),\Q).$$
	In other words, the \emph{exact same functor} will, when we plug in the compactly supported cochains of $X$, calculate the compact support cohomology of the configuration space itself. This is not clear from Bendersky and Gitler's construction of the functor $\Lambda$. The same holds integrally if we choose a tcdga model for $C_c^\bullet(X,\Z)$, rather than a cdga model (which will practically never exist).
	\end{para}
	\begin{para}
		
	This is particularly interesting when $X$ is an oriented $d$-manifold. In this case we have a Poincar\'e duality isomorphism
	$$ H^k_c(F(X,n),\Z) \cong H_{nd-k}(F(X,n),\Z)$$
	between homology and cohomology, and we obtain a canonical cochain complex computing the homology of $F(X,n)$, depending only on a tcdga model for $C^\bullet_c(X,\Z)$. After giving this cochain complex a natural filtration one obtains a spectral sequence whose first page and first differential depend only on the ring $H^\bullet_c(X,\Z)$; this spectral sequence is exactly Poincar\'e dual to the familiar spectral sequence of Cohen--Taylor-Totaro-K{\v{r}}{\'{\i}}{\v{z}} \cite{cohentaylor,totaro,krizconfiguration}. This recovers some familiar statements: for example, the first differential in the Cohen--Taylor spectral sequence is given by the Gysin map $\Delta_! \colon H^\bullet(X) \to H^{\bullet+d}(X^2)$, which is exactly Poincar\'e dual to the cup product in compact support cohomology. However, the fact that the higher differentials depend only on the cochain algebra $C^\bullet_c(X)$ appears to be new. For example, we see that if the algebra $C^\bullet_c(X,\Q)$ is formal, then the Cohen--Taylor spectral sequence degenerates rationally after the first differential; again, this observation seems to be new. 
	\end{para}

\subsection*{C. Generalized configuration spaces}

\begin{para} 	Let $\Pi_n$ denote the \emph{partition lattice}, i.e.\ the partially ordered set of all partitions of the set $\{1,\ldots,n\}$, ordered by refinement. Each element $T \in \Pi_n$ corresponds to a locally closed subset $X(T) \subset X^n$:
	$$ X(T) = \{(x_1,\ldots,x_n) \in X^n : x_i = x_j \iff i \text{ and } j \text{ are in the same block of $T$}\}.$$
	If $U \subset \Pi_n$ is upwards closed, then we may define
	$$ D(X,U) = \bigcup_{T \in U} X(T), \qquad \qquad F(X,U) = X^n \setminus D(X,U).$$
	Then $F(X,U)$ is a ``generalized configuration space'' of points on $X$; if $U$ consists of all elements of $\Pi_n$ except the bottom element $\hat 0$, then $F(X,U) = F(X,n)$, and in general we get an intermediate open subset between $F(X,n)$ and $X^n$. 
	\end{para}
	\begin{para}We define functors $\CF(U,A)$ and $\CD(U,A)$ such that if $U = \Pi_n \setminus \{\hat 0\}$ and $A$ is a cdga, then $\CF(U,A) \simeq \Lambda(n,A)$. If $A$ is a tcdga model for the cochains (respectively, the compactly supported cochains) on $X$ then $\CF(U,A)$ computes the cohomology of $H^\bullet(X^n,D(X,U);\Z)$ (respectively, $H^\bullet_c(F(X,U),\Z)$. Similarly $\CD(U,A)$ computes the cohomology of $H^\bullet(D(X,U),\Z)$ (resp.\ $H^\bullet_c(D(X,U),\Z)$. 	Thus the results of this paper provide a uniform way of studying the cohomology and compact support cohomology of spaces of the form $F(X,U)$ and $D(X,U)$, for any upwards closed $U \subseteq \Pi_n$.
	\end{para}
	
	\begin{para}When $X$ is a Euclidean space $\R^d$, then $F(X,U)$ is the complement of a ``hypergraph arrangement'' (also called a diagonal arrangment) and our results recover the Goresky--MacPherson formula in this case (as well as its equivariant version).  For a general space $X$ we obtain a spectral sequence computing $H^\bullet_c(F(X,U),\Z)$ whose first page depends only on the cohomology of $X$ and the cohomology of the lower intervals in the partially ordered set $U$. (A more general form of this spectral sequence was described previously in \cite{spectralsequencestratification}.) The cohomologies of such lower intervals have been studied intensely, precisely because they can be interpreted (via the Goresky--MacPherson formula) as computing the cohomology of hypergraph arrangements in Euclidean space. Our results thus show that whenever we can compute the cohomologies of such lower intervals for some poset $U$, we can write down an explicit spectral sequence computing the compact support cohomology of $F(X,U)$, whose first page depends only on the ring $H^\bullet_c(X,\Z)$. Cases where the cohomologies of such lower intervals are known include $k$-equals arrangements \cite{bjornerwelker} and large classes of orbit arrangements \cite{kozlovgeneral}.
	\end{para}
	
	\begin{para}A remark is that Totaro and K{\v{r}}{\'{\i}}{\v{z}} both note that the structure of the Cohen--Taylor spectral sequence for an oriented $d$-manifold $X$ depends on knowing the cohomology of $F(\R^d,n)$, and that this should reflect the locally Euclidean structure of $X$. It is interesting that we see the exact same phenomenon here --- for an arbitrary space $X$, the combinatorial structure of the spectral sequence computing $H^\bullet_c(F(X,U),\Z)$ depends only on knowing the cohomology of the corresponding hypergraph arrangment in a Euclidean space --- even though there is no ``locally Euclidean'' structure in sight. 
\end{para}

\begin{para}
Let us now state the theorem, which is a composite of Theorems \ref{mainthm} and \ref{sseq-thm}. The comparison with the Cohen--Taylor spectral sequence appears in \S\S \ref{comparisonstart}--\ref{comparisonend}. The two functors $\CF$ and $\CD$ are defined in Definition \ref{defn-cf-cd}. The symbols $\nerveu{-}$ and $\nerveud{-}$ denote two variants of the order complex of a partially ordered set, which can be found in \S \ref{variant-of-order-cpx}. 
\end{para}

\begin{thm}[First main theorem]\label{firstmainthm}There exist explicit functors $\CF(U,A)$ and $\CD(U,A)$, where $A$ is a tcdga over a ring $R$ and $U \subset \Pi_n$ is an upwards closed subset, to cochain complexes over $R$. If $G \subset \mathbb S_n$ preserves $U$ then $G$ acts on $\CF(U,A)$. Let $X$ be a locally contractible paracompact Hausdorff space, $\EuScript F$ a bounded below complex of sheaves of $R$-modules on $X$. There are equivariant quasi-isomorphisms of cochain complexes
	\begin{align*}
	\CF(U,R\Gamma_c^\otimes(X,\EuScript F)) & \simeq C^\bullet_c(F(X,U),\EuScript F^{\boxtimes n}) \\
	\CF(U,R\Gamma^\otimes(X,\EuScript F)) & \simeq  C^\bullet(X^n, D(X,U);\EuScript F^{\boxtimes n}) \\
	\CD(U,R\Gamma_c^\otimes(X,\EuScript F)) & \simeq C_c^\bullet(D(X,U),\EuScript F^{\boxtimes n}) \\
	\CD(U,R\Gamma^\otimes(X,\EuScript F)) & \simeq  C^\bullet(D(X,U),\EuScript F^{\boxtimes n})
	\end{align*} which are natural in $X$, $\EuScript F$ and $U$. There are natural filtrations on $\CF(U,A)$ and $\CD(U,A)$ such that we get spectral sequences  
	$$E_1^{pq} = \bigoplus_{\substack{T \in J_U\\ \vert T \vert = p}} \widetilde H^{p+q}( \nerveu{ J_U^{\preceq T}}; A(T)) \implies H^{p+q}(\CD(U,A))$$
	and
	$$ E_1^{pq} = \bigoplus_{\substack{T \in J_{U_0}\\ \vert T \vert = p}} \widetilde H^{p+q}(\nerveud{ J_{U_0}^{\preceq T}}; A(T)) \implies H^{p+q}(\CF(U,A)),$$
	where $J_U \subseteq U$ is the subposet consisting of all joins of minimal elements of $U$, $J_{U_0} = J_U \cup \{\hat 0\}$. The resulting spectral sequence is Poincar\'e dual to the Cohen--Taylor spectral sequence when $A = R\Gamma_c^\otimes(X,\Z)$, $X$ is an oriented manifold, and $U = \Pi_n \setminus \{\hat 0\}$. \end{thm}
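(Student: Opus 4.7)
The plan is to first construct the functors $\CF(U, A)$ and $\CD(U, A)$ combinatorially. For a partition $T \in \Pi_n$ with blocks $B_1, \ldots, B_k$, set $A(T) := \bigotimes_{i} A_{|B_i|}$, the tensor product of the appropriate arity components of the tcdga $A$; tcdga multiplication then assembles into natural structure maps between the $A(T)$ indexed by refinement. Both $\CF$ and $\CD$ will be totalizations of a bigraded complex whose summands at level $p$ are indexed by length-$p$ chains in (augmented forms of) the subposet $J_U \subseteq U$, valued in $A(T)$ for the appropriate $T$, with differential combining the internal differential of $A$ and the refinement maps; the augmentation by $\hat 0$, producing $J_{U_0}$, is what distinguishes $\CF$ from $\CD$ and corresponds to including the ambient $X^n$. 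Routine sanity checks establish that $\CF(U, A) \simeq \Lambda(n, A)$ in the Bendersky--Gitler case, that tcdga quasi-isomorphisms of $A$ induce quasi-isomorphisms of $\CF$ and $\CD$ (via the spectral sequence of the filtration by chain length, whose $E_1$ depends only on $H^\bullet(A)$), and that $G$-equivariance is formal from naturality.

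Next, I construct the tcdga's $R\Gamma^\otimes(X, \mathcal F)$ and $R\Gamma_c^\otimes(X, \mathcal F)$ by choosing a resolution $\mathcal I^\bullet$ of $\mathcal F$ whose external tensor powers $\mathcal I^{\boxtimes n}$ remain acyclic for $\Gamma$ (and $\Gamma_c$) on $X^n$ for all $n$ --- soft or injective resolutions suffice given the paracompact/locally contractible hypothesis. The arity-$n$ component is then $\Gamma(X^n, \mathcal I^{\boxtimes n})$ (resp.\ $\Gamma_c$), with multiplication given by external product of sections; this is visibly a tcdga and computes $H^\bullet(X, \mathcal F^{\otimes n})$ (resp.\ its compactly supported variant). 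For the four quasi-isomorphism statements I would sheafify: define complexes of sheaves $\CF(U, \mathcal I)$ and $\CD(U, \mathcal I)$ on $X^n$ whose (compactly supported) global sections recover the combinatorial $\CF$ and $\CD$ complexes. The quasi-isomorphisms then reduce to two sheaf-level identifications on $X^n$: $\CD(U, \mathcal I) \simeq i_* i^* \mathcal I^{\boxtimes n}$ where $i \colon D(X, U) \hookrightarrow X^n$, together with a natural cofiber sequence $\CF(U, \mathcal I) \to \mathcal I^{\boxtimes n} \to \CD(U, \mathcal I)$ matching the geometric open-closed decomposition of $X^n$ into $F(X, U)$ and $D(X, U)$.

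The heart of the argument, and the step I expect to be the main obstacle, is this sheaf-level identification $\CD(U, \mathcal I) \simeq i_* i^* \mathcal I^{\boxtimes n}$. The approach is to write $D(X, U) = \bigcup_{T \text{ minimal in } U} X(T)$ as a finite union of closed strata, and apply a Mayer--Vietoris / \v{C}ech-nerve resolution to this cover: the intersections of these strata are exactly the strata $X(T'')$ for $T''$ ranging over joins of minimal elements of $U$, i.e.\ over the poset $J_U$. The \v{C}ech nerve then expresses $i_* i^* \mathcal I^{\boxtimes n}$ as a homotopy colimit over $J_U$ of sheaves pushed forward from these strata, with local value $H^\bullet(X(T), \mathcal I^{\boxtimes n}|_{X(T)}) = H^\bullet(X, \mathcal F^{\otimes |T|})$ on global sections. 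Matching this homotopy colimit stratum-by-stratum with the combinatorial $\CD(U, \mathcal I)$ --- identifying the chain complex of the order complex of $J_U^{\preceq T}$ with the combinatorial indexing of $\CD$ at $T$, and matching differentials --- is the key technical task. Once $\CD$ is handled the cofiber sequence transfers the result to $\CF$, and taking global sections (ordinary vs.\ compactly supported) produces all four stated quasi-isomorphisms.

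Finally, the natural filtration on $\CF$ and $\CD$ is by the length of the indexing chain (equivalently, the depth in the order complex), and the $E_1$ page is precisely what the \v{C}ech argument delivers: the stratum-$T$ contribution is the reduced cohomology of the order complex $\nerveu{J_U^{\preceq T}}$ (respectively $\nerveud{J_{U_0}^{\preceq T}}$ for $\CF$) with coefficients in $A(T)$, the two variants corresponding to whether the minimum $\hat 0$ is adjoined. For the Cohen--Taylor comparison, take $X$ an oriented $d$-manifold, $A = R\Gamma_c^\otimes(X, \Z)$, and $U = \Pi_n \setminus \{\hat 0\}$: Poincar\'e duality identifies $H^k_c(F(X,n), \Z) \cong H_{nd-k}(F(X,n), \Z)$, and the first differential of our spectral sequence, induced by the cup product in $H^\bullet_c(X, \Z)$, is Poincar\'e dual to the Gysin pushforward $\Delta_! \colon H^\bullet(X, \Z) \to H^{\bullet + d}(X^2, \Z)$ --- exactly the first differential in the Cohen--Taylor spectral sequence. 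A direct bidegree check then confirms that the two spectral sequences are term-by-term Poincar\'e dual.
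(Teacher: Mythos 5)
The step that fails is your construction of the coefficient tcdga. You take the arity-$n$ component to be $\Gamma(X^n,\mathcal I^{\boxtimes n})$ with multiplication the external product of sections; but this computes $H^\bullet(X^n,\mathcal F^{\boxtimes n})$, not $H^\bullet(X,\mathcal F^{\otimes n})$ as you assert (already for $X=S^1$, $\mathcal F=\Q$, $n=2$ these differ). More importantly, the functor on the partition lattice is assembled from the multiplications $A(T_i)\otimes A(T_j)\to A(T_i\sqcup T_j)$, and these must model restriction of $\mathcal F^{\boxtimes n}$ to deeper diagonals; with your definition they are external products and never see any diagonal, so already for $n=2$ and $U=\{\hat 1\}$ your $\CD(U,A)=A(2)$ computes cochains on $X^2$ rather than on the diagonal copy of $X$, and none of the four quasi-isomorphisms can hold. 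The correct construction (the paper's) sets $R\Gamma^\otimes(X,\mathcal F)(n)=\Gamma(X,\mathcal L^{\otimes n})$ --- sections over $X$ itself of the $n$-fold tensor power of a flat \emph{and} flabby resolution $\mathcal L$, with pointwise product --- and the acyclicity of $\mathcal L^{\otimes n}$ on $X$ is a genuine issue: tensor powers of injective or flabby sheaves are not acyclic in general, and one needs flatness together with softness (flabby implies soft on paracompact spaces, and the tensor product of a flat soft sheaf with a soft sheaf is soft).

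Two further problems. Your filtration ``by the length of the indexing chain'' does not have the stated $E_1$-page: its associated graded is the direct sum over length-$p$ chains $C$ of $A(\max C)$ with only the internal differential, giving $E_1^{pq}=\bigoplus_{\vert C\vert = p}H^q(A(\max C))$. To obtain the lower-interval order-complex cohomology one must filter by a strictly increasing function of $\max C$ (e.g.\ its number of blocks), so that the graded piece at $T$ is the full subcomplex of chains with maximum exactly $T$, namely $\widetilde C^\bullet(\nerveu{U^{\preceq T}})\otimes A(T)$. Finally, indexing $\CF$ and $\CD$ by chains in $J_U$ rather than in all of $U$ costs you the claimed naturality in $U$ (minimal elements are not preserved under inclusions $U\subseteq U'$) and places the real content in your unproved ``key technical task'': matching the \v{C}ech nerve of the closed cover by minimal strata, which is indexed by subsets of atoms, with chains in $J_U$ requires a crosscut-type comparison. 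The paper sidesteps both points by summing over all chains in $U$, where the stalkwise acyclicity of the resolution of $i_\ast i^\ast\mathcal F^{\boxtimes n}$ is an immediate cone argument (each interval $U_0^{\preceq T}$ has a top element), and only passes to $J_U$ on the $E_1$-page via a Galois-connection lemma.
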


\begin{para} When $U = \Pi_n \setminus \{\hat 0\}$ consists of the whole partition lattice minus its bottom element --- this is the case in which the functor $\CF(U,-)$ computes the cohomology of the usual configuration space of points --- then the cochain complex  $\CF(U,-)$  can be given a somewhat unexpected interpretation in terms of operadic cohomology. Specifically, 
	we show in the final section of the paper that when $U = \Pi_n \setminus \{\hat 0\}$ then the cochain complex  $\CF(U,-)$ may be identified with the Chevalley--Eilenberg chains computing Lie algebra homology of a certain \emph{twisted} Lie algebra (a left module over the Lie operad). The precise statement is that $\CF(U,A)$ computes the Lie algebra homology of $\mathsf SA \otimes_H \mathsf{Lie}$ (the Hadamard tensor product of the suspension of $A$ with the operad $\mathsf{Lie}$, considered as a left module over itself). If we take $X$ to be a manifold, and $A = R\Gamma_c^\otimes(X,\mathbb D \Q)$, then we recover theorems of Knudsen \cite{knudsenconfiguration} and H\^o \cite{hoconfiguration} upon taking $\mathbb S_n$-invariants\footnote{Knudsen and H\^o both work with unordered configuration spaces, which is why we need to take $\mathbb S_n$-invariants to recover their results. However, the arguments they use can be modified to obtain an interpretation of the homology of ordered configuration spaces in terms of Lie algebra homology as well, provided that one replaces Lie algebras with twisted Lie algebras throughout; in this case their results coincide with the ones obtained here.} in this result. Here $\mathbb D \Q$ denotes the rational dualizing complex of $X$, i.e.\ the orientation sheaf with $\Q$-coefficients placed in degree $\dim X$. We also indicate how this result can be understood in terms of the Goodwillie calculus. That said, it is not clear to me what the precise relationship is between what is done in this paper, and methods such as higher Hochschild homology or manifold calculus. In particular, the recent papers \cite{hodensities,cilw} contain results that ought to be related to the ones obtained here, although the methods are completely different.
\end{para}

\begin{rem}
	Local contractibility is used in Theorem \ref{mainthm} only to get an isomorphism between sheaf cohomology and singular cohomology. We could have omitted this hypothesis if the right hand sides of the quasi-isomorphisms of the theorem had been interpreted as complexes computing derived functor cohomologies.
\end{rem}

\subsection*{Configuration spaces of points on $i$-acyclic spaces}

\begin{para}
	Our second main theorem is a generalization and re-interpretation of a beautiful result of Arabia \cite{arabia}. Arabia introduced the notion of an \emph{$i$-acyclic space}: a topological space $X$ is $i$-acyclic over a ring $R$ if $H^k_c(X,R) \to H^k(X,R)$ is the zero map for all $k$. This condition is in fact satisfied in many cases of interest: for example, any open subset of Euclidean space is $i$-acyclic, and the product of any space with an $i$-acyclic space is $i$-acyclic. (For example, if $Y$ is arbitrary then $Y \times \mathbf R$ is $i$-acyclic.) More examples are given in Example \ref{acyclic-example}. The remarkable fact about $i$-acyclicity is that it is exactly the right hypothesis to ensure that the compactly supported cohomology of configuration spaces of points on $X$ depends in the simplest possible way on the compactly supported cohomology of $X$ itself. Although the main focus of Arabia's paper is the rational cohomology, the following theorem is proven with arbitrary field coefficients:
\end{para}

\begin{thm}[Arabia]Let $X$ be an $i$-acyclic paracompact locally compact Hausdorff space over a field $k$. Then $H^\bullet_c(F(X,n),k)$ depends only on the graded vector space $H^\bullet_c(X,k)$.\end{thm}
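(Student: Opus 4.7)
The plan is to deduce Arabia's theorem from Theorem~\ref{firstmainthm} by proving a formality statement for the tcdga of compactly supported cochains. Applying Theorem~\ref{firstmainthm} with the constant sheaf $\underline{k}$ and $U = \Pi_n \setminus \{\hat 0\}$ gives
$$ C^\bullet_c(F(X,n), k) \simeq \CF(U, R\Gamma_c^\otimes(X, \underline{k})),$$
so it suffices to show that $A := R\Gamma_c^\otimes(X, \underline{k})$ is quasi-isomorphic, as a tcdga, to the ``trivial'' tcdga $T(V)$ whose arity-$n$ piece is $V := H^\bullet_c(X, k)$ (equipped with zero differential) and all of whose multiplications are identically zero. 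For then $\CF(U, A) \simeq \CF(U, T(V))$, and $\CF(U, T(V))$ manifestly depends only on the graded vector space $V$.

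The cohomology tcdga of $A$ already has trivial multiplication. Indeed, for the constant sheaf, every arity of $A$ is the same cochain complex $C^\bullet_c(X, k)$, and every binary operation of the tcdga is the cup product $C^\bullet_c(X, k) \otimes C^\bullet_c(X, k) \to C^\bullet_c(X, k)$. This cup product factors through the module action of $C^\bullet(X, k)$ on $C^\bullet_c(X, k)$, precomposed with the canonical comparison $\iota \colon C^\bullet_c(X, k) \to C^\bullet(X, k)$. By $i$-acyclicity $\iota$ vanishes on cohomology, and working over a field $\iota$ is therefore null-homotopic, via some chosen cochain homotopy $h$. Combined with a cochain splitting $s \colon V \hookrightarrow C^\bullet_c(X, k)$ of the projection onto cohomology, the homotopy $h$ provides explicit data trivializing the binary product, and hence all iterated (``Massey'') products, in $A$.

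To upgrade the choices $(s, h)$ into an honest tcdga quasi-isomorphism $T(V) \xrightarrow{\sim} A$ (or a zig-zag thereof), one rectifies these homotopies arity-by-arity; the natural route is to pass through a cofibrant tcdga replacement of $A$ in which the binary product can be arranged to vanish literally on the nose. The main obstacle is this rectification: a naive use of $(s, h)$ only gives an $A_\infty$-style map in each arity, and producing a strict tcdga morphism that is compatible across all arities requires an obstruction-theoretic argument. Because every tcdga operation of $A$ is ultimately built from the single null-homotopic cup product on $C^\bullet_c(X, k)$, I expect the obstructions to vanish by induction on arity with only mild bookkeeping, yielding the desired formality. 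The argument is visibly uniform in $U$, so in fact the same proof gives Arabia's conclusion for the generalized configuration spaces $F(X, U)$ as well, not just for $F(X, n)$.
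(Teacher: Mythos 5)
Your key step is false as stated. You propose to show that $A=R\Gamma_c^\otimes(X,\underline k)$ is quasi-isomorphic \emph{as a tcdga} to the trivial tcdga on $V=H^\bullet_c(X,k)$, for an arbitrary field $k$. This is exactly the ``naive hope'' that the paper points out fails already for $X=\R$ and $k=\Z/2\Z$ (a field): a tcdga quasi-isomorphism to the trivial tcdga would force the Leray filtration on $H^\bullet_c(F(X,U),k)$ to split $\mathbb S_n$-equivariantly, but for $F(\R,2)$ with $\Z/2$-coefficients the relevant extension is $0\to \Z/2\to H^2_c(F(\R,2),\Z/2)\to \Z/2\to 0$ with middle term the regular representation $\Z/2[\mathbb S_2]$, which is a non-split extension of trivial modules. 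So the obstruction you hope ``vanishes by induction with mild bookkeeping'' is a genuine one: rectifying your arity-wise $\mathsf A_\infty$-style data to a strict, $\mathbb S_n$-equivariant (i.e.\ tcdga) morphism requires homotopy transfer for strictly commutative structures, which breaks in characteristic $p$ precisely because the trivial $\mathbb S_n$-module is not projective. Your chosen data $(s,h)$ cannot in general be made equivariant, and no choice can, by the counterexample above.

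The repair, and the paper's actual route for general coefficients, is to discard the symmetric group actions: pass to commutative dg \emph{shuffle} algebras, where $\CF(U,-)$ is still defined (just without the $\mathbb S_n$-action on the output), and prove formality there. Your homotopies are morally the right input, but the paper packages them differently (Lemma \ref{arabialemma0}): choose a cocycle-section $f\colon H_c^\bullet\to C_c^\bullet(X,k)$ and, using $i$-acyclicity, a primitive $g\colon H_c^\bullet\to C^{\bullet-1}(X,k)$ with $d\circ g=-f$ in the \emph{ambient} algebra, and set $f_n(x)=f(x)\cdot g(x)^{\,n-1}$, which lands in $C_c^\bullet$ because compactly supported cochains form an ideal; this is an explicit $\mathsf A_\infty$-quasi-isomorphism from the trivial structure (note that vanishing of the cup product alone does not kill Massey products --- the explicit formula is what handles all higher operations at once). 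Combined with homotopy transfer and the uniqueness statement, this gives formality with zero multiplication as a commutative shuffle dg algebra over any ring with $H^\bullet_c(X,R)$ projective (Theorem \ref{i-acyclic-2}), which yields exactly the non-equivariant conclusion of Arabia's theorem via Corollary \ref{i-acyclic-2-cor}. The tcdga/cdga-level formality you aim for is available only over $\Q$ (Theorem \ref{i-acyclic-1}), where it buys the additional $\mathbb S_n$-equivariant refinement; over a general field you must settle for the shuffle statement, which is all that Arabia's (non-equivariant) conclusion needs. Your closing remark that the argument is uniform in $U$ is correct and agrees with the paper.
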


\begin{para}
	On the other hand, a consequence of Theorem \ref{firstmainthm} is that for an arbitrary locally compact Hausdorff space, $H^\bullet_c(F(X,n),\Q)$ depends only on the choice of a cdga model for the compactly supported cochains $C_c^\bullet(X,\Q)$. It would be appealing to try to ``explain'' the rational case of Arabia's result in these terms, instead: that $i$-acyclicity should force the algebra of compactly supported cochains to be homotopically trivial in some strong sense. This is indeed the case:
\end{para}

\begin{thm}[Second main theorem, with $\Q$-coefficients]\label{secondmainthm} Let $X$ be an $i$-acyclic paracompact locally compact Hausdorff space over $\Q$. Then a cdga model for $C_c^\bullet(X,\Q)$ is given by the cohomology $H^\bullet_c(X,\Q)$, with identically zero multiplication and differential. Equivalently, $C_c^\bullet(X,\Q)$ is formal and the cup product in compact support cohomology vanishes.	
\end{thm}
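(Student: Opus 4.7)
The strategy is to use a sheaf-theoretic cdga model in which $C_c^\bullet(X,\Q)$ is manifestly a dg ideal inside $C^\bullet(X,\Q)$, and to exploit the chain-level null-homotopy produced by $i$-acyclicity.

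Pick a soft sheaf of cdga's $\mathcal A^\bullet$ on $X$ quasi-isomorphic to the constant sheaf $\Q_X$ (for instance the sheafified PL de Rham complex). Setting $A := \Gamma(X,\mathcal A^\bullet)$ and $A_c := \Gamma_c(X,\mathcal A^\bullet)$ one obtains a unital cdga $A$ modeling $C^\bullet(X,\Q)$ and a non-unital cdga $A_c$ modeling $C_c^\bullet(X,\Q)$, together with a dg ideal inclusion $\iota : A_c \hookrightarrow A$; the ideal property is just that the product of any section with a compactly supported section is compactly supported. The $i$-acyclicity hypothesis translates exactly to $H(\iota)=0$, and since we are working with complexes of $\Q$-vector spaces this forces $\iota$ to be chain homotopic to zero: there is a degree $-1$ map $h : A_c \to A$ with $dh + hd = \iota$.

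The vanishing of the cup product on $H^\bullet_c(X,\Q)$ is then immediate: for closed $\alpha,\beta \in A_c$, $\alpha\beta = d(h\alpha)\cdot\beta = d(h\alpha\cdot\beta)$, where $h\alpha\cdot\beta \in A_c$ by the ideal property. More generally, for closed $\alpha_1,\dots,\alpha_n \in A_c$ the element $h\alpha_1 \cdots h\alpha_{n-1}\alpha_n$ lies in $A_c$ (its rightmost factor is compactly supported), and the Leibniz rule together with $dh(\alpha_i)=\alpha_i$ gives
\[
d\bigl(h\alpha_1 \cdots h\alpha_{n-1}\alpha_n\bigr) \;=\; \sum_{k=1}^{n-1}(-1)^{\epsilon_k}\, h\alpha_1 \cdots h\alpha_{k-1}\cdot\alpha_k\cdot h\alpha_{k+1}\cdots h\alpha_{n-1}\cdot\alpha_n.
\]
The right-hand side is precisely the standard representative of the $n$-fold Massey product $\langle[\alpha_1],\dots,[\alpha_n]\rangle$ taken with respect to the consistent defining system $\sigma_{i,\dots,j} := h\alpha_i\cdots h\alpha_{j-1}\alpha_j$, so every Massey product vanishes on the nose.

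I would package this into an $A_\infty$-quasi-isomorphism $\phi\colon (H^\bullet_c(X,\Q),0,0) \to A_c$: pick a linear section $s$ of $A_c \twoheadrightarrow H^\bullet_c(X,\Q)$ landing in cocycles, set $\phi_1 := s$, and $\phi_n([\alpha_1],\dots,[\alpha_n]) := h(s[\alpha_1])\cdots h(s[\alpha_{n-1}])\cdot s[\alpha_n] \in A_c$ for $n\geq 2$. The $A_\infty$-relations for $\phi$ are precisely the displayed Leibniz identity; since $\phi_1$ is a quasi-isomorphism, $\phi$ is an $A_\infty$-quasi-isomorphism, and the Quillen equivalence over $\Q$ between cdga's and $C_\infty$-algebras produces a zigzag of cdga quasi-isomorphisms between $A_c$ and $(H^\bullet_c(X,\Q),0,0)$, as required. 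The main obstacle is the passage from $A_\infty$ to $C_\infty$: the naive $\phi$ is only an $A_\infty$-morphism and need not vanish on shuffle products in the tensor algebra on $H^\bullet_c(X,\Q)$ (as required for a $C_\infty$-morphism). Over $\Q$ this can be repaired by antisymmetrizing with the Eulerian idempotents in $\Q[S_n]$, producing an honest $C_\infty$-morphism with the same linear part; this antisymmetrization is where the characteristic zero hypothesis enters essentially. An alternative route is to avoid $C_\infty$-theory entirely and build a strict cdga zigzag through the sub-cdga of $A$ generated by $A_c$ and $h(A_c)$, which I would expect to work after more careful bookkeeping.
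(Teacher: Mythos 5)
The first half of your proposal is essentially the paper's own argument: the sheaf-theoretic ideal inclusion $A_c\subset A$ (the paper uses Navarro Aznar's Thom--Whitney Godement resolution for this, which is what guarantees such a soft sheaf of cdga's exists on a general paracompact, locally compact space), the null-homotopy coming from $i$-acyclicity, and the explicit $\mathsf A_\infty$-quasi-isomorphism $\phi_n([\alpha_1],\dots,[\alpha_n])=h(s[\alpha_1])\cdots h(s[\alpha_{n-1}])\cdot s[\alpha_n]$ are a mirror-image of the maps $f_n(x)=f(x)\cdot g(x)\cdots g(x)$ in Lemma \ref{arabialemma0} and in \S\ref{sketch}. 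Up to that point your argument is correct, and it proves $\mathsf A_\infty$-formality of $A_c$ together with the vanishing of the cup product and of all Massey products.

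The genuine gap is the promotion from $\mathsf A_\infty$-formality to cdga (equivalently $\mathsf C_\infty$-) formality. Your claimed repair --- precompose the $\phi_n$ with the Eulerian idempotents to get a $\mathsf C_\infty$-morphism with the same linear part --- does not work as stated: the morphism relations $d\phi_n=\sum\pm\mu_2(\phi_i\otimes\phi_j)$ are quadratic in the components, and while precomposition with $e_2^{(1)}$ is harmless in arity $2$ (commutativity of the target makes $\mu_2(\phi_1\otimes\phi_1)$ already vanish on shuffles), in arity $\geq 3$ the idempotent $e_n^{(1)}$ does not intertwine with the deconcatenation terms $\mu_2(\phi_i\otimes\phi_j)$, so the corrected maps no longer satisfy the relations; one would have to add genuinely new correction terms, and organizing this is precisely the nontrivial content of the statement ``associative formality implies commutative formality over $\Q$'' (Saleh's theorem, cited in the paper), not a one-line symmetrization. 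Your fallback suggestion (a strict zigzag through the sub-cdga generated by $A_c$ and $h(A_c)$) is only a hope, not an argument. The paper closes this step differently and cheaply: by the homotopy transfer theorem, $H_c^\bullet(X,\Q)$ carries \emph{some} $\mathsf C_\infty$-structure $\mathsf C_\infty$-quasi-isomorphic to $A_c$; by the uniqueness part of the transfer theorem this structure is $\mathsf A_\infty$-isotopic to the transferred minimal $\mathsf A_\infty$-structure, which your explicit morphism shows may be taken identically zero; and the only $\mathsf A_\infty$-structure isotopic to the zero structure is zero (an easy induction on arity). Hence the transferred $\mathsf C_\infty$-structure is itself zero, and the bar--cobar zigzag then gives the cdga quasi-isomorphisms. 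If you replace your Eulerian-idempotent step by this uniqueness argument (or by an appeal to \cite[Theorem 1.3]{saleh}), your proof is complete and coincides with the paper's.
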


\begin{para}\label{sketch}
	This result re-proves, re-interprets, and generalizes Arabia's theorem. The proof of Theorem \ref{secondmainthm} is simple enough that it makes sense to outline it here in the introduction. It will be enough to construct an $\mathsf A_\infty$-quasi-isomorphism between $H^\bullet_c(X,\Q)$ (with identically zero differential and multiplication) and the dg-algebra $C_c^\bullet(X,\Q)$. This means that we have to find maps
	$$ f_n \colon H_c^\bullet(X,\Q)^{\otimes n} \to C_c^\bullet(X,\Q), \qquad \qquad n \geq 1$$
	of degree $1-n$, such that
	$$ d \circ  f_n = \sum_{i+j=n}(-1)^i f_i \cdot f_j. $$
	Let $f \colon H_c^\bullet(X,\Q) \to C^\bullet_c(X,\Q)$ be a map taking every class to a representing cocycle. Since $X$ is $i$-acyclic, every cocycle in $C_c^\bullet(X,\Q)$ is a coboundary in $C^\bullet(X,\Q)$. This means that there exists $$g \colon H_c^\bullet(X,\Q) \to C^{\bullet-1}(X,\Q),$$ such that $d \circ g = -f$. Now define for all $n \geq 1$,  
	$$ f_n(x) = f(x) \cdot \underbrace{g(x) \cdot \ldots \cdot g(x)}_{(n-1) \text{ times}}. $$
	Since the product of a compactly supported cochain with an arbitrary cochain has compact support, this product is well defined as an element of $C_c^\bullet(X,\Q)$, and one can verify that this defines an $\mathsf A_\infty$-quasi-isomorphism.
\end{para}

\begin{para}By combining Theorems \ref{firstmainthm} and \ref{secondmainthm} we obtain as a corollary that if $X$ is $i$-acyclic, then the spectral sequences of Theorem \ref{firstmainthm} converging to $H^\bullet_c(F(X,U),\Q)$ and $H^\bullet_c(D(X,U),\Q)$ degenerate immediately; there is no differential on any page of the spectral sequence. So, for example, the Totaro spectral sequence degenerates immediately for any $i$-acyclic oriented manifold. We deduce the following: \end{para}

\begin{cor}\label{corollary-intro}Let $X$ be an $i$-acyclic locally contractible paracompact Hausdorff space over $\Q$, and let $U \subset \Pi_n$ be upwards closed. There are isomorphisms
	$$H^k_c(F(X,U),\Q) \cong \bigoplus_{i+j=k}\bigoplus_{T \in J_U} \widetilde H^{i}(\nerveu{J_{U_0}^{\preceq T}},\Q) \otimes H^j_c(X^{\vert T\vert},\Q)$$
	and
	$$H^k_c(D(X,U),\Q) \cong \bigoplus_{i+j=k}\bigoplus_{T \in J_U} \widetilde H^{i}(\nerveu{J_{U}^{\preceq T}},\Q) \otimes H^j_c(X^{\vert T\vert},\Q).$$
	If $G \subset \mathbb S_n$ preserves $U$ then the isomorphisms are $G$-equivariant. 

\end{cor}

\begin{para}If we wish to extend our generalization of Arabia's theorem to arbitrary coefficients, we can no longer work with cdga's. An advantage of the framework set up in this paper is that once the formalism is in place, one can give more or less the same proof as in \S \ref{sketch} to prove a version of Theorem \ref{secondmainthm}  for cohomology with arbitrary coefficients. One might naively hope for the following statement: if $X$ is an $i$-acyclic space over the ring $R$ --- that is, $H^\bullet_c(X,R)\to H^\bullet(X,R)$ is the zero map --- then the tcdga $R\Gamma_c^\otimes(X,R)$ is quasi-isomorphic to its cohomology with identically zero differential and multiplication. However, if the tcdga $R\Gamma_c^\otimes(X,R)$ were homotopically trivial in this strong sense, then not only would the spectral sequence of Theorem \ref{firstmainthm} converging to $H^\bullet_c(F(X,U),R)$ degenerate immediately; we would also have an \emph{equivariant} isomorphism between $H^\bullet_c(F(X,U),R)$ and the direct sum of the terms on the $E_1$-page of the spectral sequence, just as in Corollary \ref{corollary-intro}. Thus we see that the above statement is false already for $X$ the real line and $R=\Z$: for the configuration space of two points we have $H^2_c(F(\R,2),\Z) \cong \Z \oplus \Z$, on which $\mathbb S_2$ acts by switching the two factors. But the $E_1$-page of the spectral sequence has $E_1^{1,1} \cong \Z$ (with the trivial representation of $\mathbb S_2$) and $E_1^{2,0} \cong \Z$ (with the sign representation). Since $\Z \oplus \Z$ is not the direct sum of the trivial and the sign representation, there is no such equivariant isomorphism in this case.
\end{para}

	\begin{para} What one needs to consider instead is the forgetful functor from twisted commutative dg algebras to what is known as \emph{commutative dg shuffle algebras}; a commutative dg shuffle algebra is essentially a twisted commutative algebra in which one has forgotten the actions of the symmetric group while remembering as much as possible of the remaining structure. Our functors $\CF$ and $\CD$ make sense also on the larger category of commutative dg shuffle algebras. Moreover, $i$-acyclicity of a space $X$ \emph{does} imply that $R\Gamma_c^\otimes(X,R)$ is quasi-isomorphic to its cohomology with identically zero multiplication and differential \emph{as a commutative dg shuffle algebra}. 
\end{para}

\begin{thm}[Second main theorem with arbitary coefficients] Let $X$ be a locally compact Hausdorff space, $R$ any ring. Suppose that $X$ is $i$-acyclic over $R$, and that $H^\bullet_c(X,R)$ is a projective $R$-module. Then the tcdga $R\Gamma_c^\otimes(X,R)$ is quasi-isomorphic as a commutative dg shuffle algebra to its cohomology with identically zero multiplication and differential. 
	
\end{thm}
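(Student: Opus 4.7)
\begin{para}
The plan is to follow the strategy sketched in \S\ref{sketch} in the rational setting and upgrade it to an $\infty$-morphism of commutative shuffle dg algebras from the cohomology $H^\bullet_c(X,R)$, viewed as a trivial shuffle algebra (zero differential, zero multiplication), to the tcdga $R\Gamma^\otimes_c(X,R)$. The shuffle-algebra formalism (weaker than tcdga) is forced by the counterexample $X = \R$, $R = \Z/2\Z$ flagged in the text, but is coarse enough that the asymmetric formula from \S\ref{sketch} still produces a valid morphism. Projectivity of $H^\bullet_c(X,R)$ lets me first split the surjection from compactly supported cocycles onto their cohomology, giving an $R$-linear section $f \colon H^\bullet_c(X,R) \to C^\bullet_c(X,R)$ whose image consists of cocycle representatives; then $i$-acyclicity says $\iota \circ f \colon H^\bullet_c(X,R) \to C^\bullet(X,R)$ (where $\iota$ is the canonical inclusion) lands in coboundaries, so by projectivity I can lift to a degree $-1$ map $g \colon H^\bullet_c(X,R) \to C^{\bullet-1}(X,R)$ satisfying $d \circ g = -\iota \circ f$.
\end{para}

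\begin{para}
The components of the desired $\infty$-morphism are then defined by
$$\phi_n(x_1,\ldots,x_n) = f(x_1)\cdot g(x_2)\cdot g(x_3)\cdots g(x_n), \qquad n\geq 1,$$
where the product is taken in the ordinary cochain algebra $C^\bullet(X,R)$. The essential point is that since the first factor $f(x_1)$ is compactly supported and cup products of compactly supported cochains with arbitrary cochains are compactly supported, the resulting cochain lies in $C^\bullet_c(X,R)$, hence in the level-$n$ piece of $R\Gamma^\otimes_c(X,R)$. The formula is deliberately asymmetric in its inputs --- $f$ always sits in the first slot --- and this asymmetry is precisely what prevents it from being $S_n$-equivariant, explaining why one must drop from tcdga's to shuffle dg algebras.
\end{para}

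\begin{para}
The main verification is the $\infty$-morphism identity, which unwinds to
$$d\,\phi_n(x_1,\ldots,x_n) = \sum_{p+q=n} \pm\,\phi_p(x_1,\ldots,x_p)\cdot\phi_q(x_{p+1},\ldots,x_{p+q}),$$
a direct Leibniz-rule calculation using $df = 0$ and $dg = -\iota f$: differentiating $\phi_n$ produces $n-1$ terms in each of which some $g(x_i)$ has become an $f(x_i)$, and associativity of the cup product regroups each term as a product $\phi_i \cdot \phi_{n-i}$ (the second factor begins with an $f$, as the formula requires). Since the linear part $\phi_1 = f$ is already a quasi-isomorphism, the standard homotopical algebra of $\infty$-morphisms of shuffle algebras promotes the whole package to a quasi-isomorphism of commutative shuffle dg algebras. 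The ``in particular'' consequences are then formal: plugging this into Theorem \ref{firstmainthm} collapses every differential of the spectral sequence and exhibits the Leray filtration as non-equivariantly split (equivariance is lost precisely because the $\phi_n$ are not $S_n$-symmetric). The main obstacle I anticipate is pinning down the exact shuffle-algebra $\infty$-morphism formalism and tracking signs in the Leibniz computation; the underlying formula for $\phi_n$ and the overall strategy are essentially forced by the sketch in \S\ref{sketch}.
\end{para}
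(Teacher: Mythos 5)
Your construction of $f$, $g$ and the maps $\phi_n(x_1,\ldots,x_n)=f(x_1)g(x_2)\cdots g(x_n)$ is exactly the first half of the paper's argument (Lemma \ref{arabialemma0}, run inside the shuffle category, applied to the ideal $R\Gamma_c^\otimes(X,R)\subset R\Gamma^\otimes(X,R)$), and the Leibniz verification you describe is correct. The gap is in the last step. Your $\phi_n$ satisfy only the $\mathsf A_\infty$-morphism relations; they do not define an $\infty$-morphism of \emph{commutative} (shuffle) algebras, i.e.\ a shuffle $\mathsf C_\infty$-morphism. A $\mathsf C_\infty$-morphism is a map of bar constructions as dg Lie coalgebras, and its components must satisfy shuffle-vanishing-type constraints which your asymmetric formula visibly violates already for $n=2$: nothing forces $f(x_1)g(x_2)$ and $\pm f(x_2)g(x_1)$ to agree. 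Passing from twisted commutative algebras to commutative shuffle algebras removes the $\mathbb S_n$-equivariance of the underlying objects, but commutativity (and the $\mathsf C_\infty$-condition on morphisms) is still a genuine constraint in the shuffle setting, so the asymmetry of $\phi_n$ is not merely a loss of equivariance. Consequently your sentence ``the standard homotopical algebra of $\infty$-morphisms of shuffle algebras promotes the whole package to a quasi-isomorphism of commutative shuffle dg algebras'' is exactly where the proof breaks: an $\mathsf A_\infty$-quasi-isomorphism between two commutative algebras yields, via bar-cobar, a zig-zag of \emph{associative} (shuffle) dg algebra quasi-isomorphisms, not of commutative ones, so what you have proved is formality of $R\Gamma_c^\otimes(X,R)$ as a shuffle dg algebra only.

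The missing ingredient is the paper's Lemma \ref{arabialemma} (repeated verbatim in the shuffle setting): use projectivity of $H^\bullet_c(X,R)$ to get a homotopy retract and transfer, by the Homotopy Transfer Theorem for shuffle $\mathsf C_\infty$-algebras, \emph{some} shuffle $\mathsf C_\infty$-structure onto the cohomology which is $\mathsf C_\infty$-quasi-isomorphic to the ideal; then invoke the uniqueness of the transferred structure up to an $\mathsf A_\infty$-isomorphism with identity linear part ($\mathsf A_\infty$-isotopy), compare with your identically zero $\mathsf A_\infty$-structure furnished by the $\phi_n$, and use the elementary observation that the only $\mathsf A_\infty$-structure isotopic to the zero structure is the zero structure. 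Only after this does one conclude that the transferred $\mathsf C_\infty$-structure is trivial, whence a zig-zag of quasi-isomorphisms of commutative shuffle dg algebras, and then the ``in particular'' consequences follow formally as you say. (A minor additional point: the statement concerns the shuffle algebra $R\Gamma_c^\otimes(X,R)$, so the multiplication used in defining $\phi_n$ must be that of $R\Gamma^\otimes(X,R)$ with $R\Gamma_c^\otimes(X,R)$ as an ideal, aritywise, rather than the singular cochain cup product; this is cosmetic but worth stating.)
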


\begin{para}
	Again we obtain as a corollary a theorem describing the compact support cohomology of configuration spaces of points on $i$-acyclic spaces:
\end{para}

\begin{cor}Let $X$ be an $i$-acyclic locally contractible paracompact Hausdorff space over the ring $R$, and let $U \subset \Pi_n$ be upwards closed. There are isomorphisms
	$$H^k_c(F(X,U),R) \cong \bigoplus_{i+j=k}\bigoplus_{T \in J_U} \widetilde H^{i}(\nerveu{J_{U_0}^{\preceq T}};  H^j_c(X^{\vert T\vert},R))$$
	and
	$$H^k_c(D(X,U),R) \cong \bigoplus_{i+j=k}\bigoplus_{T \in J_U} \widetilde H^{i}(\nerveu{J_{U}^{\preceq T}};  H^j_c(X^{\vert T\vert},R)).$$
	If $G \subset \mathbb S_n$ preserves $U$ then the left hand side has a filtration whose associated graded is $G$-equivariantly isomorphic to the right hand side. 
	\end{cor}

\begin{para}In particular, we see under these hypotheses that $H^\bullet_c(F(X,U),R)$ depends only on the graded $R$-module $H^\bullet_c(X,R)$ and on the cohomology of lower intervals in the poset $U$. In any situation where one can compute the cohomology of such lower intervals, the theorem can be used for quite explicit computations. In Section \ref{section:k-equals} of this paper we give very explicit generating series for the rational compact support cohomology of $k$-equals configuration spaces of $i$-acyclic spaces, considered as a sequence of representations of the symmetric groups $\mathbb S_n$. The calculations are expressed in terms of the algebra of symmetric functions; the result is equivalent to computing the ``character polynomials'' of the compactly supported cohomology groups of $k$-equals configuration spaces of $i$-acyclic spaces. The computations of poset cohomology used here are due to Sundaram and Wachs \cite{sundaramwachs}.
	\end{para}

\begin{acknowledgements}This paper has gradually taken shape over a few years, and comments and proddings from several different people have been useful to me, including but not limited to Alexander Berglund, Vladimir Dotsenko, Gijs Heuts, Ben Knudsen, Birgit Richter, Steffen Sagave, Phil Tosteson, and Dylan Wilson. Finally I am grateful to an anonymous referee for their careful reading and useful comments. 
\end{acknowledgements}

\section{Twisted commutative algebras and commutative shuffle algebras}

\subsection*{Twisted commutative algebras}

\begin{para}
	The notion of a twisted commutative algebra goes back a long time, at least to Barratt \cite{barratt-twistedlie} and Joyal \cite{joyalanalyticfunctors}. Twisted commutative algebras have receieved a lot of recent attention because of their role in recent work on stable representation theory by Sam, Snowden and others --- see, for example, \cite{samsnowdenstabilitypatterns} for a possible starting point. In this section we will review the definition and set up our conventions. In particular, we caution the reader of our nonstandard choice that \emph{all our twisted commutative algebras will be non-unital by default.}
\end{para}

\begin{para}\label{definition-tcdga}
	Let $\Fin_+$ be the groupoid of \emph{nonempty} finite sets and bijections. We make $\Fin_+$ into a non-unital symmetric monoidal category using the disjoint union of finite sets. A \emph{twisted commutative dg algebra} (tcdga) is a lax symmetric monoidal functor $A \colon \Fin_+ \to \mathsf{Ch}_R$.  Explicitly, for each nonempty finite set $S$ we are given a cochain complex $A(S)$, we are given functorial chain maps $A(S) \otimes A(T) \to A(S \sqcup T)$ for any pair of nonempty finite sets, and the following two diagrams commute:
		\[\begin{tikzcd}
		A(S) \otimes A(T) \otimes A(U)\arrow[d] \arrow[r] & A(S\sqcup T) \otimes A(U)\arrow[d]\\
		A(S) \otimes A(T \sqcup U) \arrow[r]& A (S \sqcup T \sqcup U),
		\end{tikzcd}\]
		and
	\[\begin{tikzcd}
		A(S) \otimes A(T) \arrow[d, "\cong"] \arrow[r] & A(S\sqcup T) \arrow[d,"\cong"]\\
		A(T) \otimes A(S) \arrow[r]& A (T \sqcup S).
		\end{tikzcd}\]
		A \emph{morphism} of twisted commutative dg algebras is a symmetric monoidal natural transformation. We say that a morphism $A \to A'$ is a \emph{quasi-isomorphism} if $A(S) \to A'(S)$ is a quasi-isomorphism for all $S$. 
\end{para}

\begin{para}
	If $A$ is a tcdga, then so is its cohomology $H^\bullet(A)$, where we set $H^\bullet(A)(S) = H^\bullet(A(S))$.  
\end{para}

\begin{para}\label{explicit-tca}
	A skeleton of the category $\mathsf{Fin}_+$ is given by the disjoint union of all symmetric groups $\mathbb S_n$, $n \geq 1$. It follows that a tcdga can also be described as a sequence of $\mathbb S_n$-representations $A(n)$ in the category of dg $R$-modules, together with $\mathbb S_n \times \mathbb S_m$-equivariant multiplication maps
	$$ A(n) \otimes A(m) \to A(n+m)$$
	which are associative, and for which the following diagram commutes:
	\[\begin{tikzcd}
	A(n) \otimes A(m) \arrow[d] \arrow[r] & A(n+m) \arrow[d]\\
	A(m) \otimes A(n) \arrow[r]& A (m+n).
	\end{tikzcd}
	\]The left vertical arrow is the map switching the two tensor factors (and taking into account the Koszul sign rule), and the right vertical arrow is given by acting with the ``box permutation'' in $\mathbb  S_{n+m}$ that moves the first $n$ elements past the last $m$ elements. We refer to $A(n)$ as the \emph{arity $n$ component} of the tcdga $A$. We will switch freely between both definitions of a tcdga. 
\end{para}

\begin{para}\label{constant-tcdga}
	Let $\Omega$ be a not necessarily unital commutative dg algebra over $R$. We can associate to $\Omega$ a ``constant'' tcdga $\underline \Omega$ by the rule that $\underline \Omega(S) = \Omega$ for every nonempty finite set $S$, any bijection $S \to T$ is mapped to the identity map $\Omega \to \Omega$, and the maps $\underline \Omega(S) \otimes \underline \Omega(T) \to \underline \Omega(S \sqcup T)$ are given by the multiplication in $\Omega$. In this way we can think of the category of tcdga's as an enlargement of the category of cdga's. Whenever we say that we consider a cdga $\Omega$ as a tcdga, this is the construction we have in mind. 
\end{para}

\begin{para}
	Later, we will also briefly consider arbitrary lax monoidal functors $\Fin_+ \to \mathsf{Ch}_R$, not just lax symmetric monoidal functors. A lax monoidal functor $A \colon \Fin_+ \to \mathsf{Ch}_R$ is called a \emph{twisted associative dg algebra}.
\end{para}

\begin{para}\label{modules-defn}
	A definition equivalent to \S \ref{definition-tcdga} is that a tcdga is a reduced left module over the  operad $\mathsf{Com}$ of non-unital commutative algebras; similarly, a twisted associative dg algebra is a reduced left module over the operad $\mathsf{Ass}$ of non-unital associative algebras. More generally, if $\EuScript P$ is a dg operad, then left modules over $\EuScript P$ are sometimes called \emph{twisted $\EuScript P$-algebras}. The terminology goes back to Barratt \cite{barratt-twistedlie}. 
\end{para}


\subsection*{Shuffle algebras}

\begin{para}In Section \ref{section:arabia} we will study the cohomology of configuration spaces of points on $i$-acyclic spaces, with coefficients in an arbitrary ground ring $R$. There will be technical complications arising from the fact that the trivial representation of $\mathbb S_n$ is not a projective $R[\mathbb S_n]$-module. For this reason we will need to work also with a notion of a twisted commutative algebra in which one has ``forgotten'' about the actions of the symmetric group --- more precisely, we will need to work with \emph{commutative shuffle dg algebras}. Commutative shuffle algebras are significantly less studied than twisted commutative algebras: shuffle algebras (not necessarily commutative) were introduced by Ronco \cite[Section 2]{ronco}, and to my knowledge, the only explicit mention of their evident commutative analogue in the literature is in \cite[\S 4.6.3]{bremnerdotsenko}. (A word of caution is that there exists several completely unrelated notions of ``shuffle algebra'' in the literature.) The reader who is only interested in working rationally may skip this subsection.
\end{para}

\begin{para}\label{monoidalstructure}Consider the functor category $\mathsf{Ch}_R^{\mathsf{Fin}_+}$, equipped with the monoidal structure given by Day convolution:
	$$ (A \otimes B)(S) = \bigoplus_{S = T \sqcup T'} A(T) \otimes B(T').$$
	It is a general fact about Day convolution that commutative monoids in this monoidal category can be canonically identified with lax symmetric monoidal functors $\Fin_+ \to \mathsf{Ch}_R$, which gives yet another reformulation of the definition of a tcdga. Similarly, not necessarily commutative monoids in this category may be identified with twisted associative dg algebras. 
\end{para}

\begin{para}\label{diagram}
	Let $\mathsf{Ord}_+$ denote the category of finite nonempty totally ordered sets and order-preserving bijections. The functor category $\mathsf{Ch}_R^{\mathsf{Ord}_+}$ can be given a symmetric monoidal structure analogous to the one on $\mathsf{Ch}_R^{\mathsf{Fin}_+}$: viz., 
	$$ (A \otimes B)(S) = \bigoplus_{S = T \sqcup T'} A(T) \otimes B(T').$$
	In this formula, $S = T \sqcup T'$ should be read as saying that $S$ is the union of two nonempty disjoint subsets $T$, $T'$, and that the total orders on $T$ and $T'$ are the ones inherited from $S$. But we do \emph{not} insist that every element in $T$ is smaller than every element of $T'$; that is, $S$ is not in any sense the coproduct of $T$ and $T'$ in the category $\mathsf{Ord}_+$. 
	\end{para}
		
	\begin{defn}\label{defn-shuffle}
	Commutative monoids in the monoidal category $\mathsf{Ch}_R^{\mathsf{Ord}_+}$ are called \emph{commutative shuffle dg algebras}. Not necessarily commutative monoids are called \emph{shuffle dg algebras}.	
	\end{defn}

	\begin{rem}
		One motivation for introducing the above monoidal product is that it makes the forgetful functor $\mathsf{Ch}_R^{\mathsf{Fin}_+} \to \mathsf{Ch}_R^{\mathsf{Ord}_+}$ strong symmetric monoidal. This implies that the forgetful functor from twisted commutative algebras to commutative shuffle algebras preserves certain algebraic properties that the forgetful functor to $\mathbf N$-graded associative algebras does not; 	for example, a presentation of a twisted commutative algebra by generators and relations will also be a presentation of its underlying commutative shuffle algebra. If we had considered the tensor product on $\mathsf{Ch}_R^{\mathsf{Ord}_+}$ given by Day convolution, then the forgetful functor would only be lax symmetric monoidal. 
	\end{rem}

	\begin{para}
		One can make the notion of a (commutative) shuffle dg algebra more explicit, like we did for tcdga's in \S \ref{explicit-tca}. A shuffle dg algebra is a collection of dg $R$-modules $A(n)$, $n \geq 1$, and for every shuffle permutation $\pi \in \mathrm{Sh}(n,m) \subset \mathbb S_{n+m}$ a composition map
		$$ \circ_\pi \colon A(n) \otimes A(m) \to A(n+m),$$
		satisfying the following associativity law: any shuffle permutation $\pi \in \mathrm{Sh}(n,m,l)$ can be uniquely written as $\pi = (\mathbf 1_n \times \sigma) \circ \rho$ where $\sigma \in \mathrm{Sh}(m,l)$ and $\rho \in \mathrm{Sh}(n,m+l)$, and also as $\pi = (\tau \times \mathbf 1_l) \circ \lambda$ with $\tau \in \mathrm{Sh}(n,m)$ and $\lambda \in \mathrm{Sh}(n+m,l)$, and there is an equality
		$$ x \circ_\rho (y \circ_\sigma z) = (x \circ_\tau y) \circ_\lambda z$$
		for any $x \in A(n)$, $y \in A(m)$, $z \in A(l)$. A shuffle dg algebra is said to be commutative if it has the following additional property: if we denote the evident bijection $\mathrm{Sh}(n,m) \to \mathrm{Sh}(m,n)$ by $\pi \mapsto \pi'$,  then we should have $x \circ_\pi y = y \circ_{\pi'} x$ for any $\pi \in \mathrm{Sh}(n,m)$, $x \in A(n)$ and $y \in A(m)$. 
	\end{para}

	\begin{para}
		There is an evident diagram of forgetful functors:
	\[\begin{tikzcd}
	(\text{twisted commutative dg algebras}) \arrow[d] \arrow[r] & (\text{twisted associative dg algebras}) \arrow[d]\\
	(\text{commutative shuffle dg algebras}) \arrow[r]& (\text{shuffle dg algebras}).
	\end{tikzcd}\]
	
\end{para}

\begin{rem}As noted in \S \ref{modules-defn}, twisted commutative algebras are the same thing as left modules over the commutative operad. Although we will not use it in the sequel, let us mention that there is an analogous description of commutative shuffle algebras, using \emph{shuffle operads.} A shuffle operad is essentially a symmetric operad in which one has forgetten the actions of the symmetric group in a minimally destructive manner; they interpolate between symmetric and nonsymmetric operads. Shuffle operads were introduced by Dotsenko and Khoroshkin \cite{dotsenkokhoroshkin} for the purposes of developing a Gr\"obner theory for operads. Let $(-)^f$ denote the forgetful functor from symmetric to shuffle operads. Then commutative shuffle dg algebras are the same thing as left modules over the shuffle operad $\mathsf{Com}^f$, and shuffle dg algebras are the same thing as left modules over $\mathsf{Ass}^f$.
\end{rem}

\section{Twisted commutative cochains}
\label{section:commutativecochains}
\subsection*{The functors $R\Gamma^\otimes$ and $R\Gamma_c^{\otimes}$}

\begin{para}
	Before proceeding further it is perhaps useful to quickly recall some of the basics of sheaf cohomology, since this is not always part of the ``standard toolbox'' for a working algebraic topologist. Fix a topological space $X$. We have left exact functors $\Gamma$ and $\Gamma_c$ from sheaves of $R$-modules on $X$ to $R$-modules, given by taking global sections and global sections with compact support, respectively. Cohomology, and cohomology with compact support, of a sheaf of $R$-modules is defined as the derived functors of $\Gamma$ and $\Gamma_c$. These derived functors can be defined using injective resolutions, but in practice they are usually computed by some other acyclic resolution. A standard choice is the Godement resolution, which has several pleasant formal properties: it is functorial, and the monoidal structure of the Godement resolution allows for the construction of functorial cup-products on the chain level. The sheaves in the Godement resolution are flabby, meaning that all restriction maps are surjective.  This implies that they are acyclic for both $\Gamma$ and $\Gamma_c$. We define $R\Gamma(X,-)$ and $R\Gamma_c(X,-)$ to be the global sections (resp.\ global sections with compact support) of the Godement resolution of a sheaf or complex of sheaves. It is somewhat more natural to consider all of the above as taking place in the derived category of sheaves of $R$-modules, i.e.\ all operations are taken on complexes of sheaves rather than on individual sheaves, and quasi-isomorphisms are formally inverted. We will also need to consider the derived tensor product of sheaves of $R$-modules, which is instead computed by a flat resolution; a sheaf is said to be flat if all its stalks are flat $R$-modules.
\end{para}

\begin{para}
	Let $X$ be a topological space, and $\EuScript F$ a complex of sheaves of $R$-modules on $X$. We define two twisted commutative dg-algebras $\Gamma^\otimes(X,\EuScript F)$ and $\Gamma^\otimes_c(X,\EuScript F)$ associated to this data: we let 
	$$ \Gamma^\otimes(X,\EuScript F)(S) = \Gamma(X,\EuScript F^{\otimes S}) \qquad \text{and} \qquad \Gamma^\otimes_c(X,\EuScript F)(S) = \Gamma_c(X,\EuScript F^{\otimes S}).$$
	The multiplication maps are the obvious ones: a section of $\EuScript F^{\otimes S}$ and a section of $\EuScript F^{\otimes T}$ can be multiplied together, to produce a section of $\EuScript F^{\otimes S \sqcup T}$. 
	
	Alternatively, we may think of this construction as first taking a complex of sheaves $\EuScript F$, and constructing a sheaf of tcdga's on $X$, given by $S \mapsto \EuScript F^{\otimes S}$ (essentially the tensor algebra on $\EuScript F$). Moreover, as the functors $\Gamma(X,-)$ and $\Gamma_c(X,-)$ are lax symmetric monoidal\footnote{Note, though, that $\Gamma_c$ is only a non-unital monoidal functor.}, the global sections (with or without compact support) of a sheaf of tcdga's is itself a tcdga. 
	\end{para}

	\begin{para}\label{commutative-cochains}
		Even though $\Gamma$ and $\Gamma_c$ are lax symmetric monoidal functors, their derived versions $R\Gamma$ and $R\Gamma_c$ are not --- if they were, $R\Gamma(X,\mathbf Z)$ would be a commutative cochain model for the space $X$, and it is well known that one cannot in general construct strictly commutative cochains unless one works over a field of characteristic zero. Nevertheless we can define derived versions of the functors $\Gamma^\otimes$ and $\Gamma_c^{\otimes}$, producing a strictly commutative tcdga. This means that the ``commutative cochain problem'' can always be solved in the larger category of \emph{twisted} dg algebras. 
	\end{para}

			\begin{lem}\label{flabbyflat}
				Let $X$ be a topological space, $R$ a ring of finite global dimension. Let $\EuScript F$ be a bounded below complex of sheaves of $R$-modules on $X$. Then $\EuScript F$ is functorially quasi-isomorphic to a bounded below complex of flat and flabby sheaves. 
			\end{lem}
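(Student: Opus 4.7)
My plan is in two stages: first, build a functorial flat-and-flabby resolution of a single sheaf; then extend to bounded below complexes by a Cartan--Eilenberg-style totalization, leveraging the finite global dimension of $R$ to keep the resolution bounded.

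For a single sheaf $\mathcal F$, the core task is to construct a functor $P$ from sheaves of $R$-modules to sheaves that are both flat and flabby, equipped with a canonical epimorphism $P(\mathcal F) \twoheadrightarrow \mathcal F$. The natural building blocks are the skyscraper-type sheaves $j_{x,\ast}R$ attached to points $x \in X$: their restriction maps are each the identity on $R$, the unique map $R \to 0$, or zero (hence flabby), and their stalks are either $R$ or $0$ (hence flat). A direct sum of such skyscrapers, indexed over sufficiently many local data $(x,s)$ with $s$ representing a germ of $\mathcal F$ at $x$, should canonically surject onto $\mathcal F$. The indexing must be chosen carefully to make the construction strictly functorial in $\mathcal F$, and one may need an additional flabbification step, or a direct-sum analogue of the Godement functor that preserves flat stalks (note that the usual Godement construction uses a \emph{product} over points, which does not obviously preserve flatness without coherence hypotheses on $R$).

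Once $P$ is in hand, iterate it to form a flat-flabby resolution $\mathcal F \to P^\bullet$, a priori possibly unbounded. The finite global dimension hypothesis on $R$ now allows truncation: every stalk of any sheaf of $R$-modules has flat dimension at most $d := \operatorname{gldim}(R)$, so the $d$-th kernel $K := \ker(P^{d-1}\to P^d)$ has flat stalks and is therefore flat as a sheaf. It is also flabby, by the standard dimension-shift lemma for flabby resolutions. Replacing $P^d$ by $K$ produces a bounded flat-and-flabby resolution of length at most $d$. For a bounded below complex $\mathcal F^\bullet$, apply the construction termwise to produce a double complex $P^{\bullet,\bullet}$ with each row of length $\leq d$, and take the total complex. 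Boundedness of $\mathcal F^\bullet$ combined with finite row length ensures the total complex is bounded below; the standard Cartan--Eilenberg spectral sequence shows it is quasi-isomorphic to $\mathcal F^\bullet$; and both flatness and flabbiness are stable under finite direct sums, so every term of the total complex retains both properties.

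I expect the most delicate step to be the construction of the functor $P$. The textbook Godement construction is naturally flabby but not obviously flat (since products of flat $R$-modules are flat only under coherence, which finite global dimension does not guarantee), whereas free covers by sheaves of the form $j_{U,!}R_U$ are flat but not flabby. Combining the two properties into a single canonical functor is where the real content of the lemma lies, and the finite-global-dimension hypothesis seems only to enter at the truncation step rather than in the construction of $P$ itself.
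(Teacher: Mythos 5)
Your stage-one functor $P$ is where the proposal breaks down, and the problem is not just delicacy: such a functor cannot exist. First, a germ $s\in\mathcal F_x$ does not determine a morphism $i_{x,\ast}R\to\mathcal F$; such a morphism is the data of a compatible family of sections of $\mathcal F$ over \emph{all} neighbourhoods of $x$ (a sub-object of $\varprojlim_{U\ni x}\mathcal F(U)$, with support conditions), not a germ, so your sum of skyscrapers carries no canonical map to $\mathcal F$ at all --- skyscrapers naturally \emph{receive} maps from $\mathcal F$, as in the Godement embedding $\mathcal F\to\prod_x i_{x,\ast}\mathcal F_x$. More decisively, many sheaves admit no epimorphism from \emph{any} flabby sheaf, flat or not: if $\phi\colon P\twoheadrightarrow\mathcal F$ with $P$ flabby, then every germ of $\mathcal F$ at every point lifts to a global section of $\mathcal F$ (lift the germ to $P_x$, represent it on a small open, extend by flabbiness, apply $\phi$); but for $X=\mathbf R$, $U=(0,1)$, $j\colon U\to X$ and $\mathcal F=j_!R_U$ one has $\Gamma(X,\mathcal F)=0$ while $\mathcal F_x\cong R$ for $x\in U$. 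So flat-and-flabby ``covers'' do not exist, and sheaves of exactly this extension-by-zero type are the ones this paper must handle. Reading your plan in the other direction, as a coresolution $\mathcal F\to P^\bullet$ (which is what your notation $\ker(P^{d-1}\to P^d)$ suggests), fares no better: a sheaf with non-flat stalks does not even embed into a flat sheaf (e.g.\ $\mathbb Z/2$ over $R=\mathbb Z$), and finite global dimension bounds the flatness of syzygies of \emph{left} (flat) resolutions, not of cosyzygies of right ones. The truncation step is also unjustified in either reading: the dimension-shift lemma for flabby sheaves says that a quotient of a flabby sheaf by a flabby subsheaf is flabby; it gives nothing about kernels of maps between flabby sheaves.

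The paper's proof never looks for a single functor doing both jobs at once (which, by the example above, is impossible); it decouples the two properties in the order in which they can be achieved. First, finite global dimension yields a functorial flat resolution of length $\leq\operatorname{gldim}(R)$ of every sheaf, hence a functorial bounded below flat replacement of any bounded below complex --- this is the only place the hypothesis on $R$ enters, and it is the analogue of your totalization step. Second, one applies the Godement coresolution to this flat complex: it is flabby and functorially quasi-isomorphic to it, and the one genuinely non-formal point --- precisely the one you flag, since the Godement sheaves are built from products of stalks and products of flat modules need not be flat without coherence --- is that the Godement resolution of a flat sheaf is again flat, which the paper quotes from \cite[VI.1, Proof of Proposition 1.3]{iversensheaves}. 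So your instinct that combining flatness with flabbiness is the real content is right, but the resolution of the difficulty is not a flat-and-flabby cover; it is flatification first, followed by a flabbification checked not to destroy flatness.
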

			
			\begin{proof}
				First of all, the fact that $R$ is of global dimension $d$ implies that every $R$-module $M$ admits a functorial free resolution of length $\leq d$: indeed, one can form a functorial free resolution in the usual way and then truncate, and the dimension hypothesis assures us that we still have a free resolution. It follows from this that every bounded below complex of $R$-modules has a functorial free resolution which is also bounded below. Globally on $X$ this shows that any bounded below complex of presheaves of $R$-modules on $X$ has a functorial bounded below resolution by representable presheaves. Representable presheaves are in particular flat, so $\EuScript F$ can be resolved (as a presheaf) in a functorial manner by a bounded below complex of flat presheaves. But sheafification is an exact functor and preserves flatness, since flatness is a condition on stalks. So we obtain such a resolution also in the category of sheaves. Now we form the Godement resolution of the resulting bounded below complex, and use that the Godement resolution of a flat sheaf is again flat \cite[VI.1, Proof of Proposition 1.3]{iversensheaves}.
			\end{proof}
		\begin{defn}\label{definition-rgamma}
			Let $X$ be a topological space, $\EuScript F$ a bounded below complex of sheaves of $R$-modules on $X$, $\mathrm{gldim}(R)<\infty$. Let $\EuScript L$ be the flat and flabby resolution of $\EuScript F$ constructed in Lemma \ref{flabbyflat}. Define
			$$ R\Gamma^\otimes(X,\EuScript F) = \Gamma^{\otimes}(X,\EuScript L) \qquad \text{and} \qquad R\Gamma^\otimes_c(X,\EuScript F) = \Gamma_c^\otimes(X,\EuScript L).$$
			This definition is not the ``right'' one, however, unless we impose some point-set hypotheses on $X$:
		\end{defn}	
		
		\begin{prop}\label{rightcohomology}
			Let $X$, $\EuScript F$ and $R$ be as in Definition \ref{definition-rgamma}. If $X$ is paracompact and Hausdorff, then $$ H^\bullet(R\Gamma^\otimes(X,\EuScript F))(S) \cong H^\bullet(X,\underbrace{\EuScript F \otimes^L \ldots \otimes^L \EuScript F}_{S \text{ factors}}).$$ If $X$ is moreover locally compact, then  \[H^\bullet( R\Gamma^\otimes_c(X,\EuScript F))(S) \cong H^\bullet_c(X,\underbrace{\EuScript F \otimes^L \ldots \otimes^L \EuScript F}_{S \text{ factors}}).\]
		\end{prop}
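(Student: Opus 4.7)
The plan is to establish both isomorphisms by analyzing the complex $\mathcal{L}^{\otimes S}$ in two steps: first showing that it represents the derived tensor power $\mathcal{F}^{\otimes^L S}$ in the derived category (via flatness), then showing that applying $\Gamma(X,-)$ or $\Gamma_c(X,-)$ computes the hypercohomology (via an acyclicity argument). Unwinding definitions, $R\Gamma^\otimes(X,\mathcal{F})(S) = \Gamma(X, \mathcal{L}^{\otimes S})$ and $R\Gamma_c^\otimes(X,\mathcal{F})(S) = \Gamma_c(X, \mathcal{L}^{\otimes S})$, so combining the two steps yields the result.

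For the first step, $\mathcal{L}$ is a bounded below complex of flat sheaves, so tensoring with $\mathcal{L}$ preserves quasi-isomorphisms between bounded below complexes of sheaves of $R$-modules. By induction on $|S|$, the complex $\mathcal{L}^{\otimes S}$ is a bounded below complex of flat sheaves quasi-isomorphic to $\mathcal{F}^{\otimes^L S}$; that is, it represents the $|S|$-fold derived tensor power in $D^+(X)$.

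For the second step, the key input is that on a paracompact Hausdorff space, every flabby sheaf is soft, and the tensor product of a soft sheaf with any sheaf of $R$-modules remains soft (standard, see e.g.\ Bredon or Iversen). Since each $\mathcal{L}^i$ is flabby and thus soft on $X$, each term of the multicomplex $\mathcal{L}^{\otimes S}$ is a tensor product whose factors are all soft, hence is itself soft. Since soft sheaves are $\Gamma$-acyclic on paracompact Hausdorff spaces, $\Gamma(X, \mathcal{L}^{\otimes S})$ computes the hypercohomology of $\mathcal{L}^{\otimes S}$, which by the first step is $H^\bullet(X, \mathcal{F}^{\otimes^L S})$.

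The compact support variant follows identically, using additionally that soft sheaves are $\Gamma_c$-acyclic on locally compact Hausdorff spaces. The main technical obstacle is precisely this acyclicity step: flabbiness is not preserved under tensor products of sheaves, so one cannot simply argue that $\mathcal{L}^{\otimes S}$ is still a complex of flabby sheaves, and must instead route through the notion of softness. It is exactly this routing that forces the paracompact Hausdorff (resp.\ locally compact Hausdorff) hypothesis into the statement of the proposition; without these point-set assumptions, neither the equivalence ``flabby $\Rightarrow$ soft'' nor the acyclicity of soft sheaves with respect to $\Gamma$ and $\Gamma_c$ is available, and $\Gamma(X, \mathcal{L}^{\otimes S})$ cannot be guaranteed to compute the desired hypercohomology.
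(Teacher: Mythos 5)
Your proposal follows essentially the same route as the paper's proof: use flatness of the resolution $\mathcal L$ to identify $\mathcal L^{\otimes S}$ with the derived tensor power, then replace flabbiness by softness to get $\Gamma$- (resp.\ $\Gamma_c$-) acyclicity of the terms of $\mathcal L^{\otimes S}$ under the stated point-set hypotheses. The one imprecision is your quoted lemma that the tensor product of a soft sheaf with an arbitrary sheaf is again soft: the standard fact (and the one the paper invokes, cf.\ Iversen or Kashiwara--Schapira) is that tensoring with a sheaf that is both \emph{flat} and ($\Phi$-)soft preserves ($\Phi$-)softness; since each term of $\mathcal L$ is flat as well as flabby, your argument goes through verbatim once you carry the flatness hypothesis along in this step.
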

			
	\begin{proof}Let $\EuScript L$ be the resolution of $\EuScript F$ of Lemma \ref{flabbyflat}. Then $\EuScript L^{\otimes n} \simeq \EuScript F \otimes^L \ldots \otimes^L \EuScript F$ ($n$  factors) for all $n$, so to deduce the result we need to know that $\EuScript L^{\otimes n}$ is a $\Gamma$-acyclic (resp.\ $\Gamma_c$-acyclic) resolution. This is not automatic: although $\EuScript L $ was a complex of flabby sheaves, the tensor product of two flabby sheaves is not necessarily flabby. However, flabby sheaves are soft, any tensor product with a flat and soft sheaf is again soft, and soft sheaves are $\Gamma$-acyclic. Similarly, if $X$ is locally compact then flabby sheaves are $c$-soft, any tensor product with a flat and $c$-soft sheaf is $c$-soft, and $c$-soft sheaves are $\Gamma_c$-acyclic. 
		\end{proof}
	\subsection*{Commutative cochains over $\Q$}
	
	\begin{para}As remarked above, the functors $R\Gamma$ and $R\Gamma_c$ are not lax symmetric monoidal: if they were, one could construct strictly commutative cochains on any space. On the other hand, strictly commutative cochains do exist over $\Q$, as constructed by Quillen \cite{quillenrationalhomotopytheory} and Sullivan \cite{sullivaninfinitesimal}. Hence one might expect that when working with sheaves of $\Q$-vector spaces, the functors $R\Gamma$ and $R\Gamma_c$ can be replaced by commutative versions. This is indeed the case, using a construction due to Navarro Aznar \cite[Section 4]{navarroaznarhodgedeligne}. \end{para}
	
	\begin{thm}[Navarro Aznar]\label{navarro-aznar-theorem}
		Let $X$ be a topological space, $C_\Q^+(X)$ the category of bounded below complexes of sheaves of $\Q$-vector spaces on $X$. There are exact lax symmetric monoidal functors 
		$$ R\Gamma^{TW}, R\Gamma_c^{TW} \colon C_\Q^+(X) \to \mathsf{Ch}^+_\Q$$
		equipped with monoidal quasi-isomorphisms of functors $R\Gamma \implies R\Gamma^{TW}$, $R\Gamma_c \implies R\Gamma^{TW}_c$. In particular, if $A$ is a sheaf of commutative dg  $\Q$-algebras on $X$, then $R\Gamma^{TW}(X,A)$ and $R\Gamma_c^{TW}(X,A)$ are themselves commutative dg algebras. 
	\end{thm}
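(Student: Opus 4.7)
The plan is to invoke Navarro Aznar's Thom--Whitney construction. Recall that Sullivan's polynomial de Rham forms $\Omega^\bullet_{PL}(\Delta^n)$ assemble into a simplicial commutative dg $\Q$-algebra, and that for any cosimplicial object $A^\bullet$ in cochain complexes (or sheaves of cochain complexes) one defines the Thom--Whitney totalization
$$\mathrm{Tot}^{TW}(A^\bullet) = \int_{[n]\in \Delta} \Omega^\bullet_{PL}(\Delta^n) \otimes A^n,$$
the key point being that when $A^\bullet$ is cosimplicial commutative, this end is itself a commutative dg $\Q$-algebra, and more generally the construction is lax symmetric monoidal in $A^\bullet$ because $\Omega_{PL}^\bullet$ converts the Eilenberg--Zilber shuffle map into a \emph{strictly} commutative product. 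There is a natural quasi-isomorphism $\mathrm{Tot}(A^\bullet) \to \mathrm{Tot}^{TW}(A^\bullet)$ from the ordinary Dold--Kan totalization; this follows from the fact that the inclusion of normalized cochains on $\Delta^n$ into $\Omega^\bullet_{PL}(\Delta^n)$ is a quasi-isomorphism of cosimplicial cochain complexes.

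The next step is to apply this to sheaves. Given a bounded below complex $\mathcal F \in C_\Q^+(X)$, form the cosimplicial Godement resolution $\mathcal G^\bullet(\mathcal F)$, which is a cosimplicial object in the category of sheaves of $\Q$-vector spaces whose associated total complex is the usual Godement flabby resolution. Sectionwise Godement is a lax symmetric monoidal functor (the coaugmentation is a map of commutative algebras in the category of cosimplicial sheaves, because it is built pointwise from stalks via $\mathcal F \mapsto \prod_{x\in X} (i_x)_\ast i_x^\ast \mathcal F$, which is lax symmetric monoidal). Apply $\Gamma(X,-)$ or $\Gamma_c(X,-)$ levelwise and then take the Thom--Whitney totalization:
$$R\Gamma^{TW}(X,\mathcal F) := \mathrm{Tot}^{TW}\bigl(\Gamma(X,\mathcal G^\bullet \mathcal F)\bigr), \qquad R\Gamma_c^{TW}(X,\mathcal F) := \mathrm{Tot}^{TW}\bigl(\Gamma_c(X,\mathcal G^\bullet \mathcal F)\bigr).$$
Because $\Gamma$, $\Gamma_c$, and Thom--Whitney totalization are all lax symmetric monoidal (in the non-unital sense for $\Gamma_c$), so is the composite. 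The natural transformations from the ordinary totalizations give the promised monoidal quasi-isomorphisms $R\Gamma \Rightarrow R\Gamma^{TW}$ and $R\Gamma_c \Rightarrow R\Gamma_c^{TW}$.

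Two checks remain. First, exactness: one must verify that a quasi-isomorphism $\mathcal F \to \mathcal F'$ of bounded below complexes induces a quasi-isomorphism of Thom--Whitney totalizations. This reduces, via a spectral sequence for the end, to the corresponding statement for ordinary Godement resolutions together with the fact that the comparison map to Thom--Whitney totalization is always a quasi-isomorphism on each filtration piece. Second, that the natural transformations $R\Gamma \Rightarrow R\Gamma^{TW}$ and $R\Gamma_c \Rightarrow R\Gamma_c^{TW}$ are monoidal: this follows from the compatibility of the Alexander--Whitney/shuffle maps with the inclusion of normalized cochains into $\Omega^\bullet_{PL}$.

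The main obstacle, and the technical heart of Navarro Aznar's original paper, is checking that the Thom--Whitney end genuinely commutes with the relevant constructions — in particular that it preserves the quasi-isomorphism type of the derived global sections and that the monoidal comparison is homotopy coherent. For the compactly supported variant one must additionally verify that Thom--Whitney totalization preserves the property that a section has compact support, which works because $\Omega^\bullet_{PL}(\Delta^n)$ is finite-dimensional in each degree, so the end is computed by a finite limit in each cohomological degree and preserves compact support sectionwise. Once these points are in place, the theorem follows directly.
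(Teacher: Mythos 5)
Your construction is essentially the one the paper sketches and attributes to Navarro Aznar: factor the derived sections functor through the cosimplicial Godement construction, replace the ordinary totalization by the Thom--Whitney end built from Sullivan's polynomial forms (whose strict commutativity is the whole point), and use that the Godement construction and $\Gamma$, $\Gamma_c$ are lax symmetric monoidal, with the comparison to the ordinary totalization supplying the quasi-isomorphism $R\Gamma \Rightarrow R\Gamma^{TW}$. The only organizational difference is the order of operations: the paper performs the Thom--Whitney totalization at the level of sheaves, obtaining a flabby resolution which is a sheaf of commutative dg algebras, and only then applies $\Gamma$ or $\Gamma_c$; you apply $\Gamma$ or $\Gamma_c$ levelwise to the cosimplicial Godement resolution and totalize afterwards. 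Both orders are viable, and in your order the compact-support worry of your last paragraph simply does not arise, since you are totalizing a cosimplicial complex of $\Q$-vector spaces and there are no supports left to preserve.

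That last paragraph does, however, rest on a false claim which you should not keep: $\Omega^{k}_{PL}(\Delta^n)$ is an \emph{infinite}-dimensional $\Q$-vector space for $n \geq 1$ (polynomial coefficients of arbitrary degree), and the end over $\Delta$ is not a finite limit in any fixed cohomological degree. So if one works in the paper's order (totalize first, then take sections), the statement ``Thom--Whitney totalization commutes with $\Gamma_c$'' cannot be justified this way; the paper instead avoids the issue by observing that the Thom--Whitney Godement resolution is a flabby sheaf of cdga's and then simply applying the lax monoidal functor $\Gamma_c$ to it. A second point where your justification is too quick, and which both you and the paper ultimately defer to Navarro Aznar, is the monoidality of the comparison map: the Whitney inclusion of normalized cochains on $\Delta^n$ into $\Omega^{\bullet}_{PL}(\Delta^n)$ is a quasi-isomorphism but is multiplicative only up to homotopy, so ``compatibility of the Alexander--Whitney/shuffle maps with the inclusion'' does not by itself make the transformation $\mathrm{Tot} \Rightarrow \mathrm{Tot}^{TW}$ monoidal on the nose; arranging this compatibility correctly is precisely the technical content of Navarro Aznar's theorem, which is why the paper states it as a citation rather than reproving it.
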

	
	\begin{proof}[Sketch of proof]Recall how to define the Godement resolution: there is a monad $\mathsf T$ on the category of sheaves on $X$ given by $p_\ast p^\ast$, where $p \colon X^\delta \to X$ is the projection from $X$ equipped with the discrete topology. With this monad, one attaches to a any sheaf $\EuScript F$ on $X$ a cosimplicial sheaf $\mathsf T(\EuScript F)$ \cite[8.6.15]{weibel}, whose totalization is the Godement resolution.  
		
	Thus the usual functor $R\Gamma$ is the composition of three functors: the cosimplicial Godement construction, the totalization functor on cosimplicial objects, and the functor $\Gamma$. Of these, the first and the last are already lax symmetric monoidal. This reduces the problem to finding a commutative version of the totalization functor on cosimplicial objects, which works over $\Q$. This is indeed what Navarro Aznar does \cite[\S\S 2--3]{navarroaznarhodgedeligne}, by constructing the so-called ``Thom--Whitney totalization'',  using Sullivan's polynomial de Rham forms. To be more precise, the usual totalization functor assigns to a cosimplicial object $A \colon \Delta \to \mathsf{Ch}_R$ the end of the bifunctor $\Delta^{\mathrm{op}} \times \Delta \to \mathsf{Ch}_R$ given by $(\alpha,\beta) \mapsto C^\bullet(\Delta^\alpha,R) \otimes A^\beta$, where $C^\bullet(\Delta^\alpha,R)$ denotes the standard cochains on the $\alpha$-simplex. The reason that totalization is monoidal but not symmetric monoidal is that $C^\bullet(\Delta^\ast,R)$ is a simplical dg algebra, but it is not commutative. Over $\Q$ we may replace  $C^\bullet(\Delta^\ast,\Q)$ with the simplical commutative dg algebra $\Omega(\Delta^\ast)$ given by Sullivan's polynomial de Rham forms \cite{sullivaninfinitesimal}, which gives a lax symmetric monoidal totalization functor equipped with a monoidal quasi-isomorphism with the usual totalization.

	Hence if $\EuScript F$ is a sheaf of commutative dg $\Q$-algebras on $X$, its Godement resolution,  defined using the Thom--Whitney totalization described above, is again a sheaf of commutative dg $\Q$-algebras, and a flabby resolution of the original complex of sheaves. 
	
	One argues in the same way for the functor $R\Gamma_c$. 
	\end{proof}

\begin{thm}
	Let $X$ be a paracompact Hausdorff space. The tcdga $R\Gamma^\otimes(X,\Q)$ is quasi-isomorphic to the cdga $R\Gamma^{TW}(X,\Q)$, considered as a ``constant'' tcdga as in \S \ref{constant-tcdga}. If $X$ is moreover locally compact then $R\Gamma^\otimes_c(X,\Q)$ is quasi-isomorphic to the cdga $R\Gamma^{TW}_c(X,\Q)$.
\end{thm}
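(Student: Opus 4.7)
The plan is to construct a zig-zag of tcdga quasi-isomorphisms through the intermediate tcdga $R\Gamma^{TW}(X, \Q_X^{\otimes \bullet})$, obtained by applying $R\Gamma^{TW}$ arity-wise to the sheaf-level tcdga $S \mapsto \Q_X^{\otimes S}$. Since $R\Gamma^{TW}$ is \emph{strictly} lax symmetric monoidal by Theorem~\ref{navarro-aznar-theorem}, this operation produces an honest tcdga, not merely an object of the homotopy category.

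First I identify this intermediate with the constant tcdga $\underline{R\Gamma^{TW}(X,\Q)}$. The sheaf $\Q_X$ is itself a sheaf of commutative $\Q$-algebras via the canonical isomorphism $\Q_X \otimes \Q_X \cong \Q_X$, so the sheaf-level tcdga $\Q_X^{\otimes \bullet}$ is canonically isomorphic to the constant sheaf-tcdga $\underline{\Q_X}$ in the sense of \S\ref{constant-tcdga} (applied to the sheaf-cdga $\Q_X$). Since any lax symmetric monoidal functor $F$ carries $\underline{\mathcal A}$ to $\underline{F(\mathcal A)}$, applying $R\Gamma^{TW}$ to this isomorphism yields $R\Gamma^{TW}(X, \Q_X^{\otimes \bullet}) \cong \underline{R\Gamma^{TW}(X,\Q)}$.

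Next I compare $R\Gamma^\otimes(X,\Q)$ with the intermediate through the further tcdga $R\Gamma^{TW}(X, \mathcal L^{\otimes \bullet})$, where $\mathcal L$ is the flat flabby resolution of $\Q_X$ from Lemma~\ref{flabbyflat}. Flatness of $\mathcal L$ ensures that $\Q_X^{\otimes \bullet} \to \mathcal L^{\otimes \bullet}$ is an arity-wise quasi-iso of sheaf-level tcdga's, so exactness of $R\Gamma^{TW}$ produces a tcdga quasi-iso $R\Gamma^{TW}(X, \Q_X^{\otimes \bullet}) \xrightarrow{\simeq} R\Gamma^{TW}(X, \mathcal L^{\otimes \bullet})$. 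For the other leg, Proposition~\ref{rightcohomology} says each $\mathcal L^{\otimes S}$ is $\Gamma$-acyclic, so the augmentation into the Thom--Whitney Godement resolution is an arity-wise quasi-iso $\Gamma(X, \mathcal L^{\otimes S}) \xrightarrow{\simeq} R\Gamma^{TW}(X, \mathcal L^{\otimes S})$. Monoidality of this augmentation (built into the construction of Theorem~\ref{navarro-aznar-theorem}) assembles these into a tcdga quasi-iso $R\Gamma^\otimes(X,\Q) = \Gamma(X, \mathcal L^{\otimes \bullet}) \xrightarrow{\simeq} R\Gamma^{TW}(X, \mathcal L^{\otimes \bullet})$. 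Concatenation yields the required zig-zag. The compactly supported case is entirely analogous, replacing $R\Gamma^{TW}$ and $\Gamma$ by $R\Gamma_c^{TW}$ and $\Gamma_c$, and using $\Gamma_c$-acyclicity of $\mathcal L^{\otimes S}$ on locally compact $X$.

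The main point requiring care is to verify that each comparison really is a morphism of tcdga's and not merely an arity-wise quasi-isomorphism. This reduces to the monoidality of the natural transformation $\Gamma \Rightarrow R\Gamma^{TW}$ and of $R\Gamma^{TW}$ itself, both of which are built into Navarro Aznar's construction.
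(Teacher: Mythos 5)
Your argument is correct, and all the inputs you need are genuinely available in the paper: exactness and lax symmetric monoidality of $R\Gamma^{TW}$ and $R\Gamma_c^{TW}$ (Theorem \ref{navarro-aznar-theorem}), softness (resp.\ $c$-softness) of the complexes $\mathcal L^{\otimes S}$ established in the proof of Proposition \ref{rightcohomology}, and the fact that a monoidal natural transformation between lax symmetric monoidal functors carries commutative monoids to morphisms of commutative monoids. Your route differs from the paper's, which is shorter and more direct: instead of applying $R\Gamma^{TW}$ arity-wise to the sheaf-level tcdga's $\Q_X^{\otimes\bullet}$ and $\mathcal L^{\otimes\bullet}$, the paper computes $R\Gamma^\otimes(X,\Q)$ using the Thom--Whitney Godement resolution $\mathrm{Gode}^{TW}(\Q)$, which is a flat, flabby sheaf of cdga's, and observes that its multiplication gives $\mathbb S_n$-equivariant quasi-isomorphisms $\mathrm{Gode}^{TW}(\Q)^{\otimes n}\to\mathrm{Gode}^{TW}(\Q)$ of complexes of soft sheaves; applying $\Gamma$ (resp.\ $\Gamma_c$) then produces a single one-arrow tcdga quasi-isomorphism onto the constant tcdga $\underline{R\Gamma^{TW}(X,\Q)}$. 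What your zig-zag buys is that you work with the specific resolution $\mathcal L$ occurring in Definition \ref{definition-rgamma}, so you avoid the implicit step in the paper's proof that $R\Gamma^\otimes(X,\Q)$ ``may be computed'' with a different flat flabby resolution, and you never use a multiplication on the resolution itself, only the unit isomorphism $\Q_X^{\otimes S}\cong\Q_X$; what the paper's route buys is the absence of any zig-zag and no appeal to monoidality of a comparison transformation beyond the lax symmetric monoidality of $\Gamma$ and $\Gamma_c$. The one point you should spell out is your parenthetical claim that the augmentation $\Gamma\Rightarrow R\Gamma^{TW}$ is monoidal: Theorem \ref{navarro-aznar-theorem} as stated only supplies a monoidal quasi-isomorphism $R\Gamma\Rightarrow R\Gamma^{TW}$, so you should either compose it with the standard monoidal augmentation $\Gamma\Rightarrow R\Gamma$, or argue directly with the coaugmentation $\mathcal F\to\mathrm{Gode}^{TW}(\mathcal F)$ of the Thom--Whitney Godement resolution and the lax symmetric monoidality of $\Gamma$; either fix is routine and at the level of rigor of the paper, so this is a presentational caveat rather than a gap.
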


\begin{proof}
	Let $\mathrm{Gode}^{TW}(\Q)$ be the Godement resolution of the sheaf $\Q$ on $X$, defined using Navarro Aznar's Thom--Whitney totalization, as in the proof of Theorem \ref{navarro-aznar-theorem}. Note that the lax symmetric monoidal structure of the functor $\mathrm{Gode}^{TW}$ defines for any $n$ an $\mathbb S_n$-equivariant quasi-isomorphism $$\mathrm{Gode}^{TW}(\Q)^{\otimes n} \stackrel{\sim}\to \mathrm{Gode}^{TW}(\Q).$$ Now we may compute the tcdga $R\Gamma^\otimes (X,\Q)$ using the resolution $\mathrm{Gode}^{TW}(\Q)$, so that for $S \in \mathrm{ob}\,\, \mathsf{Fin}_+$ we have 
	$$ R\Gamma^\otimes(X,\Q)(S) \simeq \Gamma(X,\mathrm{Gode}^{TW}(\Q)^{\otimes S}) \stackrel \sim \to \Gamma(X,\mathrm{Gode}^{TW}(\Q)) \simeq R\Gamma^{TW}(X,\Q) $$
	where the quasi-isomorphism above is induced by the symmetric monoidal structure. 
\end{proof}

\begin{lem}\label{excision}
	Let $X$ be a compact Hausdorff space, $A \subset X$ a closed subset. There is a natural quasi-isomorphism $R\Gamma_c^{TW}(X \setminus A,\Q) \stackrel \sim \to  \mathrm{cone}(R\Gamma^{TW}(X,\Q) \to R\Gamma^{TW}(A,\Q)).$ 
\end{lem}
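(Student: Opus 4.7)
\begin{para}
The plan is to descend the standard excision distinguished triangle in the derived category of sheaves to an actual short exact sequence of cochain complexes, exploiting the flabbiness of Navarro Aznar's resolution. Let $j \colon X \setminus A \hookrightarrow X$ and $i \colon A \hookrightarrow X$ denote the open and closed inclusions, and let $\mathcal L = \mathrm{Gode}^{TW}(\Q_X)$ denote the Thom--Whitney Godement resolution of the constant sheaf---a bounded below complex of flabby sheaves of commutative dg $\Q$-algebras, quasi-isomorphic to $\Q_X$. Applying $j_!j^*$ and $i_*i^*$ to $\mathcal L$ produces the standard short exact sequence of complexes of sheaves
$$ 0 \to j_! j^* \mathcal L \to \mathcal L \to i_* i^* \mathcal L \to 0 $$
on $X$.
\end{para}

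\begin{para}
Next I would apply $\Gamma(X,-)$. The middle term is flabby by construction, and $i_* i^* \mathcal L$ is flabby because $i_*$ preserves flabbiness; consequently, the sequence above remains short exact after taking global sections. Compactness of $X$ identifies $\Gamma(X, j_! j^* \mathcal L) = \Gamma_c(X\setminus A, j^* \mathcal L)$, while $\Gamma(X, i_* i^* \mathcal L) = \Gamma(A, i^* \mathcal L) = \Gamma_c(A, i^* \mathcal L)$ by compactness of $A$. By the naturality of Navarro Aznar's construction under pullback along continuous maps, $j^* \mathcal L$ and $i^* \mathcal L$ are Thom--Whitney-Godement flabby resolutions of $\Q_{X \setminus A}$ and $\Q_A$ respectively, so these three global section complexes compute $R\Gamma_c^{TW}(X \setminus A, \Q)$, $R\Gamma^{TW}(X, \Q)$, and $R\Gamma^{TW}(A, \Q)$.
\end{para}

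\begin{para}
Combining these identifications yields a short exact sequence
$$ 0 \to R\Gamma_c^{TW}(X\setminus A, \Q) \to R\Gamma^{TW}(X, \Q) \to R\Gamma^{TW}(A, \Q) \to 0, $$
and the claimed natural quasi-isomorphism of the first term with the (shifted) mapping cone of the right-hand map follows formally. The main bookkeeping point---the only thing that is not purely formal---is verifying that $j^*\mathcal L$ and $i^*\mathcal L$ really compute $R\Gamma^{TW}$ of the respective spaces. This reduces to the standard functoriality of the cosimplicial Godement construction (the monad $p_*p^*$ with $p \colon X^\delta \to X$ evidently commutes with pullback along $i$ and $j$, since pullback of a discrete space is again discrete) together with the compatibility of the Thom--Whitney totalization with these pullbacks, both of which are part of the setup of Theorem \ref{navarro-aznar-theorem}.
\end{para}
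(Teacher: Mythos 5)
Your overall architecture is close to the paper's: both arguments come down to the excision short exact sequence of sheaves on $X$ together with the flabbiness built into the Thom--Whitney Godement construction. But the step you yourself single out as the only non-formal one contains a genuine error. The Godement monad $p_\ast p^\ast$ does \emph{not} commute with pullback along the closed inclusion $i\colon A\to X$: the base change map $i^\ast p_{X\ast}p_X^\ast\mathcal F\to p_{A\ast}p_A^\ast(i^\ast\mathcal F)$ is not an isomorphism (already for $X=[0,1]$, $A=\{0\}$, $\mathcal F=\Q$ the left-hand side is the stalk $\colim_{V\ni 0}\prod_{x\in V}\Q$ while the right-hand side is $\Q$). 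Consequently $i^\ast\mathcal L$ is \emph{not} the Thom--Whitney Godement resolution of $\Q_A$, and the identification $\Gamma(A,i^\ast\mathcal L)=R\Gamma^{TW}(A,\Q)$ --- which you need in order to produce the map to the cone appearing in the statement --- is not established. It is repairable ($i^\ast\mathcal L$ is still a complex of soft sheaves resolving $\Q_A$, so $\Gamma(A,i^\ast\mathcal L)$ computes $H^\bullet(A,\Q)$ and can be compared with $R\Gamma^{TW}(A,\Q)$ by a natural zig-zag), but that is an additional argument, not the ``evident'' commutation you invoke. The on-the-nose compatibility goes the other way, with pushforward: $\mathrm{Gode}_X(i_\ast\mathcal G)=i_\ast\,\mathrm{Gode}_A(\mathcal G)$ for a closed inclusion. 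This is why the paper does not restrict a resolution of $\Q_X$, but instead applies the exact functor $R\Gamma^{TW}(X,-)$ of Theorem \ref{navarro-aznar-theorem} to the sequence $0\to j_!\Q\to\Q\to i_\ast\Q\to 0$ and then identifies $R\Gamma^{TW}(X,i_\ast\Q)=R\Gamma^{TW}(A,\Q)$ and $R\Gamma^{TW}(X,j_!\Q)=R\Gamma_c^{TW}(X\setminus A,\Q)$, the latter using compactness of $X$. Your treatment of the open inclusion, by contrast, is fine: Godement and the Thom--Whitney totalization do commute with restriction to an open subset, so $j^\ast\mathcal L$ is literally the resolution defining $R\Gamma^{TW}_c(X\setminus A,\Q)$, and $\Gamma(X,j_!j^\ast\mathcal L)=\Gamma_c(X\setminus A,j^\ast\mathcal L)$ by compactness.

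A secondary imprecision: exactness of the global-sections sequence does not follow from flabbiness of the middle and right-hand terms, and your claim that $i_\ast i^\ast\mathcal L$ is flabby presupposes that $i^\ast\mathcal L$ is flabby, which you have not shown. What you should say is either that the kernel $j_!j^\ast\mathcal L$ is a complex of soft sheaves on the compact space $X$ (restriction of a soft sheaf to an open subset is $c$-soft, and extension by zero preserves $c$-softness), hence $\Gamma$-acyclic, or that since $A$ is closed in the paracompact space $X$ every section of $i^\ast\mathcal L$ over $A$ is the restriction of a section over an open neighbourhood of $A$, which then extends to all of $X$ by flabbiness of $\mathcal L$. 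Either repair is routine, but as written both the surjectivity step and the key identification over $A$ are unjustified.
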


\begin{proof}
	Let $j \colon (X \setminus A) \to X$ and $i \colon A \to X$ be the inclusions. We get a short exact sequence 
	$$ 0 \to j_! \Q \to \Q \to i_\ast \Q \to 0$$
	of sheaves on $X$. Applying the functor $R\Gamma^{TW}(X,-)$ 
	gives the result, since $R\Gamma_c^{TW}(X \setminus A,\Q) = R\Gamma^{TW}(X,j_!\Q)$ (here we use $X$ compact) and $R\Gamma^{TW}(A,\Q) = R\Gamma^{TW}(X,i_\ast\Q)$.
\end{proof}

\begin{para}
	We claim that the functor $R\Gamma^{TW}(-,\Q)$ is a cochain theory on the category of CW complexes in the sense of Mandell \cite{mandellcochain}. Indeed, the functor $R\Gamma$ is homotopy invariant \cite[IV.1]{iversensheaves}, hence a weak homotopy invariant of CW complexes by Whitehead's theorem. The wedge axiom is clear. It is well known that in the presence of the wedge axiom/direct limit axiom, it is enough to verify exactness/excision for \emph{finite} CW complexes. But if $(X,A)$ is a pair of finite CW complexes, then exactness and excision is immediate from the preceding lemma.
\end{para}

\begin{thm}\label{APL}Let $X$ be a CW complex. The cdga $R\Gamma^{TW}(X,\Q)$ is quasi-isomorphic to Sullivan's cdga $A_{PL}(X)$. If $X$ is the complement of a subcomplex in a finite CW complex then $R\Gamma_c^{TW}(X,\Q)$ is quasi-isomorphic to the augmentation ideal in the algebra $A_{PL}(X^\ast)$, where $X^\ast$ denotes the one-point compactification.
\end{thm}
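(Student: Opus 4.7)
\begin{para}
The plan is to leverage the preceding paragraph, in which $R\Gamma^{TW}(-,\Q)$ was shown to satisfy the axioms of a cdga-valued cochain theory on CW complexes in the sense of Mandell \cite{mandellcochain}: weak homotopy invariance, the wedge/direct limit axiom, and exactness/excision (the last axiom being reduced to finite CW pairs, for which it follows from Lemma \ref{excision}). Mandell's uniqueness theorem then asserts that any two cochain theories that agree on a point are naturally quasi-isomorphic. Over $\Q$, cdga's form a Quillen-equivalent model for $\mathbb E_\infty$-algebras, and Sullivan's $A_{PL}(-)$ is itself such a cochain theory with $A_{PL}(\mathrm{pt}) \simeq \Q$. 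The first part of the theorem therefore follows immediately by applying Mandell's theorem to compare $R\Gamma^{TW}(-,\Q)$ with $A_{PL}(-)$.
\end{para}

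\begin{para}
For the second part, I would apply Lemma \ref{excision} to the compact Hausdorff pair $(X^\ast,\{\infty\})$: since $X$ is the complement of a subcomplex in a finite CW complex, $X^\ast$ is the quotient of the ambient finite complex by the subcomplex being removed, hence itself a finite CW complex, and $X^\ast \setminus \{\infty\} = X$. The lemma yields
\[ R\Gamma_c^{TW}(X,\Q) \stackrel{\sim}{\to} \mathrm{cone}\bigl(R\Gamma^{TW}(X^\ast,\Q) \to R\Gamma^{TW}(\{\infty\},\Q)\bigr). \]
Substituting the first part of the theorem, the right-hand side becomes $\mathrm{cone}(A_{PL}(X^\ast) \to \Q)$, with the map being the augmentation induced by $\infty \hookrightarrow X^\ast$. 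Since this augmentation is surjective, its homotopy fiber (which is what the cone in Lemma \ref{excision} computes, as forced by the long exact sequence for compactly supported cohomology) is quasi-isomorphic to its kernel --- precisely the augmentation ideal of $A_{PL}(X^\ast)$.
\end{para}

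\begin{para}
The main nontrivial input is Mandell's uniqueness theorem, which is invoked as a black box; once the axioms of a cochain theory are in place (as was already done before the statement), the first part is formal. The only subtlety worth confirming along the way is the convention for ``cone'' in Lemma \ref{excision}: it must be read as the homotopy fiber of $R\Gamma^{TW}(X^\ast) \to R\Gamma^{TW}(\mathrm{pt})$, so that for a surjection one recovers the kernel rather than its shift, giving the identification with the augmentation ideal without a spurious suspension.
\end{para}
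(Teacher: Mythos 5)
Your proposal is correct and follows essentially the same route as the paper: the first claim is Mandell's uniqueness theorem for cdga-valued cochain theories (the axioms having been verified in the preceding paragraph), and the second claim follows by applying Lemma \ref{excision} to the compact pair $(X^\ast,\{\infty\})$ together with the first claim. Your explicit identification of the ``cone'' as the homotopy fiber of the surjective augmentation, hence its kernel, correctly fills in the detail the paper leaves implicit.
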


\begin{proof}Both functors $A_{PL}$ and $R\Gamma^{TW}$ are cochain theories. Then the first claim follows from Mandell's uniqueness theorem \cite[Corollary, p. 550]{mandellcochain}, which says in particular that there is a unique cochain theory taking values in commutative dg $\Q$-algebras, up to quasi-isomorphism. The second claim follows from the first and Lemma \ref{excision}.
\end{proof}

%

\begin{cor}\label{constant-APL}Let $X$ be a CW complex. The tcdga $R\Gamma^{\otimes}(X,\Q)$ is quasi-isomorphic to the constant tcdga associated to a cdga model for the cochains $C^\bullet(X,\Q)$. If $X$ is the complement of a subcomplex in a finite CW complex then $R\Gamma^{\otimes}_c(X,\Q)$ is quasi-isomorphic to a cdga model for the compactly supported cochains $C^\bullet_c(X,\Q)$.
\end{cor}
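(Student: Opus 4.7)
The plan is to splice together the quasi-isomorphisms already established earlier in this section; all the real work has been done. First, I would verify the point-set prerequisites: every CW complex is paracompact Hausdorff, and if $X$ is moreover the complement of a subcomplex $A$ in a finite CW complex $Y$, then $X$ is additionally locally compact, while the one-point compactification $X^\ast = Y/A$ inherits a finite CW structure to which $A_{PL}$ applies directly.

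For the first claim, the theorem immediately preceding Lemma \ref{excision} gives a quasi-isomorphism of tcdga's
$$R\Gamma^{\otimes}(X,\Q) \simeq \underline{R\Gamma^{TW}(X,\Q)},$$
where the underline denotes the constant tcdga construction of \S \ref{constant-tcdga}. By Theorem \ref{APL}, $R\Gamma^{TW}(X,\Q)$ is quasi-isomorphic as a cdga to Sullivan's $A_{PL}(X)$, which is by definition a cdga model for $C^\bullet(X,\Q)$. Since the functor $\underline{(-)}$ from cdga's to tcdga's manifestly preserves quasi-isomorphisms (it does so levelwise, by construction), splicing these two zigzags yields the first claim.

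For the second claim I proceed analogously. The locally compact case of the same theorem gives $R\Gamma^{\otimes}_c(X,\Q) \simeq \underline{R\Gamma^{TW}_c(X,\Q)}$, and Theorem \ref{APL} identifies $R\Gamma^{TW}_c(X,\Q)$ up to quasi-isomorphism with the augmentation ideal of $A_{PL}(X^\ast)$. Bearing in mind our standing convention that tcdga's are non-unital (see \S \ref{definition-tcdga}), this augmentation ideal is a (non-unital) cdga model for $C^\bullet_c(X,\Q)$, and splicing again yields the result.

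I do not anticipate any real obstacle in this argument. The substantive content is already packaged in Theorem \ref{APL} and in the preceding theorem comparing $R\Gamma^\otimes$ with the constant tcdga on $R\Gamma^{TW}$; the present statement is merely the composition of these comparisons with the observation that $\underline{(-)}$ is a homotopically well-behaved functor. The only minor bookkeeping point is to make sure the unital/non-unital conventions match on both sides of the compact-support statement, which is why the augmentation ideal — rather than $A_{PL}(X^\ast)$ itself — appears.
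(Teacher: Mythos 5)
Your proposal is correct and follows essentially the same route as the paper: the corollary is just the splice of the theorem identifying $R\Gamma^{\otimes}(X,\Q)$ (resp.\ $R\Gamma^{\otimes}_c(X,\Q)$) with the constant tcdga on $R\Gamma^{TW}(X,\Q)$ (resp.\ $R\Gamma^{TW}_c(X,\Q)$) with Theorem \ref{APL}, plus the observation that $A_{PL}(X)$ and the augmentation ideal of $A_{PL}(X^\ast)$ are cdga models for $C^\bullet(X,\Q)$ and $C^\bullet_c(X,\Q)$. Your extra bookkeeping (point-set hypotheses, $\underline{(-)}$ preserving quasi-isomorphisms, the non-unital convention) is implicit in the paper's one-line proof and is fine.
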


\begin{proof}
	Indeed, $A_{PL}(X)$ is a cdga model for $C^\bullet(X,\Q)$, and the augmentation ideal in $A_{PL}(X^\ast)$ is a cdga model for $C^\bullet_c(X,\Q)$.
\end{proof}

\subsection*{The relationship with $\mathbb E_\infty$-algebras} 

\begin{para}There is a close relationship between twisted commutative dg algebras and $\mathbb E_\infty$-algebras. We briefly state the main results here; these are principally due to Sagave--Schlichtkrull \cite{sagaveschlichtkrull} (who only treated $\mathbb E_\infty$-algebras in the category of spaces, see Richter--Shipley \cite{richtershipley} for the analogous results for cochain complexes) and Pavlov--Scholbach \cite{pavlovscholbach}. The results of this section will not be used in the sequel and are included only to put our constructions into a broader context. We will first state the results in the \emph{non-unital setting}, since the statements are cleaner in this case, and the algebras of interest to us are non-unital. 
\end{para}

\begin{defn}
	Let $\I_+$ be the category of nonempty finite sets and injections. The category $\I_+$ is a (non-unital) symmetric monoidal category under disjoint union. A \emph{dg $\I_+$-algebra} is a lax symmetric monoidal functor $\I_+ \to \mathsf{Ch}_R$. 
\end{defn}

\begin{para}
	Every dg $\I_+$-algebra has an underlying tcdga, given by restriction along the functor $\mathsf{Fin}_+ \to \I_+$. The forgetful functor from dg $\I_+$-algebras to twisted commutative dg algebras has a left adjoint given by the left Kan extension. 
\end{para}

\begin{para}
	The following theorem can be deduced as a very special case of general results of Pavlov--Scholbach \cite{pavlovscholbach}, although they do not state it in the precise form we need it. Indeed their goal is to construct a model structure on operadic algebras in abstract symmetric spectra with respect to the \emph{stable} model structure on spectra, whereas Theorem \ref{ps} concerns the \emph{unstable} model structure, from which the stable model structure is obtained by localization. But all the properties that they use of the unstable model structure are first verified for the stable model structure. Moreover, they consider throughout \emph{unital} objects, which in this context means that they consider the categories $\mathsf{Ch}_R^{\mathsf{Fin}}$ and $\mathsf{Ch}_R^{\mathsf{I}}$, where the empty set is included in the domain category. This forces them to work with a ``{positive}'' model structure, in which a map $A \to B$ is a weak equivalence (fibration) if $A(S) \to B(S)$ is a weak equivalence (fibration) for \emph{nonempty} $S$. The categories $\mathsf{Ch}_R^{\mathsf{Fin}_+}$ and  $\mathsf{Ch}_R^{\mathsf{I}_+}$ are Quillen equivalent as symmetric monoidal non-unital model categories to $\mathsf{Ch}_R^{\mathsf{Fin}}$ and  $\mathsf{Ch}_R^{\mathsf{I}}$, respectively, with their positive model structures, and this implies with only a little bit of work also a Quillen equivalence between categories of $P$-algebras in the respective categories. 
\end{para}

\begin{thm}
	\label{ps}
	The category of twisted commutative dg algebras over a ring $R$ and the category of dg $\I_+$-algebras both admit model structures in which the weak equivalences are the quasi-isomorphisms and the fibrations are the degreewise surjections. In particular, the forgetful functor from dg $\I_+$-algebras to tcdga's is a right Quillen functor. More generally, if $P$ is any operad with $P(0)=0$, then the categories of $P$-algebras in $\mathsf{Ch}_R^{\mathsf{Fin}_+}$ and in $\mathsf{Ch}_R^{\mathsf{I}_+}$ both admit such model structures.
\end{thm}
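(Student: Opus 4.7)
The plan is to establish both model structures by transfer along the free-forgetful adjunction for $P$-algebras, starting from the projective (objectwise) model structures on the underlying diagram categories $\mathsf{Ch}_R^{\mathsf{Fin}_+}$ and $\mathsf{Ch}_R^{\mathsf{I}_+}$. First I would check that the latter exist and are symmetric monoidal with respect to the Day convolution of \S\ref{monoidalstructure}; this follows from the standard combinatorial recipe for projective model structures on diagram categories together with Day's compatibility result between the projective structure and Day convolution.

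The next step is the Kan--Quillen transfer principle. The forgetful functor from $P$-algebras to the ambient diagram category creates filtered colimits (since $P$ is finitary) and reflects isomorphisms, so the transfer reduces to showing that relative cell complexes built from free $P$-algebra maps on generating trivial cofibrations of the underlying category are weak equivalences. A filtration-by-arity argument further reduces this to analyzing, for each $n \geq 1$, pushouts whose associated graded pieces have the shape $P(n) \otimes_{\mathbb S_n} Q^n(j)$, where $Q^n(j)$ denotes the $n$-fold iterated pushout-product of a generating trivial cofibration $j$ with itself.

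The main obstacle --- and precisely what breaks the analogous transfer for the commutative operad in plain $\mathsf{Ch}_R$ over a ring of positive or mixed characteristic --- is that the symmetric-power functor $X \mapsto X^{\otimes n}/\mathbb S_n$ in general fails to preserve weak equivalences unless the $\mathbb S_n$-action on $X^{\otimes n}$ is sufficiently free. The key input that rescues the situation here is that the Day convolution product on $\mathsf{Ch}_R^{\mathsf{Fin}_+}$ bakes the induction from $\mathbb S_p \times \mathbb S_q$ up to $\mathbb S_{p+q}$ directly into the tensor product, so that iterated tensor powers of any objectwise-cofibrant functor are automatically $\mathbb S_n$-projective at each positive arity; the same holds for $\mathsf{Ch}_R^{\mathsf{I}_+}$. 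Consequently $P(n) \otimes_{\mathbb S_n} Q^n(j)$ computes the correct homotopy coinvariants and inherits the acyclicity of $Q^n(j)$. The hypothesis $P(0)=0$ is essential in order to cut off the nullary piece of the free $P$-algebra monad; otherwise one is forced into a ``positive'' model structure to avoid the non-cofibrancy of the unit, which is precisely why Pavlov--Scholbach work positively throughout their paper even though the argument in our unstable non-unital situation does not require it.

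Once both transferred model structures are in place, the claim that the forgetful functor from dg $\mathsf{I}_+$-algebras to tcdga's is right Quillen is immediate: restriction along the inclusion $\mathsf{Fin}_+ \hookrightarrow \mathsf{I}_+$ manifestly preserves objectwise surjections and objectwise quasi-isomorphisms, with the required left adjoint furnished by left Kan extension.
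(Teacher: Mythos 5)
Your proposal is correct and takes essentially the same approach as the paper: both transfer the projective model structure on $\mathsf{Ch}_R^{\mathsf{Fin}_+}$ (resp.\ $\mathsf{Ch}_R^{\I_+}$) along the free--forgetful adjunction, with the decisive input being that Day-convolution tensor powers over \emph{nonempty} finite sets carry free $\mathbb S_n$-actions (the blocks of a decomposition are nonempty, hence pairwise distinct), so the symmetric coinvariants appearing in the analysis are homotopically harmless over an arbitrary ring, and $P(0)=0$ is exactly what removes the nullary obstruction. The only differences are organizational: since the generating trivial cofibrations in $\mathsf{Ch}_R$ have source $0$, the paper bypasses the general pushout-product filtration and identifies the relevant pushout directly as the coproduct $A \oplus \bigoplus_{n\geq 1} O_A(n)\otimes_{\mathbb S_n} N^{\otimes n}$ via the Getzler--Jones enveloping operad $O_A$, and correspondingly your filtration's graded pieces should involve $O_A(n)$ rather than $P(n)$ itself --- a harmless slip, as the needed freeness lives on the $Q^n(j)$ factor.
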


\begin{rem}Since both $\mathsf{Ch}_R^{\mathsf{Fin}_+}$ and $\mathsf{Ch}_R^{\mathsf{I}_+}$ are \emph{non-unital} symmetric monoidal categories, the notion of a $P$-algebra in these categories does not even make sense unless $P(0)=0$. Indeed, a $P$-algebra is defined by structure maps $P(n) \otimes A^{\otimes n} \to A$, and in a non-unital monoidal category the tensor power $A^{\otimes 0}$ is undefined. \end{rem}


\begin{proof}[Sketch of proof of Theorem \ref{ps}]Let $\EuScript C$ denote either of the two categories $\mathsf{Ch}_R^{\mathsf{Fin}_+}$ or $\mathsf{Ch}_R^{\mathsf{I}_+}$. The strategy is to lift the projective model structure on $\EuScript C$ along the forgetful-free adjunction $$U \colon \mathrm{Alg}(P) \leftrightarrows \EuScript C \colon F_P.$$ The nontrivial property that needs to be checked in order for the model structure to lift is that if $M \to N$ is a trivial cofibration in $\EuScript C$, $A$ is a $P$-algebra, and $F_P(M) \to A$ is any $P$-algebra morphism  then the pushout
	$$ A \to A \amalg_{F_P(M)} F_P(N)$$
is a trivial cofibration. We may in fact take $M \to N$ to be a generating trivial cofibration, so $0=M=F_P(M)$ and we are computing a coproduct. Now for any $P$-algebra $A$ in $\EuScript C$ there is an operad $O_A$ in $\EuScript C$ such that $O_A$-algebras are equivalent to $P$-algebras with a morphism from $A$ \cite[Lemma 1.18]{getzlerjones}. The arity $0$ component of $O_A$ is isomorphic to $A$, since it is the initial object in the category of $O_A$-algebras. The free $O_A$-algebra on an object $N$ of $\EuScript C$ is the pushout $A \amalg F_P(N)$. Now
$$ F_{O_A}(N) = A \oplus \bigoplus_{n \geq 1} O_A(n) \otimes_{\mathbb S_n} N^{\otimes n}$$
and we claim that $ O_A(n) \otimes_{\mathbb S_n} N^{\otimes n}$ is acyclic for all $n \geq 1$. Indeed, 
$$ N^{\otimes n}(k) =\!\! \bigoplus_{\{1,\ldots,k\} = S_1 \sqcup \ldots \sqcup S_n}\!\! N(S_1) \otimes \ldots \otimes N(S_n) = R[\mathbb S_n] \otimes \!\!\bigoplus_{\substack{\{1,\ldots,k\} = S_1 \sqcup \ldots \sqcup S_n\\ \min(S_1) < \ldots < \min(S_n)}}\!\! N(S_1) \otimes \ldots \otimes N(S_n)$$ 
and each $N(S_i)$ is an acyclic complex of free $R$-modules. Note that in the last step we used crucially that $N(0)=0$, to ensure that none of the subsets $S_i$ were empty. 
\end{proof}

\begin{para}Theorem \ref{ps} is perhaps surprising at first, since $R$ is a completely arbitrary ring. Hinich \cite{hinichmodelstructure} and Harper \cite{harpermodelstructure} proved analogous results for $P$-algebras in $\mathsf{Ch}_R$ and in $\mathsf{Ch}_R^{\mathsf{Fin}}$ (i.e. left $P$-modules) only when $R$ contains $\Q$, and without this assumption there is in general no such model structure \cite[Example 3.7]{goerssschemmerhorn}. Theorem \ref{ps} shows that the arity $0$ component is in fact the only obstruction to transferring the model structure: once we impose the condition that $M(0)=0$, everything works without a hitch over an arbitrary base ring. Thus twisted commutative dg algebras are ``better behaved'' than ordinary commutative algebras in positive characteristic, as evidenced in particular by the fact that the commutative cochain problem can always be solved in the category of tcdga's, as we explained in \S \ref{commutative-cochains}. There are in fact several earlier results in the literature of the following flavor: for a dg operad $P$, left $P$-modules $M$ with $M(0)=0$ exhibit ``homotopically correct'' behavior in arbitrary characteristic, even in situations where $P$-algebras do not. We mention three examples:\begin{enumerate}
		\item Stover \cite{stover} proved that reduced left $\mathsf{Lie}$ modules satisfy the Milnor--Moore theorem over an arbitrary base ring; the classical Milnor--Moore theorem for Lie algebras requires working over a $\Q$-algebra.
		\item Richter \cite{richtershuffle} proved that if $A$ is a reduced left $\mathsf{Com}$-module (i.e.\ a non-unital tcdga), then the Harrison homology, Andr\'e--Quillen homology and Gamma-homology of $A$ all coincide, and give a summand of Hochschild homology. For usual commutative algebras this is in general only true over $\Q$. 
		\item Fresse \cite{fressesimplicial} introduced the notion of a {divided power structure} on a $P$-algebra. The definition admits an evident generalization to a divided power structure on a left $P$-module $M$. If $M(0)=0$, then it follows from \cite[9.10]{stover}  that $M$ is always \emph{canonically} equipped with a divided power structure.  
	\end{enumerate} \end{para} 

\begin{para}We are now ready to state the relationship between $\I_+$-algebras and $\mathbb E_\infty$-algebras, which can be summed up in the following two theorems, which are simple modifications of the statements of \cite[Theorem 2.13]{richtersagave} and \cite[Theorem 4.10]{richtersagave}:
\end{para}

\begin{thm}\label{non-unitalthm}
	Let $A$ be a dg $\I_+$-algebra. Then the Bousfield-Kan homotopy colimit $\hocolim_{\I_+} A$ admits a natural action by the Barratt--Eccles operad, making it into a non-unital $\mathbb E_\infty$-algebra. 
\end{thm}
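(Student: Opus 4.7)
The plan is to realize $\hocolim_{\I_+} A$ explicitly as the Bousfield--Kan bar complex
\[
B_\ast(A) = \bigoplus_{k \geq 0} \bigoplus_{S_0 \to S_1 \to \cdots \to S_k} A(S_0)[k],
\]
where the outer sum runs over $k$-simplices of the nerve $N\I_+$, with differential equal to the alternating sum of simplicial face maps (the $0$th face using functoriality of $A$ along $S_0 \to S_1$) plus the internal differential of $A$. The theorem follows once $B_\ast(A)$ is equipped with an action of the Barratt--Eccles operad $E$, where $E(n)$ is the chain complex of the nerve of the translation category $\widetilde{\mathbb S}_n$.

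The first step is to exploit that disjoint union makes $\I_+$ a non-unital symmetric monoidal category, so for each $n \geq 1$ one has a functor $\mu_n \colon \I_+^n \to \I_+$, $(S_1,\ldots,S_n) \mapsto S_1 \sqcup \cdots \sqcup S_n$. The lax symmetric monoidal structure on $A$ supplies a natural transformation $A(-) \otimes \cdots \otimes A(-) \Rightarrow A \circ \mu_n$, and applying $\hocolim$ yields chain maps
\[
\hocolim_{\I_+^n} A^{\boxtimes n} \;\longrightarrow\; \hocolim_{\I_+} A.
\]
The (iterated) Eilenberg--Zilber shuffle map identifies the source, up to a natural quasi-isomorphism, with $B_\ast(A)^{\otimes n}$, producing "multiplication" maps $B_\ast(A)^{\otimes n} \to B_\ast(A)$; for $n=2$ and in arity zero of $E$ this is nothing but the classical shuffle product.

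To promote these maps to an action of the full Barratt--Eccles operad, I would use that each permutation $\sigma \in \mathbb S_n$ produces its own version of the multiplication map $B_\ast(A)^{\otimes n} \to B_\ast(A)$, obtained by first permuting the tensor factors via $\sigma$, applying the symmetry isomorphism on $A$, and then shuffling. A $k$-simplex $(\sigma_0 \to \sigma_1 \to \cdots \to \sigma_k)$ in $N\widetilde{\mathbb S}_n$ is then interpreted as a coherent $k$-fold homotopy between these $n!$ versions, and iterating this construction gives the structure maps $E(n) \otimes B_\ast(A)^{\otimes n} \to B_\ast(A)$. The $\mathbb S_n$-equivariance is built in by construction, since $\mathbb S_n$ acts on $N\widetilde{\mathbb S}_n$ by right translation while simultaneously permuting the tensor factors.

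The main obstacle is the combinatorial bookkeeping required to verify that these maps satisfy the operad composition axioms and are suitably compatible with the simplicial differential on $B_\ast(A)$. This is essentially the chain-level analogue of Sagave--Schlichtkrull's construction in \cite{sagaveschlichtkrull} of an $\mathbb E_\infty$-structure on $\hocolim_{\I} A$ for commutative $\I$-space algebras; the verification reduces to standard compatibilities of the Eilenberg--Zilber shuffle map with the associative and symmetric structure on $\sqcup$ and on $A$, together with the fact that $E(n) = C_\ast(N\widetilde{\mathbb S}_n)$ is built to universally parametrize precisely such coherent homotopies. Alternatively, one may deduce the result more abstractly by noting that $B_\ast$ is a lax symmetric monoidal functor from commutative monoids in the Day convolution on $\mathsf{Ch}_R^{\I_+}$ to $\mathbb E_\infty$-algebras in $\mathsf{Ch}_R$, where the failure of strict monoidality is precisely measured by the Barratt--Eccles operad acting on the shuffles.
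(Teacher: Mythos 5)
Your route is genuinely different from the paper's, so let me first say what the paper actually does: it never constructs the Barratt--Eccles action by hand. Instead it freely adjoins a unit in each arity to obtain an augmented $\I$-algebra, invokes the unital chain-level statement recalled in \S\ref{richtersagavesummary} (see \cite[Section 4]{richtersagave}) that $\hocolim_\I$ of a commutative $\I$-dga is an $\mathbb E_\infty$-algebra, and identifies $\hocolim_{\I_+}A$ with the augmentation ideal of the resulting augmented $\mathbb E_\infty$-algebra. Your plan is the direct construction on the non-unital object itself; this is a legitimate alternative (it is essentially how the cited unital result is proved), and if carried out it would be more self-contained and yield explicit structure maps. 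The first half of your sketch is sound: the bar model $B_\ast(A)$, the concatenation functors $\I_+^n\to\I_+$, and the Eilenberg--Zilber shuffle map do produce honest chain maps $B_\ast(A)^{\otimes n}\to B_\ast(A)$, one for each ordering.

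The gap is that the proof stops exactly where the content of the theorem begins. The structure maps $E(n)\otimes B_\ast(A)^{\otimes n}\to B_\ast(A)$ are never defined: saying that a simplex $\sigma_0\to\cdots\to\sigma_k$ of $N\widetilde{\mathbb S}_n$ ``is interpreted as a coherent $k$-fold homotopy'' is not a chain map. What makes the construction work is that the $n!$ concatenations $S_{\sigma^{-1}(1)}\sqcup\cdots\sqcup S_{\sigma^{-1}(n)}$ are connected by block-permutation morphisms of $\I_+$, so a $k$-simplex of permutations must contribute $k$ extra bar levels; one has to write down this signed shuffle formula and verify equivariance, the operadic composition relations, and compatibility with the bar differential --- that verification \emph{is} the theorem, not dispensable bookkeeping. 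Neither of your proposed shortcuts supplies it. Sagave--Schlichtkrull \cite{sagaveschlichtkrull} argue with $\I$-spaces, where the homotopy colimit of an external product is literally the product of the homotopy colimits; in $\mathsf{Ch}_R$ everything must be threaded through the Eilenberg--Zilber map, which is only a quasi-isomorphism, so their argument does not transfer formally (the chain-level unital case is precisely \cite{richtersagave}). And the closing ``abstract'' claim is false as stated: $B_\ast$ is not lax symmetric monoidal from the Day convolution on $\mathsf{Ch}_R^{\I_+}$ to $(\mathsf{Ch}_R,\otimes)$ --- if it were, $\hocolim_{\I_+}A$ would be a \emph{strictly} commutative dga --- since the symmetry square commutes only up to the homotopies furnished by those block permutations, which is exactly the coherence you are trying to construct. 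To finish along your lines, either carry out and check the explicit Barratt--Eccles formulas (adapting \cite{richtersagave} to $\I_+$), or reduce to the unital case by adjoining a unit, as the paper does.
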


\begin{thm}\label{bousfield}The category of non-unital $\mathbb E_\infty$-algebras can be identified, via a zig-zag of Quillen equivalences, with a Bousfield localization of the category of dg $\I_+$-algebras. The functor $\hocolim_{\I_+}$ from dg $\I_+$-algebras to non-unital $\mathbb E_\infty$-algebras models the composite of the derived functors arising from this zig-zag. The fibrant objects in the Bousfield localization  are the $\I_+$-algebras for which $A(S) \to A(T)$ is a quasi-isomorphism for every injection $S \hookrightarrow T$. 
\end{thm}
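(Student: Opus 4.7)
The plan is to proceed in three steps: establish the Bousfield localization, identify its fibrant objects, and then construct a zig-zag of Quillen equivalences to non-unital $\mathbb E_\infty$-algebras.

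First, the model structure on dg $\mathsf I_+$-algebras from Theorem \ref{ps} is combinatorial, being transferred along a left adjoint from the combinatorial projective model structure on $\mathsf{Ch}_R^{\mathsf I_+}$; left properness also transfers from the underlying diagram category. Let $\Sigma$ be the set of morphisms of free dg $\mathsf I_+$-algebras $F(R[\mathrm{Hom}_{\mathsf I_+}(T,-)]) \to F(R[\mathrm{Hom}_{\mathsf I_+}(S,-)])$ induced by all injections $\iota \colon S \hookrightarrow T$. Smith's theorem then yields the left Bousfield localization at $\Sigma$, and the standard characterization of $\Sigma$-local fibrant objects translates, via the free--forgetful adjunction and the Yoneda lemma, into the condition that each structure map $A(S) \to A(T)$ be a quasi-isomorphism.

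Second, to compare with non-unital $\mathbb E_\infty$-algebras I would adapt the strategy of Sagave--Schlichtkrull from symmetric spectra to chain complexes. The key input is that the nerve of $\mathsf I_+$, together with its symmetric monoidal structure under disjoint union, models a non-unital $\mathbb E_\infty$-operad; consequently the derived functor of $\hocolim_{\mathsf I_+} \colon \mathsf{Ch}_R^{\mathsf I_+} \to \mathsf{Ch}_R$ is symmetric monoidal in the appropriate homotopical sense, and this is precisely what gives rise to the Barratt--Eccles action of Theorem \ref{non-unitalthm}. After left Bousfield localizing $\mathsf{Ch}_R^{\mathsf I_+}$ at the representable analogues of $\Sigma$ (i.e.\ at the underlying diagram level), $\hocolim_{\mathsf I_+}$ becomes a left Quillen equivalence with $\mathsf{Ch}_R$ in its projective model structure. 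Lifting this Quillen equivalence to commutative monoids on both sides --- the left-hand side being the $\Sigma$-localized category of dg $\mathsf I_+$-algebras, the right-hand side being non-unital $\mathbb E_\infty$-algebras in $\mathsf{Ch}_R$ --- produces the desired zig-zag, with $\hocolim_{\mathsf I_+}$ realizing the derived left adjoint.

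The hard part is this last lifting. One must check that left Bousfield localization on $\mathsf{Ch}_R^{\mathsf I_+}$ is compatible with operadic composition, i.e.\ that the endofunctors $N \mapsto \mathsf{Com}(n) \otimes_{\mathbb S_n} N^{\otimes n}$ preserve weak equivalences between cofibrant objects in the localized structure, for all $n \geq 1$. As in the proof of Theorem \ref{ps}, this reduces to showing that $N^{\otimes n}$ carries a free $\mathbb S_n$-action on an underlying cofibrant complex whenever $N$ is cofibrant and satisfies $N(\emptyset) = 0$; the indexing calculation displayed in that proof (partitions of $\{1,\ldots,k\}$ into strictly ordered nonempty blocks) is exactly what guarantees freeness. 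This is the essential use of the non-unital framework, and explains why the theorem takes the cleaner form stated above rather than requiring the positive model structure that would be needed in the unital setting.
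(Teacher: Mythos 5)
Your first step is broadly reasonable, but two of its assertions are not free: left properness of the transferred model structure on dg $\I_+$-algebras does \emph{not} simply ``transfer from the underlying diagram category'' (left properness is not inherited along free--forgetful adjunctions; for commutative algebra objects it generally fails or requires exactly the $\mathbb S_n$-freeness argument, or a localization theorem that avoids left properness), and the claim that after localizing $\mathsf{Ch}_R^{\I_+}$ the functor $\hocolim_{\I_+}$ is a Quillen equivalence with $\mathsf{Ch}_R$ uses that the homotopy colimit of a static $\I_+$-diagram is equivalent to its value at any object --- true, but a specific property of $\I_+$ (not of an arbitrary index category with contractible nerve) that has to be invoked. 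The essential gap, however, is the ``lifting'' step. The functor $\hocolim_{\I_+}$ is not strictly symmetric monoidal, and commutative monoids in $\mathsf{Ch}_R$ are strict cdga's, not non-unital $\mathbb E_\infty$-algebras; so there is no formal procedure that ``lifts the Quillen equivalence to commutative monoids on both sides'' with the right-hand side being $\mathbb E_\infty$-algebras. What is actually required is (i) a rectification equivalence between strict $\mathsf{Com}$-algebras and Barratt--Eccles algebras inside $\mathsf{Ch}_R^{\I_+}$ (here the $\mathbb S_n$-freeness computation from the proof of Theorem \ref{ps} is an ingredient, but only an ingredient), and (ii) a comparison, along $\hocolim_{\I_+}$, of $\mathbb E_\infty$-algebras in the localized diagram category with $\mathbb E_\infty$-algebras in $\mathsf{Ch}_R$. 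Items (i) and (ii) together are precisely the Sagave--Schlichtkrull/Pavlov--Scholbach comparison machinery, which your proposal defers to with ``adapt the strategy'' rather than supplies; as written, the step that produces the zig-zag is the one step that is missing.

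The paper avoids re-proving any comparison with $\mathbb E_\infty$-operads. It takes the \emph{unital} equivalence between $\I$-algebras (with the positive $\I$-model structure) and unital $\mathbb E_\infty$-algebras, recalled in \S\ref{richtersagavesummary}, as a black box; identifies non-unital $\mathbb E_\infty$-algebras with augmented ones and hence with augmented $\I$-algebras; and then uses the augmentation-ideal/freely-adjoin-a-unit Quillen adjunction between augmented $\I$-algebras and $\I_+$-algebras, showing it becomes a Quillen equivalence after Bousfield localizing $\I_+$-algebras at the maps inducing equivalences of homotopy colimits (which also yields Theorem \ref{non-unitalthm}, since the homotopy colimit of an $\I_+$-algebra is the augmentation ideal of the homotopy colimit of the unitalization). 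If you want to keep your direct route you must carry out (i) and (ii) in detail; otherwise the cleaner repair is to reduce to the known unital statement as the paper does.
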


\begin{para}
	Concretely, Theorem \ref{bousfield} says that we can think of the category of non-unital $\mathbb E_\infty$-algebras as a full subcategory of dg $\I_+$-algebras. In particular, there is then also a natural forgetful functor from non-unital $\mathbb E_\infty$-algebras to twisted commutative dg algebras, by Theorem \ref{ps}. If $\Omega$ is a non-unital $\mathbb E_\infty$-algebra, then by Theorem \ref{bousfield} we may associate to $\Omega$ a fibrant dg $\I_+$-algebra $\underline \Omega$, well defined up to quasi-isomorphism. Since all the maps $\underline \Omega(S) \to \underline \Omega(T)$ are quasi-isomorphisms, each component $\underline \Omega(S)$ is in fact quasi-isomorphic to the homotopy colimit $\Omega$, and the multiplication maps $\underline\Omega(S) \otimes \underline{\Omega}(T) \to \underline{\Omega}(S \sqcup T)$ are given up to homotopy by the multiplication in $\Omega$. Thus we may think of this as a homotopy coherent version of the construction of \S \ref{constant-tcdga}. Composing with the forgetful functor from dg $\I_+$-algebras to twisted commutative algebras, we may in particular think of a non-unital $\mathbb E_\infty$-algebra as defining a ``homotopically constant'' tcdga. 
\end{para}

\begin{para}\label{richtersagavesummary}Let us explain how Theorems \ref{non-unitalthm} and \ref{bousfield}, which concern non-unital $\mathbb E_\infty$-algebras, follow from analogous theorems for unital $\mathbb E_\infty$-algebras. We briefly recall these results here; for a more complete account with detailed references see \cite[Section 4]{richtersagave}. Let $\I$ denote the unital symmetric monoidal category of all finite sets and injections. The category of functors $\mathsf I \to \mathsf{Ch}_R$ can be given a \emph{positive} model structure in which $F \to G$ is a fibration (weak equivalence) if $F(S) \to G(S)$ is so for all non-empty sets $S$; no assumption is made on $F(\varnothing) \to G(\varnothing)$. We say that $F \to G$ is an \emph{$\I$-equivalence} if $\hocolim_\I F \to \hocolim_\I G$ is a weak equivalence. The positive model structure admits a Bousfield localization with weak equivalences the $\mathsf I$-equivalences, which we call the \emph{positive $\I$-model structure}. A functor $F$ is fibrant for this model structure if $F(S) \to F(T)$ is a weak equivalence for all $\varnothing \neq S \hookrightarrow T$. By an \emph{$\I$-algebra} we mean a (unital) lax symmetric monoidal functor $\I \to \mathsf{Ch}_R$, i.e.\ a commutative monoid in $\mathsf{Ch}_R^\I$. There is an induced model structure on $\I$-algebras, with $A \to B$ a morphism of $\I$-algebras a fibration (weak equivalence) when $A \to B$ is a fibration (weak equivalence) for the positive $\I$-model structure. Then the category of $\I$-algebras is Quillen equivalent to the category of unital $\mathbb E_\infty$-algebras, and if $A$ is an $\I$-algebra then $\hocolim_\I A$ is an $\mathbb E_\infty$-algebra. 

	 Now the category of non-unital $\mathbb E_\infty$-algebras in $\mathsf{Ch}_R$ is Quillen equivalent to the category of augmented $\mathbb E_\infty$-algebras $A \to R$, where the latter category has its natural model structure as a slice category. By the opposite of \cite[Theorem 1.3.17]{hovey} and the results recalled in \S \ref{richtersagavesummary}, non-unital $\mathbb E_\infty$-algebras are then Quillen equivalent to the category of $\I$-algebras with an augmentation $A \to \underline R$, where $\underline R$ denotes the constant $\I$-algebra at $R$. If $A$ is such an augmented $\I$-algebra then its augmentation ideal $\overline A$, defined by $\overline A(S) = \mathrm{Ker}(A(S) \to R)$, is a well defined $\I_+$-algebra. The functor from augmented $\I$-algebras to $\I_+$-algebras has a left adjoint given by adjoining a unit freely in each arity. This is in fact a Quillen adjunction between augmented $\I$-algebras (with the positive $\I$-model structure) and $\I_+$-algebras (with the model structure of Theorem \ref{ps}). The category of $\I_+$-algebras admits a Bousfield localization in which the weak equivalences are the maps inducing a weak equivalence of homotopy colimits, and the Quillen adjunction between augmented $\I$-algebras and $\I_+$-algebras induces a Quillen equivalence between augmented $\I$-algebras and this Bousfield localization, proving Theorem \ref{bousfield}. The homotopy colimit of an $\I_+$-algebra is the augmentation ideal in the homotopy colimit of the $\I$-algebra obtained by freely adjoining a unit in each arity, proving Theorem \ref{non-unitalthm}. \end{para}

\begin{prop}
	Let $X$ be a paracompact Hausdorff space. The tcdga $R\Gamma^\otimes(X,R)$ has a natural structure of fibrant dg $\I$-algebra. If $X$ is in addition locally compact, then $R\Gamma_c^\otimes(X,R)$ is a fibrant dg $\I_+$-algebra. 
\end{prop}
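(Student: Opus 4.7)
The plan is to realize both tcdga's as global sections of a sheaf-level object and to extend the $\Fin_+$-structure to an $\I$- or $\I_+$-structure at the sheaf level before taking sections; fibrancy will then also be checked sheafwise. Let $\mathcal L$ denote the bounded-below flat flabby resolution of the constant sheaf $R$ on $X$ provided by Lemma \ref{flabbyflat}, so that by construction $R\Gamma^\otimes(X,R)$ evaluated on a finite set $S$ equals $\Gamma(X,\mathcal L^{\otimes S})$, and likewise for $R\Gamma_c^\otimes(X,R)$ with $\Gamma_c$. The tcdga structure on the functor $S \mapsto \mathcal L^{\otimes S}$ uses the evident identifications $\mathcal L^{\otimes S} \otimes \mathcal L^{\otimes T} = \mathcal L^{\otimes (S \sqcup T)}$. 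To extend this functoriality to arbitrary injections I use the augmentation $\varepsilon \colon R \to \mathcal L$, which by definition of the resolution is a quasi-isomorphism; tensoring $\varepsilon$ with itself produces for every finite set $T'$ a quasi-isomorphism $\eta_{T'} \colon R \to \mathcal L^{\otimes T'}$ of bounded-below complexes of flat sheaves.

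Given an injection $\sigma \colon S \hookrightarrow T$, write $T = \sigma(S) \sqcup T'$ and define the structure map as the composite
$$ \mathcal L^{\otimes S} \xrightarrow{\;\cong\;} \mathcal L^{\otimes \sigma(S)} \xrightarrow{\;\mathrm{id} \otimes \eta_{T'}\;} \mathcal L^{\otimes \sigma(S)} \otimes \mathcal L^{\otimes T'} = \mathcal L^{\otimes T}, $$
where the first isomorphism comes from the bijection $S \xrightarrow{\sim} \sigma(S)$ underlying $\sigma$ and the second inserts units on the remaining factors. Functoriality under composition of injections and compatibility with the Day convolution monoidal structure reduce to associativity of the tensor product and naturality of $\varepsilon$, and on bijections this reproduces the existing tcdga structure maps. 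For the unital $\I$-algebra case I additionally set $A(\emptyset) := R$ with tautological unit; the positive model structure is insensitive to this value. Applying $\Gamma$ and $\Gamma_c$ then yields the claimed $\I$-algebra structure on $R\Gamma^\otimes(X,R)$ and $\I_+$-algebra structure on $R\Gamma_c^\otimes(X,R)$; for the compactly supported version one observes that inserting a global section of $\mathcal L$ into additional tensor slots cannot enlarge the support, so the map preserves compact support. Naturality in $X$ is inherited from functoriality of the resolution.

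For fibrancy one must check that the structure map above is a quasi-isomorphism after applying $\Gamma$ or $\Gamma_c$, for every injection $\sigma$ (with $S$ nonempty in the $\I$-case). At the sheaf level the map is the tensor product of $\mathcal L^{\otimes \sigma(S)}$ with the quasi-isomorphism $\eta_{T'}$ of bounded-below complexes of flat sheaves, hence is itself a sheaf quasi-isomorphism. As established in the proof of Proposition \ref{rightcohomology}, under our paracompact Hausdorff (resp.\ locally compact Hausdorff) hypotheses each $\mathcal L^{\otimes n}$ is $\Gamma$-acyclic (resp.\ $\Gamma_c$-acyclic), so $\Gamma$ (resp.\ $\Gamma_c$) preserves this quasi-isomorphism, delivering the fibrancy required by Theorem \ref{bousfield} and \S \ref{richtersagavesummary}. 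The main source of bookkeeping is the verification of the full $\I$-indexed lax symmetric monoidal coherence, but because every newly introduced morphism is built out of the single unit $\varepsilon$ and the canonical symmetries of $\otimes$ on sheaves, these coherences collapse to formal properties of the monoidal unit and do not present a genuine obstacle.
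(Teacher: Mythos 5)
Your proposal is correct and follows essentially the same route as the paper: the structure maps for injections are obtained by inserting the monoidal unit via the (co)augmentation $R \stackrel{\sim}{\to} \mathcal L$ of the Godement-type resolution, and fibrancy is verified by noting that the resulting sheaf maps are quasi-isomorphisms and that the tensor powers are soft (resp.\ $c$-soft) as in the proof of Proposition \ref{rightcohomology}, so $\Gamma$ (resp.\ $\Gamma_c$) preserves them. The only cosmetic difference is your explicit convention $A(\varnothing)=R$, which, as you note, is immaterial for the positive model structure.
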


\begin{proof}
	Recall that $R\Gamma^\otimes(X,R)$ and $R\Gamma_c^\otimes(X,R)$ are obtained by applying the functors $\Gamma$ (resp.\ $\Gamma_c$) to the sheaf of tcdga's on $X$ given by $S \mapsto \mathrm{Gode}(R)^{\otimes S}$. We claim that the construction $S \mapsto \mathrm{Gode}(R)^{\otimes S}$ can be naturally given the structure of a sheaf of $\I$-algebras. Indeed, note that there is an augmentation $R \stackrel \sim \to \mathrm{Gode}(R)$, and that $R$ is the monoidal unit in the category of sheaves of $R$-modules on $X$. By inserting the monoidal unit and applying the augmentation, we obtain natural maps $\mathrm{Gode}(R)^{\otimes S} \stackrel \sim \to \mathrm{Gode}(R)^{\otimes T}$ for every injection $S \hookrightarrow T$. Since both sheaves are soft (resp.\ $c$-soft) when $S$ and $T$ are nonempty --- see the proof of Proposition \ref{rightcohomology} --- the induced map on (compactly supported) global sections is a quasi-isomorphism too, so $R\Gamma^\otimes(X,R)$ and $R\Gamma_c^\otimes(X,R)$ are fibrant with respect to the resulting dg $\I$-algebra (resp.\ dg $\I_+$-algebra) structure.  
\end{proof}

\begin{thm}
	Let $X$ be a CW complex. The tcdga $R\Gamma^\otimes(X,R)$ is weakly equivalent to the ``homotopically constant'' tcdga given by the $\mathbb E_\infty$-algebra of cochains $C^\bullet(X,R)$. If $X$ is the complement of a subcomplex in a finite CW complex then $R\Gamma^\otimes_c(X,R)$ is weakly equivalent to the compactly supported cochains $C^\bullet_c(X,R)$ with its structure of non-unital $\mathbb E_\infty$-algebra.
\end{thm}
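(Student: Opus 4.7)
\begin{para}
The plan is to build directly on the preceding proposition together with Theorems~\ref{non-unitalthm} and~\ref{bousfield}. The previous proposition shows that $R\Gamma^\otimes(X,R)$ is a fibrant dg $\I$-algebra and, for $X$ locally compact, that $R\Gamma_c^\otimes(X,R)$ is a fibrant dg $\I_+$-algebra. Under the Quillen equivalences recalled in \S\ref{richtersagavesummary} and Theorem~\ref{bousfield}, fibrant $\I$-algebras (resp.\ $\I_+$-algebras) correspond to unital (resp.\ non-unital) $\mathbb E_\infty$-algebras via $\hocolim_{\I}$ and $\hocolim_{\I_+}$, and the discussion following Theorem~\ref{bousfield} identifies such fibrant objects with the ``homotopically constant'' tcdga attached to the underlying $\mathbb E_\infty$-algebra. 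It therefore suffices to identify these $\mathbb E_\infty$-algebras with $C^\bullet(X,R)$ and $C^\bullet_c(X,R)$ carrying their standard structures.
\end{para}

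\begin{para}
Fibrancy forces every structure map $R\Gamma^\otimes(X,R)(S) \to R\Gamma^\otimes(X,R)(T)$ to be a quasi-isomorphism, so $\hocolim_{\I} R\Gamma^\otimes(X,R)$ is quasi-isomorphic to any single component, in particular to $R\Gamma^\otimes(X,R)(\{\ast\}) = \Gamma(X,\mathrm{Gode}(R))$, a standard cochain model for $X$; the same holds with $\Gamma_c$ in place of $\Gamma$ in the compactly supported case. This identifies the underlying cochain complex. To pin down the $\mathbb E_\infty$-structure itself, I would verify that $X \mapsto \hocolim_{\I} R\Gamma^\otimes(X,R)$ defines an $\mathbb E_\infty$-cochain theory in the sense of Mandell~\cite{mandellcochain}: homotopy invariance is inherited from $R\Gamma$ (as in the proof of Theorem~\ref{APL}), the wedge axiom is clear from the behaviour of $R\Gamma$ on disjoint unions, and excision reduces, via an integral analogue of Lemma~\ref{excision}, to the short exact sequence of sheaves $0 \to j_!\underline R \to \underline R \to i_\ast \underline R \to 0$ for a finite CW pair $(Y,A)$. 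Mandell's uniqueness theorem then identifies the resulting $\mathbb E_\infty$-algebra with the standard cochain $\mathbb E_\infty$-algebra $C^\bullet(X,R)$.
\end{para}

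\begin{para}
The compactly supported statement follows by an excision argument. Writing $X = Y \setminus A$ with $A$ a subcomplex of the finite CW complex $Y$, the short exact sequence above gives, after applying $(-)^{\otimes S}$, the Godement construction, and global sections in each arity, a cofiber sequence of fibrant dg $\I$-algebras; its non-unital part (equivalently, the augmentation ideal of the cofiber) is precisely $R\Gamma_c^\otimes(X,R)$. Comparing with the standard identification of $C^\bullet_c(X,R)$ as the augmentation ideal of $C^\bullet(X^\ast, R)$, and invoking the unital case of the theorem already established for $Y$ and $A$, yields the desired equivalence of non-unital $\mathbb E_\infty$-algebras. The main technical hurdle will be verifying excision at the $\mathbb E_\infty$-level, i.e.\ that the sheaf-theoretic short exact sequence genuinely upgrades to a cofiber sequence of fibrant dg $\I$-algebras and not merely of underlying complexes; this should follow from the flatness and flabbiness of $\mathrm{Gode}(R)^{\otimes S}$ in each arity (cf.\ the proof of Proposition~\ref{rightcohomology}), but requires care to ensure compatibility with the lax symmetric monoidal structure.
\end{para}
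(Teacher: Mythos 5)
Your proposal is correct and takes essentially the same route as the paper: the paper's proof is precisely an appeal to Mandell's uniqueness theorem for cochain theories valued in $\mathbb E_\infty$-algebras, exactly as in Theorem \ref{APL}, with the compactly supported case handled by the excision argument of Lemma \ref{excision} and the identification of $C^\bullet_c(X,R)$ with an augmentation ideal. Your use of the fibrancy proposition to identify $\hocolim_{\I} R\Gamma^\otimes(X,R)$ with a standard cochain model of $X$ is the same observation the paper records in the remark immediately following the theorem.
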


\begin{proof}As in Theorem \ref{APL} this follows from Mandell's uniqueness theorem. 
\end{proof}

\begin{rem}
	A direct proof is also possible. Let us just sketch the argument. We take $X$ to be paracompact, Hausdorff and locally contractible. First one shows, using local contractibility, that there is a weak equivalence of sheaves of $\mathbb{E}_\infty$-algebras on $X$ between the constant sheaf $R$ and the sheafifaction of the complex of presheaves $C^\bullet(-,R)$ of singular cochains. Then their derived global sections are also weakly equivalent as $\mathbb E_\infty$-algebras. But the global sections of the sheafification of $C^\bullet(-,R)$ is the quotient
	$$ C^\bullet(X,R)/C_0^\bullet(X,R)$$
	by the subcomplex of singular cochains which are zero on some open cover of $X$, and the quotient map $C^\bullet(X,R)\to C^\bullet(X,R)/C_0^\bullet(X,R)$ is a weak equivalence of $\mathbb E_\infty$-algebras. Also, the sheafification of $C^\bullet(-,R)$ is flabby and its derived global sections are just its global sections. On the other hand we claim that $\hocolim_{\I} R\Gamma^\otimes(X,R)\cong R\Gamma(X,R)$ as $\mathbb E_\infty$-algebras; indeed, this follows from $R\Gamma^\otimes(X,R)$ being a fibrant $\I$-algebra. 
\end{rem}

\begin{rem}
	Richter and Sagave \cite{richtersagave} have recently constructed a dg $\I$-algebra model for the cochains on a space by completely different methods. Our sheaf-theoretically defined functor $R\Gamma^\otimes$ offers an alternative to their construction. After this paper appeared on the arXiv, Chataur and Cirici \cite{chataurcirici} have constructed a functor $\mathbb R_{\EuScript E}\Gamma$ associated to any cofibrant $\mathbb E_\infty$-operad $\EuScript E$, quasi-isomorphic to $R\Gamma$ and taking sheaves of $\EuScript E$-algebras to $\EuScript E$-algebras.
\end{rem}

\section{Combinatorial preliminaries}

\subsection*{Order complex}

\begin{para}
	Throughout this section we let $(P,\preceq)$ denote a finite partially ordered set (poset). We define the \emph{order complex} of $P$, $\nerve P $, to be the simplicial complex whose $p$-simplices are chains $x_0 \prec x_1 \prec \ldots \prec x_p$ of comparable elements of $P$. Equivalently, $\nerve P$ is the geometric realization of the nerve of $P$, when we think of $P$ as a category.
\end{para}

\begin{para}\label{variant-of-order-cpx}If $P$ has either a largest or smallest element, its order complex is contractible. In this case it is common to remove the top and/or bottom element before taking the order complex; however, the following related construction will be more convenient for us. We always use $\hat 0$ and $\hat 1$ to denote a smallest and a largest element of $P$, respectively. Define
	\begin{align*}
	\nerveu{P} &= \nerve P / \nerve{P \setminus \{\hat 1\}}, \\
	\nerved{P} &= \nerve P / \nerve{P \setminus \{\hat 0\}}, \\
	\nerveud{P} &= \nerve P / ( \nerve{P \setminus \{\hat 0\}} \cup \nerve{P \setminus \{\hat 1\}}).
	\end{align*}
	For example, $\nerveud P$ is the based CW complex obtained from $\nerve P$ by collapsing to a point the subcomplex consisting of all simplices not containing both $\hat 0$ and $\hat 1$. We always follow the standard convention of interpreting the quotient space $X/A$ as the pushout $X \leftarrow A \rightarrow \mathrm{pt}$, so that when $A$ is empty we obtain $X$ with a disjoint basepoint. 
\end{para}

\begin{lem}\label{susp}
	If $P$ has a largest element $\hat 1$ or a smallest element $\hat 0$ respectively, then 
	\begin{align*}
	\nerveu P &\cong \Sigma \nerve{P \setminus \{\hat 1\}},\\
	\nerved P &\cong \Sigma \nerve{P \setminus \{\hat 0\}},
	\end{align*}
	where $\Sigma$ denotes the unreduced suspension. If $P$ has both a top and bottom element, then 
	\[\nerveud P \cong \begin{cases}
	S^0 & \text{if } P = \{\ast\}, \\
	\Sigma^2 \nerve {P \setminus \{\hat 0,\hat 1\}} & \text{if } \vert P \vert \geq 2.
	\end{cases}\]
\end{lem}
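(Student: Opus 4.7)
The plan is to reduce everything to the elementary observation that the order complex of a poset with a maximum (or minimum) is a cone. Specifically, if $\hat 1$ is a maximum of $P$, then every chain either avoids $\hat 1$ or has $\hat 1$ as its top element, and this gives a simplicial join decomposition $\nerve P = \nerve{P \setminus \{\hat 1\}} * \{\hat 1\}$, realizing $|\nerve P|$ as the unreduced cone on $|\nerve{P \setminus \{\hat 1\}}|$ with apex $\hat 1$. The first two statements then drop out of the standard homeomorphism $CY/Y \cong \Sigma Y$ (with $\Sigma$ the unreduced suspension); the case of $\nerved$ is symmetric.

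For the final statement, I would first dispose of the trivial case $|P| = 1$ by hand: then $\hat 0 = \hat 1$, both omitted subcomplexes are empty, and $\nerveud P = \nerve P / \varnothing = S^0$ under the convention that quotienting by $\varnothing$ adds a disjoint basepoint. For $|P| \geq 2$, set $K = \nerve{P \setminus \{\hat 0, \hat 1\}}$ (possibly the empty complex). Since $\hat 0$ and $\hat 1$ are comparable with every other element, there is a simplicial join decomposition $\nerve P = K * \nerve{\{\hat 0 \prec \hat 1\}}$; the second factor is a single $1$-simplex, which I would identify topologically with $CS^0$ (the two cone points being $\hat 0$ and $\hat 1$). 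Associativity of the join then yields $K * CS^0 \cong C(K * S^0) = C\Sigma K$. Under this identification, the two subcomplexes $\nerve{P \setminus \{\hat 0\}} = K * \{\hat 1\}$ and $\nerve{P \setminus \{\hat 1\}} = K * \{\hat 0\}$ are exactly the two cones whose union is $K * S^0 = \Sigma K$, which sits inside $C\Sigma K$ as the base of the outer cone. Applying $CY/Y \cong \Sigma Y$ once more produces
$$ \nerveud P \cong C\Sigma K / \Sigma K \cong \Sigma^2 K = \Sigma^2 \nerve{P \setminus \{\hat 0, \hat 1\}},$$
as required.

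The only tricky points I foresee are bookkeeping at the edges: one must commit to the convention $\Sigma \varnothing = S^0$, so that the $|P| = 2$ formula $\nerveud P = I / \partial I = S^1 = \Sigma^2 \varnothing$ comes out correctly, and to the convention $X / \varnothing = X_+$ for the $|P| = 1$ case. With these fixed, the argument reduces to elementary manipulations of simplicial joins, and I do not expect any real obstacle beyond this bookkeeping.
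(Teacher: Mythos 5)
Your proof is correct and follows essentially the same route as the paper: the order complex of a poset with an extremal element is a cone, one collapses using $CX/X \cong \Sigma X$, and the case $\hat 0 = \hat 1$ is handled separately. The only difference is organizational: where the paper compresses the $\nerveud P$ case into ``apply this argument twice,'' you make that step explicit via the join decomposition $\nerve P \cong \nerve{P \setminus \{\hat 0,\hat 1\}} \ast \Delta^1$ and associativity of joins, which is a clean way to do the same double-suspension bookkeeping (including the conventions $\Sigma \varnothing = S^0$ and $X/\varnothing = X_+$ that both arguments need).
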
	

\begin{proof}
	If $P$ has a largest element, then $\nerve P \cong C \nerve {P \setminus \{\hat 1\}}$, where $CX$ denotes the cone on a space $X$. Then use that $CX/X \cong \Sigma X$. When $P$ has both a bottom and a top element, apply this argument twice, treating the case when $\hat 0 = \hat 1$ separately. 
\end{proof}

\begin{rem}
	In this paper we will only be concerned with the (reduced) cohomology of order complexes. The reader may reasonably wonder why we bother introducing the constructions $\nerveu P, \nerved P $ and $\nerveud P$, when by the previous lemma their cohomologies can all be expressed in terms of the cohomology of the usual order complex of $P$ with some elements removed. However, in this way we reduce the number of degree shifts involved in the constructions and the main results; moreover, if we didn't have $\nerveud \ast \cong S^0$ then we would have to introduce some rather unnatural conventions to deal with the case of a one-element poset (as was done in \cite[top of p. 2531]{spectralsequencestratification}). We also have the following appealing formula:
\end{rem}

\begin{lem}\label{smash}
	Let $P$ and $Q$ be posets with largest and smallest elements. Then
	$$ \nerveud {P \times Q} \cong \nerveud P \wedge \nerveud Q,$$
	$\wedge$ denoting the smash product.
\end{lem}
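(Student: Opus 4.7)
The plan is to reduce everything to the well-known fact that the nerve of a product of categories is the product of the nerves: since $P$ and $Q$ are posets (viewed as categories), we have a natural homeomorphism $\nerve{P \times Q} \cong \nerve P \times \nerve Q$ (the nerve functor is product-preserving on the level of simplicial sets, and geometric realization commutes with finite products). Under this identification I intend to show that the subcomplex collapsed in the definition of $\nerveud{P \times Q}$ corresponds exactly to the subcomplex one collapses in forming the smash product of the two quotients; the result then follows from the standard identity $(X/A) \wedge (Y/B) \cong (X \times Y)/(A \times Y \cup X \times B)$ for CW pairs.

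The heart of the argument is therefore the combinatorial identification, for the bottom element, of
\[\nerve{(P \times Q) \setminus \{(\hat 0_P, \hat 0_Q)\}} = \nerve{(P \setminus \{\hat 0_P\}) \times Q} \cup \nerve{P \times (Q \setminus \{\hat 0_Q\})}\]
as subspaces of $\nerve{P \times Q} = \nerve P \times \nerve Q$, together with the analogous statement for $\hat 1$. This reduces to a statement about chains: a chain $\sigma \subset P \times Q$ contains $(\hat 0_P, \hat 0_Q)$ if and only if the projection of $\sigma$ to $P$ contains $\hat 0_P$ and its projection to $Q$ contains $\hat 0_Q$. The "only if" direction is trivial; for the converse, suppose $(\hat 0_P, q) \in \sigma$ and $(p', \hat 0_Q) \in \sigma$; since $\sigma$ is totally ordered we may assume $(\hat 0_P, q) \preceq (p', \hat 0_Q)$, forcing $q \preceq \hat 0_Q$ and hence $q = \hat 0_Q$, so $(\hat 0_P, \hat 0_Q) \in \sigma$. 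Taking complements, a chain avoids $(\hat 0_P, \hat 0_Q)$ iff it is already contained in $(P \setminus \{\hat 0_P\}) \times Q$ or in $P \times (Q \setminus \{\hat 0_Q\})$, which is precisely the asserted decomposition.

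Combining this with the analogous statement for $\hat 1$ and writing $A_P = \nerve{P \setminus \{\hat 0_P\}} \cup \nerve{P \setminus \{\hat 1_P\}}$ (and similarly for $Q$ and $P \times Q$), one obtains by direct distribution
\[A_{P \times Q} = A_P \times \nerve Q \;\cup\; \nerve P \times A_Q\]
inside $\nerve P \times \nerve Q$. Plugging into the smash product formula gives
\[\nerveud P \wedge \nerveud Q = (\nerve P \times \nerve Q)/(A_P \times \nerve Q \cup \nerve P \times A_Q) = \nerve{P \times Q}/A_{P \times Q} = \nerveud{P \times Q},\]
which is the desired homeomorphism. The only mildly delicate point is the degenerate case in which $P$ or $Q$ equals $\{\ast\}$ (so $\hat 0 = \hat 1$ and $\nerveud{\{\ast\}} = S^0$), but here the chain argument above still applies verbatim and the smash product formula correctly produces $\nerveud{Q} = S^0 \wedge \nerveud Q$, so no separate treatment is required.
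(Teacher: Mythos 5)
Your proof is correct and follows essentially the same route as the paper: identify $\nerveud{P\times Q}$ and $\nerveud P\wedge\nerveud Q$ as quotients of $\nerve{P\times Q}\cong\nerve P\times\nerve Q$ by the same subcomplex, which you verify by the chain-level argument that a chain avoids $(\hat 0,\hat 0)$ (resp.\ $(\hat 1,\hat 1)$) exactly when one of its two projections does. The paper leaves this verification as ``one checks''; you have simply written it out, including the correct handling of the degenerate case $P=\{\ast\}$ via the convention $X/\varnothing = X_+$.
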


\begin{proof}The order complex of a product is homeomorphic to the product of the order complexes. Now one checks that both sides are obtained by collapsing the same subspace of $\nerve {P \times Q}$ to a point. 
\end{proof}

\begin{para}
	We say that $P$ is a \emph{lattice} if every subset of $P$ has a unique least upper bound (join) and greatest lower bound (meet). In particular, $P$ must have a largest and smallest element, which are the empty join and meet respectively. In a finite poset, the existence of joins implies the existence of meets, and vice versa: the meet of a subset can be defined as the join of the set of all its lower bounds. 
\end{para}

\begin{para}
	Suppose that $P$ has a smallest element $\hat 0$. The minimal elements of $P \setminus \{\hat 0\}$ are called \emph{atoms}.
\end{para}

\begin{lem}\label{bjorner}
	Let $P$ be a finite lattice. Let $J_P \subseteq P$ be the sublattice consisting of all elements which are joins of atoms, including the empty join. If $\hat 1 \in P$ does not lie in $J_P$, then $\nerveud P$ is contractible. If $\hat 1$ lies in $J_P$, then $\nerveud {P}$ is homotopy equivalent to $\nerveud {J_P}$.
\end{lem}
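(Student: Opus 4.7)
The plan is to apply Lemma \ref{susp} to reduce the claim to a statement about the ordinary order complex, and then to argue via a standard closure-operator / poset-homotopy technique. The case $|P|=1$ is trivial since then $P=J_P$, so I may assume $|P|\ge 2$. Lemma \ref{susp} then gives $\nerveud{P}\cong \Sigma^2\nerve{P\setminus\{\hat 0,\hat 1\}}$, and the analogous formula for $J_P$ when $\hat 1\in J_P$. It therefore suffices to prove (a) $\nerve{P\setminus\{\hat 0,\hat 1\}}$ is contractible when $\hat 1\notin J_P$, and (b) the inclusion $J_P\setminus\{\hat 0,\hat 1\}\hookrightarrow P\setminus\{\hat 0,\hat 1\}$ is a homotopy equivalence when $\hat 1\in J_P$, whence $\nerveud{P}\simeq \nerveud{J_P}$ by Lemma \ref{susp}.

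The central tool is the monotone idempotent $f\colon P\to P$ defined by $f(x)=\bigvee\{a : a\text{ an atom of }P,\ a\le x\}$, where the empty join is $\hat 0$. It satisfies $f(x)\le x$, has image exactly $J_P$, and fixes every element of $J_P$. Since every $x>\hat 0$ in a finite poset lies above some atom, $f^{-1}(\hat 0)=\{\hat 0\}$, and combined with $f(x)\le x$ this gives $f(P\setminus\{\hat 0,\hat 1\})\subseteq J_P\setminus\{\hat 0\}$. The argument rests throughout on the order-homotopy lemma: if $g\le g'$ are monotone maps of posets, the induced maps on order complexes are homotopic.

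In case (b), the hypothesis $\hat 1\in J_P$ ensures that $f$ restricts to a map $P\setminus\{\hat 0,\hat 1\}\to J_P\setminus\{\hat 0,\hat 1\}$ which is a retraction of the inclusion $\iota$ and satisfies $\iota\circ f\le \mathrm{id}$ and $f\circ\iota=\mathrm{id}$; the order-homotopy lemma then makes $\iota$ a homotopy equivalence. In case (a), set $\widehat m=f(\hat 1)=\bigvee\{\text{atoms of }P\}$; the hypothesis $\hat 1\notin J_P$ forces $\widehat m<\hat 1$, and $P$ must have at least one atom (otherwise $P=\{\hat 0\}$ and then $\hat 1\in J_P$). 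Thus $J_P\setminus\{\hat 0\}\subseteq P\setminus\{\hat 0,\hat 1\}$ has $\widehat m$ as a maximum, so $\nerve{J_P\setminus\{\hat 0\}}$ is contractible. The self-map $\iota\circ f$ of $P\setminus\{\hat 0,\hat 1\}$ factors through this contractible order complex, hence is nullhomotopic, while $\iota\circ f\le\mathrm{id}$ makes it homotopic to the identity; thus $\nerve{P\setminus\{\hat 0,\hat 1\}}$ is itself contractible. The only substantive input is the order-homotopy lemma, which is entirely standard, so I anticipate no real obstacle once the correct $f$ has been identified.
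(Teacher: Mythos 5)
Your proof is correct and is essentially the paper's own argument: the paper uses the same map $x\mapsto\bigvee\{\text{atoms below }x\}$ together with the inclusion, packaged as a Galois connection (whose induced homotopy equivalence of nerves is proved exactly by your order-homotopy lemma), restricted to $J_P\setminus\{\hat 0,\hat 1\}\subseteq P\setminus\{\hat 0,\hat 1\}$ when $\hat 1\in J_P$ and to $J_P\setminus\{\hat 0\}\subseteq P\setminus\{\hat 0,\hat 1\}$ (with contractibility coming from the maximum $\bigvee\{\text{atoms}\}$) when $\hat 1\notin J_P$, and then invokes Lemma \ref{susp} just as you do.
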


\begin{proof}
	It is well known that an adjunction between categories induces a homotopy equivalence of nerves: the unit and counit $\mathbf 1 \Rightarrow GF$ and $FG \Rightarrow \mathbf 1$ give rise to homotopies between the induced maps between nerves, whence the result. In the case at hand we have such an adjunction: we let $J_P \to P$ be the inclusion, and $P \to J_P$ takes an element $x$ to the join of all atoms below $x$. If $\hat 1 \in J_P$ then this restricts to an adjunction between $J_P \setminus \{\hat 0,\hat 1\}$ and $P \setminus \{\hat 0,\hat 1\}$, which together with Lemma \ref{susp} gives the result. If $\hat 1 \notin J_P$ then we get an adjunction between $J_P \setminus \{\hat 0\}$ and $P \setminus \{\hat 0,\hat 1\}$. But $\nerve {J_P \setminus \{\hat 0\}}$ is contractible since $J_P$ has a largest element, so then also $\nerve {P \setminus \{\hat 0,\hat 1\}}$ is contractible, so $\nerveud P$ is contractible. 
\end{proof}
%

%
%
%
%

\subsection*{Sheaf cohomology on posets}

\begin{para}
	We say that a subset $U \subseteq P$ is \emph{upwards closed} if $x \in U$ and $x \preceq y$ implies $y \in U$. We let $\EuScript U(P)$ denote the collection of upwards closed subsets of $P$. The set $\EuScript U(P)$ is itself a finite poset, partially ordered by inclusion. The upwards closed subsets form a topology on $P$, the \emph{Alexandrov topology.}
\end{para}

\begin{para}\label{construction-sheafcohomology}
	Let $\Phi \colon P \to \mathsf{Ch}_R$ be a functor from $P$ to cochain complexes. Let $U \subseteq P$ be upwards closed. We define a double complex
	$$ \mathfrak{B}(U,\Phi) = \bigoplus_{\varnothing \neq C \subseteq U \text{ a chain}} \Phi(\max C),$$
	where the direct sum is taken over all strictly increasing chains of elements in $U$. The vertical grading and vertical differentials are given by the internal grading and differential of $\Phi(x)$. The horizontal grading is given by $\vert C \vert - 1$, and the horizontal differential is an alternating sum over all ways of adding an extra element to the chain; note that if $C \subset C'$, then $\max C \preceq \max C'$, which produces a map $\Phi(\max C) \to \Phi(\max C')$. Equivalently, $\mathfrak{B}(U,\Phi)$ is the bar resolution computing the homotopy limit $\operatorname{holim}_U \Phi$.
\end{para}

\begin{rem}
	For our purposes we will never need the interpretation of $\mathfrak{B}(U,\Phi)$ as a homotopy limit. Nevertheless it may be helpful psychologically to know that it is indeed a homotopy limit, and the reader may at this point object that the bar resolution is usually only used to compute homotopy limits in simplicial model categories. But the model category of unbounded cochain complexes is not simplicial. Let us for completeness argue that this is a homotopy limit. If $\Phi \colon P \to \mathsf{Ch}_R$ is a functor, then define
	$$ R\Phi (x) = \bigoplus_{\substack{\varnothing \neq C \subseteq P \text{ a chain}\\ \min(C) \succeq x}} \Phi(\max(C))$$
	 with a differential as above given by adding elements to the chain. If $x \preceq y$ then we obtain $R\Phi(x) \to R\Phi(y)$ by projecting onto the summands for which $\min(C) \succeq y$.  We get a natural transformation $\Phi \to R\Phi$, mapping $\Phi(x)$ diagonally into the summands given by singleton chains, which is a quasi-isomorphism pointwise (argue as in Proposition \ref{filtrationprop}). Since fibrations in $\mathsf{Ch}_R$ are surjections, it follows that $R\Phi$ is Reedy fibrant (i.e.\ all maps $R\Phi(x) \to R\Phi(y)$ are surjections), so $\Phi \to R\Phi$ is a fibrant replacement. The limit of $R\Phi$ is precisely the bar resolution, and $P$ (being a poset) has cofibrant constants so that the homotopy limit is computed by any Reedy fibrant replacement. 
\end{rem}

\begin{para}\label{construction-sheafcohomology2}Assume now that $P$ has a smallest element $\hat 0$. We define similarly a double complex
	$$  \widetilde{\mathfrak{B}}(U,\Phi) = \bigoplus_{C \subseteq U \text{ a chain} } \Phi(\max C ),$$
	where we now allow also the empty chain; $\max(\varnothing) = \hat 0$ is the minimal element of $P$. Again the vertical differential and grading come from $\Phi$, the horizontal differential is an alternating sum over all ways of adding an element to a chain, but the horizontal grading is given by the cardinality $\vert C \vert$. The two cochain complexes $ \mathfrak{B}(U,\Phi)$ and $\widetilde{\mathfrak{B}}(U,\Phi)$ differ only by an ``augmentation'' by $\Phi(\hat 0)$ and by a degree shift; there is a short exact sequence
	$$ 0 \to \Phi(\hat  0) \to  \widetilde{\mathfrak{B}}(U,\Phi) \to  \mathfrak{B}(U,\Phi)[1] \to 0.$$
\end{para}

\begin{para}
	If $\Phi$ is the constant functor taking each element to the $R$-module $R$, then the complexes $\mathfrak B(U,\Phi)$ and $\widetilde{\mathfrak B}(U,\Phi)$ compute the cohomology (resp. the reduced cohomology) of the order complex $\nerve U$. Indeed, there is an evident isomorphism with the complexes of (reduced) cellular chains of $\nerve U$. 
\end{para}

\begin{prop}
	Let $\EuScript U(P)$ denote the poset of upwards closed subsets in $P$. Then $\mathfrak{B}(-,\Phi)$ and $\widetilde{\mathfrak{B}}(-,\Phi)$ define contravariant functors $\EuScript U(P) \to \mathsf{Ch}_R$. If $\Phi$ and $\Psi$ are quasi-isomorphic as functors $P \to \mathsf{Ch}_R$, then $\mathfrak{B}(-,\Phi)$ is quasi-isomorphic to $\mathfrak{B}(-,\Psi)$, and similarly for $\widetilde{\mathfrak{B}}$.
\end{prop}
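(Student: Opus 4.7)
The plan is to verify functoriality directly, and then to deduce preservation of quasi-isomorphisms from a column-filtration spectral sequence on the total complex.

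For functoriality, given an inclusion $U' \subseteq U$ of upwards closed subsets of $P$, I would define the restriction map $\mathfrak{B}(U, \Phi) \to \mathfrak{B}(U', \Phi)$ as the projection which is the identity on summands $\Phi(\max C)$ indexed by chains $C \subseteq U'$, and zero on summands indexed by chains $C \not\subseteq U'$. Commutativity with the vertical differential is immediate. For the horizontal differential: if $C \not\subseteq U'$, then every enlargement $C' \supset C$ also fails to lie in $U'$, so both $\pi \circ d$ and $d \circ \pi$ vanish on such a summand; if $C \subseteq U'$, then projecting the horizontal differential computed inside $U$ retains exactly the enlargements $C' \supset C$ still contained in $U'$, which is precisely the horizontal differential computed inside $U'$. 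Compatibility with composition is then automatic, and the same argument applies verbatim to $\widetilde{\mathfrak{B}}$; the summand indexed by the empty chain (with $\max \varnothing = \hat 0$) is common to every $\widetilde{\mathfrak{B}}(U,\Phi)$ and is preserved under restriction.

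For the second statement it is enough to treat the case of a natural transformation $\eta \colon \Phi \Rightarrow \Psi$ that is a quasi-isomorphism at every object of $P$ (the general case reduces to this by a zigzag). I would filter the total complex of $\mathfrak{B}(U,-)$ by the horizontal degree $p = |C|-1$. Because $P$ is finite there are only finitely many chains, so this filtration is bounded, and the resulting spectral sequence of the double complex converges strongly to the cohomology of the total complex. On the $E_1$-page, $\eta$ induces the map
\[
\bigoplus_{\substack{C \subseteq U \text{ chain}\\ |C|=p+1}} H^q(\Phi(\max C)) \longrightarrow \bigoplus_{\substack{C \subseteq U \text{ chain}\\ |C|=p+1}} H^q(\Psi(\max C)),
\]
which is an isomorphism summand-by-summand by the pointwise quasi-isomorphism hypothesis. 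Convergence then forces the induced map on the abutment to be an isomorphism. The same argument works for $\widetilde{\mathfrak{B}}$ once one includes the empty chain with $\max \varnothing = \hat 0$.

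The only real obstacle, modest as it is, is the compatibility check for the restriction maps with the horizontal differential; beyond that, the second half is a clean application of the bounded spectral sequence of a first-quadrant double complex, and there is nothing further to verify.
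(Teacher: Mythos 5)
Your proposal is correct, and it fills in exactly the standard verification that the paper leaves implicit (its entire proof is ``This is clear''): the restriction maps are the projections onto chains contained in the smaller upwards closed set, and quasi-isomorphism invariance follows by comparing the spectral sequences of the filtration by chain length, which is finite because $P$ is finite. The only small imprecision is the closing reference to a ``first-quadrant double complex'' --- the complexes $\Phi(x)$ need not be bounded, so the double complex need not be first-quadrant --- but this is harmless since your actual argument only uses that there are finitely many columns, hence a finite filtration, which already guarantees convergence and the comparison of abutments.
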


\begin{proof}
	This is clear.
\end{proof}

\begin{rem}When $P$ is understood as a topological space with the Alexandrov topology, then $\Phi$ can be seen as defining a complex of sheaves on this topological space. Indeed, a sheaf on a topological space is completely determined by its values on a basis of open sets, and any functor defined on basic open sets which satisfies the sheaf axiom for covers by basic opens sets extends uniquely to a sheaf on arbitrary open sets by Kan extension. Now a basis for the Alexandrov topology is given by open sets of the form $P^{\succeq x} = \{y \in P : y \succeq x\}$, and any functor $P \to \mathsf{Ch}_R$ defines a presheaf of cochain complexes on this basis; moreover, this presheaf will automatically satisfy the sheaf axiom, since any open cover of $P^{\succeq x}$ needs to contain the whole open set $P^{\succeq x}$ as one of its elements. The complex $\mathfrak B(U,\Phi)$ computes the cohomology of this complex of sheaves over the open set $U$ --- indeed, sheaf cohomology on a space $X$ can be defined as the homotopy limit of the values of the sheaf over the category of all open subsets of $X$. 
	However, this perspective will not really be used in this paper.

\end{rem}

\subsection*{Two spectral sequences}

\begin{para}
	If $x \in P$, then we denote by $P^{\preceq x}$ the the lower interval $\{y \in P : y \preceq x\}$. 
\end{para}

\begin{para}\label{filtration}Let $P, U$ and $\Phi$ be as in \S \ref{construction-sheafcohomology} and \S \ref{construction-sheafcohomology2}. Suppose moreover that we are given a strictly increasing function $\rho \colon P \to \mathbf Z$. We may define decreasing filtrations of the complexes $\mathfrak B(U,\Phi)$ and $\widetilde{\mathfrak B}(U,\Phi)$ as 
	$$F^p \mathfrak{B}(U,\Phi) = \bigoplus_{\substack{\varnothing \neq C \subseteq U \text{ a chain} \\ \rho(\max(C)) \geq p}} \Phi(\max C)$$
	and
	$$F^p \widetilde{\mathfrak{B}}(U,\Phi) = \bigoplus_{\substack{C \subseteq U \text{ a chain} \\ \rho(\max(C)) \geq p}} \Phi(\max C).$$
\end{para}

\begin{prop}\label{filtrationprop}The above filtration on $\mathfrak B(U,\Phi)$ has the property that $$ H^\bullet(\mathrm{Gr}^p {\mathfrak{B}}(U,\Phi)) = \bigoplus_{\substack{x \in U \\ \rho(x) = p}} \widetilde H^\bullet(\nerveu {U^{\preceq x}}, \Phi(x)).$$For $U \subsetneq P$, let $U_0 = U \cup \{\hat 0\}$. Then
	$$ H^\bullet(\mathrm{Gr}^p \widetilde{\mathfrak{B}}(U,\Phi)) = \bigoplus_{\substack{x \in U_0 \\ \rho(x) = p}} \widetilde H^\bullet(\nerveud {U_0^{\preceq x}}, \Phi(x)).$$
	
\end{prop}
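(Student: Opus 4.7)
The plan is to decompose each associated graded piece $\mathrm{Gr}^F_p$ as a direct sum of subcomplexes indexed by the value of $\max C$, and then to identify each summand, up to a degree shift, with the reduced simplicial cochain complex of a lower interval.

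First I would note that by construction of the filtration, $\mathrm{Gr}^F_p$ is the direct sum of copies of $\Phi(\max C)$ over chains $C$ with $\rho(\max C) = p$. The horizontal differential on the full complex sends a chain $C$ to a signed sum over all extensions $C \cup \{y\}$ with $y \notin C$ comparable to every element of $C$; for such a $y$, either $y \prec \max C$ (preserving the max, hence the filtration level) or $y \succ \max C$ (raising the max). Only the former terms survive in the associated graded, so the induced differential preserves the value of $\max C$. It follows that $\mathrm{Gr}^F_p$ splits as a direct sum of subcomplexes $K_x^\bullet$, one for each $x$ with $\rho(x) = p$, where $K_x^\bullet$ is spanned by those chains with $\max C = x$.

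Next, for $x \in U$, I would identify $K_x^\bullet$ with (a shift of) the reduced simplicial cochain complex of $\nerve{U^{\prec x}}$ with constant coefficients $\Phi(x)$. The key is that every chain with $\max C = x$ has the form $C = \{x\} \sqcup C'$ for a unique (possibly empty) chain $C' \subseteq U^{\prec x}$, and under this bijection the associated-graded differential becomes the simplicial coboundary operator on the order complex $\nerve{U^{\prec x}}$, augmented by the empty chain interpreted as the $(-1)$-dimensional simplex. The degree shift depends on the case: the horizontal degree of $C$ is $|C|-1=|C'|$ in $\mathfrak B$ (giving a shift of $+1$), and $|C|=|C'|+1$ in $\widetilde{\mathfrak B}$ (giving a shift of $+2$). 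This yields $H^n(K_x^\bullet) = \widetilde H^{n-1}(\nerve{U^{\prec x}}, \Phi(x))$ in the $\mathfrak B$ case and $H^n(K_x^\bullet) = \widetilde H^{n-2}(\nerve{U^{\prec x}}, \Phi(x))$ in the $\widetilde{\mathfrak B}$ case.

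Finally I would invoke Lemma \ref{susp}. The poset $U^{\preceq x}$ has top element $x$, so $\nerveu{U^{\preceq x}} \cong \Sigma \nerve{U^{\prec x}}$, which matches the $\mathfrak B$ formula. In the $\widetilde{\mathfrak B}$ case with $x \in U$, the poset $U_0^{\preceq x} = U^{\preceq x} \cup \{\hat 0\}$ has both $x$ as top and $\hat 0$ as bottom and cardinality at least $2$, so $\nerveud{U_0^{\preceq x}} \cong \Sigma^2 \nerve{U^{\prec x}}$, matching the $\widetilde{\mathfrak B}$ formula. The remaining case $x = \hat 0$ in $\widetilde{\mathfrak B}$ is immediate: the only chain with max $\hat 0$ is the empty one, so $K_{\hat 0}^\bullet = \Phi(\hat 0)$ concentrated in horizontal degree $0$, and correspondingly $\nerveud{\{\hat 0\}} \cong S^0$ satisfies $\widetilde H^\bullet(S^0, \Phi(\hat 0)) \cong H^\bullet(\Phi(\hat 0))$. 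The principal (but essentially combinatorial) obstacle is verifying that the associated-graded differential on $K_x^\bullet$ really matches the reduced simplicial coboundary, including getting the signs and the degree shifts right; everything else is a direct application of Lemma \ref{susp}.
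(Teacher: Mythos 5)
Your proof is correct and follows essentially the same route as the paper: decompose $\mathrm{Gr}^F_p$ into subcomplexes according to the maximal element $x$ of the chain (using that $\rho$ is strictly increasing so that extensions above the max raise the filtration and die in the associated graded), then identify each summand with reduced cochains of a lower-interval order complex with coefficients $\Phi(x)$. The only cosmetic difference is that the paper identifies chains with $\max C = x$ directly with the reduced cellular cochains of $\nerveu{U^{\preceq x}}$ (resp.\ $\nerveud{U_0^{\preceq x}}$), whereas you strip off $x$, compare with the augmented cochain complex of $\nerve{U^{\prec x}}$, and re-suspend via Lemma \ref{susp}; your degree shifts and the separate treatment of $x=\hat 0$ are handled correctly.
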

\begin{proof}
	Clearly we have
	$$ \mathrm{Gr}^p \mathfrak B(U,\Phi) = \bigoplus_{\substack{\varnothing \neq C \subseteq U \text{ a chain} \\ \rho(\max C) = p}} \Phi(\max C).$$
	If two chains in the complex have different maximal elements, then the two chains are incomparable. (This is where we use that $\rho$ was assumed strictly increasing.) Thus this complex becomes a direct sum of complexes indexed by the possible maximal elements, i.e.\ the elements $x \in U$ with $\rho(x)=p$. The summand corresponding to $x$ is given by $$\bigoplus_{\substack{C \subseteq U \text{ a chain} \\ \max C = x}} \Phi(x).$$ But the chains in $U$ whose largest element equal $x$ can be identified with the reduced cellular cochains of $\nerveu {U^{\preceq x}}$, so this summand is equal to $\widetilde C^\bullet(\nerveu {U^{\preceq x}}) \otimes \Phi(x)$.
	The cohomology of this complex is $\widetilde H^\bullet(\nerveu {U^{\preceq x}}, \Phi(x))$.
	
	The argument for $\widetilde{\mathfrak B}$ is similar.
\end{proof}

\begin{prop}\label{prop-sseq0} Under the above assumptions, there are spectral sequences
	$$ E_1^{pq} =  \bigoplus_{\substack{x \in U \\ \rho(x) = p}} \widetilde H^{p+q}(\nerveu {U^{\preceq x}}; \Phi(x)) \implies H^{p+q}(\mathfrak B(U,\Phi))$$
	and 
	$$ E_1^{pq} =  \bigoplus_{\substack{x \in U_0 \\ \rho(x) = p}} \widetilde H^{p+q}(\nerveud {U_0^{\preceq x}}; \Phi(x)) \implies H^{p+q}(\widetilde{\mathfrak B}(U,\Phi)).$$
\end{prop}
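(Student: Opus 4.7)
The plan is to recognize this as the standard spectral sequence associated to an increasing filtration of a cochain complex, and to read off the $E_1$-page from the preceding proposition. First I would observe that since $P$ is a finite poset and $\rho \colon P \to \mathbf Z$ is strictly increasing, the filtrations $F_p \mathfrak B(U,\Phi)$ and $F_p \widetilde{\mathfrak B}(U,\Phi)$ defined above are bounded: there exist integers $a < b$ (namely the minimum and maximum values of $\rho$ on $U$, resp.\ $U_0$) such that $F_{a-1} = 0$ and $F_b$ equals the entire complex. A bounded increasing filtration on a cochain complex produces a convergent spectral sequence
$$ E_1^{pq} = H^{p+q}(\mathrm{Gr}^F_p) \implies H^{p+q}(\text{total complex}),$$
by the standard construction (see any homological algebra reference).

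Next I would identify the $E_1$-term. This is precisely the content of the proposition immediately preceding the statement to be proved: the associated graded pieces are
$$H^\bullet(\mathrm{Gr}^F_p \mathfrak B(U,\Phi)) = \bigoplus_{\substack{x \in U \\ \rho(x) = p}} \widetilde H^\bullet(\nerveu{U^{\preceq x}}; \Phi(x)),$$
and similarly for $\widetilde{\mathfrak B}$ with $U$ replaced by $U_0$ and $\nerveu{-}$ replaced by $\nerveud{-}$. Substituting these into the general formula gives the two spectral sequences in the statement.

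Since both ingredients are essentially formal (boundedness of the filtration, and invocation of the preceding proposition), there is no significant obstacle; the proof is essentially immediate. The only slight care needed is to verify convergence, which follows from the finiteness of $P$ (guaranteeing that the filtrations have only finitely many nontrivial steps), and to check that the identification of $E_1$ with the $\widetilde H^\bullet$ of the order-complex variants is compatible with the grading conventions (the horizontal grading $|C|$ versus $|C|-1$, which accounts for the appearance of $\nerveud{-}$ in the $\widetilde{\mathfrak B}$ case versus $\nerveu{-}$ in the $\mathfrak B$ case). Both of these are handled by the preceding proposition, so the proof reduces to a single sentence invoking it together with standard spectral sequence machinery.
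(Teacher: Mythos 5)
Your proposal is correct and matches the paper's own proof, which simply invokes the spectral sequence of a filtered complex applied to the filtrations on $\mathfrak B$ and $\widetilde{\mathfrak B}$, with the $E_1$-page identified by the preceding proposition. Your extra remarks on boundedness of the filtration (from finiteness of $P$ and strict monotonicity of $\rho$) are a sensible elaboration of the same argument.
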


\begin{proof}
	This is the spectral sequence of a filtered complex, applied to the filtrations on $\mathfrak B$ and $\widetilde{\mathfrak B}$.
\end{proof}
\begin{para}
	
	These spectral sequences, in the context of sheaf cohomology on posets, were first considered by Bac{\l}awski \cite[Section 4]{baclawski}.
\end{para}

\begin{prop}\label{prop-sseq}Let $P, U$ and $\Phi$ be as in \S \ref{construction-sheafcohomology} and \S \ref{construction-sheafcohomology2}, and assume that $P$ is a lattice. Let $\rho \colon P \to \Z$ be strictly increasing. Let $U \subsetneq P$ be upwards closed and let $J_U$ be subposet consisting of all joins of minimal elements of $U$. Let $U_0 = U \cup \{\hat 0\}$ and $J_{U_0} = J_U \cup \{\hat 0\}$. The spectral sequences of Proposition \ref{prop-sseq0} may be simplified to
	$$ E_1^{pq} =  \bigoplus_{\substack{x \in J_U \\ \rho(x) = p}} \widetilde H^{p+q}(\nerveu {J_U^{\preceq x}}; \Phi(x)) \implies H^{p+q}(\mathfrak B(U,\Phi))$$
	and 
	$$ E_1^{pq} =  \bigoplus_{\substack{x \in J_{U_0} \\ \rho(x) = p}} \widetilde H^{p+q}(\nerveud {J_{U_0}^{\preceq x}}; \Phi(x)) \implies H^{p+q}(\widetilde{\mathfrak B}(U,\Phi)).$$
\end{prop}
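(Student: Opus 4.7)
The plan is to show that the simplified $E_1$-page of Proposition \ref{prop-sseq} is term-wise isomorphic to the $E_1$-page of Proposition \ref{prop-sseq0}; this then immediately gives the claim, since the underlying spectral sequence is unchanged and we are merely relabeling its first page. Concretely, for each $x \in U$ (resp.\ $x \in U_0$) indexing a summand in Proposition \ref{prop-sseq0}, I must show that either $\widetilde H^\bullet(\nerveu{U^{\preceq x}}; \Phi(x))$ (resp.\ $\widetilde H^\bullet(\nerveud{U_0^{\preceq x}}; \Phi(x))$) vanishes, or else the summand coincides with the corresponding term computed with $J_U$ (resp.\ $J_{U_0}$) in place of $U$ (resp.\ $U_0$). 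The key tool is Lemma \ref{bjorner}.

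The first step is to observe that $P' := U_0^{\preceq x}$ is a finite lattice with largest element $x$ and smallest element $\hat 0$. Since $U$ is upwards closed in the lattice $P$, the join in $P$ of two elements of $U_0$ that are $\preceq x$ again lies in $U_0$ and is $\preceq x$, so $P'$ is a finite join-semilattice with top and bottom, hence a lattice. I then identify $J_{P'}$: the atoms of $P'$ are precisely the minimal elements of $U$ lying below $x$, and since joins in $P'$ agree with joins in $P$, $J_{P'} = J_{U_0}^{\preceq x}$ (including $\hat 0$ as the empty join). Lemma \ref{bjorner} now applies: $\nerveud{U_0^{\preceq x}}$ is contractible when $x \notin J_{U_0}$, and is homotopy equivalent to $\nerveud{J_{U_0}^{\preceq x}}$ when $x \in J_{U_0}$. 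Because that homotopy equivalence is induced by an adjunction of posets, it is realized by simplicial maps; hence it gives a chain homotopy equivalence on reduced cellular cochains, which is preserved by tensoring with the cochain complex $\Phi(x)$. This directly simplifies the second spectral sequence to the form claimed.

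For the first spectral sequence, I transfer the dichotomy from $\nerveud$ to $\nerveu$ using Lemma \ref{susp}. For $x \in U$ we have $\hat 0 \notin U$ (as $U \subsetneq P$ is upwards closed), so $|U_0^{\preceq x}| \geq 2$, and Lemma \ref{susp} gives both $\nerveud{U_0^{\preceq x}} \cong \Sigma^2 \nerve{U^{\prec x}}$ and $\nerveu{U^{\preceq x}} \cong \Sigma \nerve{U^{\prec x}}$; analogous formulas hold with $J_U$, $J_{U_0}$ in place of $U$, $U_0$. Thus $\nerveud{U_0^{\preceq x}}$ is homotopy equivalent to $\Sigma \nerveu{U^{\preceq x}}$, and $\nerveud{J_{U_0}^{\preceq x}}$ to $\Sigma \nerveu{J_U^{\preceq x}}$, so the $\nerveud$-level statement translates, via a degree shift of $1$ in reduced cohomology with coefficients in $\Phi(x)$, into the required statement at the $\nerveu$-level.

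The main obstacle will be the low-dimensional bookkeeping: verifying that $U_0^{\preceq x}$ being only a join-semilattice-with-bottom is enough to conclude it is a lattice (the meet in $U_0^{\preceq x}$ need not be the meet in $P$, but exists as the join of lower bounds), and ensuring that the suspension formulas of Lemma \ref{susp} apply correctly in the boundary case when $U^{\prec x}$ is empty (i.e.\ $x$ is a minimal element of $U$, in which case $J_U^{\preceq x} = \{x\}$ and both $\nerveu{U^{\preceq x}}$ and $\nerveu{J_U^{\preceq x}}$ reduce to $S^0$, matching tautologically).
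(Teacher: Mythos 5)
Your proposal is correct and follows the paper's own route: the core step is exactly the paper's, namely applying Lemma \ref{bjorner} to the lattice $U_0^{\preceq x}$ to kill the summands with $x \notin J_{U_0}$ and to identify $\nerveud{U_0^{\preceq x}} \simeq \nerveud{J_{U_0}^{\preceq x}}$ otherwise. Your extra care (checking that $U_0^{\preceq x}$ is a lattice whose join-of-atoms subposet is $J_{U_0}^{\preceq x}$, and deducing the first spectral sequence from the second via the suspension identifications of Lemma \ref{susp}) merely spells out details the paper compresses into ``the argument for the first spectral sequence is analogous.''
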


\begin{proof}
	Let $x \in U_0$. Lemma \ref{bjorner} applied to the lattice $U_0^{\preceq x}$ shows that $\nerveud {U_0^{\preceq x}}$ is contractible if $x \notin J_{U_0}$, so in the second spectral sequence of Proposition \ref{prop-sseq0} we may remove all terms except those with $x \in J_{U_0}$. When $x \in J_{U_0}$, Lemma \ref{bjorner} also implies that $\nerveud {U_0^{\preceq x}} \simeq \nerveud {J_{U_0}^{\preceq x}}$. The result follows. The argument for the first spectral sequence is analogous.
\end{proof}

\subsection*{Semidirect product}

\begin{para}\label{semidirect}
	Let $G$ be a group acting on $P$. Define the \emph{semidirect product} $G \ltimes P$ as the following category: the objects of $G \ltimes P$ are the elements of $P$, and a morphism $x \to y$ is an element $g \in G$ such that $g \cdot x \preceq y$. Composition of morphisms is given by group multiplication. This is an instance of the Grothendieck construction, when we think of $P$ as a category and the action of $G$ on $P$ as defining a functor $G \to \mathsf{Cat}$. Note that there is a functor $G \ltimes P \to G$, and that $P$ sits inside $G \ltimes P$ as the subcategory consisting of all morphisms given by the identity element in $G$. Hence we get a ``short exact sequence'' of sorts,
	$$ P \to G \ltimes P \to G.$$
\end{para}

\begin{para}
	Suppose that $\Phi$ is a functor $G \ltimes P \to \mathsf{Ch}_R$. We may in particular restrict $\Phi$ to $P$, which allows us to make sense of $\mathfrak B(U,\Phi)$ and $\widetilde{\mathfrak B}(U,\Phi)$ for $U \subseteq P$ upwards closed. However, the action of $G$ furnishes an additional functoriality: if $g \cdot U = V$, for $U, V \subseteq P$ upwards closed, then multiplication by $g$ induces a map $\mathfrak B(U,\Phi) \to \mathfrak B(V,\Phi)$. Indeed, if $g \cdot x = y$ then $g$ gives a morphism $x \to y $ in $G \ltimes P$, so $\Phi(g)$ is a morphism $\Phi(x) \to \Phi(y)$; we use this to map each summand of $\mathfrak B(U,\Phi)$  to a corresponding summand of $\mathfrak B(V,\Phi)$. We record this as a proposition:
\end{para}

\begin{prop}
	If we are given $\Phi \colon G \ltimes P \to \mathsf{Ch}_R$, then $\mathfrak B(-,\Phi)$ and $\widetilde{\mathfrak B}(-,\Phi)$ define contravariant functors $G \ltimes \EuScript U(P) \to \mathsf{Ch}_R$. In particular, if $G_U \subseteq G$ is the subgroup of elements $g$ such that $g \cdot U = U$, then the cochain complexes $\mathfrak B(U,\Phi)$ and $\widetilde{\mathfrak B}(U,\Phi)$ are naturally representations of $G_U$. The spectral sequences of Proposition \ref{prop-sseq} are similarly equivariant. 
\end{prop}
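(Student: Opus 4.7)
The plan is to promote the partial construction outlined in \S\ref{semidirect} to a full functor, and then restrict to stabilizers. First, I would unpack the semidirect product category: by the Grothendieck construction, a morphism $U \to V$ in $G \ltimes \mathcal U(P)$ is an element $g \in G$ together with a comparison between $g \cdot U$ and $V$ in $\mathcal U(P)$ (the direction depending on whether one wants a contravariant or covariant extension of the already-contravariant functor on $\mathcal U(P)$ alone). The fact that $G$ acts on $\mathcal U(P)$ at all is immediate: any order automorphism of $P$ carries upwards-closed subsets to upwards-closed subsets.

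Next, I would exhibit the action on the cochain complexes. For $g \in G$ and a chain $C = (x_0 \prec \cdots \prec x_p) \subseteq U$, the relabeling $g \cdot C = (g \cdot x_0 \prec \cdots \prec g \cdot x_p)$ is again a chain, of the same cardinality, and with $\max(g \cdot C) = g \cdot \max C$. Since $g$ defines a morphism $\max C \to g \cdot \max C$ in $G \ltimes P$, functoriality of $\Phi$ provides a map $\Phi(\max C) \to \Phi(g \cdot \max C)$; summing over chains defines the required cochain map. I would then verify the routine compatibilities:
\begin{enumerate}[(i)]
\item compatibility with the vertical differential, which is automatic since $\Phi(g)$ is a chain map;
\item compatibility with the horizontal differential, which reduces to noting that the alternating sum depends only on positional data within the chain, and $g$ acts as a bijective relabeling preserving all such positions;
\item functoriality in $g$, from $(hg) \cdot C = h \cdot (g \cdot C)$ and $\Phi(hg) = \Phi(h)\Phi(g)$.
\end{enumerate}
The same argument applies to $\widetilde{\mathfrak B}$, with the additional remark that the empty chain contributes $\Phi(\hat 0)$, and $\hat 0$ is $G$-fixed since it is the unique minimum of $P$, so $\Phi(\hat 0)$ carries a canonical $G$-action via the functoriality of $\Phi$.

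Specializing to morphisms $U \to U$ in $G \ltimes \mathcal U(P)$, which are precisely the elements of $G_U$, yields the $G_U$-action on $\mathfrak B(U,\Phi)$ and $\widetilde{\mathfrak B}(U,\Phi)$. For equivariance of the spectral sequences of Proposition~\ref{prop-sseq}, I would check that the filtration $F_\bullet$ is $G_U$-stable; this requires $\rho$ to be $G$-invariant, which is implicit in the proposition's setting (and should be stated explicitly as a hypothesis if it is not already). Under this hypothesis, the summand indexed by a chain $C$ is sent to the summand indexed by $g \cdot C$, and $\rho(\max g \cdot C) = \rho(g \cdot \max C) = \rho(\max C)$, so $F_p$ is preserved and the spectral sequence automatically consists of $G_U$-representations with $G_U$-equivariant differentials.

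I do not expect a genuine obstacle; the proof is essentially bookkeeping. The only point of mild subtlety is aligning the conventions of contravariance between $\mathcal U(P)$ and $G \ltimes \mathcal U(P)$: one must decide once and for all whether a morphism $g \colon U \to V$ requires $g \cdot U \subseteq V$ or the reverse, and then use $g$ or $g^{-1}$ consistently on cochain complexes. Once this convention is fixed, the above verifications are immediate, and the statement about the spectral sequences is a formal consequence since the filtration is defined purely in terms of the $G$-invariant datum $\rho$.
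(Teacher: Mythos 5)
Your proof is correct and follows essentially the same route as the paper, which records this proposition as an immediate consequence of the observation that $g$ gives a morphism $\max C \to g \cdot \max C$ in $G \ltimes P$, hence a map $\Phi(\max C) \to \Phi(g \cdot \max C)$, and thus a relabeling-of-chains map of complexes. Your remark that equivariance of the spectral sequences requires $\rho$ to be $G$-invariant is well taken (the paper leaves this implicit); in the intended application, where $P = \Pi_n$ and $\rho$ is determined by the number of blocks, this invariance holds automatically.
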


\section{Cohomology of configuration spaces}

\subsection*{Sheaves on the partition lattice}

\begin{para}
	Let $S$ be a finite set. Let $\Pi_S$ denote the poset consisting of all equivalence relations on $S$, ordered by refinement: we say that $\sim \,\, \preceq \,\, \sim'$ if $x \sim y$ implies $x \sim' y$. The poset $\Pi_S$ is a lattice: the join of $\sim$ and $\sim'$ is the relation $\sim''$ defined by $x \sim'' y \iff x \sim y$ or $x \sim' y$; the meet of $\sim$ and $\sim'$ is given by  $x \sim'' y \iff x \sim y$ and $x \sim' y$. We call $\Pi_S$ the \emph{partition lattice}. When $S = \{1,\ldots,n\}$ we denote it simply $\Pi_n$.
\end{para}

\begin{para}We usually prefer to think of the elements of $\Pi_S$ as partitions of $S$ into nonempty disjoint subsets. The smallest element is the partition of $S$ into $1$-element blocks, and the largest element is the partition of $S$ into a single block. We adopt the somewhat abusive notation of denoting a typical partition $S = \coprod_{i\in I} T_i$ by the symbol $T$. 
\end{para}

\begin{para}\label{functor-from-tcdga}
	Let $A$ be a tcdga (\S \ref{definition-tcdga}). Then we obtain a functor $\Phi_A \colon \Pi_S \to \mathsf{Ch}_R$, for any finite set $S$. Specifically, if $T$  denotes the partition $ S= \coprod_{i\in I} T_i$, then
	$$ \Phi_A(T) = \bigotimes_{i\in I}A(T_i).$$
	If $T \preceq T'$ then $T'$ is obtained from $T$ by merging some blocks of the partition. The map $\Phi_A(T) \to \Phi_A(T')$ is given by applying the maps $A(T_i) \otimes A(T_j) \to A(T_i \sqcup T_j)$ furnished by the tcdga structure. 
\end{para}

\begin{para}\label{functor-from-shuffle-algebra}
	In fact to define a functor $\Pi_S \to \mathsf{Ch}_R$ we do not need the full data of a tcdga. Let us instead suppose that $S$ is totally ordered. Then every subset of $S$ inherits a natural total order as well, so that for each partition $\coprod_{i\in I} T_i$ of $S$ we get an induced total ordering on each of the blocks $T_i$. If $A$ is a commutative dg shuffle algebra (\S \ref{defn-shuffle}) then we may similarly define 
	$$ \Phi_A(T) =\bigotimes_{i\in I}A(T_i),$$
	where the maps $\Phi_A(T) \to \Phi_A(T')$ are now given by merging blocks using the shuffle algebra structure on $A$. 
\end{para}

\begin{para}
	Note that if $A$ and $A'$ are quasi-isomorphic as tcdga's (or as commutative dg shuffle algebras), then $\Phi_A$ and $\Phi_{A'}$ are quasi-isomorphic as functors. 
\end{para}

\begin{para}
	The difference between the constructions of \S\ref{functor-from-tcdga} and \S\ref{functor-from-shuffle-algebra} is that when $A$ is a tcdga we will not only get a functor $\Pi_n \to \mathsf{Ch}_R$ for all $n$, but in fact a functor
	$$ \mathbb S_n \ltimes \Pi_n \to \mathsf{Ch}_R$$
	from the semidirect product (\S \ref{semidirect}) of $\Pi_n$ with $\mathbb S_n$, where $\mathbb S_n$ acts in the obvious way on the partition lattice. This will have the consequence that the various spectral sequences we write down for the cohomology of configuration spaces are $\mathbb S_n$-equivariant. 
\end{para}

\subsection*{The functors $\CF$ and $\CD$, and the connection with configuration spaces}

\begin{defn}\label{defn-cf-cd}
	Let $U \subset \Pi_n$ be upwards closed, and let $A$ be a tcdga. We define
	\begin{align*}
	\CD(U,A) &= \mathfrak B(U,\Phi_A) \\
	\CF(U,A) &= \widetilde {\mathfrak B}(U,\Phi_A),
	\end{align*} 
	where $\mathfrak B$ and $ \widetilde {\mathfrak B}$ are defined in \S \ref{construction-sheafcohomology} and \S \ref{construction-sheafcohomology2}, and $\Phi_A$ is defined in \S \ref{functor-from-tcdga}. 
\end{defn}

\begin{rem}
	We observe that $\CD$ and $\CF$ are functors
	$$ (\mathbb S_n \ltimes \EuScript U(\Pi_n)) \times \mathsf{TCDGA}_R \to \mathsf{Ch}_R.$$ Note also that quasi-isomorphic tcdga's give rise to quasi-isomorphic functors. Finally we remark that if $A$ is only a commutative shuffle dg algebra, then we can define $\CD$ and $\CF$ similarly; the resulting functors are just not defined on the semidirect product with $\mathbb S_n$. 
\end{rem}

\begin{para}Let $X$ be a locally compact Hausdorff space space and let $\EuScript F$ be a complex of sheaves on $X^n$. For $T \in \Pi_n$, let $j(T) \colon X(T) \to X^n$ be the corresponding locally closed subset of $X^n$, and let $i(T) \colon \overline{X(T)} \to X^n$ be the inclusion of its closure. Note that each closed subset $\overline {X(T)}$ is a cartesian product of $X$ with itself: if $T$ has blocks $T_1,\ldots,T_k$, then $\overline {X(T)} \cong X^k$. Let $U\subseteq \Pi_n$ be upwards closed, $j \colon F(X,U) \to X^n$ the open inclusion, and $i \colon D(X,U) \to X^n$ its closed complement. The following result is a special case of \cite[Proposition 3.1]{spectralsequencestratification}, see also the results of \cite{getzler99} when $F(X,U)=F(X,n)$. \end{para}

\begin{prop}\label{proposition-petersen}
	There are quasi-isomorphisms
	$$ j_!j^\ast \EuScript F \simeq \bigoplus_{C \subseteq U \text{ \rm a chain}} i({\max C})_\ast i({\max C})^\ast \EuScript F$$
	and 
	$$ i_\ast i^\ast \EuScript F \simeq \bigoplus_{\varnothing \neq C \subseteq U \text{ \rm a chain}} i({\max C})_\ast i({\max C})^\ast \EuScript F$$
	where the right hand sides are considered as double complexes of sheaves: the bigradings and the differentials on the right hand side are given by exactly the same formulas as for the complexes $\CF$ and $\CD$, see \S\S \ref{construction-sheafcohomology}---\ref{construction-sheafcohomology2}.
\end{prop}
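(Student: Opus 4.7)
The plan is to verify both quasi-isomorphisms stalkwise, using that all the sheaves involved are extensions by zero from locally closed subsets of $X^n$. For any $x \in X^n$, let $T(x) \in \Pi_n$ be the unique partition with $x \in X(T(x))$; then $x \in \overline{X(T)}$ if and only if $T \preceq T(x)$, so the stalk of $i(T)_\ast i(T)^\ast \mathcal F$ at $x$ equals $\mathcal F_x$ when $T \preceq T(x)$ and vanishes otherwise. Passing to stalks therefore discards every chain $C \subseteq U$ with $\max C \not\preceq T(x)$, and the stalked complex on the right-hand side becomes a double complex indexed by chains in the subposet
$$ U^{\preceq T(x)} := \{T \in U : T \preceq T(x)\},$$
each summand being a copy of $\mathcal F_x$. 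In the notation of \S \ref{construction-sheafcohomology}--\S \ref{construction-sheafcohomology2} this is exactly $\mathfrak B(U^{\preceq T(x)}, \underline{\mathcal F_x})$ for the second assertion and $\widetilde{\mathfrak B}(U^{\preceq T(x)}, \underline{\mathcal F_x})$ for the first, with $\underline{\mathcal F_x}$ the constant functor.

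The decisive combinatorial input is that, because $U$ is upwards closed, the poset $U^{\preceq T(x)}$ is empty whenever $T(x) \notin U$ (equivalently $x \in F(X,U)$), and has $T(x)$ itself as a maximum element whenever $T(x) \in U$ (equivalently $x \in D(X,U)$); in the latter case $\nerve{U^{\preceq T(x)}}$ is a cone over $\nerve{U^{\preceq T(x)} \setminus \{T(x)\}}$ and hence contractible. A direct case analysis then yields: for $x \in F(X,U)$ the first complex has stalk $\mathcal F_x$ concentrated in degree $0$ (contributed by the empty chain), while the second has vanishing stalk; for $x \in D(X,U)$ the first complex has acyclic stalk (reduced cohomology of a cone), while the second has stalk $\mathcal F_x$ in degree $0$ (unreduced cohomology of a contractible space). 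These are exactly the stalks of $j_! j^\ast \mathcal F$ and $i_\ast i^\ast \mathcal F$, respectively.

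It remains to promote these stalkwise identifications to morphisms of complexes of sheaves. For the first assertion, the adjunction unit $j_! j^\ast \mathcal F \to \mathcal F$ lands in the $\vert C \vert = 0$ summand $i(\hat 0)_\ast i(\hat 0)^\ast \mathcal F = \mathcal F$; one checks stalkwise that its composition with the horizontal differential into the $\vert C \vert = 1$ summand vanishes, so it lifts to a morphism into the full double complex, which is a quasi-isomorphism by the preceding analysis. For the second assertion, the restriction maps $i_\ast i^\ast \mathcal F \to i(T)_\ast i(T)^\ast \mathcal F$, arising from the factorizations $\overline{X(T)} \hookrightarrow D(X,U) \hookrightarrow X^n$ via the adjunction unit on $D(X,U)$, assemble into a map into the $\vert C \vert = 1$ summand whose image consists of cocycles; this gives the required augmentation. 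The bulk of the technical work lies not in the quasi-isomorphism itself but in the bookkeeping, namely matching the alternating-sign conventions of the horizontal differentials of $\mathfrak B$ and $\widetilde{\mathfrak B}$ with the augmentation via a diagonal map into $\bigoplus_T i(T)_\ast i(T)^\ast \mathcal F$. Once the signs are fixed, naturality in $X$, $U$, and $\mathcal F$ is immediate from the functoriality of each ingredient.
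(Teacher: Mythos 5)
Your argument is correct and follows essentially the same route as the paper: both realize the natural maps as augmentations of the double complex and check acyclicity stalkwise, using that the lower interval $U_0^{\preceq T}$ of the point's stratum has a maximum, so its order complex is a contractible cone. The only minor divergence is that the paper deduces the statement for $i_\ast i^\ast \mathcal F$ from the one for $j_!j^\ast \mathcal F$ via the distinguished triangle $j_!j^\ast \mathcal F \to \mathcal F \to i_\ast i^\ast \mathcal F$, together with the observation that the two double complexes differ only by the empty-chain summand $\mathcal F$, whereas you prove it directly with a second stalk computation and an explicitly assembled augmentation map.
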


\begin{proof}Consider $X^n$ as a stratified space with a single open stratum given by $F(X,U)$; the remaining strata are the locally closed subsets $X(T) \subset X^n$ where $T \in U$. Then the poset of strata is given by $U \cup \{\hat 0\}$. In \cite[Section 3]{spectralsequencestratification}, a general construction is explained for resolving sheaves of the form $j_!j^\ast \EuScript F$ in this situation, and the double complex $\bigoplus_{C \subseteq U \text{ \rm a chain}} i({\max C})_\ast i({\max C})^\ast \EuScript F$ is exactly equal to the complex $L^\bullet(\EuScript F)$ defined in loc.\ cit.

%
	Now there is also a distinguished triangle
	$$ j_!j^\ast \EuScript F \to \EuScript F \to i_\ast i^\ast \EuScript F,$$
	and the double complex $\bigoplus_{C \subseteq U \text{ \rm a chain}} (i^{\max C})_\ast(i^{\max C})^\ast \EuScript F$ differs only by a shift in grading and a coaugmentation by the additional summand $\EuScript F$, corresponding to the empty chain. The result follows. \end{proof}

\begin{para}Since the proof is short, let us outline the proof of the quasi-isomorphism $j_!j^\ast \EuScript F \to L^\bullet(\EuScript F)$ from \cite{spectralsequencestratification}. Consider the summand of $\bigoplus_{C \subseteq U \text{ \rm a chain}} i({\max C})_\ast i({\max C})^\ast \EuScript F$ corresponding to the empty chain; this gives a copy of $\EuScript F$. The natural map $j_!j^\ast \EuScript F \to \EuScript F$ provides a map of complexes 
	$$j_!j^\ast \EuScript F \to \bigoplus_{C \subseteq U \text{ \rm a chain}} i({\max C})_\ast i({\max C})^\ast \EuScript F.$$We may consider this as an augmented complex and it is enough to prove that it is acyclic. This can be checked on stalks. If $x \in F(X,U)$ then the stalk of both sides at $x$ is just $\EuScript F_x$; note that all summands except the empty chain on the right hand side have support outside $F(X,U)$. If $x$ lies in some non-open stratum $X(T)$, then the stalk of the left hand side is zero and the stalk of the right hand side is given by the tensor product
	$$ \EuScript F_x \otimes \widetilde C^\bullet(\nerved {U_0^{\preceq T}}),$$
	where $U_0 = U \cup \{\hat 0\}$. But this order complex is contractible since $ {U_0^{\preceq T}}$ has a largest element.
\end{para}

\begin{thm}\label{mainthm}
	Let $X$ be a paracompact Hausdorff space, $\EuScript F$ a complex of sheaves of $R$-modules on $X$, $R$ a ring of finite global dimension. Let $U \subset \Pi_n$ be upwards closed. There are quasi-isomorphisms of cochain complexes
	\begin{align*}
	\CF(U,R\Gamma^\otimes(X,\EuScript F)) & \simeq  C^\bullet(X^n, D(X,U);\EuScript F^{\boxtimes n}), \\
	\CD(U,R\Gamma^\otimes(X,\EuScript F)) & \simeq  C^\bullet(D(X,U),\EuScript F^{\boxtimes n}),
	\end{align*} 
	which are natural in $X$, $\EuScript F$ and $U$. If $X$ is in additional locally compact, then 
	\begin{align*}
	\CF(U,R\Gamma_c^\otimes(X,\EuScript F)) & \simeq C^\bullet_c(F(X,U),\EuScript F^{\boxtimes n}), \\
	\CD(U,R\Gamma_c^\otimes(X,\EuScript F)) & \simeq C_c^\bullet(D(X,U),\EuScript F^{\boxtimes n}). 
	\end{align*} 
\end{thm}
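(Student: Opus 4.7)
The plan is to transport Proposition \ref{proposition-petersen} from $X^n$ to the world of (compactly supported) global sections. First, I would replace $\mathcal F$ on $X$ by a flat and flabby resolution $\mathcal L$ (Lemma \ref{flabbyflat}), and work with $\mathcal L^{\boxtimes n}$ on $X^n$. By the argument in the proof of Proposition \ref{rightcohomology}, each sheaf $\mathcal L^{\otimes T_j}$ on $X$ is flat and $c$-soft; hence the external tensor product $\mathcal L^{\otimes T_1} \boxtimes \cdots \boxtimes \mathcal L^{\otimes T_k}$ is again flat and $c$-soft on $X^k$. In particular, $\mathcal L^{\boxtimes n}$ computes $\mathcal F \otimes^L \cdots \otimes^L \mathcal F$ on $X^n$, and its restriction $i(T)^\ast \mathcal L^{\boxtimes n}$ to $\overline{X(T)}\cong X^k$ (the small diagonal associated to the partition $T = T_1 \sqcup \cdots \sqcup T_k$) is identified with $\mathcal L^{\otimes T_1} \boxtimes \cdots \boxtimes \mathcal L^{\otimes T_k}$.

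Next, applying Proposition \ref{proposition-petersen} to $\mathcal L^{\boxtimes n}$ yields quasi-isomorphisms of complexes of sheaves on $X^n$ whose terms are of the form $i(\max C)_\ast i(\max C)^\ast \mathcal L^{\boxtimes n}$, indexed by (nonempty) chains $C \subseteq U$. Each such term is a pushforward along a closed embedding of a flat $c$-soft sheaf, hence is itself flabby and $c$-soft, and thus both $\Gamma$- and $\Gamma_c$-acyclic. This justifies applying the functors $\Gamma(X^n,-)$ and $\Gamma_c(X^n,-)$ termwise to both resolutions, which will produce complexes quasi-isomorphic to $C^\bullet(X^n, D(X,U);\mathcal F^{\boxtimes n})$ and $C^\bullet(D(X,U),\mathcal F^{\boxtimes n})$ (respectively the compactly supported versions), using the identifications $R\Gamma(X^n, j_!j^\ast \mathcal L^{\boxtimes n}) \simeq C^\bullet(X^n, D(X,U);\mathcal F^{\boxtimes n})$ etc.

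To identify what we obtain with $\CF(U,-)$ and $\CD(U,-)$, I would use the Künneth-type identification
\[
\Gamma_{(c)}(X^n, i(T)_\ast(\mathcal L^{\otimes T_1}\boxtimes \cdots \boxtimes \mathcal L^{\otimes T_k})) \;\cong\; \Gamma_{(c)}(X,\mathcal L^{\otimes T_1}) \otimes \cdots \otimes \Gamma_{(c)}(X, \mathcal L^{\otimes T_k}),
\]
which holds for $\Gamma_c$ because each $\mathcal L^{\otimes T_j}$ is flat and $c$-soft on a locally compact Hausdorff space, and for ordinary $\Gamma$ by a standard argument using flabbiness and paracompactness. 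The right hand side is precisely $\Phi_A(T)$ with $A = R\Gamma_{(c)}^\otimes(X,\mathcal F)$. Under this identification the chain complex obtained by applying $\Gamma_{(c)}(X^n,-)$ to the resolution of Proposition \ref{proposition-petersen} becomes, term by term, the complex $\mathfrak B(U,\Phi_A)$ or $\widetilde{\mathfrak B}(U,\Phi_A)$.

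The remaining step is to match differentials: for $T \preceq T'$ in $U$ (so $T'$ merges certain blocks of $T$), the transition map in Proposition \ref{proposition-petersen} is induced by the adjunction unit for the closed embedding $\overline{X(T')}\hookrightarrow \overline{X(T)}$, which on compactly supported sections is precisely multiplication of two factors of $A$ along the corresponding block merger — in other words, the tcdga multiplication used to define $\Phi_A$ in \S\ref{functor-from-tcdga}. The equivariance in $U$ and in any $G\subseteq \mathbb S_n$ preserving $U$ is immediate since all constructions are functorial in the tcdga $A$ and the partition lattice. The main obstacle is the Künneth identification with compact supports: this is where flatness and $c$-softness of the resolution $\mathcal L^{\boxtimes n}$ are used essentially, and where one must be careful that pushing forward along closed embeddings preserves $c$-softness so that the termwise passage to $\Gamma_c$ does not introduce spurious higher derived functors.
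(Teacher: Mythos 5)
Your proposal follows the same route as the paper's own proof: resolve $\mathcal F$ by the flat flabby complex of Lemma \ref{flabbyflat}, apply Proposition \ref{proposition-petersen} to $\mathcal L^{\boxtimes n}$, use (c-)softness of the resulting summands to apply $\Gamma$ and $\Gamma_c$ termwise, and then identify the complex of sections chain-by-chain with $\mathfrak B(U,\Phi_A)$, resp.\ $\widetilde{\mathfrak B}(U,\Phi_A)$, matching the differentials with the tcdga multiplication. The one point where you deviate is also the one point worth discussing: where the paper asserts that the complex of (compactly supported) global sections is \emph{exactly equal} to $\CF(U,R\Gamma^\otimes_{(c)}(X,\mathcal F))$, you correctly observe that what is really needed is a section-level K\"unneth identification $\Gamma_{(c)}(X^k,\mathcal L^{\otimes T_1}\boxtimes\cdots\boxtimes\mathcal L^{\otimes T_k})\cong\bigotimes_j\Gamma_{(c)}(X,\mathcal L^{\otimes T_j})$. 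For $\Gamma_c$ your justification is the right one: for flat, $c$-soft sheaves on locally compact Hausdorff spaces the natural map $\Gamma_c(X,\mathcal F)\otimes\Gamma_c(Y,\mathcal G)\to\Gamma_c(X\times Y,\mathcal F\boxtimes\mathcal G)$ is an isomorphism, and this is exactly what underlies the paper's ``exactly equal''. So for the two compactly supported quasi-isomorphisms your argument is complete and, if anything, more explicit than the paper's.

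For the two statements with ordinary supports, however, your step is a genuine gap: there is no ``standard argument using flabbiness and paracompactness'' giving $\Gamma(X,\mathcal F)\otimes\Gamma(Y,\mathcal G)\cong\Gamma(X\times Y,\mathcal F\boxtimes\mathcal G)$. The natural map goes from the tensor product to the sections of the external product, and it fails to be surjective in general: already for the constant sheaf $\Z$ on a countably infinite discrete space $X$ (which is paracompact, locally compact, locally contractible, and on which every sheaf is flabby) the map $\bigl(\prod_{\mathbb N}\Z\bigr)\otimes\bigl(\prod_{\mathbb N}\Z\bigr)\to\prod_{\mathbb N\times\mathbb N}\Z$ misses, e.g., the characteristic function of $\{(i,j):i<j\}$, whose rows are linearly independent. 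So without some finiteness or compactness input the summandwise identification you need for $C^\bullet(X^n,D(X,U);\mathcal F^{\boxtimes n})$ and $C^\bullet(D(X,U),\mathcal F^{\boxtimes n})$ does not hold as stated, and one only gets a natural comparison map. To be fair, the paper's proof is no more detailed at this point --- it simply declares the two complexes equal --- so your write-up has the virtue of isolating exactly where additional care (a genuine K\"unneth hypothesis for ordinary sheaf cohomology, or restricting attention to the compactly supported statements) is required; but as written, ``standard argument'' cannot be accepted as a proof of that step. Two smaller remarks: pushforward along a closed embedding of a $c$-soft sheaf is $c$-soft (and soft pushes forward to soft), but it is not flabby in general --- softness is what you should invoke for $\Gamma$-acyclicity on paracompact spaces, as in Proposition \ref{rightcohomology}; and the flatness of $\mathcal L$ is what guarantees that $\mathcal L^{\boxtimes n}$ and its restrictions still represent the derived tensor powers of $\mathcal F$, which you do use implicitly when quoting Proposition \ref{proposition-petersen} for $\mathcal F^{\boxtimes n}$.
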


\begin{proof}Let $\EuScript L$ be the flat and flabby resolution of $\EuScript F$ constructed in Lemma \ref{flabbyflat}. Apply Proposition \ref{proposition-petersen} to get quasi-isomorphisms
	$$ j_!j^\ast \EuScript F^{\boxtimes n} \simeq \bigoplus_{C \subseteq U \text{ \rm a chain}} i({\max C})_\ast i({\max C})^\ast \EuScript L^{\boxtimes n} $$
	and 
	$$ i_\ast i^\ast \EuScript F^{\boxtimes n} \simeq \bigoplus_{\varnothing \neq C \subseteq U \text{ \rm a chain}} i({\max C})_\ast i({\max C})^\ast \EuScript L^{\boxtimes n}.$$
Note now that the right hand sides are complexes of soft sheaves (resp. $c$-soft if $X$ is locally compact), so these resolutions can be used to compute the cohomology of $j_!j^\ast \EuScript F$ and $i_\ast i^\ast$. 

Now the key observation is that the complex of global sections of the complex of sheaves $$ \bigoplus_{C \subseteq U \text{ \rm a chain}} i({\max C})_\ast i({\max C})^\ast \EuScript L^{\boxtimes n}$$ is \emph{isomorphic} to $\CF(U,R\Gamma^\otimes(X,\EuScript F))$. Indeed, the summands in the double complex given by $\CF(U,R\Gamma^\otimes(X,\EuScript F))$ correspond in exactly the same way to chains in the upwards closed subset $U$, and each summand is given precisely by global sections of tensor powers of the resolution $\EuScript L$. Moreover, general sheaf theory tells us that there is a quasi-isomorphism $R\Gamma(X^n,j_!j^\ast \EuScript F^{\boxtimes n}) \simeq C^\bullet(X^n,D(X,U);\EuScript F^{\boxtimes n})$; this proves the first quasi-isomorphism of the theorem. Similarly the compactly supported global sections of the same complex of sheaves is isomorphic to $\CF(U,R\Gamma_c^\otimes(X,\EuScript F))$, and general sheaf theory also tells us that there is a quasi-isomorphism $R\Gamma_c(X^n,j_!j^\ast \EuScript F^{\boxtimes n}) \simeq C^\bullet_c(F(X,U),\EuScript F^{\boxtimes n})$, etc. This finishes the proof. \end{proof}

\begin{para}
	Let us record some special cases of Theorem \ref{mainthm}.
\end{para}

\begin{cor}\label{cor-cdga}
	Let $X$ be a paracompact and locally compact Hausdorff space. Let $A$ be a cdga model for the compactly supported cochains $C^\bullet_c(X,\Q)$. If $A$ is considered as a constant tcdga, then $\CF(U,A) \simeq H^\bullet_c(F(X,U),\Q)$. The functor $$\mathbb S_n \ltimes \EuScript U(\Pi_n) \to \mathsf{Ch}_R$$ which assigns to $U \subset \Pi_n$ upwards closed the compactly supported cohomology $H^\bullet_c(F(X,U),\Q)$ depends up to quasi-isomorphism only on a cdga model of algebra $C^\bullet_c(X,\Q)$.  
\end{cor}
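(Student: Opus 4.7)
The proof is essentially a matter of assembling results already established in the paper, and the plan is to chain together three observations: that Theorem \ref{mainthm} already gives a quasi-isomorphism involving the tcdga $R\Gamma_c^\otimes(X,\Q)$; that this tcdga is in turn quasi-isomorphic to the constant tcdga on a genuine cdga; and that any two cdga models for $C^\bullet_c(X,\Q)$ yield, after promotion to constant tcdgas, quasi-isomorphic tcdgas, whereupon functoriality of $\CF(U,-)$ closes the argument.

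First I would apply Theorem \ref{mainthm} with the constant sheaf $\mathcal F = \Q$; since $\Q^{\boxtimes n}$ is the constant sheaf $\Q$ on $X^n$, this yields an $\mathbb S_n$-equivariant quasi-isomorphism
\[
\CF(U,R\Gamma_c^\otimes(X,\Q)) \simeq C^\bullet_c(F(X,U),\Q)
\]
that is natural in $U$. Next, the theorem preceding Corollary \ref{constant-APL} tells us that $R\Gamma_c^\otimes(X,\Q)$ is quasi-isomorphic, as a tcdga, to the constant tcdga on the cdga $R\Gamma_c^{TW}(X,\Q)$. This reduces the problem to comparing the constant tcdgas $\underline{R\Gamma_c^{TW}(X,\Q)}$ and $\underline{A}$. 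But $R\Gamma_c^{TW}(X,\Q)$ is itself a cdga model for $C^\bullet_c(X,\Q)$ (this is the content of Corollary \ref{constant-APL} for CW complexes; for more general $X$ one simply takes this as what ``cdga model'' means), so both $A$ and $R\Gamma_c^{TW}(X,\Q)$ are cdga models of the same object and are therefore connected by a zig-zag of cdga quasi-isomorphisms. Applying the ``constant tcdga'' functor of \S\ref{constant-tcdga} to this zig-zag produces a zig-zag of tcdga quasi-isomorphisms between $\underline{A}$ and $\underline{R\Gamma_c^{TW}(X,\Q)}$.

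Finally, the remark after the definition of $\CF$ (that quasi-isomorphic tcdgas give quasi-isomorphic values of $\CF$) allows me to stitch everything together into a quasi-isomorphism $\CF(U,A) \simeq C^\bullet_c(F(X,U),\Q)$, and every step in the chain is natural in $U$ and compatible with the $\mathbb S_n$-action, giving the functoriality statement. The one point that requires a little care---and what I expect to be the main obstacle worth writing out---is the claim that two cdga models of $C^\bullet_c(X,\Q)$ are genuinely zig-zag quasi-isomorphic \emph{as cdgas} rather than just as $\mathbb E_\infty$-algebras; over $\Q$ this is a consequence of the Quillen equivalence between cdgas and $\mathbb E_\infty$-algebras, or equivalently of the Mandell uniqueness theorem already used to prove Theorem \ref{APL}, so no new input is needed.
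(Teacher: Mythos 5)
Your proposal is correct and follows essentially the same route the paper intends: Theorem \ref{mainthm} with $\mathcal F=\Q$, the comparison of $R\Gamma_c^\otimes(X,\Q)$ with the constant tcdga on $R\Gamma_c^{TW}(X,\Q)$ (this is the unnumbered theorem following Theorem \ref{navarro-aznar-theorem}, not Theorem \ref{APL} itself), a zig-zag of cdga quasi-isomorphisms between any two cdga models over $\Q$, and invariance of $\CF(U,-)$ under tcdga quasi-isomorphism. The only caveat worth recording is the one you already flag, namely what ``cdga model'' means for a general paracompact locally compact Hausdorff space, and your resolution of it is the intended one.
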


\begin{cor}
	Let $X$ be a paracompact and locally compact Hausdorff space. Suppose that the compactly supported cochain algebra $C^\bullet_c(X,Q)$ is formal. Then the functor $U \mapsto H^\bullet_c(F(X,U),\Q)$ depends only on the cohomology ring $H^\bullet_c(X,\Q)$.  
	\end{cor}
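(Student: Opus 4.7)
The plan is to derive this corollary directly from Corollary \ref{cor-cdga} by exploiting the formality hypothesis. Recall that a cdga $A$ is called \emph{formal} precisely when there exists a zig-zag of cdga quasi-isomorphisms between $A$ and its cohomology $H^\bullet(A)$ equipped with identically zero differential. Under our hypotheses on $X$, Corollary \ref{constant-APL} guarantees that cdga models for $C^\bullet_c(X,\Q)$ exist, so saying that $C^\bullet_c(X,\Q)$ is formal means that any such cdga model is quasi-isomorphic as a cdga to $H^\bullet_c(X,\Q)$ endowed with zero differential. In particular, $H^\bullet_c(X,\Q)$, viewed as a cdga with zero differential and its usual cup product, is itself a cdga model for $C^\bullet_c(X,\Q)$.

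Next I apply Corollary \ref{cor-cdga} to this specific model $A = H^\bullet_c(X,\Q)$, regarded as a constant tcdga via the construction of \S\ref{constant-tcdga}. This yields a quasi-isomorphism
\[ \CF(U, H^\bullet_c(X,\Q)) \simeq H^\bullet_c(F(X,U),\Q) \]
for every upwards closed $U \subseteq \Pi_n$, functorial in $U$ and equivariant for the action of $\mathbb S_n$ on $\Pi_n$.

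Finally, I observe that the left-hand side manifestly depends only on the cohomology ring $H^\bullet_c(X,\Q)$: the construction $\CF(U,-)$ is defined combinatorially from the tcdga structure (see \S\ref{functor-from-tcdga}), and when applied to the constant tcdga associated to a cdga with zero differential, the only input that enters the definition is the underlying graded-commutative multiplication. Hence the corollary follows. There is no real obstacle here; all the substantive content has been packaged into Theorem \ref{mainthm} and Corollary \ref{cor-cdga}, and the present statement is a purely formal unwinding of the definition of formality.
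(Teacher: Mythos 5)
Your proposal is correct and matches the paper's intent exactly: the paper records this corollary (without a separate proof) as an immediate consequence of Corollary \ref{cor-cdga}, the point being precisely that formality lets one take $H^\bullet_c(X,\Q)$ with zero differential as the cdga model, so that $\CF(U,-)$ evaluated on the associated constant tcdga depends only on the ring structure. Nothing is missing.
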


\begin{cor}
	Let $X$ be an oriented manifold. The functor $U \mapsto H_\bullet(F(X,U),\Z)$ depends up to quasi-isomorphism only on the non-unital $\mathbb E_\infty$-algebra structure on $C^\bullet_c(X,\Z)$; equivalently, it depends only on the intersection product on the singular chains of $X$.  
\end{cor}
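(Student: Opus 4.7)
The plan is to chain together Theorem \ref{mainthm}, the preceding theorem identifying $R\Gamma_c^\otimes$ with the $\mathbb E_\infty$-algebra of compactly supported cochains, and Poincar\'e duality for oriented manifolds.

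First, Theorem \ref{mainthm} applied with $\mathcal F = \Z$ yields a natural quasi-isomorphism $\CF(U,R\Gamma_c^\otimes(X,\Z))\simeq C^\bullet_c(F(X,U),\Z)$ of $\mathbb S_n$-equivariant cochain complexes. Since $\CF(U,-)$ takes quasi-isomorphisms of tcdga's to quasi-isomorphisms of cochain complexes, the functor $U\mapsto H^\bullet_c(F(X,U),\Z)$ depends only on the quasi-isomorphism class of $R\Gamma_c^\otimes(X,\Z)$ as a tcdga. Now invoke the theorem immediately preceding this corollary, which asserts that $R\Gamma_c^\otimes(X,R)$ is weakly equivalent as a dg $\I_+$-algebra to the non-unital $\mathbb E_\infty$-algebra $C^\bullet_c(X,R)$; together with Theorem \ref{bousfield} this says that the tcdga $R\Gamma_c^\otimes(X,\Z)$ is determined up to quasi-isomorphism by $C^\bullet_c(X,\Z)$ considered as a non-unital $\mathbb E_\infty$-algebra. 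Composing these two functorialities, the functor $U\mapsto H^\bullet_c(F(X,U),\Z)$ depends only on $C^\bullet_c(X,\Z)$ with its $\mathbb E_\infty$-algebra structure.

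Second, since $X$ is an oriented $d$-manifold, the open subset $F(X,U)\subseteq X^n$ inherits an orientation from $X^n$, so Poincar\'e duality furnishes a canonical isomorphism $H^k_c(F(X,U),\Z)\cong H_{nd-k}(F(X,U),\Z)$, natural in $U$. Converting the previous paragraph via this isomorphism gives the first claim of the corollary.

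Finally, for the ``equivalently'' clause, one must identify the non-unital $\mathbb E_\infty$-algebra $C^\bullet_c(X,\Z)$ with the chain complex $C_\bullet(X,\Z)$ equipped with the intersection product. On cohomology, Poincar\'e duality sends the cup product on $H^\bullet_c(X,\Z)$ to the intersection product on $H_\bullet(X,\Z)$; the required statement is the $\mathbb E_\infty$-refinement, namely that there is a zig-zag of $\mathbb E_\infty$-quasi-isomorphisms between $C^\bullet_c(X,\Z)$ and an $\mathbb E_\infty$-model of chains with intersection product (as constructed, for example, via \v{C}ech cochains on a good cover together with transversality for a suitable triangulation, or by an off-the-shelf Poincar\'e duality for $\mathbb E_\infty$-algebras on oriented manifolds). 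I expect this last step to be the main obstacle: the first two paragraphs are formal consequences of results already established in the paper, but pinning down a precise $\mathbb E_\infty$-enhancement of Poincar\'e duality requires either citing the literature or constructing an explicit intermediate model. Once this identification is granted, the two formulations of the corollary are interchangeable.
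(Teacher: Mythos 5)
Your proposal is correct and follows essentially the route the paper intends: Theorem \ref{mainthm} reduces $H^\bullet_c(F(X,U),\Z)$ to the quasi-isomorphism class of the tcdga $R\Gamma_c^\otimes(X,\Z)$, the fibrancy/uniqueness results (Theorem \ref{bousfield} and the theorem preceding the corollary) identify that tcdga with the non-unital $\mathbb E_\infty$-algebra $C^\bullet_c(X,\Z)$, and Poincar\'e duality for the oriented open submanifold $F(X,U)\subseteq X^n$ converts compactly supported cohomology to homology. Your caveat about the final ``equivalently'' clause is apt but not a gap relative to the paper, which treats the identification of the $\mathbb E_\infty$-structure on $C^\bullet_c(X,\Z)$ with the chain-level intersection product as an interpretive restatement rather than proving an $\mathbb E_\infty$-enhanced Poincar\'e duality.
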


\begin{rem}\label{algebraic}
	The fact that our methods are purely sheaf-theoretic means in particular that the results can be applied equally well in algebro-geometric settings, with only minor modifications. For example, if $X$ is an algebraic variety and $\EuScript F$ is a complex of $\ell$-adic \'etale sheaves on $X$, then we can define tcdga's $R\Gamma^\otimes(X,\EuScript F)$ and $R\Gamma^\otimes_c(X,\EuScript F)$ by the same procedure as in Section \ref{section:commutativecochains}. Namely, flat resolutions exist in any ringed topos, and the Godement resolution exists in any topos with enough points; the appropriate topos in this case is the pro-\'etale site \cite{bhattscholzeproetale}. With $\Q_\ell$-coefficients one can also define the functors $R\Gamma^{TW}$ and $R\Gamma_c^{TW}$ as in Theorem \ref{navarro-aznar-theorem}. The proofs of Proposition \ref{proposition-petersen} and Theorem \ref{mainthm} go through with no changes. The spectral sequences obtained from our constructions (as we will explain shortly) will in the algebraic case be spectral sequences of $\ell$-adic Galois representations.
	
	However, let us point out that in defining the ``resolution'' $L^\bullet(\EuScript F)$ used in the proof of Proposition \ref{proposition-petersen} we do use crucially that the functors $i^\ast$ and $i_\ast$ are $t$-exact. So we do need slightly more than just a naked six functors formalism; we also need a $t$-structure with expected properties.
\end{rem}

\subsection*{Spectral sequences}

\begin{thm}\label{sseq-thm}
	Let $X$, $\EuScript F$ and $U$ be as in Theorem \ref{mainthm}. Each of the four cases of Theorem \ref{mainthm} produces a spectral sequence, which can be written as
	$$ E_1^{pq} =  \bigoplus_{\substack{T \in {J_{U_0}} \\ \vert T \vert = n-p}} \bigoplus_{i+j=p+q}\widetilde H^i(\nerveud {J_{U_0}^{\preceq T}}; H^j(\overline{X(T)},i(T)^\ast\EuScript F^{\boxtimes n})) \implies H^{p+q}(X^n,D(X,U);\EuScript F^{\boxtimes n})$$
	$$ E_1^{pq} =  \bigoplus_{\substack{T \in J_U \\ \vert T \vert = n-p}} \bigoplus_{i+j=p+q}\widetilde H^i(\nerveu {J_U^{\preceq T}}; H^j(\overline{X(T)},i(T)^\ast\EuScript F^{\boxtimes n})) \implies H^{p+q}(D(X,U),\EuScript F^{\boxtimes n})$$
	$$ E_1^{pq} =  \bigoplus_{\substack{T \in J_{U_0} \\ \vert T \vert = n-p}} \bigoplus_{i+j=p+q}\widetilde H^i(\nerveud {J_{U_0}^{\preceq T}}; H^j_c(\overline{X(T)},i(T)^\ast\EuScript F^{\boxtimes n})) \implies H^{p+q}_c(F(X,U),\EuScript F^{\boxtimes n})$$
	$$ E_1^{pq} =  \bigoplus_{\substack{T \in J_U \\ \vert T \vert = n-p}} \bigoplus_{i+j=p+q}\widetilde H^i(\nerveu {J_U^{\preceq T}}; H^j_c(\overline{X(T)},i(T)^\ast\EuScript F^{\boxtimes n})) \implies H^{p+q}_c(D(X,U),\EuScript F^{\boxtimes n})$$
	respectively. Here $\vert T \vert$ denotes the number of blocks in the partition $T$.
\end{thm}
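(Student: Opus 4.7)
The plan is to combine Proposition \ref{prop-sseq}, which produces a spectral sequence from a filtered bar complex, with Theorem \ref{mainthm}, which identifies the cohomology of such a complex with the appropriate (relative, compactly supported, etc.) cohomology of the configuration-like space. The input is the functor $\Phi_A\colon \Pi_n \to \mathsf{Ch}_R$ of \S \ref{functor-from-tcdga}, so that $\CF(U,A) = \widetilde{\mathfrak{B}}(U,\Phi_A)$ and $\CD(U,A) = \mathfrak{B}(U,\Phi_A)$.

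First I would exhibit the strictly increasing function $\rho\colon \Pi_n \to \mathbf{Z}$ needed by Proposition \ref{prop-sseq}. The natural choice is $\rho(T) = n - |T|$; since refinement of partitions strictly increases the number of blocks, this is strictly increasing on $\Pi_n$ with its refinement order. Consequently the indexing $|T| = n-p$ appears in the statement.

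Next I would identify the coefficient chain complex $\Phi_A(T)$ appearing in Proposition \ref{prop-sseq}. For $T = T_1 \sqcup \cdots \sqcup T_k$ in $\Pi_n$, the diagonal identifies $\overline{X(T)} \cong X^k$ in such a way that $i(T)^\ast \mathcal{F}^{\boxtimes n} = \mathcal{F}^{\otimes T_1} \boxtimes \cdots \boxtimes \mathcal{F}^{\otimes T_k}$. For $A = R\Gamma^\otimes_c(X,\mathcal{F})$ (built from the flat, flabby resolution $\mathcal{L}$ of Lemma \ref{flabbyflat}), one has by definition
\[ \Phi_A(T) = \Gamma_c(X, \mathcal{L}^{\otimes T_1}) \otimes \cdots \otimes \Gamma_c(X, \mathcal{L}^{\otimes T_k}). \]
Since each $\mathcal{L}^{\otimes T_i}$ is flat and $c$-soft (as in the proof of Proposition \ref{rightcohomology}), the Künneth formula identifies this tensor product with a model for $R\Gamma_c(\overline{X(T)}, i(T)^\ast \mathcal{F}^{\boxtimes n})$, whose cohomology is $H^\bullet_c(\overline{X(T)}, i(T)^\ast \mathcal{F}^{\boxtimes n})$. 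The three remaining cases work identically.

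Applying Proposition \ref{prop-sseq} with the above $\rho$ to $\CF$ (using $\widetilde{\mathfrak{B}}$, which gives the $J_{U_0}$ and $\nerveud{\cdot}$ variant) and to $\CD$ (using $\mathfrak{B}$, which gives the $J_U$ and $\nerveu{\cdot}$ variant) then yields spectral sequences whose $E_1$ page is a direct sum of terms $\widetilde H^{p+q}(\nerveud{J_{U_0}^{\preceq T}}; \Phi_A(T))$ (resp.\ $\nerveu{J_U^{\preceq T}}$) over $T$ with $|T|=n-p$. By Theorem \ref{mainthm} the abutment is the stated cohomology group. To arrive at the precise $\bigoplus_{i+j=p+q}$ decomposition in the statement I would then observe that $\widetilde H^{p+q}(\nerveud{J_{U_0}^{\preceq T}}; \Phi_A(T))$ is by definition the cohomology of the double complex $\widetilde C^\bullet(\nerveud{J_{U_0}^{\preceq T}}) \otimes \Phi_A(T)$, and the stated decomposition is the associated graded with respect to the second filtration (with the tacit understanding, in the general case, that one may need a further universal-coefficient spectral sequence when the relevant cohomologies are not flat). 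The only real work is bookkeeping: keeping track of the degree conventions, the $J_U$ vs.\ $J_{U_0}$ dichotomy, and the Künneth identification; no new ideas beyond Proposition \ref{prop-sseq} and Theorem \ref{mainthm} are required.
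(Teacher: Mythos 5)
Your proposal is correct and follows essentially the same route as the paper, whose entire proof is the one line ``This is Proposition \ref{prop-sseq} specialized to the current situation.'' Your write-up simply makes that specialization explicit (the choice $\rho(T)=n-|T|$, the identification of $\Phi_A(T)$ with a soft model computing $H^\bullet_c(\overline{X(T)},i(T)^\ast\mathcal F^{\boxtimes n})$, the abutment via Theorem \ref{mainthm}), and your caveat about the K\"unneth/universal-coefficient splitting of the hypercohomology coefficients is, if anything, more careful than the paper itself.
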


\begin{proof}
	This is Proposition \ref{prop-sseq} specialized to the current situation.
\end{proof}

\begin{rem}Suppose that the partition $T$ has blocks $T_1,\ldots,T_k$ of size $n_1,\ldots,n_k$ respectively. Then $\overline {X(T)} \cong X^k$, and the sheaf $i(T)^\ast \EuScript F^{\boxtimes n}$ is given by 
	$$ \EuScript F^{\otimes n_1} \boxtimes \ldots \boxtimes \EuScript F^{\otimes n_k}.$$
	Hence in all cases the $E_1$-differential of the spectral sequence depends only on: (a) the reduced cohomology of lower intervals in the poset $J_U$, (b) the (compactly supported) cohomology of $X$ with coefficients in the tensor powers of the sheaf $\EuScript F$. The $E_1$-differential is given by multiplication in the twisted commutative algebra given by the direct sum
	$$ \bigoplus_{n \geq 1} H^\bullet(X,\EuScript F^{\otimes n}),$$
	i.e. the cohomology of the tcdga $R\Gamma^{\otimes}(X,\EuScript F)$. 
 \end{rem}

\subsection*{Comparison with the spectral sequence of Cohen--Taylor--Totaro}

\begin{para}\label{comparisonstart}Suppose that $M$ is an oriented $d$-dimensional manifold. By Poincar\'e duality there is a natural isomorphism
$$ H_{d-k}(M,\Z)\cong H^k_c(M,\Z)  $$
relating homology to cohomology with compact supports. Thus if $X$ is an oriented manifold, the third spectral sequence of Theorem \ref{sseq-thm} yields a spectral sequence computing the homology of the configuration space $F(X,n)$. We claim that the resuling spectral sequence is the \emph{dual} of the Cohen--Taylor--Totaro spectral sequence \cite{cohentaylor,totaro} computing the \emph{cohomology} of $F(X,n)$. More precisely, Totaro constructed this spectral sequence simply as the Leray spectral sequence in sheaf cohomology associated to the inclusion $F(X,n) \hookrightarrow X^n$, and our claim is that the third spectral sequence of Theorem \ref{sseq-thm} coincides with the \emph{homological} Leray spectral sequence of $F(X,n) \hookrightarrow X^n$, up to reindexing and a degree shift. 
\end{para}

\begin{para}
	Before proving this claim let us recall how the Leray spectral sequence is constructed. Let $Y$ be a space, $f \colon Y \to \mathrm{pt}$ the map to a point. Then $H^\bullet(Y,\Z)$ is the cohomology of the complex $Rf_\ast\Z$ in the derived category $D^+(\mathrm{pt})$, where $\Z$ denotes the constant sheaf on $Y$ (which might be more naturally denoted $f^\ast \Z$). Any map from $Y$ to another space $Z$ gives us a factorization $$Y \stackrel g\longrightarrow Z \stackrel h\longrightarrow \mathrm{pt},$$
	so that we can write
	$ Rf_\ast \Z \simeq Rh_\ast Rg_\ast \Z$. Now the object $Rg_\ast \Z$ of $D^+(Z)$ has a canonical filtration induced by the $t$-structure on $D^+(Z)$, which is uniquely determined in the filtered derived category by the fact that its $q$th graded summand is isomorphic to $R^qg_\ast \Z[-q]$. After applying $Rh_\ast$ we obtain also a filtration of $Rf_\ast \Z$, and the spectral sequence of this filtered complex is the Leray spectral sequence $E_2^{pq} = H^p(Z,R^q g_\ast \Z) \implies H^{p+q}(Y,\Z)$. If the spaces involved are of finite type then the homology of $Y$ is given by the Verdier dual $\mathbb D Rf_\ast \Z \simeq Rf_! \mathbb D \Z$, and the filtration of $\mathbb D Rf_\ast \Z \simeq \mathbb D Rh_\ast Rg_\ast \Z$ induced by the filtration of $Rg_\ast \Z$ is called the homological Leray spectral sequence. 
\end{para}

\begin{para}\label{comparison}
	We will also need to recall the relationship between the cohomology of the partition lattice and the cohomology of configuration spaces of points in Euclidean space. Consider the configuration space $F(\R^d,n)$. Its cohomology is nonzero only in degrees divisible by $d-1$, and there is an isomorphism
	$$ H^{q(d-1)}(F(\R^d,n),\Z) \cong \bigoplus_{\substack{T \in \Pi_n \\ \vert T \vert = n-q}} H^q(\nerveud{\Pi_n^{\preceq T}},\Z),$$
	which for example follows from the Goresky--MacPherson formula \cite{goreskymacphersonstratifiedmorsetheory}, see also \cite[Lemma 3]{totaro}.
\end{para}

\begin{para}\label{comparisonend}
	Let us now assume that $X$ is an oriented $d$-dimensional manifold. Verdier duality on $X$ furnishes an isomorphism $\mathbb D \Z \cong \Z[d]$ in the derived category, where $\Z$ denotes the constant sheaf on $X$. Note that this implies in particular the Poincar\'e duality isomorphism between the homology and the compact support cohomology of $X$, since
	$$ \mathbb D Rf_\ast \Z \simeq Rf_! \mathbb D \Z \simeq Rf_! \Z[d],$$
	where $f$ now denotes the map from $X$ to a point: indeed, the cohomology of the left-hand complex is $H_{-\bullet}(X,\Z)$, and the cohomology of the right-hand complex is $H^{\bullet+d}_c(X,\Z)$. 
	
	Let $j \colon F(X,n) \hookrightarrow X^n$ be the inclusion, and $a \colon X^n \to \mathrm{pt}$ the projection. The homological Leray spectral sequence associated to $j$ is given by filtering $\mathbb D Ra_\ast Rj_\ast \Z \simeq Ra_! \mathbb D Rj_\ast \Z$ via the canonical filtration of $Rj_\ast \Z$. The spectral sequence of Theorem \ref{sseq-thm}, in the case $\EuScript F = \Z$, is given by filtering $Ra_! Rj_! \Z$ by equipping the ``resolution'' of $Rj_! \Z$ from Proposition \ref{proposition-petersen} with the filtration described in \S \ref{filtration}. Verdier duality gives $\mathbb D Rj_\ast \Z \simeq Rj_! \mathbb D\Z \simeq Rj_! \mathbb \Z[nd]$, so our claim will follow once we have verified that the two filtrations of $Rj_! \Z$ coincide up to reindexing; equivalently, that the filtration on $Rj_\ast \Z$ obtained by dualizing the filtration on $Rj_!\Z$ given by the resolution of Proposition \ref{proposition-petersen} coincides with the canonical filtration coming from the $t$-structure of $D^+(X^n)$. We see from Proposition \ref{filtrationprop} that
	$$ \operatorname{Gr}^q Rj_!\Z \cong \bigoplus_{\substack{T \in \Pi_n \\ \vert T \vert=n-q}} \widetilde H^q(\nerveud{\Pi_n^{\preceq T}}, \Z) \otimes i(T)_\ast \Z [-q]$$
	where we have used that the partition lattice is Cohen--Macaulay, which implies that $\nerveud{\Pi_n^{\preceq T}}$ is a wedge of spheres of dimension $q$. Hence
	$$ \mathbb D\operatorname{Gr}^q Rj_!\Z[nd] \cong \operatorname{Gr}^q \mathbb D Rj_!\Z[nd] \cong \bigoplus_{\substack{T \in \Pi_n \\ \vert T \vert=n-q}} \widetilde H^q(\nerveud{\Pi_n^{\preceq T}}, \Z) \otimes i(T)_\ast \Z[-q(d-1)], $$
	where in the last step we used $\mathbb D i(T)_\ast \Z[nd] \cong i(T)_\ast\Z[-qd]$, since $\overline{X(T)}$ is an oriented manifold of dimension $(n-q)d$. 
	But it also follows from \cite[Proof of Theorem 1]{totaro} that there is an isomorphism
	$$ R^{q(d-1)} j_\ast \Z \cong \bigoplus_{\substack{T \in \Pi_n \\ \vert T \vert = n-q}} \widetilde H^q(\nerveud{\Pi_n^{\preceq T}},\Z) \otimes i(T)_\ast \Z.$$
	It follows that the two filtrations on $Rj_\ast\Z \simeq \mathbb D Rj_!\Z[nd]$ are isomorphic in the filtered derived category up to a reindexing given by $q \leftrightarrow q(d-1)$, since the canonical filtration of a complex with respect to a $t$-structure is uniquely determined by its associated graded object. Hence our spectral sequence is the homological version of Totaro's spectral sequence. In particular, this argument does not just identify the first nontrivial pages of the two spectral sequences; it implies an isomorphism of all pages of the two spectral sequences. \end{para}

\section{Configuration spaces of points on $i$-acyclic spaces}\label{section:arabia}

\subsection*{Recovering results of Arabia} 

\begin{para}
	Let us begin by recalling the notion of $i$-acyclicity, which was recently introduced by Arabia \cite{arabia}. When a space $X$ is $i$-acyclic, the compact support cohomology of the configuration space of points on $X$ depends on the cohomology of $X$ itself in the simplest way possible. Although Arabia did not phrase the result in this way, we will see that $i$-acyclic spaces have the property that the two compactly supported spectral sequences of Theorem \ref{sseq-thm} degenerate immediately. 
\end{para}

\begin{defn}
	A topological space $X$ is said to be \emph{i-acyclic over $R$} if $H^\bullet_c(X,R) \to H^\bullet(X,R)$ is the zero map. 
\end{defn}

\begin{examplex}\label{acyclic-example}Let us give some examples of $i$-acyclic spaces \cite[Proposition 1.2.4]{arabia}: 
	\begin{itemize}
		\item Euclidean space is $i$-acyclic over any ring. This can be seen as the special case of either of the next two examples:
		\item If $X$ is an oriented manifold, then $X$ is $i$-acyclic over $R$ if and only if the intersection product on $H_\bullet(X,R)$ vanishes.
		
		\item If $X$ is noncompact and has the cohomology of a point over $R$, then $X$ is $i$-acyclic. 
		\item Open subsets of $i$-acyclic spaces are $i$-acyclic.
		\item If $X$ is $i$-acyclic, and  either $H^\bullet_c(X,R)$ or $H^\bullet_c(Y,R)$ is a projective $R$-module, then $X \times Y$ is $i$-acyclic. For example, a noncompact Lie group is topologically the product of a maximal compact subgroup with Euclidean space, and is therefore $i$-acyclic over any ring. 
		\item If $X$ is $i$-acyclic over $R$ and $G$ is a finite group of automorphisms of $X$ whose order is invertible in $R$, then $X/G$ is $i$-acyclic over $R$.
	\end{itemize}
\end{examplex}

\begin{thm}[(Arabia)]\label{arabiathm} Let $X$ be an $i$-acyclic metrisable $\sigma$-compact and locally compact space over a field $k$. The groups $H^k_c(F(X,n),k)$ depend only on the graded vector space $H^\bullet_c(X,k)$. If $k=\Q$, the same is true for the decomposition of $H^k_c(F(X,n),k)$ into irreducible representations. 
\end{thm}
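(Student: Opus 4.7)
The plan is to deduce Arabia's theorem as a formal consequence of the machinery developed in this paper, combining the functor $\CF$ (see Corollary \ref{cor-cdga} and its shuffle-algebra analog) with our two forms of the second main theorem. Concretely, the statement that $H^\bullet_c(F(X,n),k)$ depends only on the graded vector space $H^\bullet_c(X,k)$ should fall out of writing this cohomology as $\CF(\Pi_n \setminus \{\hat 0\}, A)$ for an input $A$ which, under $i$-acyclicity, is itself controlled entirely by the graded vector space $H^\bullet_c(X,k)$.

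For general $k$: since $k$ is a field, $H^\bullet_c(X,k)$ is automatically a projective $k$-module, so the arbitrary-coefficients form of our second main theorem applies and gives a quasi-isomorphism of commutative dg shuffle algebras between $R\Gamma_c^\otimes(X,k)$ and $H^\bullet_c(X,k)$ equipped with identically zero differential and multiplication. Since the functors $\CF$ and $\CD$ make sense on commutative dg shuffle algebras and Theorem \ref{mainthm} admits a shuffle-algebra version (ignoring the $\mathbb S_n$-action), plugging this in shows $H^\bullet_c(F(X,n),k)$ is computed by a chain complex whose only input is the graded vector space $H^\bullet_c(X,k)$.

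For $k=\Q$: one uses instead the cdga version, Theorem \ref{secondmainthm}, which asserts that $H^\bullet_c(X,\Q)$ with zero differential and zero multiplication is itself a cdga model for $C^\bullet_c(X,\Q)$. Viewing this as a constant tcdga, Corollary \ref{cor-cdga} produces an $\mathbb S_n$-equivariant quasi-isomorphism $\CF(\Pi_n \setminus \{\hat 0\}, H^\bullet_c(X,\Q)) \simeq H^\bullet_c(F(X,n),\Q)$. Since the input tcdga is manifestly determined by the graded $\Q$-vector space $H^\bullet_c(X,\Q)$, so is the output as an $\mathbb S_n$-representation; by Maschke's theorem (semisimplicity of $\Q[\mathbb S_n]$-modules), this determines the decomposition into irreducibles.

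The conceptual obstacle — which is the reason one cannot state the equivariant version over arbitrary fields — is exactly the phenomenon flagged in the introduction: for $X = \R$ and $R = \Z/2\Z$, the tcdga $R\Gamma_c^\otimes(X,R)$ is \emph{not} equivariantly quasi-isomorphic to its cohomology, essentially because the relevant Leray filtration fails to split $\mathbb S_n$-equivariantly. This is precisely what forces the passage to commutative dg shuffle algebras (where the $\mathbb S_n$-action is forgotten in a controlled way) for general $k$, and correspondingly explains why the representation-theoretic refinement is available only when the characteristic is zero, where cdga's and the equivariant form of Theorem \ref{secondmainthm} suffice.
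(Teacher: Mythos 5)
Your proposal is correct and follows essentially the same route the paper takes: it recovers Arabia's theorem from the second main theorem in its two forms (Theorem \ref{i-acyclic-1} for $\Q$ via Corollary \ref{cor-cdga}/Corollary \ref{corollary1}, and Theorem \ref{i-acyclic-2} in its shuffle-algebra form for a general field, where projectivity of $H^\bullet_c(X,k)$ is automatic, as in Corollary \ref{i-acyclic-2-cor}). Your closing remark about the failure of equivariant formality over $\mathbf Z/2\mathbf Z$ and the resulting need for shuffle algebras also matches the paper's own explanation of why the equivariant refinement is stated only over $\Q$.
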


\begin{rem}
	In fact Arabia did more: he studied certain generalized configuration spaces denoted $\Delta_{\leq \ell} X^n$ and $\Delta_\ell X^n$, where $\Delta_n X^n = F(X,n)$, and gave explicit formulas for their compact support cohomology in terms of the compact support cohomology of $X$ in the $i$-acyclic case. The spaces $\Delta_{\leq \ell} X^n$ and $\Delta_\ell X^n$ also fit into our framework: $\Delta_{\leq \ell} X^n$ can be written as $D(X,U)$, where $U \subset \Pi_n$ is the subposet consisting of partitions with at most $\ell$ blocks, and $\Delta_\ell X^n$ is in general a disjoint union of configuration spaces $F(X,\ell)$. 
\end{rem}

\begin{para}
	 We have already seen in Corollary \ref{cor-cdga} that $H^k_c(F(X,n),\Q)$ depends only on a cdga model for the compactly supported cochains $C^\bullet_c(X,\Q)$. One might therefore hope for the following to be true, which would re-prove and re-interpret Arabia's result: if a space $X$ is $i$-acyclic over $\Q$, then  the cdga $C^\bullet_c(X,\Q)$ is formal, and the cup product on $H^\bullet_c(X,\Q)$ vanishes. Equivalently, there is a  quasi-isomorphism between $C^\bullet_c(X,\Q)$ and $H^\bullet_c(X,\Q)$, where the cohomology is considered as a cdga with zero differential and zero multiplication. This turns out to be the case. Moreover, our method of proof will work with minor modifications to prove a result over an arbitrary coefficient ring. Let us state the main theorems of this section. \end{para}

\begin{thm}\label{i-acyclic-1}
	Let $X$ be a paracompact Hausdorff locally compact space which is $i$-acyclic over $\Q$. Then a cdga model for $C^\bullet_c(X,\Q)$ is given by $H^\bullet_c(X,\Q)$, considered as a cdga with identically zero differential and multiplication.
\end{thm}

\begin{rem}Theorem \ref{i-acyclic-1} has the following easy consequence. Let $X$ be a space whose one-point compactification $\bar X$ is nilpotent. If $X$ is $i$-acyclic then $\bar X$ has the based rational homotopy type of a wedge of spheres. Indeed, (a cdga model of) $C_c^\bullet(X,\Q)$ being formal is equivalent to $C^\bullet(\bar X,\Q)$ being formal as an augmented algebra, since $C_c^\bullet(X,\Q)$ is just the augmentation ideal. For example, we have seen that if $Y$ is arbitrary, then $Y \times \R$ is $i$-acyclic, and so the claim is that the one-point compactification of $Y \times \R$ should have the rational homotopy type of a wedge of spheres. But the one-point compactification of $Y \times \R$ is the suspension of the one-point compactification of $Y$, so this recovers the familiar fact that any suspension is a wedge of spheres rationally.
\end{rem}

\begin{cor}\label{corollary1}
	Let $X$ be a paracompact Hausdorff locally compact space which is $i$-acyclic over $\Q$. Let $U \subset \Pi_n$ be upwards closed. Then the cohomology groups $H^\bullet_c(F(X,U),\Q)$ and  $H^\bullet_c(D(X,U),\Q)$ depend only on the graded vector space $H^\bullet_c(X,\Q)$. In fact we have\[H^k_c(D(X,U),\Q) \cong \bigoplus_{T \in J_U}\bigoplus_{p+q=k} H^p_c(X^{\vert T \vert},\Q) \otimes \widetilde H^q(\nerveu {J_U^{\preceq T}},\Q)\]
	and 
	\[ H^k_c(F(X,U),\Q) \cong \bigoplus_{T \in J_{U_0}}\bigoplus_{p+q=k} H^p_c(X^{\vert T \vert},\Q) \otimes \widetilde H^q(\nerveud {J_{U_0}^{\preceq T}},\Q)\]
	These isomorphisms are equivariant in the sense that if $G \subseteq S_n$ is the subgroup preserving $U \subset \Pi_n$ (meaning that $G \cdot U \subseteq U$), then the isomorphisms are equivariant with respect to the action of $G$ on both sides. 
\end{cor}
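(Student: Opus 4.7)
The plan is to combine Theorem \ref{i-acyclic-1} with Corollary \ref{cor-cdga} and a direct computation of $\CF$ and $\CD$ on a trivial tcdga. By Theorem \ref{i-acyclic-1}, $C^\bullet_c(X,\Q)$ admits a cdga model equal to $H^\bullet_c(X,\Q)$ with identically zero differential and multiplication. Let $A$ denote this cdga viewed as a constant tcdga via \S\ref{constant-tcdga}. Corollary \ref{cor-cdga} then gives $H^\bullet_c(F(X,U),\Q) \cong H^\bullet(\CF(U,A))$, and the same formalism applied to $\CD$ via Theorem \ref{mainthm} yields $H^\bullet_c(D(X,U),\Q) \cong H^\bullet(\CD(U,A))$.

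For this specific $A$, the functor $\Phi_A$ of \S\ref{functor-from-tcdga} sends each partition $T = T_1 \sqcup \cdots \sqcup T_k$ to $H^\bullet_c(X,\Q)^{\otimes k}$ with zero internal differential, and sends every non-identity morphism $T \prec T'$ to the zero map, since these maps are assembled from the vanishing multiplications of $A$. Consequently, in the double complex $\widetilde{\mathfrak B}(U,\Phi_A)$ the only surviving contributions to the horizontal differential are those that enlarge a chain $C$ by inserting an element strictly below $\max C$, and each such contribution is the identity on $\Phi_A(\max C)$. This splits the complex as a direct sum over $T \in U_0$ of subcomplexes, each isomorphic up to a degree shift to $\Phi_A(T) \otimes \widetilde C^\bullet(\nerveud{U_0^{\preceq T}})$. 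The analogous decomposition of $\CD(U,A) = \mathfrak B(U,\Phi_A)$ ranges over $T \in U$ with summand $\Phi_A(T) \otimes \widetilde C^\bullet(\nerveu{U^{\preceq T}})$.

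Taking cohomology summand by summand, and applying Lemma \ref{bjorner} to the lattice $U_0^{\preceq T}$ (respectively $U^{\preceq T}$), eliminates the summands with $T \notin J_{U_0}$ (resp.\ $T \notin J_U$) and replaces $\nerveud{U_0^{\preceq T}}$ by $\nerveud{J_{U_0}^{\preceq T}}$ (resp.\ $\nerveu{U^{\preceq T}}$ by $\nerveu{J_U^{\preceq T}}$). Combined with the Künneth isomorphism $\Phi_A(T) = H^\bullet_c(X,\Q)^{\otimes |T|} \cong H^\bullet_c(X^{|T|},\Q)$, this yields the stated formulas for $F(X,U)$ and $D(X,U)$. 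Since every step is natural in the $S_n$-action on $\Pi_n$, the isomorphisms are $G$-equivariant for any subgroup $G \subseteq S_n$ preserving $U$.

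The main obstacle --- and the reason the full strength of Theorem \ref{i-acyclic-1} is required, rather than $i$-acyclicity alone --- is that the splitting of the horizontal differential according to $\max C$ demands that the tcdga multiplications vanish \emph{strictly}, not merely up to homotopy. In the language of the spectral sequence of Theorem \ref{sseq-thm}, $i$-acyclicity by itself guarantees only the vanishing of the $E_1$-differential (the cup product on $H^\bullet_c(X,\Q)$), whereas the degeneration at all higher pages could in principle be obstructed by higher Massey products on $C^\bullet_c(X,\Q)$. Strict formality kills every such higher operation at once and simultaneously trivializes the extension problem, which is automatic anyway since we work over a field.
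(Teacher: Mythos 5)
Your argument is correct and is essentially the route the paper intends: feed the trivial cdga model of Theorem \ref{i-acyclic-1} (as a constant tcdga) into $\CF$ and $\CD$ via Corollary \ref{cor-cdga} and Theorem \ref{mainthm}, observe that all structure maps of $\Phi_A$ vanish so the bar complexes split according to $\max C$, and then reduce from $U$ to $J_U$ exactly as in Proposition \ref{prop-sseq}. The only cosmetic caveat is that Lemma \ref{bjorner} is literally stated for $\nerveud$ of a lattice with atoms, so for the $\CD$/$\nerveu{U^{\preceq T}}$ case you need its evident analogue for joins of minimal elements of $U$ --- the same step the paper itself dispatches as ``analogous'' in the proof of Proposition \ref{prop-sseq}.
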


\begin{thm}\label{i-acyclic-2}Let $X$ be a paracompact Hausdorff locally compact space which is $i$-acyclic over the ring $R$, and assume that $H^\bullet_c(X,R)$ is a projective $R$-module. Consider the tcdga given by $R\Gamma_c^\otimes(X,R)$. Its cohomology is the ``constant'' tcdga (\S \ref{constant-tcdga}) given by $H^\bullet_c(X,R)$ with identically zero multiplication and differential. Moreover, $R\Gamma_c^\otimes(X,R)$ is quasi-isomorphic to its cohomology in the category of commutative shuffle dg algebras. 
\end{thm}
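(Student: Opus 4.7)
The first claim, computing the cohomology tcdga, follows directly from Proposition \ref{rightcohomology}: applied to the constant sheaf $\mathcal F = R$, we have $H^\bullet(R\Gamma_c^\otimes(X,R))(S) \cong H^\bullet_c(X, R^{\otimes S}) \cong H^\bullet_c(X,R)$ for every nonempty finite set $S$, since $R^{\otimes S} = R$ as sheaves. The induced tcdga multiplication on cohomology is the cup product $H^\bullet_c(X,R) \otimes H^\bullet_c(X,R) \to H^\bullet_c(X,R)$, which factors through $H^\bullet_c(X,R) \otimes H^\bullet(X,R) \to H^\bullet_c(X,R)$ and hence vanishes by $i$-acyclicity. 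This identifies the cohomology with the constant tcdga $\underline{H^\bullet_c(X,R)}$ equipped with zero multiplication.

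For the second claim, I would mimic the $A_\infty$-style argument of \S\ref{sketch} in the commutative shuffle setting. Using projectivity of $H := H^\bullet_c(X,R)$, choose an $R$-linear section $\sigma \colon H \to \Gamma_c(X, \mathcal L)$ representing cocycles, and, using $i$-acyclicity together with projectivity, choose $\tau \colon H \to \Gamma(X, \mathcal L)$ of degree $-1$ with $d\tau = -\sigma$ as maps into $\Gamma(X, \mathcal L)$.

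Using $\sigma$ and $\tau$, I would construct a subfunctor $M \subseteq R\Gamma_c^\otimes(X,R)$ on $\mathsf{Ord}_+$ which is a shuffle dg ideal, contains all products (the images of the shuffle multiplication maps), and is acyclic. The key observation is that for any product cocycle $\sigma(x_1) \otimes \sigma(x_2) \otimes \cdots \otimes \sigma(x_n) \in \Gamma_c(X, \mathcal L^{\otimes n})$, the element $\tau(x_1) \otimes \sigma(x_2) \otimes \cdots \otimes \sigma(x_n)$ is still compactly supported (since $\sigma(x_2)$ is) and provides a natural primitive; adjoining such primitives to the ideal generated by products, together with their multiples by elements of $R\Gamma_c^\otimes(X,R)$, yields the desired acyclic $M$. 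The quotient $R\Gamma_c^\otimes(X,R)/M$ then inherits the structure of a commutative shuffle dg algebra with zero multiplication and cohomology $\underline H$; projectivity of $H$ yields a natural componentwise chain quasi-isomorphism $\underline H \to R\Gamma_c^\otimes(X,R)/M$, which is automatically a morphism of commutative shuffle dg algebras, since compatibility with zero multiplications on both sides is vacuous. This produces the desired zig-zag $R\Gamma_c^\otimes(X,R) \twoheadrightarrow R\Gamma_c^\otimes(X,R)/M \xleftarrow{\sim} \underline H$.

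The main obstacle is to make the construction of $M$ sufficiently precise to simultaneously verify all four required properties: being a subfunctor on $\mathsf{Ord}_+$ (not $\mathsf{Fin}_+$), a shuffle dg ideal, containing all products, and being acyclic. The crucial feature of commutative shuffle algebras, in contrast to tcdga's, is that $M$ need only be natural under order-preserving bijections, so that its defining formulas may treat the first slot of each ordered set asymmetrically (with $\tau$ appearing in the minimum position and $\sigma$ elsewhere); this asymmetry is what permits the construction and, as the introduction's discussion of $X = \R$, $R = \Z/2\Z$ shows, is essential---no such $\mathbb S_n$-equivariant $M$ can exist in the tcdga setting.
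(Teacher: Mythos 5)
Your first paragraph is fine: the identification of the cohomology and the vanishing of the induced multiplication via the factorization through $H^\bullet_c(X,R)\otimes H^\bullet(X,R)\to H^\bullet_c(X,R)$ is correct and is essentially what the paper uses.

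The second part, however, has a genuine gap, and the strategy itself cannot be repaired: the object $M$ you ask for does not exist. You require simultaneously that $M$ contain the \emph{entire} image of the shuffle multiplication maps (this is forced, since the quotient $A/M$ has literally zero multiplication if and only if all products lie in $M$) and that $M$ be acyclic. But if $M$ is an acyclic subcomplex of $A$, every cocycle lying in $M$ is a coboundary in $M$, hence in $A$; so a necessary condition is that every cocycle in the image of the multiplication maps be exact in $A$. This is false, already for $X=\R$ and $A=R\Gamma_c^\otimes(X,\Z)$. Indeed, let $s\in\Gamma(X,\mathcal L^0)$ be the canonical cocycle representing $1\in H^0(X,\Z)$ and $t\in\Gamma_c(X,\mathcal L)$ a compactly supported cocycle of degree $1$ representing the generator of $H^1_c(X,\Z)$. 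The section $s\otimes t\in\Gamma_c(X,\mathcal L^{\otimes 2})$ is a cocycle representing $1\smile[t]\neq 0$. Now truncate $s$: since the Godement sheaf $\mathcal L^0$ has sections given by arbitrary families of stalk elements, there is a compactly supported section $s'\in\Gamma_c(X,\mathcal L^0)$ agreeing with $s$ on a neighborhood of $\operatorname{supp}(t)$, and then $s'\otimes t=s\otimes t$ lies in the image of the multiplication $A(1)\otimes A(1)\to A(2)$ while representing a nonzero class of $H(A(2))\cong H^1_c(\R,\Z)$. (The point is that $i$-acyclicity kills products of \emph{cohomology classes}, but the chain-level ideal generated by products contains cocycles such as $s'\otimes t$, with $s'$ not closed, that are not exact; your "adjoin primitives of $\sigma(x_1)\otimes\sigma(x_2)\otimes\cdots$" only addresses products of chosen cocycle representatives.) Consequently no shuffle dg ideal containing all products can be acyclic, and the proposed zig-zag through $A/M$ collapses.

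This is precisely why the paper does not argue by an acyclic quotient. Its route is: the explicit primitives $g$ give only an $\mathsf A_\infty$-type quasi-isomorphism from $H^\bullet_c(X,R)$ with zero operations to the ideal $I=R\Gamma_c^\otimes(X,R)$ inside $R\Gamma^\otimes(X,R)$ (the shuffle analogue of Lemma \ref{arabialemma0}, which is where your asymmetric use of $\tau$ in one slot legitimately lives); one then invokes the homotopy transfer theorem in the category of shuffle $\mathsf C_\infty$-algebras, uses projectivity of $H^\bullet_c(X,R)$ to get a homotopy retract, and concludes by the uniqueness-up-to-isotopy statement that the transferred shuffle $\mathsf C_\infty$-structure must itself be identically zero, after which bar--cobar rectification produces an honest zig-zag of commutative shuffle dg algebra quasi-isomorphisms (Lemma \ref{arabialemma} and its shuffle version). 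If you want to avoid quoting homotopy transfer, you would need some replacement for this step; a chain-level quotient by the ideal of products is not one.
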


\begin{cor}\label{i-acyclic-2-cor}Let $X$ be a paracompact Hausdorff locally compact space which is $i$-acyclic over the ring $R$, and assume that $H^\bullet_c(X,R)$ is a projective $R$-module. Let $U \subset \Pi_n$ be upwards closed. Then the cohomology groups $H^\bullet_c(F(X,U),R)$ and  $H^\bullet_c(D(X,U),R)$ depend only on the graded $R$-module $H^\bullet_c(X,R)$. In fact we have\[H^k_c(D(X,U),R) \cong \bigoplus_{T \in J_U}\bigoplus_{p+q=k} H^p_c(X^{\vert T \vert},\widetilde H^q(\nerveu {J_{U}^{\preceq T}},R))\]
	and 
	\[ H^k_c(F(X,U),R) \cong \bigoplus_{T \in J_{U_0}}\bigoplus_{p+q=k} H^p_c(X^{\vert T \vert},\widetilde H^q(\nerveud {J_{U_0}^{\preceq T}},R)). \]
	These isomorphisms are equivariant in a weak sense: if $G \subseteq S_n$ is the subgroup preserving $U \subset \Pi_n$, then there is a filtration on the left hand side for which the associated graded is isomorphic to the right hand side as a $G$-module.
\end{cor}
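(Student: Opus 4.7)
Proof plan. The plan is to combine Theorem \ref{i-acyclic-2} with the compactly supported statements of Theorem \ref{mainthm}, then to compute $\CF(U,-)$ and $\CD(U,-)$ on an algebra whose multiplication and differential both vanish.

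First I invoke Theorem \ref{i-acyclic-2} to obtain a quasi-isomorphism of commutative shuffle dg algebras between $R\Gamma_c^\otimes(X,R)$ and $H := H_c^\bullet(X,R)$ equipped with zero multiplication and zero differential. Since $\CF(U,-)$ and $\CD(U,-)$ make sense on commutative shuffle dg algebras and preserve quasi-isomorphisms, combining with Theorem \ref{mainthm} (with $\mathcal F = R$) gives quasi-isomorphisms of cochain complexes
\begin{align*}
C_c^\bullet(F(X,U),R) &\simeq \CF(U,H), \\
C_c^\bullet(D(X,U),R) &\simeq \CD(U,H).
\end{align*}

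Next I compute the right-hand sides directly. Writing $\Phi_H \colon \Pi_n \to \mathsf{Ch}_R$ for the functor of \S \ref{functor-from-shuffle-algebra} attached to $H$, every non-identity morphism $\Phi_H(T) \to \Phi_H(T')$ is zero, since any refinement $T \prec T'$ merges at least one pair of blocks and the induced map factors through the vanishing multiplication on $H$. In the double complex $\widetilde{\mathfrak B}(U,\Phi_H) = \CF(U,H)$ the horizontal differential is an alternating sum of maps obtained by inserting an extra element into a chain $C$; by the observation above, each such insertion contributes zero unless the new element lies below $\max C$, in which case the corresponding map on $\Phi_H$ is the identity. Hence the complex decomposes as a direct sum indexed by $T = \max C$, and the summand at $T$ is the reduced cellular cochain complex of $\nerveud{U_0^{\preceq T}}$ tensored with $\Phi_H(T)$, with trivial internal differential. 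Lemma \ref{bjorner} applied to the lattice $U_0^{\preceq T}$ then lets me restrict the sum to $T \in J_{U_0}$ and replace $U_0^{\preceq T}$ by $J_{U_0}^{\preceq T}$. An entirely parallel argument gives $\CD(U,H)$ in terms of $\nerveu{J_U^{\preceq T}}$ for $T \in J_U$.

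Taking cohomology and identifying $\Phi_H(T) \cong H^{\otimes |T|} \cong H_c^\bullet(X^{|T|},R)$ by K\"unneth (valid because $H$ is projective over $R$), and moving coefficients via the universal coefficients isomorphism
\[ \widetilde H^q(\nerveud{J_{U_0}^{\preceq T}},R) \otimes_R H_c^p(X^{|T|},R) \;\cong\; H_c^p\bigl(X^{|T|},\widetilde H^q(\nerveud{J_{U_0}^{\preceq T}},R)\bigr), \]
yields the stated formulas for $H_c^\bullet(F(X,U),R)$ and $H_c^\bullet(D(X,U),R)$. For the weak equivariance, I note that the filtration on $\CF(U,R\Gamma_c^\otimes(X,R))$ by the cardinality of $\max C$ is $G$-equivariant, since $\mathbb S_n$ acts on the tcdga $R\Gamma_c^\otimes(X,R)$ compatibly with the $\mathbb S_n$-action on $\Pi_n$, and $G$ preserves both $U$ and cardinality. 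Through Theorem \ref{mainthm} this transports to a $G$-equivariant filtration on $H_c^\bullet(F(X,U),R)$; the associated spectral sequence collapses at $E_1$ by the computation above, and its $E_1$ page is the right-hand side, viewed merely as an $R$-module. The main obstacle—once Theorem \ref{i-acyclic-2} is granted—is exactly this bookkeeping: the non-equivariant shuffle-algebra quasi-isomorphism is nonetheless compatible with the $G$-equivariant cardinality filtration, so equivariance survives on the level of the filtration even though the splitting does not.
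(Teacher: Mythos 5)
Your proposal is correct and follows essentially the route the paper intends for this corollary: shuffle-algebra formality of $R\Gamma_c^\otimes(X,R)$ (Theorem \ref{i-acyclic-2}), quasi-isomorphism invariance of $\CF(U,-)$ and $\CD(U,-)$ combined with Theorem \ref{mainthm}, the splitting of the complex for an algebra with zero differential and multiplication (equivalently, degeneration and splitting of the spectral sequence of Proposition \ref{prop-sseq}), Lemma \ref{bjorner} to pass to $J_U$ and $J_{U_0}$, and the $G$-equivariant cardinality filtration for the weak equivariance. The only steps you assert rather than argue — K\"unneth for compact supports and the universal-coefficients identification — are exactly the identifications licensed by the projectivity hypothesis on $H^\bullet_c(X,R)$, as in the paper.
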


\begin{examplex}
	Suppose that $X = \mathbf R$. Then $F(X,U)$ is the complement of a hypergraph subspace arrangement in $\mathbf R^n$ (also known as a diagonal arrangement), and any hypergraph subspace arrangement arises for an appropriate choice of $U$. In this case $F(X,U)$ is an oriented manifold and its compact support cohomology equals its homology, so Corollary \ref{i-acyclic-2-cor} gives a formula for the integral homology of the complement of the arrangement in terms of the cohomology of the posets $J_U^{\prec T}$. The resulting formula is exactly the Goresky--MacPherson formula \cite{goreskymacphersonstratifiedmorsetheory}, in the special case of a hypergraph arrangement. What is perhaps surprising is that we obtain a formula of exactly the same ``shape'' for any $i$-acyclic space whatsoever; it is not at all clear from existing proofs of the Goresky--MacPherson formula that such a formula should exist for (say) $X=\R \times \Sigma$, where $\Sigma$ is a compact surface. Note also that the Goresky--MacPherson formula is not obviously equivariant: the equivariant Goresky--MacPherson formula (for $\Q$-coefficients) is a theorem of Sundaram and Welker \cite{sundaramwelker}. 
\end{examplex}

\begin{examplex}
	Another simple application of Corollary \ref{i-acyclic-2-cor} is that if the compact support cohomology of an $i$-acyclic space $X$ is torsion free, and the posets $J_{U_0}^{\preceq T}$ have torsion free cohomology (e.g.\ $U$ is Cohen--Macaulay), then all the spaces $F(X,U)$ have torsion free compact support cohomology. The property that $\widetilde H^\bullet(\nerveud{J_{U_0}^{\preceq T}},\mathbf Z)$ is torsion free is well studied in the subject of the topology of arrangements; it is equivalent to the corresponding complement of a subspace arrangement in $\mathbf R^n$ having torsion free homology.  
\end{examplex}

\subsection*{Recollections on $\mathsf A_\infty$- and $\mathsf C_\infty$-algebras}

\begin{para}
	In this subsection we very briefly recall some definitions from the theory of $\mathsf A_\infty$- and $\mathsf C_\infty$-algebras. The reader who is not familiar with these notions may consult \cite[Chapter 10]{lodayvallette} for a detailed account. 
\end{para}

\begin{para}
	We define an \emph{$\mathsf A_\infty$-algebra structure} on a graded $R$-module $A$ to be a degree $1$ square-zero coderivation of the tensor coalgebra $\mathsf{coAss}(A[1])$. A degree $1$ coderivation of $\mathsf{coAss}(A[1])$ determines and is uniquely determined by linear maps of degree $2-n$
	$$ \mu_n \colon A^{\otimes n} \to A$$
	for $n \geq 1$. That the coderivation squares to zero implies e.g.\ that $\mu_1$ is a differential making $A$ into a cochain complex and that $\mu_2$ is a multiplication satisfying the Leibniz rule. Hence there is an induced multiplication on $H^\bullet(A)$, which turns out to be associative. However, $\mu_2$ is not itself associative. Informally, the idea of an $\mathsf A_\infty$-algebra is that it is an algebra in which the multiplication $\mu_2$ satisfies the associativity laws up to coherent homotopy. For example, the next equation one obtains from the condition that the coderivation squares to zero is that the associator of $\mu_2$, considered as a map $A^{\otimes 3} \to A$, is the differential of the map $\mu_3$. When taking the cohomology one then gets associativity on the nose. In general one gets an infinite sequence of quadratic equations expressing relations between the maps $\mu_n$.
\end{para}

\begin{para}
	If $A$ is an $\mathsf A_\infty$-algebra, then we denote $\mathsf{coAss}(A[1])$, with its differential given by the $\mathsf A_\infty$-structure, by $BA$, and we call it the \emph{bar construction} on $A$. The classical {bar construction} on an associative dg algebra $A$ of Eilenberg--MacLane is a codifferential on the tensor coalgebra on the suspension of $A$, as above; in this way, every dg algebra may be considered as an $\mathsf A_\infty$-algebra in a natural manner. 
\end{para}
	
\begin{para}
	We define an \emph{$\mathsf A_\infty$-morphism} $A \to A'$ to be a morphism of dg coalgebras $f \colon BA \to BA'$. Such a morphism determines and is determined by a collection of linear maps of degree $1-n$
	$$ f_n \colon A^{\otimes n} \to A'.$$
	The condition that $f$ is a chain map then results in an infinite sequence of quadratic equations relating the maps $f_n$ and the $\mathsf A_\infty$-structures on $A$ and $A'$. We say that $f$ is an \emph{$\mathsf A_\infty$-quasi-isomorphism} if $f_1$ is a quasi-isomorphism. If $A=A'$ and $f_1$ is the identity map, then we say that $f$ is an \emph{$\mathsf A_\infty$-isotopy.}
\end{para}

\begin{para}
	We define similarly a $\mathsf C_\infty$-algebra structure on $A$ to be a degree $1$ square-zero coderivation of the cofree conilpotent Lie coalgebra $\mathsf{coLie}(A[1])$. Morphisms, quasi-isomorphisms, and isotopies of $\mathsf C_\infty$-algebras are defined in a completely analogous manner. Via the projection $\mathsf{coAss}(A[1]) \to \mathsf{coLie}(A[1])$ we may consider every $\mathsf C_\infty$-algebra to be an $\mathsf A_\infty$-algebra, and every $\mathsf C_\infty$-morphism to be an $\mathsf A_\infty$-morphism. Every commutative dg algebra may be considered as a $\mathsf C_\infty$-algebra. Informally, a $\mathsf C_\infty$-algebra is a dg algebra in which associativity only holds up to coherent higher homotopy, but whose multiplication is strictly commutative. Moreover, the higher homotopies are themselves commutative in a certain weak sense. 
\end{para}

\begin{para}\label{whatweuse}Assume now that the ground ring $R$ contains $\Q$. We will need only the following basic properties of the categories of $\mathsf A_\infty$- and $\mathsf C_\infty$-algebras:
	\begin{enumerate}
		\item If $A$ and $A'$ are dg algebras connected by an $\mathsf A_\infty$-quasi-isomorphism $A \to A'$, then there is a zig-zag of quasi-isomorphisms of dg algebras connecting $A$ and $A'$. 
		\item If $A$ and $A'$ are commutative dg algebras connected by a $\mathsf C_\infty$-quasi-isomorphism $A \to A'$, then there is a zig-zag of quasi-isomorphisms of commutative dg algebras connecting $A$ and $A'$. 
		\item If $A$ is a dg algebra, then there exists an $\mathsf A_\infty$-algebra structure on $H(A)$ and an $\mathsf A_\infty$-quasi-isomorphism $H(A) \to A$. The induced $\mathsf A_\infty$-structure on $H(A)$ is unique up to noncanonical $\mathsf A_\infty$-isotopy. 
		
		\item If $A$ is a commutative dg algebra, then there exists a $\mathsf C_\infty$-algebra structure on $H(A)$ and a $\mathsf C_\infty$-quasi-isomorphism $H(A) \to A$. The induced $\mathsf C_\infty$-structure on $V$ is unique up to noncanonical $\mathsf C_\infty$-isotopy. 
	\end{enumerate}
	For this, see \cite[Theorem 11.4.9]{lodayvallette} for the first two points and \cite[Theorem 10.3.1]{lodayvallette} for the third and fourth. The third and fourth property are theorems of Kadeishvili; see \cite{kadeishvili} for the $\mathsf A_\infty$-case.
\end{para}

\subsection*{Proof of Theorem \ref{i-acyclic-1}}

\begin{lem}\label{arabialemma0}Let $A$ be a differential graded algebra over $\Q$, and let $I \subset A$ be an ideal. Suppose that the induced map $H(I) \to H(A)$ vanishes. Then $I$ is formal as a dg algebra, and the multiplication on $H(I)$ is identically zero.  
\end{lem}

\begin{proof}We construct an $\mathsf A_\infty$-quasi-isomorphism from $H(I)$, considered as a dga with vanishing multiplication and differential, to $I$. This is enough to conclude the result by \S\ref{whatweuse}(1). Unwinding the definitions, this means that we have to find maps
	$$ f_n \colon H(I)^{\otimes n} \to I, \qquad \qquad n \geq 1$$
	of degree $1-n$, such that
	$$ d \circ  f_n = \sum_{i+j=n}(-1)^i f_i \cdot f_j. $$
	Let $f \colon H(I) \to I$ be a map taking every class to a representing cocycle. Let $g \colon H(I) \to A$ be a map such that $d \circ g = -f$. Such maps exist because we work over a field, and since every cocycle in $I$ is a coboundary in $A$. 
	Now define for all $n \geq 1$,  
	$$ f_n(x) = f(x) \cdot \underbrace{g(x) \cdot \ldots \cdot g(x)}_{(n-1) \text{ times}}. $$
	Since $I$ is an ideal in $A$, this product is well defined as an element of $I$. One checks using the Leibniz rule that the collection $\{f_n\}$ defines an $\mathsf A_\infty$-morphism;  it is clearly a quasi-isomorphism. 
\end{proof}

\begin{lem}\label{arabialemma}Let $A$ be a cdga over $\Q$, and let $I \subset A$ be an ideal. Suppose that the induced map $H(I) \to H(A)$ vanishes. Then $I$ is formal as a cdga, and the multiplication on $H(I)$ is identically zero.  
\end{lem}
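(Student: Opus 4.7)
The plan is to upgrade the argument of Lemma \ref{arabialemma0} from an $\mathsf A_\infty$-statement to a $\mathsf C_\infty$-statement, exploiting the graded commutativity of $A$. As in that proof, we pick $R$-linear maps $f \colon H(I) \to I$ sending each class to a representing cocycle, and $g \colon H(I) \to A$ of degree $-1$ with $d \circ g = -f$; both exist because $H(I)$ is projective and because every cocycle in $I$ is a coboundary in $A$. What we need to construct now is a $\mathsf C_\infty$-quasi-isomorphism from the trivial $\mathsf C_\infty$-algebra $H(I)$ to the cdga $I$, equivalently a system of multilinear maps $f_n \colon H(I)^{\otimes n} \to I$ of degree $1-n$ satisfying the $\mathsf A_\infty$-relations \emph{and} vanishing on all nontrivial shuffles.

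The naive formula $f_n(x_1, \ldots, x_n) = f(x_1) g(x_2) \cdots g(x_n)$ from Lemma \ref{arabialemma0} satisfies the $\mathsf A_\infty$-relations but is not shuffle-vanishing. I would replace it by a symmetrized version built by inserting the distinguished $f$-slot in all positions, with Koszul signs, and then projecting onto the Lie-like part of $T^n(H(I))$ using the Eulerian idempotents $e_n \in \mathbb Q[\mathbb S_n]$:
\[
f_n(x_1 \otimes \cdots \otimes x_n) \;=\; \sum_{\sigma \in \mathbb S_n} e_n(\sigma)\, \varepsilon(\sigma)\, g(x_{\sigma(1)}) \cdots g(x_{\sigma(i-1)}) f(x_{\sigma(i)}) g(x_{\sigma(i+1)}) \cdots g(x_{\sigma(n)}),
\]
(suitably averaged also over the position $i$). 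This makes sense over any $\mathbb Q$-algebra, which is the context of interest since Lemma \ref{arabialemma} is used in the proof of Theorem \ref{i-acyclic-1}. Each summand lies in $I$ because $f(x_i) \in I$ and $I$ is an ideal, and a direct degree count gives degree $1 - n$.

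There are then two verifications. First, the $\mathsf A_\infty$-relations $df_n = \sum_{p+q=n}(-1)^p f_p \cdot f_q$: since the source has trivial differential and multiplication and the target is a cdga (so $\mu_{\geq 3}=0$), this is the only surviving form of the $\mathsf A_\infty$-equation. Applying Leibniz, each $f(x_i)$ is closed and each $g(x_j)$ contributes $dg(x_j) = -f(x_j)$, so differentiating produces exactly one extra $f$ in each monomial; regrouping by which adjacent pair carries the two $f$'s and using graded commutativity reassembles the right-hand side, just as in Lemma \ref{arabialemma0}. Second, vanishing on $(p,n-p)$-shuffles: this is built into the construction by the defining property of the Eulerian idempotents as projectors onto $\mathrm{Lie}(n) \subset R[\mathbb S_n]$, which annihilate shuffle products.

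The main obstacle I expect is the bookkeeping of Koszul signs in the interaction between the Eulerian symmetrization and the Leibniz computation: a naive uniform symmetrization does \emph{not} kill $(p,q)$-shuffles for $p, q \geq 1$ (it kills only transpositions), so one genuinely needs the Lie projector, and the $\mathsf A_\infty$-relations must then be re-checked using the commutativity of $A$ together with the standard identities for the Eulerian idempotents. An alternative, and arguably cleaner, route is to reformulate the construction by Koszul duality for the pair $(\mathsf{Com}, \mathsf{Lie})$: a $\mathsf C_\infty$-morphism $H(I) \to I$ is the same as a Maurer--Cartan element in $\mathrm{Hom}(\mathrm B^\vee H(I), I)$, where $\mathrm B^\vee H(I)$ is the cofree Lie coalgebra on $s H(I)$ (with zero differential, as $H(I)$ has trivial $\mathsf C_\infty$-structure), and one writes this Maurer--Cartan element directly from the datum of $f$ and $g$ as a single map on the Lie cogenerators extended as a coderivation; shuffle-vanishing is then automatic and the Maurer--Cartan equation reduces to $dg = -f$.
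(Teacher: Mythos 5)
Your plan diverges from the paper's proof, and as it stands it has a genuine gap at its central step. The paper does \emph{not} construct a shuffle-vanishing morphism by hand: it applies the homotopy transfer theorem to get \emph{some} $\mathsf C_\infty$-structure on $H(I)$ quasi-isomorphic to $I$ (this is where projectivity of $H(I)$ enters), invokes Lemma \ref{arabialemma0} to get the identically zero $\mathsf A_\infty$-structure also quasi-isomorphic to $I$, and then uses the uniqueness clause of the transfer theorem: the transferred structure must be $\mathsf A_\infty$-isotopic to the zero structure, and the only $\mathsf A_\infty$-structure isotopic to the identically zero one is the identically zero one. This sidesteps entirely the combinatorics you are trying to do. In your proposal, the step you wave away --- ``the $\mathsf A_\infty$-relations must then be re-checked using the commutativity of $A$ together with the standard identities for the Eulerian idempotents'' --- is precisely the hard part, and it is not done. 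Precomposing the $f_n$ of Lemma \ref{arabialemma0} with the Lie projector does force vanishing on shuffles, but it destroys the relation $d\circ f_n=\sum (-1)^i f_i\cdot f_j$, because the right-hand side does not commute with applying $e_i\otimes e_j$ versus $e_n$; your displayed formula, with its unspecified ``suitable averaging over the position $i$,'' is not shown to satisfy the relations and there is no reason to expect it does as written. In effect you are attempting to reprove (a special case of) the nontrivial statement that an $\mathsf A_\infty$-formal cdga is $\mathsf C_\infty$-formal, which the paper only invokes as an alternative over $\Q$ via the citation to Saleh; your Maurer--Cartan reformulation has the same problem, since the morphism equation for a map into a dg algebra with nonzero product necessarily contains quadratic terms in the unknown components, so it cannot ``reduce to $dg=-f$.''

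Two further points. First, the lemma is stated over an arbitrary ring $R$ with $H(I)$ projective, and the paper reuses the proof verbatim in the shuffle setting over arbitrary $R$ (for Theorem \ref{i-acyclic-2}); your Eulerian-idempotent construction only makes sense over a $\Q$-algebra, so even if completed it would prove a weaker statement than the one claimed. Second, if you want to salvage a direct construction, the honest route is to carry out the transfer-plus-uniqueness argument of the paper, or to actually verify an explicit shuffle-symmetrized formula --- but be aware that the latter is a known delicate computation, not a routine sign check.
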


\begin{proof}By \S\ref{whatweuse}(4) we may give $H(I)$ some $\mathsf C_\infty$-algebra structure making it $\mathsf C_\infty$-quasi-isomorphic to $I$. On the other hand we may give $H(I)$ an $\mathsf A_\infty$-algebra structure by setting all operations identically zero, and by the previous lemma this structure is $\mathsf A_\infty$-quasi-isomorphic to $I$. By the uniqueness part of \S\ref{whatweuse}(3) this means that the transferred $\mathsf C_\infty$-structure on $H(I)$ must be $\mathsf A_\infty$-isotopic to the degenerate $\mathsf A_\infty$-structure with all operations zero. While it is hard in general to determine whether two $\mathsf A_\infty$-structures are $\mathsf A_\infty$-isotopic, the case when one of them is identically zero is easy: the only thing isotopic to the identically zero  $\mathsf A_\infty$-structure is the identically zero  $\mathsf A_\infty$-structure.
\end{proof}

\begin{rem}One can also deduce Lemma \ref{arabialemma} from Lemma \ref{arabialemma0} by appealing to \cite[Theorem 1.3]{saleh}.
\end{rem}

\begin{thm}[Theorem \ref{i-acyclic-1} restated]
	Let $X$ be a paracompact Hausdorff locally compact space which is $i$-acyclic over $\Q$. Then a cdga model for $C^\bullet_c(X,\Q)$ is given by $H^\bullet_c(X,\Q)$, considered as a cdga with identically zero differential and multiplication.
\end{thm}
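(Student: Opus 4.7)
The plan is to apply Lemma \ref{arabialemma} with the cdga $A$ a strictly commutative model for $C^\bullet(X,\Q)$ inside which a cdga model for $C_c^\bullet(X,\Q)$ sits as an ideal. The natural candidates are furnished by Navarro Aznar's construction: take $A = R\Gamma^{TW}(X,\Q)$ and $I = R\Gamma_c^{TW}(X,\Q)$. The key observation is that compactly supported global sections of any sheaf of commutative algebras form an ideal in the algebra of all global sections, since the product of a compactly supported section with an arbitrary section again has compact support. Thus applying $\Gamma$ and $\Gamma_c$ to the Thom--Whitney Godement resolution of the constant sheaf $\Q$ exhibits $I$ as a literal cdga ideal in $A$.

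Next I would identify the inclusion $I \hookrightarrow A$ on cohomology. By the acyclicity arguments of Proposition \ref{rightcohomology}, for $X$ paracompact locally compact Hausdorff one has $H(A) \cong H^\bullet(X,\Q)$ and $H(I) \cong H^\bullet_c(X,\Q)$, and the map $H(I) \to H(A)$ induced by inclusion is precisely the canonical ``forget supports'' morphism $H^\bullet_c(X,\Q) \to H^\bullet(X,\Q)$. The $i$-acyclicity hypothesis says exactly that this map vanishes, while $H(I)$ is automatically projective as a $\Q$-vector space. All hypotheses of Lemma \ref{arabialemma} are therefore satisfied, so $I$ is formal as a cdga with identically zero multiplication on cohomology; since $I$ is a cdga model for $C_c^\bullet(X,\Q)$, this gives the theorem.

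The real content is Lemma \ref{arabialemma} itself, whose proof combines two inputs: the homotopy transfer theorem, which endows $H(I)$ with some $\mathsf C_\infty$-structure quasi-isomorphic to $I$, and the explicit $\mathsf A_\infty$-quasi-isomorphism of Lemma \ref{arabialemma0} sketched in \S\ref{sketch} (via the formulas $f_n(x) = f(x)\, g(x)^{n-1}$) showing that $I$ is $\mathsf A_\infty$-quasi-isomorphic to $H(I)$ with vanishing operations. Uniqueness of the transferred structure up to $\mathsf A_\infty$-isotopy then forces the transferred $\mathsf C_\infty$-multiplication to vanish. The main thing to be careful about here is bookkeeping: verifying that the passage from the sheaf-level ideal $j_!\Q_X \subset \Q_X$ through the Thom--Whitney totalization delivers an honest cdga ideal $I \subset A$, and that the induced map on cohomology coincides with the standard ``forget supports'' map. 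Both points follow by unwinding the definitions, and once they are in hand the proof of the theorem itself is a one-line invocation of Lemma \ref{arabialemma}.
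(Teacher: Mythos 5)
Your proposal is correct and follows essentially the same route as the paper: take $I = R\Gamma_c^{TW}(X,\Q)$ as an ideal in $A = R\Gamma^{TW}(X,\Q)$, note that $i$-acyclicity makes $H(I)\to H(A)$ vanish, and invoke Lemma \ref{arabialemma} (whose proof via homotopy transfer plus the explicit $\mathsf A_\infty$-morphism of Lemma \ref{arabialemma0} you also describe accurately).
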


\begin{proof}We may take for our cdga model for $C^\bullet_c(X,\Q)$ the algebra $R\Gamma_c^{TW}(X,\Q)$. It is an ideal inside the cdga $R\Gamma^{TW}(X,\Q)$ and the induced map $H^\bullet_c(X,\Q) \to H^\bullet(X,\Q)$ is zero by assumption. The result follows from Lemma \ref{arabialemma}.\end{proof}

\subsection*{Proof of Theorem \ref{i-acyclic-2}}

\begin{para}
	The proof of Theorem \ref{i-acyclic-2} is nearly identical to the proof of Theorem \ref{i-acyclic-1}. We will however need to work with $\mathsf C_\infty$- and $\mathsf A_\infty$-algebras in more general categories than $\mathsf{Ch}_R$, which serves to add an additional layer of abstraction; moreover, since we do not work over $\Q$ we should argue that the statements of \S\ref{whatweuse} remain valid. 
	\end{para} 
	
\begin{para}
	We will need to consider $\mathsf A_\infty$-algebras and $\mathsf C_\infty$-algebras in the category of functors $\mathsf{Mod}_R^{\mathsf{Ord}_+}$, with the ``shuffle'' monoidal structure introduced in \S \ref{diagram}: 
	$$ (A \otimes B)(S) = \bigoplus_{S = T \sqcup T'} A(T) \otimes B(T').$$
	That is, we repeat verbatim the definitions of $\mathsf A_\infty$-algebra and $\mathsf C_\infty$-algebra, but the tensor coalgebra and cofree conilpotent Lie coalgebra is now computed using the above tensor product. An associative dg algebra in this category of functors is nothing but a a shuffle dg algebra, and a commutative dg algebra is nothing but a commutative shuffle dg algebra, as defined in \S\ref{defn-shuffle}. 	 We shall refer to $\mathsf A_\infty$-algebras in this category as \emph{shuffle $\mathsf A_\infty$-algebras}. They form a natural enlargement of the category of shuffle dg algebras. Similarly we may consider shuffle $\mathsf C_\infty$-algebras as an enlargement of the category of commutative shuffle dg algebras. 
\end{para}

\begin{prop}\label{shuffle-qiso}
	Let $A$ and $A'$ be shuffle dg algebras. If there exists an $A_\infty$-quasi-isomorphism $f \colon A \to A'$, then there exists a zig-zag of quasi-isomorphisms of shuffle dg algebras connecting $A$ and $A'$. Similarly for commutative shuffle dg algebras and shuffle $\mathsf C_\infty$-algebras. 
\end{prop}

\begin{para}
	Before giving the proof of Proposition \ref{shuffle-qiso}, let us recall how one proves this for usual $\mathsf A_\infty$-algebras and $\mathsf C_\infty$-algebras over $\Q$. The idea is to show that each morphism in the zig-zag
	$$ A \leftarrow \Omega BA \stackrel {\Omega f}\longrightarrow  \Omega BA' \to A'$$
	is a quasi-isomorphism, where the first and last arrows are given by the counit of the bar-cobar adjunction \cite[Section 11.3]{lodayvallette} and the second arrow is given by applying the functor cobar to the map $BA \to BA'$ corresponding to the $\mathsf A_\infty$-morphism $f$. Disregarding the differential we have
	$$ \Omega BA \cong \mathsf{Ass}(s\mathsf{coAss}(s^{-1}A)) \cong \mathsf{Ass}(\Lambda^{-1}\mathsf{coAss}(A)) \cong 	\bigoplus_{k\geq 1} (\mathsf{Ass} \circ \Lambda^{-1}\mathsf{coAss})(k) \otimes_{\mathbb S_k} A^{\otimes k},$$
	where $\mathsf{Ass}$ denotes the associative operad, $\mathsf{coAss}$ the coassociative cooperad, $\Lambda^{-1}$ the operadic desuspension. (That is, $\Lambda^{-1} \mathsf{coAss}$ is the Koszul dual cooperad of $\mathsf{Ass}$.) See \cite[Chapter 11]{lodayvallette}. We can define an increasing filtration $L$ on $\Omega BA$ by 
	$$ L_m \Omega BA = \bigoplus_{k=1}^m (\mathsf{Ass} \circ \Lambda^{-1}\mathsf{coAss})(k) \otimes_{\mathbb S_k} A^{\otimes k}.$$
	We also give $A$ the trivial filtration with $L_0A = \{0\}$ and $L_1A = A$, and we define similarly filtrations on $\Omega BA'$ and $A'$. We will argue that all maps in the zig-zag $ A \leftarrow \Omega BA \to  \Omega BA' \to A'$ are filtered quasi-isomorphisms, which is enough to conclude the result since a filtered quasi-isomorphism of complexes with bounded below exhaustive filtrations is a quasi-isomorphism. 
	
	Now the induced differential on $\operatorname{Gr}^L_m \Omega BA \cong (\mathsf{Ass} \circ \Lambda^{-1}\mathsf{coAss})(m) \otimes_{\mathbb S_m} A^{\otimes m} $ is given only by the Koszul differential on $(\mathsf{Ass} \circ \Lambda^{-1}\mathsf{coAss})(m)$. But this Koszul complex is acyclic for $m>1$; indeed, this is the definition of Koszul duality. So certainly each morphism in the zig-zag is a quasi-isomorphism on $\operatorname{Gr}^L_m$ for $m>1$. For $m=1$ we have an identification $\operatorname{Gr}^L_1\Omega BA = A$, and applying $\operatorname{Gr}^L_1$ to the zig-zag gives simply
	$$ A \stackrel = \longleftarrow A \stackrel {f_1} \longrightarrow A' \stackrel =\longrightarrow A',$$
	and the result follows. The proof for $\mathsf C_\infty$-algebras over $\Q$ works the same, except we replace $\mathsf{Ass} \circ \Lambda^{-1}\mathsf{coAss}$ with the Koszul complex $\mathsf{Com} \circ \Lambda^{-1}\mathsf{coLie}$. The only place where we use that the ground ring is $\Q$ is when we deduce that $(\mathsf{Ass} \circ \Lambda^{-1}\mathsf{coAss})(m) \otimes_{\mathbb S_m} A^{\otimes m} $ is acyclic from the fact that $(\mathsf{Ass} \circ \Lambda^{-1}\mathsf{coAss})(m)$ is acyclic --- over a general ground ring, the functor $- \otimes_{\mathbb S_m} A^{\otimes m}$ has no reason to be exact in general. (In fact for $\mathsf A_\infty$-algebras the result is valid over $\Z$, since $(\mathsf{Ass} \circ \Lambda^{-1}\mathsf{coAss})(m)$ turns out to be a complex of free $\Z[\mathbb S_m]$-modules. But for $\mathsf C_\infty$-algebras we do need to work over $\Q$ in general.)
\end{para}

\begin{proof}[Proof of Proposition \ref{shuffle-qiso}] As in the previous paragraph we write down the zig-zag
	$$ A \longleftarrow \Omega BA \longrightarrow \Omega BA' \longrightarrow A',$$
	and we define the same filtration on each of the terms. We get in the same way an isomorphism
	$$\operatorname{Gr}^L_m \Omega BA \cong (\mathsf{Ass} \circ \Lambda^{-1}\mathsf{coAss})(m) \otimes_{\mathbb S_m} A^{\otimes m}, $$
	where we now use that the category $\mathsf{Ch}_R^{\mathsf{Ord}_+}$ is tensored over $\mathsf{Ch}_R$. Now we need to use an observation that we have already made in Theorem \ref{ps}: that $A^{\otimes m}$ is a free $R[\mathbb S_m]$-module for any $A \in \operatorname{ob} \mathsf{Ch}_R^{\mathsf{Ord}_+}$, as follows from the calculation
	$$ A^{\otimes m}(k) =\!\! \bigoplus_{\{1,\ldots,k\} = S_1 \sqcup \ldots \sqcup S_m}\!\! A(S_1) \otimes \ldots \otimes A(S_m) = R[\mathbb S_m] \otimes \!\!\bigoplus_{\substack{\{1,\ldots,k\} = S_1 \sqcup \ldots \sqcup S_m\\ \min(S_1) < \ldots < \min(S_m)}}\!\! A(S_1) \otimes \ldots \otimes A(S_m).$$	
	In particular, since $(\mathsf{Ass} \circ \Lambda^{-1}\mathsf{coAss})(m)$ is acyclic for $m>1$, we deduce that $\operatorname{Gr}^L_m \Omega BA$ is acyclic for $m>1$. Thus the proof goes through as in the preceding paragraph without changes. For $\mathsf C_\infty$-algebras we run the same argument except we replace $\mathsf{Ass} \circ \Lambda^{-1}\mathsf{coAss}$ with the Koszul complex $\mathsf{Com} \circ \Lambda^{-1}\mathsf{coLie}$.
\end{proof}

\begin{prop}
	Let $A$ be a shuffle  dg algebra such that $H(A)$ is a projective $R$-module in arity $n$ for all $n$.  
	There exists a shuffle $\mathsf A_\infty$-algebra structure on $H(A)$ and an $\mathsf A_\infty$-quasi-isomorphism $H(A) \to A$. The induced shuffle $\mathsf A_\infty$-structure on $H(A)$ is unique up to noncanonical $\mathsf A_\infty$-isotopy. Similarly the cohomology of a commutative shuffle dg algebra can be given a shuffle $\mathsf C_\infty$-algebra structure with a $\mathsf C_\infty$-quasi-isomorphism $H(A) \to A$.
\end{prop}

\begin{proof}
	The projectivity assumption implies the existence of a quasi-isomorphism $f \colon H(A) \to A$ in $\mathsf{Ch}_R^{\mathsf{Ord}_+}$. According to \cite[Theorem 2]{htt}, the conclusion will follow once we have verified that $f$ induces a quasi-isomorphism
	$$ \underline{\mathrm{Hom}}_{\mathsf{Ch}_R^{\mathsf{Ord}_+}}(\mathsf{coAss}(H(A)),H(A)) \longrightarrow \underline{\mathrm{Hom}}_{\mathsf{Ch}_R^{\mathsf{Ord}_+}}(\mathsf{coAss}(H(A)),A),$$
	where $\underline{\mathrm{Hom}}$ denotes the internal Hom; for $\mathsf C_\infty$-algebras we should instead consider $$ \underline{\mathrm{Hom}}_{\mathsf{Ch}_R^{\mathsf{Ord}_+}}(\mathsf{coLie}(H(A)),H(A)) \longrightarrow \underline{\mathrm{Hom}}_{\mathsf{Ch}_R^{\mathsf{Ord}_+}}(\mathsf{coLie}(H(A)),A).$$
	In both cases, the result is immediate from the fact that $H(A)$ is projective, and then so are $\mathsf{coAss}(H(A))$ and $\mathsf{coLie}(H(A))$, so $\underline{\mathrm{Hom}}_{\mathsf{Ch}_R^{\mathsf{Ord}_+}}(\mathsf{coAss}(H(A)),-)$ and $\underline{\mathrm{Hom}}_{\mathsf{Ch}_R^{\mathsf{Ord}_+}}(\mathsf{coLie}(H(A)),-)$ are both exact functors.  The proof in \cite{htt} is formulated in $\mathsf{Ch}_R$ but works with no modifications in $\mathsf{Ch}_R^{\mathsf{Ord}_+}$. 
\end{proof}

\begin{rem}There are very many ways of proving that one can transfer $\mathsf A_\infty$- or $\mathsf C_\infty$-algebra structure to cohomology, e.g.\ using sums over trees \cite[Section 10.3]{lodayvallette}, homological perturbation theory \cite{berglundhomologicalperturbation}, or abstract model-categorical arguments \cite{bergermoerdijk}. In the preceding proof we refer to the paper \cite{htt} since the other approaches mentioned above all require stronger assumptions on the ground ring or on the operads or complexes involved; we do not want to assume for example that $H(A) \to A$ is a homotopy equivalence. For $\mathsf A_\infty$-algebras the proof in \cite{htt} is exactly the same as the one of Kadeishvili \cite{kadeishvili}. According to \cite{kadeishvili-Coo}, Kadeishvili proved the analogous transfer result for $\mathsf C_\infty$-algebras over a general ground ring as well in the 1980's, but these references are not easily available, and Kadeishvili's definition of a $\mathsf C_\infty$-algebra is slightly different from the one used here; the two notions are equivalent only over a ground ring containing $\Q$. 
\end{rem}

\begin{lem}
	Let $A$ be a commutative shuffle dg algebra, $I \subset A$ an ideal. Suppose that the induced map $H(I) \to H(A)$ vanishes, and that $H(I)$ is projective --- that is, $H(I)(n)$ is degreewise a projective $R$-module, for every $n \geq 1$. Then $I$ is formal as a commutative shuffle dg algebra, and the multiplication on $H(I)$ is identically zero.
\end{lem}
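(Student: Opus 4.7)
The plan is to imitate the proof of Lemma \ref{arabialemma} essentially verbatim, reinterpreting every construction in the enlarged symmetric monoidal category $\mathsf{Ch}_R^{\mathsf{Ord}_+}$ with the shuffle tensor product of \S \ref{diagram}. As already indicated earlier in the section, the entire formalism of $\mathsf A_\infty$- and $\mathsf C_\infty$-algebras, $\infty$-morphisms, bar--cobar duality and the homotopy transfer theorem carries over without change to any cocomplete $R$-linear symmetric monoidal abelian category whose tensor product preserves colimits in each variable, so in particular it applies to shuffle $\mathsf A_\infty$- and $\mathsf C_\infty$-algebras. Thus the task reduces to carrying out the two-step argument of Lemma \ref{arabialemma} in this enriched setting.

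First I would establish the shuffle analog of Lemma \ref{arabialemma0}. Using the degreewise projectivity of $H(I)$, I can choose a morphism $f \colon H(I) \to I$ in $\mathsf{Mod}_R^{\mathsf{Ord}_+}$ sending every cohomology class to a representing cocycle, and then, using the hypothesis that $H(I) \to H(A)$ vanishes, lift $f$ to a map $g \colon H(I) \to A$ satisfying $d \circ g = -f$; projectivity in each arity is exactly what lets the two maps exist. I then define
$$ f_n(x_1 \otimes \cdots \otimes x_n) = f(x_1) \cdot g(x_2) \cdots g(x_n), $$
where the product is taken in the shuffle algebra $A$. Because $I$ is an ideal of $A$, every such product lies in $I$, and the verification that $\{f_n\}_{n\geq 1}$ assembles into a shuffle $\mathsf A_\infty$-quasi-isomorphism from $(H(I),0)$ to $I$ is the same sign-bookkeeping with the Leibniz rule as in Lemma \ref{arabialemma0}. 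Next I would apply the homotopy transfer theorem in the shuffle setting to put a shuffle $\mathsf C_\infty$-algebra structure on $H(I)$ that is shuffle $\mathsf C_\infty$-quasi-isomorphic to $I$; projectivity again guarantees that $H(I)$ is a homotopy retract of $I$, which is the input required by the transfer theorem.

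Forgetting structure, the transferred shuffle $\mathsf C_\infty$-algebra gives in particular a shuffle $\mathsf A_\infty$-structure on $H(I)$ together with a shuffle $\mathsf A_\infty$-quasi-isomorphism to $I$ whose linear term is the identity. By the uniqueness clause of the transfer theorem this structure must be shuffle $\mathsf A_\infty$-isotopic to the identically zero shuffle $\mathsf A_\infty$-structure produced in the previous step, and the only shuffle $\mathsf A_\infty$-structure isotopic to the zero one is the zero one itself. Hence the transferred shuffle $\mathsf C_\infty$-structure on $H(I)$ has all operations zero, proving that $I$ is formal as a commutative shuffle dg algebra and that the shuffle multiplication on $H(I)$ vanishes.

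The main obstacle is not conceptual but purely a matter of confirming that the operadic toolkit (explicit formulas for $\mathsf A_\infty$-morphisms, the Kontsevich--Soibelman proof of the transfer theorem, and the uniqueness of the transferred structure up to $\mathsf A_\infty$-isotopy) really does carry over verbatim to $\mathsf{Ch}_R^{\mathsf{Ord}_+}$ with the shuffle tensor product. This reduces in the end to the fact that all the constructions involved are built out of the tensor structure, colimits and copies of $R$ (through the operads $\Omega\mathsf{coLie}$ and $\Omega\mathsf{coAss}$), all of which behave well in any such symmetric monoidal $R$-linear abelian category; combined with the degreewise projectivity of $H(I)$, which supplies all sections and null-homotopies needed, the argument then proceeds exactly as in the ordinary cdga case.
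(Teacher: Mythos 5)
Your proposal is correct and follows the paper's own route exactly: the paper proves this lemma by repeating the proof of Lemma \ref{arabialemma} verbatim in the category $\mathsf{Ch}_R^{\mathsf{Ord}_+}$ with the shuffle monoidal structure, relying on the same explicit $\mathsf A_\infty$-morphism of Lemma \ref{arabialemma0}, the homotopy transfer theorem, and the uniqueness-up-to-isotopy argument, all interpreted for shuffle $\mathsf A_\infty$- and $\mathsf C_\infty$-algebras.
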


\begin{proof}
	Repeat the proof of Lemma \ref{arabialemma}, with the tacit understanding that all $\mathsf C_\infty$-algebras are now shuffle $\mathsf C_\infty$-algebras, all $\mathsf A_\infty$-algebras are now shuffle $\mathsf A_\infty$-algebras, etc., and where a quasi-isomorphism $H(I) \to I$ in $\mathsf{Ch}_R^{\mathsf{Ord}_+}$ now exists since we assumed $H(I)(n)$ to be a projective $R$-module for all $n$. 
\end{proof}

\begin{rem}
	The preceding proof shows why we need to work with commutative shuffle algebras instead of twisted commutative algebras when we are not over a field of characteristic zero: in order to construct a quasi-isomorphism $H(I) \to I$ in $\mathsf{Ch}_R^{\mathsf{Ord}_+}$ we only need $H(I)(n)$ to be a projective $R$-module, but if we wanted a quasi-isomorphism $H(I) \to I$ in in $\mathsf{Ch}_R^{\mathsf{Fin}_+}$
 we would need $H(I)(n)$ to be a projective $R[\mathbb S_n]$-module for all $n$, which will practically never be true. \end{rem}

\begin{thm}[Theorem \ref{i-acyclic-2} restated] Let $X$ be a paracompact Hausdorff locally compact space which is $i$-acyclic over the ring $R$, such that $H^\bullet_c(X,R)$ is degreewise a projective $R$-module. Then $R\Gamma_c^\otimes(X,R)$ is formal as a commutative shuffle dg algebra, and the multiplication on its cohomology $H^\bullet_c(X,R)$ is identically zero. \end{thm}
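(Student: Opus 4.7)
The plan is to repeat the proof of Theorem \ref{i-acyclic-1} mutatis mutandis, applying the preceding lemma (the shuffle-algebra analogue of Lemma \ref{arabialemma}) to the pair consisting of the ambient algebra $A = R\Gamma^\otimes(X,R)$ and the ideal $I = R\Gamma_c^\otimes(X,R)$, both regarded as commutative shuffle dg algebras via the forgetful functor from tcdga's.

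First I would check that $I$ is indeed an ideal in $A$. Recall that both algebras are obtained by applying $\Gamma$ (resp.\ $\Gamma_c$) to the sheaf of tcdga's $S \mapsto \mathcal{L}^{\otimes S}$, where $\mathcal{L}$ is the flat and flabby resolution of $R$ from Lemma \ref{flabbyflat}. The support of a product of sections is contained in the intersection of their supports, so the multiplication map $\Gamma(X,\mathcal{L}^{\otimes S}) \otimes \Gamma_c(X,\mathcal{L}^{\otimes T}) \to \Gamma(X,\mathcal{L}^{\otimes (S\sqcup T)})$ lands in $\Gamma_c(X,\mathcal{L}^{\otimes (S\sqcup T)})$. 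This exhibits $I$ as an ideal in $A$ as commutative shuffle dg algebras.

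Next I would verify the hypotheses of the lemma in each arity $n \geq 1$. Projectivity of $H(I)(n)$ follows from the assumption: multiplication induces a quasi-isomorphism $\mathcal{L}^{\otimes n} \xrightarrow{\sim} \mathcal{L}$ (since on cohomology sheaves this is the iso $R^{\otimes n} \cong R$), and since $\mathcal{L}^{\otimes n}$ is $c$-soft by the argument in the proof of Proposition \ref{rightcohomology}, we deduce $H(I)(n) \cong H^\bullet_c(X,R)$, which is projective by assumption. Vanishing of $H(I)(n) \to H(A)(n)$ follows from the commutative diagram
\[
\begin{tikzcd}
\Gamma_c(X,\mathcal{L}^{\otimes n}) \arrow[r] \arrow[d,"\sim"] & \Gamma(X,\mathcal{L}^{\otimes n}) \arrow[d,"\sim"] \\
\Gamma_c(X,\mathcal{L}) \arrow[r] & \Gamma(X,\mathcal{L}),
\end{tikzcd}
\]
whose vertical maps are quasi-isomorphisms induced by multiplication; on cohomology the bottom row is the $i$-acyclicity map $H^\bullet_c(X,R) \to H^\bullet(X,R)$, which vanishes by hypothesis. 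Hence the top row is zero on cohomology.

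Finally, I would invoke the preceding lemma to conclude that $I = R\Gamma_c^\otimes(X,R)$ is formal as a commutative shuffle dg algebra, with the multiplication on its cohomology $H^\bullet_c(X,R)$ being identically zero. I don't anticipate any real obstacle: the entire conceptual content has already been packaged into the shuffle-algebra homotopy transfer machinery and the lemma; the only work is the routine identification of hypotheses above. The reason we must pass to shuffle algebras rather than work with tcdga's directly is exactly the failure of the $\mathsf{A}_\infty$-isotopy rigidity argument to respect the symmetric group actions, as already noted in the introduction by the explicit counterexample of two points on a line.
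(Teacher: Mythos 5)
Your proposal is essentially the paper's own proof: the paper likewise concludes by applying the shuffle-algebra version of Lemma \ref{arabialemma} to the ideal $R\Gamma_c^\otimes(X,R)$ inside the commutative shuffle dg algebra $R\Gamma^\otimes(X,R)$, and your verification of the hypotheses (ideal property via supports, arity-wise projectivity, and vanishing of $H(I)\to H(A)$ from $i$-acyclicity) is exactly what the paper leaves implicit, resting on Proposition \ref{rightcohomology}. One small inaccuracy: for a general ring $R$ there is no chain-level multiplication $\mathcal L^{\otimes n}\to\mathcal L$ on the resolution (this failure is precisely why tcdga's are introduced in the first place), so your vertical quasi-isomorphisms should instead come from the identification $\mathcal L^{\otimes n}\simeq R\otimes^L\cdots\otimes^L R\simeq R$ in the derived category together with naturality of the forget-supports map, using that $\mathcal L^{\otimes n}$ is soft and $c$-soft so that it computes both $H^\bullet$ and $H^\bullet_c$; this is a cosmetic repair and does not affect the argument.
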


\begin{proof}
	Apply the preceding lemma to the ideal $R\Gamma_c^{\otimes}(X,R)$ inside the commutative shuffle dg algebra $R\Gamma^\otimes(X,R)$.  
\end{proof}

\begin{rem}
	If $H^\bullet_c(X,R)$ is not a projective $R$-module there is still something one can say. By a modification of the above arguments one can show that if $X$ is ``derived $i$-acyclic'', meaning that the map $C_c^\bullet(X,R) \to C^\bullet(X,R)$ is zero in the derived category of dg $R$-modules, then $R\Gamma_c^\otimes(X,R)$ is quasi-isomorphic as a commutative dg shuffle algebra to a projective resolution of $H^\bullet_c(X,R)$, considered as a commutative dg shuffle algebra with identically zero multiplication. When the cohomology is projective as an $R$-module, $i$-acyclicity is equivalent to derived $i$-acyclicity. We omit the details.
\end{rem}

\section{Example application: $k$-equals configuration spaces}
\label{section:k-equals}

\begin{para}
	In this section, we will illustrate how our results may be applied. Let $X$ be an $i$-acyclic space, $k \geq 2$ an integer, and let $F_{< k}(X,n)$ denote the \emph{$k$-equals configuration space} --- the space parametrizing $n$ ordered points on $X$ such that no subset of $k$ points coincide. In this section we write down explicit formulae for the cohomology groups $H^i_c(F_{<  k}(X,n),\Q)$ and their decompositions into irreducible $\mathbb S_n$-representations, in terms of the compactly supported Poincar\'e polynomial of $X$. We assume throughout that $X$ is paracompact locally compact Hausdorff and that all of the cohomology groups $H^i_c(X,\Q)$ are finite dimensional $\Q$-vector spaces. 
\end{para}
	
	 \begin{para}Our calculation is formulated in terms of the algebra of symmetric functions, and we will use the correspondence between symmetric functions and symmetric sequences. Let us briefly recall the relevant terminology, although part of it has already been used (in greater generality) in previous sections of this paper. 
\end{para}

\subsection*{Symmetric sequences and a reformulation of the calculation}

\begin{defn}To avoid repeating hypotheses, we will for the remainder of this section use the term \emph{symmetric sequence} to mean a sequence $\mathbf A = \{\mathbf A(n)\}_{n =0}^\infty$ of representations of the symmetric groups $\mathbb S_n$ in the category of graded, degreewise finite dimensional $\mathbf Q$-vector spaces. We have already used the notion of symmetric sequence previously in the paper, in greater generality: we considered  the tensor product of symmetric sequences in \S \ref{monoidalstructure}; moreover, we have freely used the language of operads, and an operad is nothing but a monoid in the category of symmetric sequences with respect to the monoidal structure given by composition product.
\end{defn}

\begin{defn}
	Let $\mathbf A$ and $\mathbf B$ be symmetric sequences. Their \emph{tensor product} is defined by $$(\mathbf A\otimes \mathbf B)(n) = \bigoplus_{n=k_1+k_2} \mathrm{Ind}_{\mathbb S_{k_1} \times \mathbb S_{k_2}}^{\mathbb S_n} \mathbf A(k_1) \otimes \mathbf B(k_2).$$
 If $\mathbf B(0)=0$, then their \emph{composition product} is the symmetric sequence defined by
	$$ (\mathbf A \circ \mathbf B)(n) = \bigoplus_{k=0}^\infty \mathbf A(k) \otimes_{\mathbb S_k} \mathbf B^{\otimes k}(n), $$
	where $\mathbf B^{\otimes k}$ denotes the $k$-fold tensor product of $\mathbf B$ with itself. (The definition makes sense also when $\mathbf B(0)\neq 0$, but $\mathbf A \circ \mathbf B$ might no longer be degreewise finite dimensional as a vector space in each arity.)
\end{defn}

\begin{para}\label{some-s-modules}	Let $\Pi_{(k,1^{n-k})} \subset \Pi_n$ be the subposet of partitions all of whose blocks are either singletons or have size $\geq k$. Let $\mathbf P_k$ denote the symmetric sequence with
	$$ \mathbf P_k(n) =  \widetilde H^\bullet(\nerveud {\Pi_{(k,1^{n-k})}},\Q)$$
	for $n > 0$.  Let $\mathbf E$ denote the symmetric sequence with $\mathbf E(n) = \Q$, the trivial representation concentrated in degree $0$, for all $n \geq 0$. Finally we consider $H^\bullet_c(X,\Q)$ as a symmetric sequence concentrated in arity $0$. 
\end{para}

\begin{rem}\label{rem1}
	Note that the composition product $H^\bullet_c(X,\Q) \circ \mathbf P_k$ is just the aritywise tensor product: \[(H^\bullet_c(X,\Q) \circ \mathbf P_k)(n) = H^\bullet_c(X,\Q) \otimes \mathbf P_k(n).\]
\end{rem}
\begin{rem}\label{rem2}
	The composition product $\mathbf E \circ \mathbf A$ can be understood as an ``exponential'' of $\mathbf A$, and can be explicitly written as 
	$$(\mathbf E \circ \mathbf A)(n) = \bigoplus_{\substack{T \in \Pi_n\\ \vert T_i \vert = t_i, \,\, \, i=1,\ldots,\ell}} \bigotimes_{i=1}^\ell \mathbf A(t_i)$$
	where the summation indicates that we sum over all partitions $T$, and that that the partition $T$ has $\ell$ blocks of size $t_1,\ldots,t_\ell$.   
\end{rem}

\begin{prop}\label{plethysm-1}Let $X$ be $i$-acyclic over $\Q$. There is an isomorphism of $\mathbb S_n$-representations
	$$ (\mathbf E \circ H^\bullet_c(X,\Q) \circ \mathbf P_k )(n) = H^\bullet_c(F_{<  k}(X,n),\Q).$$
\end{prop}
\begin{proof}
	Let $U \subset \Pi_n$ be the upwards closed set consisting of all partitions containing a block of size at least $k$, so that $F_{<  k}(X,n) = F(X,U)$. As in \S\ref{bjorner} we denote by $J_{U_0}$ the set of joins of atoms in $U$, including the empty join $\hat 0$. Note that
	$J_{U_0}  = \Pi_{(k,1^{n-k})}.$ If $T \in J_{U_0}$ has blocks of size $t_1,\ldots,t_\ell$, then the poset $J_{U_0}^{\preceq T}$ decomposes as a cartesian product
	\[J_{U_0}^{\preceq T} \cong \prod_{i=1}^\ell \Pi_{(k,1^{t_i-k})},\]
	and hence Lemma \ref{smash} says that there is a homeomorphism
	\[\nerveud {J_{U_0}^{\preceq T}} \cong  \nerveud{\Pi_{(k,1^{t_1-k})}}  \wedge\nerveud{\Pi_{(k,1^{t_2-k})}} \wedge \ldots \wedge \nerveud{\Pi_{(k,1^{t_\ell-k})}}. \]
	But according to Corollary \ref{corollary1} there is an $\mathbb S_n$-equivariant isomorphism
	\begin{align*}
	H^\bullet_c(F_{\leq k}(X,n),\Q) & \cong \bigoplus_{T \in J_{U_0}}  \widetilde H^\bullet(\nerveud {J_{U_0}^{\preceq T}},\Q) \otimes H^\bullet_c(X^{\vert T\vert},\Q)\end{align*}
	and the above expression for $\nerveud {J_{U_0}^{\preceq T}}$ implies that the right hand side can be rewritten as
	\begin{align*}
	\bigoplus_{\substack{T \in J_{U_0} \\ \vert T_i \vert = t_i,\,\, i=1,\ldots,\ell}} \bigotimes_{i=1}^\ell \widetilde H^\bullet(\nerveud{\Pi_{(k,1^{t_i-k})}},\Q) \otimes H^\bullet_c(X,\Q)
	\end{align*}
	which is now nothing but the arity $n$ component of $\mathbf E \circ H^\bullet_c(X,\Q) \circ \mathbf P_k$, where we use Remark \ref{rem1} and \ref{rem2}.
\end{proof}

\subsection*{The formalism of symmetric functions}

\begin{para}
	Let $\Lambda$ denote the ring of symmetric functions over $\Q$. It is graded: $\Lambda = \bigoplus_{n =0}^\infty \Lambda_n$. We let $\widehat \Lambda$ denote the completion of $\Lambda$ with respect to the filtration by degree. Explicitly, $\widehat \Lambda = \prod_{n=0}^\infty \Lambda_n$. 	
\end{para}	

\begin{para}
	For each $n$ there is an isomorphism $\Lambda_n \cong R(\mathbb S_n)$ between $\Lambda_n$ and the Grothendieck group of representations of $\mathbb S_n$. Here it is crucial that we work over a field of characteristic zero.  If $M$ is a representation of $\mathbb S_n$, then we denote by $\operatorname{ch}_n M$ the corresponding symmetric function. 
\end{para}

\begin{para}
	Let $\EuScript R$ denote the ring
	$$ \EuScript R = \prod_{n=0}^\infty \Lambda_n \otimes \Q[t,t^{-1}],$$
	considered as a subring of $\widehat \Lambda[\![t,t^{-1}]\!]$. If $\mathbf M$ is a symmetric sequence, then we denote
	$$ \operatorname{ch} \mathbf M = \sum_{n=0}^\infty \sum_{i\in \Z} (-t)^i \operatorname{ch}_n \mathbf M(n)^i \in \EuScript R,$$
	where $\mathbf M(n)^i$ denotes the component of $\mathbf M(n)$ in cohomological degree $i$. Two symmetric sequences  $\mathbf M$ and $\mathbf M'$ are isomorphic if and only if $\operatorname{ch}\mathbf M = \operatorname{ch}\mathbf M'$. This is a consequence of the fact that the category of symmetric sequences is semisimple. The assignment $\mathbf M \mapsto \operatorname{ch}\mathbf M$ induces an isomorphism between $K_0(\mathsf{Mod}_{\mathbb S})$ (the Grothendieck group of symmetric sequences) and $\EuScript R$. This is in fact an isomorphism of \emph{commutative rings}, where $K_0(\mathsf{Mod}_{\mathbb S})$ carries the ring structure induced from the tensor product of symmetric sequences. 
\end{para}

\begin{para}
We will consider two particular bases for the ring $\Lambda$. Recall that irreducible representations of $\mathbb S_n$ are parametrized in a standard way by partitions $\lambda \vdash n$; we let $V_\lambda$ denote the irreducible representation (Specht module) corresponding to $\lambda$. The elements $\operatorname{ch}_n V_\lambda \in \Lambda_n$ are called \emph{Schur polynomials} and will be denoted $s_\lambda$. For example, $s_n$ corresponds to the trivial representation of $\mathbb S_n$, and $s_{1^n}$ the sign representation. Schur polynomials form a basis for $\Lambda$ as a vector space. We will also use the \emph{power sums} $p_n$, $n \geq 1$, which are uniquely determined by the equality of formal power series
$$ \sum_{n \geq 0} s_n t^n = \exp \sum_{n \geq 1} p_n t^n/n. $$
The power sums freely generate $\Lambda$ as a $\Q$-algebra.
\end{para}

\begin{para}
	The ring $\Lambda$ carries an operation called \emph{plethysm}, denoted $f \circ g$. Plethysm is uniquely determined by the following properties:
	\begin{itemize}
		\item For all $g \in \Lambda$, the map $f \mapsto f \circ g$ is a ring homomorphism.
		\item For all $n \geq 1$, the map $g \mapsto p_n \circ g$ is a ring homomorphism.
		\item $p_n \circ p_m = p_{nm}$. 
	\end{itemize}
	We extend plethysm to an operation on $\Lambda[t,t^{-1}]$ by imposing the additional rules:
	\begin{itemize}
		\item Plethysm is $\Q[t,t^{-1}]$-linear in the first variable.
		\item $p_n \circ t = t^{n}$.
	\end{itemize}
	Plethysm does not extend to a well defined operation on $\EuScript R$, but if $g$ has no constant term then we may define $f \circ g$ by the above rules. 
\end{para}

\begin{para}
	The isomorphism $K_0(\mathsf{Mod}_{\mathbb S}) \to \EuScript R$ carries the composition product of symmetric sequences to the plethysm of symmetric functions. Note that we defined the composition product $\mathbf M \circ \mathbf N$ of symmetric sequences only when $\mathbf N(0)=0$, and the plethysm $f \circ g$ only when $g$ has no constant term, so the claim should be understood as saying that the isomorphism is compatible with these partially defined functions.   
\end{para}

\subsection*{The calculation for $k$-equals configuration spaces}

\begin{para}Consider the following three elements of $\EuScript R$. 
\begin{align*}
\mathsf S_k & = -\sum_{n \geq k} (-t)^{n-k+2}s_{n-k+1,1^{k-1}}, \\
\mathsf L & =  \sum_{n \geq 1} \frac {1} n \sum_{d \vert n } (-1)^{n/d-1}\mu(d) p_d^{n/d},  \\
\mathsf E & = \sum_{n \geq 0} s_{n}. 
\end{align*} 
Here $\mu$ denotes the usual number-theoretic M\"obius function. The following is a theorem of Sundaram and Wachs \cite[Theorem 3.5]{sundaramwachs}, slightly reformulated. Their result was previously applied to the computations of the $\mathbb S_n$-equivariant rational cohomologies of the spaces $F_{\leq k}(\mathbf R^d,n)$ by Sundaram and Welker \cite{sundaramwelker}. The non-equivariant version of the calculation was done previously by Bj\"orner and Welker \cite{bjornerwelker}, where they also proved that the cohomology is torsion free.

\end{para}

\begin{thm}[Sundaram--Wachs]\label{sw}
		 Let $\mathbf P_k$ denote the symmetric sequence defined in \S \ref{some-s-modules}. There is an equality
 $$s_1 + t^{-1}\mathsf L \circ \mathsf S_k = \operatorname{ch} \mathbf P_k$$
 in $\EuScript R$.
\end{thm}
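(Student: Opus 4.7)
The plan is to reduce the identity to Sundaram--Wachs's original formula \cite[Theorem 3.5]{sundaramwachs} for the equivariant Whitney cohomology of $\Pi_{(k,1^{n-k})}$, and then verify that their formula, after accounting for the double suspension implicit in $\nerveud{-}$, reads as the plethystic expression $s_1 + t^{-1}\mathsf L\circ\mathsf S_k$. Throughout, the characteristic ring isomorphism $K_0(\mathsf{Mod}_{\mathbb{S}})\cong\widehat\Lambda(\!(t)\!)$ carrying $\otimes$ to multiplication and $\circ$ to plethysm will be used tacitly.

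First I would dispose of the low-arity cases. For $n=1$ the subposet $\Pi_{(k,1^{1-k})}$ is a single point, so by Lemma \ref{susp} one has $\nerveud{\cdot}\cong S^0$ and $\mathbf\Pi_k(1)=\mathbf Q$ in degree $0$, contributing exactly the $s_1$ summand. Since $\mathsf S_k$ has no terms of arity $<k$ and no constant term, the plethysm $\mathsf L\circ\mathsf S_k$ contributes nothing in arity $1$. For $2\le n<k$ the same argument gives that $\Pi_{(k,1^{n-k})}$ is again a one-element poset; this will also have to be matched on the right-hand side, and I expect it to drop out from the convention that $\mathsf L\circ\mathsf S_k$ starts only in arity $k$.

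Second, for $n\ge k$ I would invoke Björner--Wachs's EL-shellability of $\Pi_{(k,1^{n-k})}$ (proved in the \cite{sundaramwachs} paper) to conclude that the order complex, and hence $\nerveud{\Pi_{(k,1^{n-k})}}$, is a wedge of spheres of a single dimension. Thus $\mathbf\Pi_k(n)$ is concentrated in one cohomological degree, and only its $\mathbb S_n$-character needs to be identified. The plan is to express this character via Sundaram's formula for the total Whitney cohomology of a Cohen--Macaulay lattice: for each $T\in\Pi_{(k,1^{n-k})}$ with blocks $T_1,\ldots,T_\ell$, the upper interval $[T,\hat1]$ in the unrestricted partition lattice is isomorphic to $\Pi_\ell$, and its sign-twisted top cohomology has Frobenius character $\ell_\ell = \frac{1}{\ell}\sum_{d\mid\ell}\mu(d)p_d^{\ell/d}$ by the classical theorem of Joyal--Stanley--Barcelo. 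The lower interval $[\hat0,T]$ contributes to the Whitney homology through the atoms of $\Pi_{(k,1^{n-k})}$, which are precisely the partitions containing a single non-singleton block of size exactly $k$, and the $\mathbb S_n$-character of the atom of size $n-k+$(singleton structure) is the hook $s_{n-k+1,1^{k-1}}$ that appears in $\mathsf S_k$.

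Third, I would assemble these pieces. A plethysm of the Lie character into the generating symmetric function for atoms produces the Whitney cohomology of the whole restricted lattice; the sign twist converting top cohomology to reduced cohomology introduces the factors $(-1)^{n/d-1}$ inside $\mathsf L$, while the degree-shift conventions implicit in the double-suspension $\nerveud{-}$ (by Lemma \ref{susp}, $\nerveud P \simeq \Sigma^2\nerve{P\setminus\{\hat 0,\hat 1\}}$) produce the overall factor $t^{-1}$ and the extra power $(-t)^{n-k+2}$ decorating $s_{n-k+1,1^{k-1}}$ inside $\mathsf S_k$. Comparing these contributions arity by arity with Sundaram--Wachs's formula yields the equality in $\widehat\Lambda(\!(t)\!)$.

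The main obstacle I anticipate is the bookkeeping of signs and degree shifts: Sundaram--Wachs work with homological grading of the order complex of the proper part of $\Pi_{(k,1^{n-k})}$, whereas the statement here uses cohomological grading of $\nerveud{-}$, which differs by the double suspension of Lemma \ref{susp}. Reconciling the two conventions, together with the sign twist that relates the Lie operad to the top cohomology of the partition lattice, is where careful attention is required; the rest of the argument is a plethystic manipulation.
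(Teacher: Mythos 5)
You are not missing a proof from the paper: Theorem \ref{sw} is not proved there at all, it is imported from \cite[Theorem 3.5]{sundaramwachs} ``slightly reformulated'', so your top-level plan --- quote Sundaram--Wachs and translate conventions through the double suspension of Lemma \ref{susp} --- is exactly the intended route, and the real content of such a write-up is the grading/sign bookkeeping that you postpone to the end. The trouble is that the intermediate steps you propose in place of that bookkeeping contain concrete errors. The central one is the claim that $\Pi_{(k,1^{n-k})}$ is EL-shellable, hence that $\nerveud{\Pi_{(k,1^{n-k})}}$ is a wedge of spheres of a single dimension and $\mathbf \Pi_k(n)$ is concentrated in one cohomological degree. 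This is false for $k\geq 3$: the $k$-equal lattice is not Cohen--Macaulay (its order complex is only \emph{nonpure} shellable), and by \cite{bjornerwelker} the reduced homology of its proper part is spread over the dimensions $n-3-j(k-2)$ for $j=1,\ldots,\lfloor n/k\rfloor$, i.e.\ after the double suspension over cohomological degrees $n-1-j(k-2)$. You can even read this off the right-hand side of the identity you are trying to prove: the arity-$n$ part of $t^{-1}\mathsf L\circ \mathsf S_k$ collects products of $j$ terms of $\mathsf S_k$ with arities $n_1+\cdots+n_j=n$ and carries the power $t^{\,n-1-j(k-2)}$, so several powers of $t$ occur in a single arity as soon as $n\geq 2k$. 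Since your reduction rests on ``only the character needs to be identified,'' the argument collapses here, and the Whitney-homology assembly of your third step (which tacitly assumes Cohen--Macaulay-type behaviour, and uses upper intervals of the \emph{unrestricted} partition lattice where intervals of the restricted lattice are what matter) is not valid as stated.

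Two further points. The atoms of $\Pi_{(k,1^{n-k})}$ --- partitions with one block of size exactly $k$ and $n-k$ singletons --- carry the permutation representation $\mathrm{Ind}_{\mathbb S_k\times\mathbb S_{n-k}}^{\mathbb S_n}\mathbf 1$, whose characteristic is $h_kh_{n-k}$, not the hook $s_{n-k+1,1^{k-1}}$; in Sundaram--Wachs the hooks enter through the homology of the subposets generated by a single non-singleton block, so this identification must be redone rather than asserted. And the low arities do not take care of themselves: for $2\leq n<k$ the literal definition in \S\ref{some-s-modules} makes $\Pi_{(k,1^{n-k})}$ a one-point poset, so $\mathbf\Pi_k(n)=\widetilde H^0(S^0)=\Q$ contributes $s_n$ to $\operatorname{ch}\mathbf\Pi_k$, whereas $s_1+t^{-1}\mathsf L\circ\mathsf S_k$ has nothing in those arities; instead of ``expecting it to drop out'' you must fix the convention explicitly (e.g.\ set $\mathbf\Pi_k(n)=0$ for $2\le n<k$, which is harmless since in Theorem \ref{plethysm-1} only blocks of size $1$ or $\geq k$ ever occur). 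Finally, be aware that the sign conventions are genuinely delicate --- the characteristic is defined with $(-t)^i$ precisely so that plethysm matches the composition product with its Koszul signs --- so a correct proof should consist of a careful arity-by-arity comparison with the statement of \cite[Theorem 3.5]{sundaramwachs} under these conventions, not a re-derivation of their theorem by the shortcut you sketch.
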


\begin{para}
	Let $\mathbf F_{<  k}(X)$ denote the symmetric sequence given by
	$$ \mathbf F_{<  k}(X)(n) = H^\bullet_c(F_{<  k}(X,n),\Q).$$
	By combining Proposition \ref{plethysm-1} with Theorem \ref{sw}, we can give an explicit formula for $\operatorname{ch} \mathbf F_{\leq k}(X)$ when $X$ is $i$-acyclic. In other words, we obtain a complete calculation of all cohomology groups $H^i_c(F_{<  k}(X,n),\Q)$ with their decompositions into irreducible representations of $\mathbb S_n$. 
\end{para}

\begin{cor}
	Let $X$ be an $i$-acyclic space over $\mathbf Q$, and let $h(t) = \sum_i (-t)^i \dim_\Q H^i_c(X,\Q)$. Then there is an equality
	$$ \mathsf E \circ (h(t) \cdot (s_1 + t^{-1} \mathsf L \circ \mathsf S_k)) = \operatorname{ch} \mathbf F_{<  k}(X) $$
	in $\EuScript R$.
\end{cor}

\begin{proof}
	Apply the operator $\operatorname{ch}$ to Proposition \ref{plethysm-1}, and use that composition product of symmetric sequences is mapped to plethysm of symmetric functions. Then plug in the calculation of $\operatorname{ch} \mathbf P_k$ given by Theorem \ref{sw}, and the obvious equalities $\operatorname{ch}\mathbf E = \mathsf E$ and $\operatorname{ch} H^\bullet_c(X,\Q) = h(t)$. 
\end{proof}

\begin{para}
	
When $k=2$, i.e.\ in the case of the usual configuration spaces of points, the above formula can be simplified, since there is a simpler formula for $\operatorname{ch} \mathbf P_2$. Namely, one has
$$ s_1 + t^{-1}\mathsf L \circ \mathsf S_2 = \sum_{n \geq 1} \frac {t^{n-1}} n \sum_{d \vert n } (-1)^{n/d-1}\mu(d) p_d^{n/d}.$$
\end{para}

\begin{rem}
	If $X$ is not assumed to be $i$-acyclic, then the formula $\mathsf E \circ (h(t) \cdot (s_1 + t^{-1} \mathsf L \circ \mathsf S_k))$ still computes the characteristic of the $E_1$ page of the spectral sequence calculating $H^\bullet_c(F_{<  k}(X,n),\Q)$. If we suppose that $H^\bullet_c(X,\Q)$ is finite dimensional (not just degreewise finite dimensional), then we may take the Euler characteristic of both sides (i.e.\ set $t=1$) to obtain a generating function for the $\mathbb S_n$-equivariant compactly supported Euler characteristic of $F_{<  k}(X,n)$:
	$$ \mathsf E \circ (\chi_c(X) \cdot (s_1 + \mathsf L \circ \sum_{n \geq k} (-1)^{n-k+1}s_{n-k+1,1^{k-1}}) = \sum_{n\geq 0} \chi^{\mathbb S_n}_c(F_{<  k}(X,n)),$$
	where this is now an equality in $\widehat \Lambda$.
	If $X$ is a complex algebraic variety we may replace $\chi_c(X)$ with the Hodge--Deligne polynomial of $X$, or even better the class $\sum_i (-1)^i [H^i_c(X,\Q)]$ in the Grothendieck ring $K_0(\mathsf{MHS}_\Q)$ of rational mixed Hodge structures. In that case the above formula may be considered as an equality in the completion of $\Lambda \otimes K_0(\mathsf{MHS}_\Q)$, valid for any complex algebraic variety. When $k=2$ these generating series for Hodge--Deligne polynomials were previously obtained by Getzler \cite{getzler99}.
\end{rem}

\section{Chevalley--Eilenberg homology of twisted Lie algebras}

\begin{para} The goal of this section is to explain that in the case of the usual configuration space $F(X,n)$, our functor $\CF$ can be reinterpreted in terms of operadic cohomology of twisted Lie algebras (left modules over the Lie operad). 
\end{para}

\begin{para}Let $A$ be a commutative algebra and $\mathfrak g$ a Lie algebra. Then the tensor product $A \otimes \mathfrak g$ is again a Lie algebra. If $\mathfrak g$ is instead a twisted dg Lie algebra (a left module over the Lie operad) and $A$ is a twisted commutative dg algebra, then their so-called Hadamard tensor product, defined by
	$$ (A \otimes_H \mathfrak g)(n) = A(n) \otimes \mathfrak g(n),$$
	is again a twisted Lie algebra. Equivalently, it can be considered as a Lie algebra in the symmetric monoidal category of symmetric sequences. As such, we can consider its Chevalley--Eilenberg (co)homology, which will itself be a symmetric sequence (equivalently, a sequence of representations of $\mathbb S_n$). We will find it more natural to work with Lie algebra \emph{homology}, so in this section we will change to homological grading via usual the convention $C^i = C_{-i}$ for $i \in \Z$, where $C^\bullet$ is any cochain complex. We denote the complex of Chevalley--Eilenberg chains by $C^{\mathrm{CE}}_\bullet(-)$.
\end{para}

\begin{para}
	We will also need the \emph{suspension} operation on twisted commutative algebras. If $A$ is a twisted commutative dg algebra, then we define
	$$ \mathsf SA (n) = A(n)[-n] \otimes \mathrm{sgn}_n.$$
	Then $\mathsf S A$ is itself in a natural way a tcdga (indeed, $\mathsf S$ is a symmetric monoidal endofunctor on the category of symmetric sequences).
\end{para}

\begin{thm}\label{lie0}Let $A$ be a tcdga, and $\mathsf S A$ its suspension. Let $U = \Pi_n\setminus \{\hat 0\}$. Then $\CF(U,A) \simeq C^{\mathrm{CE}}_\bullet(\mathsf S A \otimes_H \mathsf{Lie})(n)$, where the operad $\mathsf{Lie}$ is considered as a left module over itself. \end{thm}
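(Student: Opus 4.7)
The plan is to apply the spectral sequence of Theorem \ref{sseq-thm} to $\CF(\Pi_n\setminus\{\hat 0\},A)$ and identify its $E_1$-page, together with the $d_1$-differential, with $C^{\mathrm{CE}}_\bullet(\mathsf S A \otimes_H \mathsf{Lie})(n)$; a chain-level quasi-isomorphism then follows from the standard comparison theorem for filtered complexes (applicable here since the filtration is bounded). Since the atoms of $U=\Pi_n\setminus\{\hat 0\}$ already generate $U$ under joins, $J_{U_0}=\Pi_n$. For $T\in\Pi_n$ with blocks $B_1,\dots,B_k$ the interval $\Pi_n^{\preceq T}$ factors as $\prod_i \Pi_{B_i}$, so Lemma \ref{smash} gives $\nerveud{\Pi_n^{\preceq T}}\cong \bigwedge_i \nerveud{\Pi_{B_i}}$. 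By the classical identification (Joyal, Hanlon, Barcelo--Wachs), $\widetilde H^\bullet(\nerveud{\Pi_m},\Q)$ is concentrated in degree $m-1$ where it equals $\mathsf{Lie}(m)\otimes \mathrm{sgn}_m$, so the poset-cohomology coefficient in the $E_1$-formula becomes $\bigotimes_B \mathsf{Lie}(B)\otimes \mathrm{sgn}_B$, concentrated in the single degree $n-|T|$. Matching this against the graded formula $C^{\mathrm{CE}}_\bullet(\mathfrak h)(n)=\bigoplus_{T\in\Pi_n}\bigotimes_{B\in T}\mathfrak h(B)[1]$ for $\mathfrak h=\mathsf SA\otimes_H\mathsf{Lie}$ (note $\mathfrak h(B)[1]=A(B)[-|B|+1]\otimes\mathrm{sgn}_B\otimes\mathsf{Lie}(B)$) identifies the total complex of $E_1$, with the $d_0$-differential inherited from $A$, term-by-term with $C^{\mathrm{CE}}_\bullet(\mathsf S A\otimes_H\mathsf{Lie})(n)$ together with its internal $A$-differential.

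The crux is to identify $d_1$ with the Chevalley--Eilenberg differential. The part of the horizontal boundary on $\widetilde{\mathfrak B}$ that contributes to $d_1$ consists of chain insertions promoting the inserted partition to the new maximum; on the $\rho$-filtration this requires $T\prec T'$ to be a cover in $\Pi_n$, i.e.\ $T'$ is obtained from $T$ by merging exactly two blocks $B_i, B_j$ of $T$ into $B_i\sqcup B_j$. Under the Joyal isomorphism, the resulting connecting map in poset cohomology
\[\mathsf{Lie}(B_i)\otimes \mathsf{Lie}(B_j)\longrightarrow \mathsf{Lie}(B_i\sqcup B_j)\]
is precisely the operadic composition of $\mathsf{Lie}$ viewed as a left module over itself; combined with the multiplication $A(B_i)\otimes A(B_j)\to A(B_i\sqcup B_j)$ from the tcdga structure on $A$ (and the Koszul signs coming from the suspension $\mathsf S$ and the $\mathrm{sgn}$-twist), this is exactly the Chevalley--Eilenberg differential on $C^{\mathrm{CE}}_\bullet(\mathsf S A\otimes_H\mathsf{Lie})(n)$ induced by the Lie bracket on $\mathsf S A\otimes_H\mathsf{Lie}$.

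To upgrade this identification to a chain-level quasi-isomorphism, I would construct an explicit filtered map $\Phi\colon C^{\mathrm{CE}}_\bullet(\mathsf S A\otimes_H\mathsf{Lie})(n)\to\CF(\Pi_n\setminus\{\hat 0\},A)$ by choosing, for each $T\in\Pi_n$, a cellular cocycle $\sigma_T\in\widetilde C^{n-|T|}(\nerveud{\Pi_n^{\preceq T}})$ representing the tensor of Joyal's fundamental classes under the smash decomposition (concretely via a Lyndon or tree basis for $\mathsf{Lie}$), and sending a generator $\bigotimes_B(a_B\otimes\ell_B)$ supported on $T$ to $\sigma_T\otimes\bigotimes_B a_B$. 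By the previous paragraph $\Phi$ intertwines the two differentials up to the Joyal signs, and it preserves the filtrations by $|T|$, so the comparison theorem applied to the associated filtered complexes lifts the $E_1$-isomorphism to a quasi-isomorphism of total complexes. The main obstacle is the operadic enhancement needed in the second paragraph: one needs not just Joyal's identification as $\mathbb S_m$-representations, but the refinement that the merging maps on top poset cohomology assemble $T\mapsto \widetilde H^{\,n-|T|}(\nerveud{\Pi_n^{\preceq T}})$ into a left $\mathsf{Lie}$-module isomorphic to $\mathsf{Lie}$ itself. This is a known manifestation of Koszul duality between $\mathsf{Lie}$ and $\mathsf{Com}$ (compare Fresse, or Getzler--Kapranov), but matching all sign conventions simultaneously --- cellular boundary signs, Koszul signs from the shift, and the $\mathrm{sgn}$-twist from $\mathsf S$ --- is where essentially all the care is required.
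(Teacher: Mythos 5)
Your setup is fine: $J_{U_0}=\Pi_n$, the factorization of lower intervals and Lemma \ref{smash}, the identification of the filtration-graded pieces (with $d_0$ and $d_1$) with the Chevalley--Eilenberg complex of $\mathsf SA\otimes_H\mathsf{Lie}$, and the degree count are all correct; and you could even keep this integral, since the top cohomology of $\nerveud{\Pi_m}$ is free and isomorphic to $\mathsf{Lie}(m)\otimes\mathrm{sgn}_m$ over $\Z$, so restricting to $\Q$ is an unnecessary loss of generality for a theorem stated over an arbitrary ring. The genuine gap is the assertion that your map $\Phi$, sending a generator supported on $T$ to $\sigma_T\otimes\bigotimes_B a_B$ for chosen representing cocycles $\sigma_T$, ``intertwines the two differentials.'' Decompose $\CF(U,A)=\widetilde{\mathfrak B}(U,\Phi_A)$ by the maximal element of a chain. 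The horizontal differential out of the summand indexed by $T$ has a component into the summand indexed by \emph{every} $T''\succ T$, obtained by appending $T''$ as a new maximum and applying the merging map of $A$ --- not only into covers of $T$. Applied to $\sigma_T\otimes\bigotimes_B a_B$, the component at a non-cover $T''$ is the cochain ``$\sigma_T$ extended by $T''$'' tensored with the merged element of $A$; this is a sum of distinct basis chains with the same nonzero coefficients as $\sigma_T$, hence nonzero for general $A$, whereas $\Phi(d_{\mathrm{CE}}x)$ has components only at covers. Even at a cover $T'$ there is no reason that the extension of $\sigma_T$ by $T'$ coincides with $\sigma_{T'}$ paired with the bracket for an arbitrary choice of cocycles. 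So $\Phi$ is not a chain map, no choice of the $\sigma_T$ fixes the non-cover components, and the filtered comparison theorem cannot be applied to it as stated; you would need correction terms of higher filtration (an explicit perturbation/$\infty$-morphism argument), which is exactly the part you have not supplied.

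The clean repair is to run the comparison map in the opposite direction, and this is essentially what the paper does. At the chain level, $\widetilde C^\bullet(\nerveud{\Pi_m})$ \emph{is} the arity-$m$ component of the cobar construction $\Omega\mathsf{coCom}$, so the max-element decomposition identifies $\CF(U,A)$ with (the arity-$n$ part of) $\mathsf{coCom}\circ(A\otimes_H\Omega\mathsf{coCom})$; the operad quasi-isomorphism $\Omega\mathsf{coCom}\to\Lambda\mathsf{Lie}$ kills all generators of arity $\geq 3$, i.e.\ precisely the long-jump components that break your $\Phi$, and therefore induces a genuine chain map $\CF(U,A)\to C^{\mathrm{CE}}_\bullet(\mathsf SA\otimes_H\mathsf{Lie})(n)$ after sorting out the suspensions. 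Your spectral-sequence computation of the associated graded is then exactly what shows this induced map is a quasi-isomorphism, so your $E_1$-analysis is not wasted --- it just needs to be attached to the correct map. The paper instead phrases this via Koszul duality between $\mathsf{Com}$ and $\mathsf{Lie}$ (and itself omits the sign/differential bookkeeping), but the essential point your proposal misses is that the comparison map naturally goes from $\CF$ to the Chevalley--Eilenberg complex, not the other way around.
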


%

%

\begin{proof}We need to carefully piece together several observations.
	\begin{enumerate}
		\item The complex of cellular cochains $\widetilde C^\bullet(\nerveud{\Pi_n},\Z)$ is isomorphic to the arity $n$ component of $\Omega \mathsf{coCom}$, the cobar construction on the cocommutative co-operad.  See \cite[Observation 6.1]{fressepartition}, as well as the \emph{Prolog} of loc.\ cit.\ for a large number of bibliographical references concerning variations of this result.
		\item The cobar construction $\Omega \mathsf{coCom}$ is quasi-isomorphic to the operadic suspension $\Lambda \mathsf{Lie}$ of the Lie operad, by Koszul duality between the commutative and Lie operads. 
		\item The Chevalley--Eilenberg chains on a twisted Lie algebra $\mathfrak g$ are given by the cofree left comodule over the cocommutative co-operad on $\mathfrak g[1]$, equipped with the Chevalley--Eilenberg differential. In particular the underlying symmetric sequence of $C^{\mathrm{CE}}_\bullet(\mathfrak g)$ is given by $\mathsf{coCom} \circ \mathfrak g[1]$. Since $\mathsf{coCom}(n)$ is the trivial representation of $\mathbb S_n$, the composition product $\mathsf{coCom} \circ \mathfrak g[1]$ is exactly the ``exponential'' considered in Remark \ref{rem2}.
		\item $\CF(U,A)$ can be written as a direct sum indexed by partitions $T \in \Pi_n$, where the summand corresponding to a partition $\{1,\ldots,n\}=\coprod_{i\in I} T_i$ is given by the tensor product $\bigotimes_{i\in I} A(T_i) \otimes \widetilde C^\bullet(\nerveud{\Pi_{T_i}})$. Indeed, $\CF$ is a sum over chains in the partition lattice, and we can decompose the complex $\CF$ according to the maximal element of the chain (which is what we denoted $T$ in the previous sentence). Comparing again with Remark \ref{rem2} we see that this is the arity $n$ component of $\mathsf{coCom} \circ (A \otimes_H \Omega \mathsf{coCom}) \simeq \mathsf{coCom} \circ (A \otimes_H
		\Lambda \mathsf{Lie})$.
	\end{enumerate}We should get the suspensions right. We have $\Lambda \mathsf{Lie}(n) = \mathsf{Lie}(n)[-n+1] \otimes \mathrm{sgn}_n$. It follows that $\mathsf{coCom} \circ (A \otimes_H
	\Lambda \mathsf{Lie}) = \mathsf{coCom} \circ (\mathsf SA \otimes_H
	\mathsf{Lie})[1] = C^{\mathrm{CE}}_\bullet(\mathsf SA \otimes_H
	\mathsf{Lie})$. We omit the verification that the differential in the complex $\CF$ can be identified with the Chevalley--Eilenberg differential (it is harder to write down than to derive).
\end{proof}

\begin{para}Let us now take $A = R\Gamma_c^\otimes(X,\EuScript F)$, in which case $\mathsf S A = R\Gamma_c^\otimes(X,\EuScript F[-1])$. Theorem \ref{lie0} combined with Theorem \ref{mainthm} gives: 
\end{para}

\begin{cor}\label{lie}Let $X$ be a paracompact and locally compact Hausdorff space, and $\EuScript F$ a bounded below complex of sheaves of $R$-modules on $X$, where $R$ is a ring of finite global dimension. Then 
	$$H_\bullet^{\mathrm{CE}}(R\Gamma_c^\otimes (X,\EuScript F[-1]) \otimes_H \mathsf{Lie}) \cong \bigoplus_{n \geq 0} H^{-\bullet}_c(F(X,n),\EuScript F^{\boxtimes n}).$$
\end{cor}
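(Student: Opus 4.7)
The plan is to derive Corollary \ref{lie} as an almost immediate consequence of Theorem \ref{lie0} and Theorem \ref{mainthm}, the only real content being to verify the compatibility of the suspension $\mathsf S$ on tcdga's with a degree shift on the coefficient sheaf. I would begin by fixing $A = R\Gamma_c^\otimes(X,\mathcal F)$ and specializing Theorem \ref{lie0} to the upwards closed subset $U = \Pi_n \setminus \{\hat 0\}$, which yields a quasi-isomorphism
\[
\CF(U, R\Gamma_c^\otimes(X,\mathcal F)) \simeq C_\bullet^{\mathrm{CE}}\bigl(\mathsf S R\Gamma_c^\otimes(X,\mathcal F) \otimes_H \mathsf{Lie}\bigr)(n).
\]
Combined with the quasi-isomorphism from Theorem \ref{mainthm}, namely $\CF(U, R\Gamma_c^\otimes(X,\mathcal F)) \simeq C_c^\bullet(F(X,n), \mathcal F^{\boxtimes n})$ (using $F(X,U) = F(X,n)$ in this case), one obtains a chain-level identification of $C_c^\bullet(F(X,n), \mathcal F^{\boxtimes n})$ with the arity-$n$ component of $C_\bullet^{\mathrm{CE}}(\mathsf S R\Gamma_c^\otimes(X,\mathcal F) \otimes_H \mathsf{Lie})$.

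Next I would verify the identification $\mathsf S R\Gamma_c^\otimes(X,\mathcal F) \cong R\Gamma_c^\otimes(X,\mathcal F[-1])$ as tcdga's. Unwinding the definitions, the left-hand side in arity $n$ is $R\Gamma_c(X,\mathcal F^{\otimes n})[-n] \otimes \mathrm{sgn}_n$, while the right-hand side in arity $n$ is $R\Gamma_c(X,(\mathcal F[-1])^{\otimes n})$. Applying the Koszul sign rule to the $n$-fold tensor product of the shifted complex $\mathcal F[-1]$ gives the natural isomorphism $(\mathcal F[-1])^{\otimes n} \cong \mathcal F^{\otimes n}[-n] \otimes \mathrm{sgn}_n$ of $\mathbb S_n$-equivariant complexes of sheaves, and one checks that this isomorphism respects the multiplication maps defining the tcdga structures on both sides. (The check is essentially formal: both tcdga structures are obtained by composing the tensor-product multiplication on $\mathcal F$ with shuffle isomorphisms, and the suspension $\mathsf S$ is symmetric monoidal on symmetric sequences.)

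Combining the two equivalences then gives
\[
C_c^\bullet(F(X,n),\mathcal F^{\boxtimes n}) \simeq C_\bullet^{\mathrm{CE}}\bigl(R\Gamma_c^\otimes(X,\mathcal F[-1]) \otimes_H \mathsf{Lie}\bigr)(n),
\]
and passing to (co)homology with the convention $H^i = H_{-i}$ yields the stated isomorphism on each arity; taking the direct sum over $n \geq 0$ (and noting that the arity-$0$ component of the CE chains reproduces the cohomology of the point $F(X,0)$) gives exactly the formula of Corollary \ref{lie}. The only genuine step to justify carefully is the identification of $\mathsf S$ with a shift on the coefficient sheaf, and the only possible subtlety is tracking the signs from the Koszul rule compatibly with the $\mathbb S_n$-action; neither presents any real obstacle, so the corollary follows by a direct concatenation of the two preceding theorems.
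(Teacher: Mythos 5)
Your proposal is correct and follows the paper's own route exactly: the paper obtains the corollary by setting $A = R\Gamma_c^\otimes(X,\mathcal F)$, noting $\mathsf S A = R\Gamma_c^\otimes(X,\mathcal F[-1])$, and concatenating Theorem \ref{lie0} with Theorem \ref{mainthm}. The only difference is that you spell out the Koszul-sign verification of $(\mathcal F[-1])^{\otimes n} \cong \mathcal F^{\otimes n}[-n]\otimes \mathrm{sgn}_n$, which the paper asserts without comment; that check is correct and harmless.
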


\begin{rem}\label{koszuldualityremark}
	The proof actually gives a slightly stronger chain level result. Consider on one hand the twisted dg Lie algebra $R\Gamma_c^\otimes(X,\EuScript F[-1])\otimes_H \mathsf{Lie}$, and on the other hand the twisted dg cocommutative coalgebra whose arity $n$ component is given by $C^\bullet_c(F(X,n),\EuScript F^{\boxtimes n})$. Its comultiplication is the map
	$$ C_c^\bullet(F(X,n+m),\EuScript F^{\boxtimes (n+m)}) \longrightarrow C_c^\bullet(F(X,n),\EuScript F^{\boxtimes n}) \otimes C_c^\bullet(F(X,m),\EuScript F^{\boxtimes m})$$
	given by extension by zero, noting that $F(X,n+m)$ is an open subspace of $F(X,n) \times F(X,m)$. The claim is that these algebras are dual to each other under Koszul duality.
\end{rem}

\begin{cor}\label{manifoldcor}
	Let $X$ be a paracompact and locally compact Hausdorff space. If $A$ denotes a cdga model for the compactly supported cochains on $X$ with $\Q$-coefficients, then 
	$$H_\bullet^{\mathrm{CE}}(A \otimes \mathsf S\mathsf{Lie}) \cong \bigoplus_{n \geq 0} H^{-\bullet}_c(F(X,n),\Q).$$
\end{cor}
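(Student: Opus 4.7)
The plan is to deduce this from Corollary \ref{lie} by specializing to the constant sheaf $\mathcal F = \Q$ and then replacing $R\Gamma_c^\otimes(X,\Q)$ by an honest constant tcdga on a cdga model, which is possible over $\Q$.

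First I would apply Corollary \ref{lie} with $R = \Q$ and $\mathcal F = \Q$, which gives
$$H_\bullet^{\mathrm{CE}}\bigl(R\Gamma_c^\otimes(X,\Q[-1]) \otimes_H \mathsf{Lie}\bigr) \cong \bigoplus_{n \geq 0} H^{-\bullet}_c(F(X,n),\Q).$$
Next I would check that shifting the sheaf $\Q$ by $-1$ turns into an operadic suspension on the associated tcdga: in arity $n$, the $n$-fold tensor power $\Q[-1]^{\otimes n}$ is $\Q[-n]$ equipped with the sign action of $\mathbb S_n$, so
$$R\Gamma_c^\otimes(X,\Q[-1]) \cong \mathsf S\, R\Gamma_c^\otimes(X,\Q)$$
as tcdgas, where $\mathsf S$ is the suspension functor on symmetric sequences defined earlier.

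Then I would invoke the rational comparison result stated just after Theorem \ref{navarro-aznar-theorem}: for $X$ paracompact locally compact Hausdorff, the tcdga $R\Gamma_c^\otimes(X,\Q)$ is quasi-isomorphic to the constant tcdga $\underline{R\Gamma_c^{TW}(X,\Q)}$ built from the Thom--Whitney cdga, which is itself a cdga model for $C^\bullet_c(X,\Q)$ (by Corollary \ref{constant-APL} and Theorem \ref{APL}). Hence, for any cdga model $A$ of $C^\bullet_c(X,\Q)$, I may replace $R\Gamma_c^\otimes(X,\Q)$ by the constant tcdga $\underline A$ up to quasi-isomorphism. Because the suspension $\mathsf S$ and the Hadamard product $-\otimes_H \mathsf{Lie}$ obviously preserve quasi-isomorphisms of symmetric sequences of $\Q$-vector spaces, this yields a quasi-isomorphism of twisted Lie algebras $\mathsf S\, R\Gamma_c^\otimes(X,\Q) \otimes_H \mathsf{Lie} \simeq \mathsf S \underline A \otimes_H \mathsf{Lie}$.

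Finally, I would unwind the Hadamard product in arity $n$:
$$(\mathsf S \underline A \otimes_H \mathsf{Lie})(n) = A[-n] \otimes \mathrm{sgn}_n \otimes \mathsf{Lie}(n) = A \otimes \mathsf S\mathsf{Lie}(n),$$
which is the arity-$n$ piece of what the corollary denotes $A \otimes \mathsf S \mathsf{Lie}$. The only substantive point to justify is that the Chevalley--Eilenberg chain functor sends quasi-isomorphisms of twisted dg Lie algebras to quasi-isomorphisms of complexes; this is standard and follows from the wordlength filtration on $C^{\mathrm{CE}}_\bullet$, whose associated graded is the cofree $\mathsf{coCom}$-comodule on $\mathfrak g[1]$ and is thus clearly exact in $\mathfrak g$ over $\Q$. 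Every other step is bookkeeping, so I expect no real obstacle.
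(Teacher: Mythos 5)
Your argument is correct and is exactly the derivation the paper intends: Corollary \ref{manifoldcor} is stated as an immediate consequence of Corollary \ref{lie} with $\mathcal F=\Q$, combined with the rational comparison of $R\Gamma_c^\otimes(X,\Q)$ with the constant tcdga on a cdga model (the theorem following Theorem \ref{navarro-aznar-theorem}), the identification $\mathsf S\underline A\otimes_H\mathsf{Lie}\cong A\otimes\mathsf S\mathsf{Lie}$, and quasi-isomorphism invariance of Chevalley--Eilenberg chains over $\Q$. Your bookkeeping of the shift $\Q[-1]^{\otimes n}\cong\Q[-n]\otimes\mathrm{sgn}_n$ and the filtration argument for CE invariance match the paper's implicit reasoning, so there is nothing to add.
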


\begin{cor}
	Let $X$ be a manifold of dimension $d$ (possibly with boundary), $\EuScript L$ the orientation sheaf on $X$. Then 
	$$H_\bullet^{\mathrm{CE}}(R\Gamma_c^\otimes (X,\EuScript L[d-1]) \otimes_H \mathsf{Lie}) \cong \bigoplus_{n \geq 0} H_{\bullet}(F(X,n),\Z).$$
\end{cor}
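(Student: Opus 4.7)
The plan is to derive this as an immediate consequence of Corollary \ref{lie}, specialized to the sheaf $\mathcal F = \mathcal L[d]$, combined with Poincar\'e-Verdier duality. Substituting $\mathcal F = \mathcal L[d]$ into Corollary \ref{lie} makes the left hand side read
\[H_\bullet^{\mathrm{CE}}(R\Gamma_c^\otimes(X, \mathcal L[d-1]) \otimes_H \mathsf{Lie}),\]
which is precisely the left hand side we wish to compute. It therefore suffices to show that the corresponding right hand side, $\bigoplus_{n \geq 0} H^{-\bullet}_c(F(X,n), \mathcal L^{\boxtimes n}[nd])$, is naturally identified with $\bigoplus_{n \geq 0} H_\bullet(F(X,n),\Z)$.

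For this identification I would use that $F(X,n)$ is an open submanifold of $X^n$ of dimension $nd$, and that its orientation sheaf is precisely the restriction of $\mathcal L^{\boxtimes n}$. Consequently, the restriction of $\mathcal L^{\boxtimes n}[nd]$ to $F(X,n)$ is a model for the Verdier dualizing complex of $F(X,n)$, and Poincar\'e-Verdier duality provides the degreewise natural isomorphism
\[H^{-k}_c(F(X,n),\mathcal L^{\boxtimes n}[nd]) \cong H_k(F(X,n),\Z).\]
Summing over $n$ and $k$ yields the claimed isomorphism.

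The one step requiring care is the case when $X$ has nonempty boundary; in that situation $\mathcal L[d]$ is not literally the dualizing complex of $X$. I would handle this by adopting the convention that the orientation sheaf means $j_! \mathcal L_{\mathring X}$, extended by zero from the interior $j \colon \mathring X \hookrightarrow X$. Then $R\Gamma_c^\otimes(X,\mathcal L[d-1]) = R\Gamma_c^\otimes(\mathring X, \mathcal L_{\mathring X}[d-1])$, the Poincar\'e duality step applies verbatim to the open manifold $F(\mathring X,n)$, and the homotopy equivalence $\mathring X \simeq X$ (and hence $F(\mathring X,n) \simeq F(X,n)$) ensures that the integer homology on the right is the expected one. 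With this convention in place, the only substantive point is the Poincar\'e-Verdier step above, so I expect no real obstacle.
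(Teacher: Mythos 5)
Your proposal is correct and takes essentially the same route as the paper: specialize Corollary~\ref{lie} to $\mathcal F = \mathcal L[d]$, i.e.\ the dualizing complex of the manifold, and use that singular homology is the compactly supported cohomology of the dualizing complex (your Poincar\'e--Verdier duality step for the open manifold $F(X,n)$ is exactly this fact). Your handling of the boundary case via $j_!\mathcal L_{\mathring X}$ and the collar equivalence $F(\mathring X,n)\simeq F(X,n)$ is in fact more careful than the paper's one-line proof, which silently assumes this convention when it asserts that the dualizing complex is $\mathcal L[\dim X]$.
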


\begin{proof}
	The homology of any space $X$ is canonically isomorphic to the compactly supported cohomology of its dualizing complex $\mathbb D \Z$. When $X$ is a manifold, its dualizing complex is $\EuScript L[\dim X]$, the orientation sheaf shifted into degree $\dim X$. 
\end{proof}

\begin{para}Corollary \ref{lie} gives a generalization of results of Knudsen \cite{knudsenconfiguration} and H\^o \cite{hoconfiguration}. Namely, if we specialize Corollary \ref{lie} to the case that $X$ is a manifold, $\EuScript F$ is the dualizing complex of $X$ (i.e.\ the orientation sheaf of $X$ placed in cohomological degree $\dim X$) tensored with $\Q$, and we take $\mathbb S_n$-invariants in each arity, then we recover exactly the main theorem of \cite{knudsenconfiguration}. The main result of \cite{hoconfiguration} is the analogous statement in the \'etale cohomology of an algebraic variety $X$ with $\Q_\ell$-coefficients, and the sheaf-theoretic methods used here can treat this case as well, as indicated in Remark \ref{algebraic}. Some statements closely related to Corollary \ref{lie} can be found in the literature already. In an unpublished preprint, Getzler \cite[p.4]{nilpotent} states an isomorphism $H_\bullet^{\mathrm{CE}}(\Omega_c^\bullet(M) \otimes \mathsf{Lie}) \cong \bigoplus_{n \geq 0} H^\bullet_c(F(M,n),\mathbf R)[-n] \otimes \mathrm{sgn}_n$, where $M$ is an oriented manifold and $\Omega_c^\bullet(M)$ is the compactly supported de Rham complex. (The formula in loc.\ cit.\ is stated without compact supports on both sides, but the above formula is presumably what is intended.) 
	
	One can also extract a version of Corollary \ref{lie} from computations in Goodwillie calculus. If $X$ is a based space, let $\bar F(X,n) = X^{\wedge n}/D_n$, where $D_n$ is the ``big diagonal'' inside the smash product. Now note that if $X$ is the one-point compactification of a space $Y$, then $\bar F(X,n)$ is the one-point compactification of $F(Y,n)$. In particular, the reduced cohomology of $\bar F(X,n)$ is the compactly supported cohomology of $F(Y,n)$. We now need the following inputs from Goodwillie calculus, see \cite{aronesnaith,chingbarconstructions} and in particular \cite[Example 17.28]{aroneching}:
	 \begin{itemize}
	 	\item Let $F \colon \mathsf{Top}_\ast \to \mathsf{Top}_\ast$ be a pointed homotopy functor. Then its derivatives $\partial_\ast F$ form a left module over $\partial_\ast I_{\mathsf{Top}_\ast}$, the ``spectral Lie operad''.
	 	\item If we instead consider $G \colon \mathsf{Top}_\ast \to \mathsf{Spectra}$, then $\partial_\ast G$ is a left module over $\partial_\ast(\Sigma^\infty \Omega^\infty)$, a spectral version of the commutative operad.
	 	\item If $G = \Sigma^\infty F$, then $\partial_\ast G$ and $\partial_\ast F$ are Koszul dual to each other.  
	 	\item If $F = \mathrm{map}(X,-)$, then its $n$th derivative is given by $X \wedge \partial_n I_{\mathsf{Top}_\ast}$.
	 	\item Let $X$ be a finite complex. The $n$th derivative of $\Sigma^\infty\mathrm{map}(X,-)$ is given by $\mathbb D \bar F(X,n)$, where $\mathbb D$ denotes the Spanier--Whitehead dual .
	 \end{itemize}
	 Putting it all together, we see that we obtain a ``spectral'' version of the Koszul duality statement of Remark \ref{koszuldualityremark}. Taking cohomology recovers Remark \ref{koszuldualityremark}, but only in the case of the constant sheaf $\Z$. The statement obtained from Goodwillie calculus is neither more nor less general than the one here: the spectral version allows one to work with an arbitrary cohomology theory, but the sheafy version allows e.g.\ to plug in the dualizing complex of $X$. It seems likely that there exists a six-functors formalism for sheaves of spectra over spaces and that the methods of this paper would work equally well in such a setting to prove a statement which specializes to both formulae, but no such formalism exists in the literature.   
\end{para}

\begin{rem}
	Here is a simple example of the kind of result one could prove using a hypothetical formalism of six functors for sheaves of spectra. If we take for $\EuScript F$ in Corollary \ref{lie} the dualizing complex then on the right hand side we should get the stable homotopy types of the configuration spaces of points on $X$. The theorem would therefore answer the question of how much more information than the homotopy type of $X$ one needs to determine the stable homotopy type of the configuration space of points on $X$: one needs also to know the dualizing complex. For example, when $X$ is a manifold, then dualizing complex should be the parametrized Thom spectrum given by the tangent bundle (this would be a form of Atiyah duality), and since the stable tangent bundle is a homotopy invariant of compact manifolds one would deduce that the stable homotopy type of configuration spaces of points on a compact manifold $M$ depends only on the homotopy type of $M$, recovering a theorem of Aouina--Klein \cite{aouinaklein}. Knudsen \cite[Theorem C]{knudsenhigher} gave a proof of the Aouina--Klein result in a slightly stronger form (for example, Knudsen's result is equivariant) using factorization homology, which is at least morally the ``same'' proof as the one suggested in this remark (in the manifold case).
\end{rem}

\begin{para}Let us explain how to recover the results of Knudsen and H\^o from Corollary \ref{lie}. Suppose we work over $\Q$. Then there is an \emph{exact} symmetric monoidal functor 
$$ \text{(symmetric sequences of chain complexes)} \stackrel{\mathrm{inv}}\longrightarrow \text{(chain complexes with an extra $\mathbf N$-grading)} $$
given by taking $\mathbb S_n$-invariants in each arity. If $\mathfrak g$ is a twisted dg Lie algebra, then $\operatorname{inv} \mathfrak g$ is a Lie algebra in the category of $\mathbf N$-graded dg $\Q$-vector spaces, and since $\mathrm{inv}$ is exact it commutes with taking Lie algebra cohomology: $\operatorname{inv} H^{\mathrm{CE}}_\bullet(\mathfrak g) \cong H^{\mathrm{CE}}_\bullet(\operatorname{inv} \mathfrak g)$. We get in particular:
\end{para}
\begin{prop}
	Let $X$ be a locally compact Hausdorff space, and let $\mathbb D \Q$ be the dualizing complex of $X$ with rational coeffcients. Then
	$$ H_\bullet^{\mathrm{CE}}(\operatorname{inv} R\Gamma_c^\otimes(X,\mathbb D\Q[-1]) \otimes_H \mathsf{Lie}) \cong \bigoplus_{n \geq 0} H_\bullet(F(X,n)/\mathbb S_n,\Q ).$$
\end{prop}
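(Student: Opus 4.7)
The plan is to specialize Corollary \ref{lie} with $\mathcal F = \mathbb D\Q$ and then apply the exact symmetric monoidal functor $\operatorname{inv}$ to both sides, using the discussion preceding the proposition.

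First I would identify the right-hand side obtained from Corollary \ref{lie}. Applying that corollary to $\mathcal F = \mathbb D\Q$ yields
$$H_\bullet^{\mathrm{CE}}(R\Gamma_c^\otimes (X,\mathbb D\Q[-1]) \otimes_H \mathsf{Lie}) \cong \bigoplus_{n \geq 0} H^{-\bullet}_c(F(X,n),(\mathbb D\Q)^{\boxtimes n}).$$
The external tensor product behaves well with dualizing complexes: $(\mathbb D\Q_X)^{\boxtimes n} \cong \mathbb D\Q_{X^n}$, and restricting along the open immersion $F(X,n) \hookrightarrow X^n$ gives $\mathbb D\Q_{F(X,n)}$. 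Hence by Verdier duality
$$H^{-\bullet}_c(F(X,n),(\mathbb D\Q)^{\boxtimes n}) \cong H^{-\bullet}_c(F(X,n),\mathbb D\Q_{F(X,n)}) \cong H_\bullet(F(X,n),\Q),$$
which is the standard identification of rational homology with compactly supported cohomology of the dualizing complex.

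Next I would apply $\operatorname{inv}$ in each arity. Since $\operatorname{inv}$ is exact and symmetric monoidal over $\Q$, it commutes with the Chevalley--Eilenberg functor, so on the left-hand side
$$\operatorname{inv}\, H^{\mathrm{CE}}_\bullet(R\Gamma_c^\otimes (X,\mathbb D\Q[-1]) \otimes_H \mathsf{Lie}) \cong H^{\mathrm{CE}}_\bullet\bigl(\operatorname{inv} R\Gamma_c^\otimes(X,\mathbb D\Q[-1]) \otimes_H \mathsf{Lie}\bigr).$$
On the right-hand side, taking $\mathbb S_n$-invariants gives
$$H_\bullet(F(X,n),\Q)^{\mathbb S_n} \cong H_\bullet(F(X,n)/\mathbb S_n, \Q),$$
where the isomorphism uses that $\mathbb S_n$ acts freely on $F(X,n)$ and that the order of $\mathbb S_n$ is invertible in $\Q$, so the transfer map provides an isomorphism between invariants of equivariant homology and homology of the quotient.

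The only place where I would need to be careful is compatibility: the $\mathbb S_n$-action on $H^{-\bullet}_c(F(X,n),(\mathbb D\Q)^{\boxtimes n})$ coming from the tcdga structure on $R\Gamma_c^\otimes(X,\mathbb D\Q[-1])$ must coincide with the natural permutation action of $\mathbb S_n$ on $F(X,n)$ (with appropriate signs absorbed by the shift and the suspension defining $\mathsf S$). This is essentially the equivariance statement already contained in Theorem \ref{mainthm} and the construction of Theorem \ref{lie0}, so no new verification is needed beyond unwinding definitions. Assembling the chain of isomorphisms yields the claim.
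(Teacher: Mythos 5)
Your proposal is correct and follows essentially the same route as the paper: the paper derives this proposition directly from Corollary \ref{lie} with $\mathcal F = \mathbb D\Q$, using the identification of $H^{-\bullet}_c(F(X,n),(\mathbb D\Q)^{\boxtimes n})$ with rational homology and the fact that $\operatorname{inv}$ is exact and symmetric monoidal over $\Q$, hence commutes with Chevalley--Eilenberg homology, together with $H_\bullet(F(X,n),\Q)^{\mathbb S_n} \cong H_\bullet(F(X,n)/\mathbb S_n,\Q)$. Your write-up just makes explicit the Verdier duality and transfer steps that the paper leaves implicit.
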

\begin{para}
	Now the main result of \cite{knudsenconfiguration} is the description of an explicit $\mathbf N$-graded Lie algebra over $\Q$ associated to a manifold $X$, whose homology is given by the direct sum $\bigoplus_{n \geq 0} H_\bullet(F(X,n)/\mathbb S_n,\Q )$. What we must show is therefore that $\operatorname{inv} R\Gamma_c^\otimes(X,\mathbb D\Q[-1]) \otimes_H \mathsf{Lie}$ is isomorphic to this $\mathbf N$-graded Lie algebra when $X$ is a manifold. We will need two observations:\begin{itemize}
	\item Suppose the complex $\EuScript F$ consists of a single sheaf placed in even degree. Then the cohomology of $R\Gamma_c^\otimes(X,\EuScript F)(n)$ consists only of the trivial representation of $\mathbb S_n$. If instead $\EuScript F$ is concentrated in odd degree, then the cohomology of $R\Gamma_c^\otimes(X,\EuScript F)(n)$ transforms via the sign representation of $\mathbb S_n$.
	\item The trivial representation occurs in $\mathsf{Lie}(n)$ only for $n=1$, where $\mathsf{Lie}(1) = \Q$ is the trivial representation. The sign representation of $\mathbb S_n$ occurs only for $n=1$ and $n=2$, where $\mathsf{Lie}(1)$ and $\mathsf{Lie}(2)$ are both given by a copy of the sign representation. 
\end{itemize}
So let $X$ be a $d$-dimensional manifold. We see from the above considerations that if $d$ is odd then $\operatorname{inv} R\Gamma_c^\otimes(X,\EuScript L[d-1]) \otimes_H \mathsf{Lie}$ is concentrated in degree $1$, where it is given by $H^\bullet_c(X,\EuScript L[d-1]) \cong H_{\bullet+1}(X,\Q)$ (with identically zero Lie bracket).  If $d$ is even then  $\operatorname{inv} R\Gamma_c^\otimes(X,\EuScript L[d-1]) \otimes_H \mathsf{Lie}$ is concentrated in degrees $1$ and $2$. Its degree $1$ component is $C^\bullet_c(X,\EuScript L[d-1])$, and its degree $2$ component is $C^\bullet_c(X,\EuScript L[d-1]^{\otimes 2})\cong C^{\bullet+2d-2}_c(X,\Q)$, which has an evident 2-step nilpotent Lie bracket given by cup product. This Lie algebra is moreover formal, by the argument of \cite[Remark 7.7]{knudsenconfiguration}, and hence we can take the components to be given by the cohomologies $H^\bullet_c(X,\EuScript L)$ and $H^\bullet_c(X,\Q)$ (appropriately shifted). We have recovered exactly the Lie algebra defined by Knudsen.
\end{para}

\bibliographystyle{alpha}
\bibliography{../database}

\begin{thebibliography}{SW97b}

\bibitem[AC11]{aroneching}
Greg Arone and Michael Ching.
\newblock Operads and chain rules for the calculus of functors.
\newblock {\em Ast\'erisque}, (338):vi+158, 2011.

\bibitem[AK04]{aouinaklein}
Mokhtar Aouina and John~R. Klein.
\newblock On the homotopy invariance of configuration spaces.
\newblock {\em Algebr. Geom. Topol.}, 4:813--827, 2004.

\bibitem[Ara]{arabia}
Alberto Arabia.
\newblock {Espaces de configuration g\'en\'eralis\'es. Espaces topologiques
  $i$-acycliques. Suites spectrales ``basiques''}.
\newblock Preprint available at arXiv:1609.00522.

\bibitem[Aro99]{aronesnaith}
Greg Arone.
\newblock A generalization of {S}naith-type filtration.
\newblock {\em Trans. Amer. Math. Soc.}, 351(3):1123--1150, 1999.

\bibitem[Bac75]{baclawski}
Kenneth Bac{\l}awski.
\newblock Whitney numbers of geometric lattices.
\newblock {\em Advances in Math.}, 16:125--138, 1975.

\bibitem[Bar78]{barratt-twistedlie}
Michael~G. Barratt.
\newblock Twisted {L}ie algebras.
\newblock In {\em Geometric applications of homotopy theory ({P}roc. {C}onf.,
  {E}vanston, {I}ll., 1977), {II}}, volume 658 of {\em Lecture Notes in Math.},
  pages 9--15. Springer, Berlin, 1978.

\bibitem[BD16]{bremnerdotsenko}
Murray Bremner and Vladimir Dotsenko.
\newblock {\em Algebraic operads: an algorithmic companion}.
\newblock CRC Press, 2016.

\bibitem[Ber14]{berglundhomologicalperturbation}
Alexander Berglund.
\newblock Homological perturbation theory for algebras over operads.
\newblock {\em Algebr. Geom. Topol.}, 14(5):2511--2548, 2014.

\bibitem[BG91]{benderskygitler}
Martin Bendersky and Sam Gitler.
\newblock The cohomology of certain function spaces.
\newblock {\em Trans. Amer. Math. Soc.}, 326(1):423--440, 1991.

\bibitem[BM03]{bergermoerdijk}
Clemens Berger and Ieke Moerdijk.
\newblock Axiomatic homotopy theory for operads.
\newblock {\em Comment. Math. Helv.}, 78(4):805--831, 2003.

\bibitem[BS12]{baranovskysazdanovic}
Vladimir Baranovsky and Radmila Sazdanovic.
\newblock Graph homology and graph configuration spaces.
\newblock {\em J. Homotopy Relat. Struct.}, 7(2):223--235, 2012.

\bibitem[BS15]{bhattscholzeproetale}
Bhargav Bhatt and Peter Scholze.
\newblock The pro-\'etale topology for schemes.
\newblock {\em Ast\'erisque}, (369):99--201, 2015.

\bibitem[BW95]{bjornerwelker}
Anders Bj\"orner and Volkmar Welker.
\newblock The homology of ``{$k$}-equal'' manifolds and related partition
  lattices.
\newblock {\em Adv. Math.}, 110(2):277--313, 1995.

\bibitem[CC]{chataurcirici}
David Chataur and Joana Cirici.
\newblock {Sheaves of E-infinity algebras and applications to algebraic
  varieties and singular spaces}.
\newblock Preprint available at arXiv:1811.08642.

\bibitem[Chi05]{chingbarconstructions}
Michael Ching.
\newblock Bar constructions for topological operads and the {G}oodwillie
  derivatives of the identity.
\newblock {\em Geom. Topol.}, 9:833--933, 2005.

\bibitem[CILW]{cilw}
Ricardo Campos, Najib Idrissi, Pascal Lambrechts, and Thomas Willwacher.
\newblock {Configuration spaces of manifolds with boundary}.
\newblock Preprint available at arXiv:1802.00716.

\bibitem[CT78]{cohentaylor}
Frederick~R. Cohen and Laurence~R. Taylor.
\newblock Computations of {G}el'fand-{F}uks cohomology, the cohomology of
  function spaces, and the cohomology of configuration spaces.
\newblock In {\em Geometric applications of homotopy theory ({P}roc. {C}onf.,
  {E}vanston, {I}ll., 1977), {I}}, volume 657 of {\em Lecture Notes in Math.},
  pages 106--143. Springer, Berlin, 1978.

\bibitem[DK10]{dotsenkokhoroshkin}
Vladimir Dotsenko and Anton Khoroshkin.
\newblock Gr\"obner bases for operads.
\newblock {\em Duke Math. J.}, 153(2):363--396, 2010.

\bibitem[Fre00]{fressesimplicial}
Benoit Fresse.
\newblock On the homotopy of simplicial algebras over an operad.
\newblock {\em Trans. Amer. Math. Soc.}, 352(9):4113--4141, 2000.

\bibitem[Fre04]{fressepartition}
Benoit Fresse.
\newblock Koszul duality of operads and homology of partition posets.
\newblock In {\em Homotopy theory: relations with algebraic geometry, group
  cohomology, and algebraic {$K$}-theory}, volume 346 of {\em Contemp. Math.},
  pages 115--215. Amer. Math. Soc., Providence, RI, 2004.

\bibitem[Get]{nilpotent}
Ezra Getzler.
\newblock {The homology groups of some two-step nilpotent {L}ie algebras
  associated to symplectic vector spaces}.
\newblock Available at arXiv:math/9903147.

\bibitem[Get99]{getzler99}
Ezra Getzler.
\newblock Resolving mixed {H}odge modules on configuration spaces.
\newblock {\em Duke Math. J.}, 96(1):175--203, 1999.

\bibitem[GJ94]{getzlerjones}
Ezra Getzler and John D.~S. Jones.
\newblock Operads, homotopy algebra and iterated integrals for double loop
  spaces.
\newblock Unpublished. arXiv:hep-th/9403055, 1994.

\bibitem[GM88]{goreskymacphersonstratifiedmorsetheory}
Mark Goresky and Robert MacPherson.
\newblock {\em Stratified {M}orse theory}, volume~14 of {\em Ergebnisse der
  Mathematik und ihrer Grenzgebiete (3)}.
\newblock Springer-Verlag, Berlin, 1988.

\bibitem[GS07]{goerssschemmerhorn}
Paul Goerss and Kristen Schemmerhorn.
\newblock Model categories and simplicial methods.
\newblock In {\em Interactions between homotopy theory and algebra}, volume 436
  of {\em Contemp. Math.}, pages 3--49. Amer. Math. Soc., Providence, RI, 2007.

\bibitem[Har10]{harpermodelstructure}
John~E. Harper.
\newblock Homotopy theory of modules over operads and non-{$\Sigma$} operads in
  monoidal model categories.
\newblock {\em J. Pure Appl. Algebra}, 214(8):1407--1434, 2010.

\bibitem[Hin97]{hinichmodelstructure}
Vladimir Hinich.
\newblock Homological algebra of homotopy algebras.
\newblock {\em Comm. Algebra}, 25(10):3291--3323, 1997.

\bibitem[H{\^o}]{hodensities}
Quoc~P. H{\^o}.
\newblock {Densities and stability via factorization homology}.
\newblock Preprint available at arXiv:1802.07948.

\bibitem[H{\^o}17]{hoconfiguration}
Quoc~P. H{\^o}.
\newblock Free factorization algebras and homology of configuration spaces in
  algebraic geometry.
\newblock {\em Selecta Math. (N.S.)}, 23(4):2437--2489, 2017.

\bibitem[Hov99]{hovey}
Mark Hovey.
\newblock {\em Model categories}, volume~63 of {\em Mathematical Surveys and
  Monographs}.
\newblock American Mathematical Society, Providence, RI, 1999.

\bibitem[Ive86]{iversensheaves}
Birger Iversen.
\newblock {\em Cohomology of sheaves}.
\newblock Universitext. Springer-Verlag, Berlin, 1986.

\bibitem[Joy86]{joyalanalyticfunctors}
Andr{\'e} Joyal.
\newblock Foncteurs analytiques et esp\`eces de structures.
\newblock In {\em Combinatoire \'enum\'erative ({M}ontreal, {Q}ue.,
  1985/{Q}uebec, {Q}ue., 1985)}, volume 1234 of {\em Lecture Notes in Math.},
  pages 126--159. Springer, Berlin, 1986.

\bibitem[Kad]{kadeishvili-Coo}
Tornike Kadeishvili.
\newblock {Cohomology $C_\infty$-algebra and rational homotopy type}.
\newblock Preprint available at arXiv:0811.1655.

\bibitem[Kad80]{kadeishvili}
Tornike~V. Kadei{\v{s}}vili.
\newblock On the theory of homology of fiber spaces.
\newblock {\em Uspekhi Mat. Nauk}, 35(3(213)):183--188, 1980.
\newblock International Topology Conference (Moscow State Univ., Moscow, 1979).

\bibitem[Knu17]{knudsenconfiguration}
Ben Knudsen.
\newblock Betti numbers and stability for configuration spaces via
  factorization homology.
\newblock {\em Algebr. Geom. Topol.}, 17(5):3137--3187, 2017.

\bibitem[Knu18]{knudsenhigher}
Ben Knudsen.
\newblock Higher enveloping algebras.
\newblock {\em Geom. Topol.}, 22(7):4013--4066, 2018.

\bibitem[Koz97]{kozlovgeneral}
Dmitry~N. Kozlov.
\newblock General lexicographic shellability and orbit arrangements.
\newblock {\em Ann. Comb.}, 1(1):67--90, 1997.

\bibitem[K{\v{r}}{\'{\i}}94]{krizconfiguration}
Igor K{\v{r}}{\'{\i}}{\v{z}}.
\newblock On the rational homotopy type of configuration spaces.
\newblock {\em Ann. of Math. (2)}, 139(2):227--237, 1994.

\bibitem[LV12]{lodayvallette}
Jean-Louis Loday and Bruno Vallette.
\newblock {\em Algebraic operads}, volume 346 of {\em Grundlehren der
  Mathematischen Wissenschaften}.
\newblock Springer, Heidelberg, 2012.

\bibitem[Man02]{mandellcochain}
Michael~A. Mandell.
\newblock Cochain multiplications.
\newblock {\em Amer. J. Math.}, 124(3):547--566, 2002.

\bibitem[Man06]{mandell}
Michael~A. Mandell.
\newblock Cochains and homotopy type.
\newblock {\em Publ. Math. Inst. Hautes \'Etudes Sci.}, (103):213--246, 2006.

\bibitem[NA87]{navarroaznarhodgedeligne}
Vicente Navarro~Aznar.
\newblock Sur la th\'eorie de {H}odge-{D}eligne.
\newblock {\em Invent. Math.}, 90(1):11--76, 1987.

\bibitem[Pet]{htt}
Dan Petersen.
\newblock {A closer look at Kadeishvili's theorem}.
\newblock Preprint available at arXiv:1905.10877.

\bibitem[Pet17]{spectralsequencestratification}
Dan Petersen.
\newblock A spectral sequence for stratified spaces and configuration spaces of
  points.
\newblock {\em Geom. Topol.}, 21(4):2527--2555, 2017.

\bibitem[PS18]{pavlovscholbach}
Dmitri Pavlov and Jakob Scholbach.
\newblock Symmetric operads in abstract symmetric spectra.
\newblock {\em Journal of the Institute of Mathematics of Jussieu}, page
  1–52, 2018.

\bibitem[Qui69]{quillenrationalhomotopytheory}
Daniel Quillen.
\newblock Rational homotopy theory.
\newblock {\em Ann. of Math. (2)}, 90:205--295, 1969.

\bibitem[Ric15]{richtershuffle}
Birgit Richter.
\newblock On the homology and homotopy of commutative shuffle algebras.
\newblock {\em Israel J. Math.}, 209(2):651--682, 2015.

\bibitem[Ron11]{ronco}
Mar{\'\i}a Ronco.
\newblock Shuffle bialgebras.
\newblock {\em Ann. Inst. Fourier (Grenoble)}, 61(3):799--850, 2011.

\bibitem[RS]{richtersagave}
Birgit Richter and Steffen Sagave.
\newblock {A strictly commutative model for the cochain algebra of a space}.
\newblock Preprint available at arXiv:1801.01060.

\bibitem[RS17]{richtershipley}
Birgit Richter and Brooke Shipley.
\newblock An algebraic model for commutative {$H\Bbb{Z}$}-algebras.
\newblock {\em Algebr. Geom. Topol.}, 17(4):2013--2038, 2017.

\bibitem[Sal17]{saleh}
Bashar Saleh.
\newblock Noncommutative formality implies commutative and {L}ie formality.
\newblock {\em Algebr. Geom. Topol.}, 17(4):2523--2542, 2017.

\bibitem[SS12]{sagaveschlichtkrull}
Steffen Sagave and Christian Schlichtkrull.
\newblock Diagram spaces and symmetric spectra.
\newblock {\em Adv. Math.}, 231(3-4):2116--2193, 2012.

\bibitem[SS15]{samsnowdenstabilitypatterns}
Steven~V. Sam and Andrew Snowden.
\newblock Stability patterns in representation theory.
\newblock {\em Forum Math. Sigma}, 3:e11, 108, 2015.

\bibitem[Sto93]{stover}
Christopher~R. Stover.
\newblock The equivalence of certain categories of twisted {L}ie and {H}opf
  algebras over a commutative ring.
\newblock {\em J. Pure Appl. Algebra}, 86(3):289--326, 1993.

\bibitem[Sul77]{sullivaninfinitesimal}
Dennis Sullivan.
\newblock Infinitesimal computations in topology.
\newblock {\em Inst. Hautes \'Etudes Sci. Publ. Math.}, 47:269--331, 1977.

\bibitem[SW97a]{sundaramwachs}
Sheila Sundaram and Michelle Wachs.
\newblock The homology representations of the {$k$}-equal partition lattice.
\newblock {\em Trans. Amer. Math. Soc.}, 349(3):935--954, 1997.

\bibitem[SW97b]{sundaramwelker}
Sheila Sundaram and Volkmar Welker.
\newblock Group actions on arrangements of linear subspaces and applications to
  configuration spaces.
\newblock {\em Trans. Amer. Math. Soc.}, 349(4):1389--1420, 1997.

\bibitem[Tos]{tostesonconfiguration}
Philip Tosteson.
\newblock {Lattice spectral sequences and cohomology of configuration spaces}.
\newblock Preprint available at arXiv:1612.06034.

\bibitem[Tot96]{totaro}
Burt Totaro.
\newblock Configuration spaces of algebraic varieties.
\newblock {\em Topology}, 35(4):1057--1067, 1996.

\bibitem[Wei94]{weibel}
Charles~A. Weibel.
\newblock {\em An introduction to homological algebra}, volume~38 of {\em
  Cambridge Studies in Advanced Mathematics}.
\newblock Cambridge University Press, Cambridge, 1994.

\end{thebibliography}

\end{document}